\newcommand{\moins}{\mathbin{\fgebackslash}}
\let\mathcal\mathscr
\let\cal\mathcal
\newtheorem{theorem}[equation]{Theorem}
 \newtheorem{lemma}[equation]{Lemma}
 \newtheorem{proposition}[equation]{Proposition}
 \newtheorem{corollary}[equation]{Corollary}  
\newtheorem{conjecture}[equation]{Conjecture}
\theoremstyle{definition}
\newtheorem{definition}[equation]{Definition}
\newtheorem{remark}[equation]{Remark}
\newtheorem{example}[equation]{Example}
\newtheorem{notation}[equation]{Notation}
\theoremstyle{remark}
\newtheorem*{acknowledgments}{Acknowledgments}
\def\invlim{\mathop{\vtop{\ialign{##\crcr$\hfill{\lim}\hfil$\crcr
\noalign{\kern1pt\nointerlineskip}\leftarrowfill\crcr\noalign
{\kern -3pt}}}}\limits}
\def\dirlim{\mathop{\vtop{\ialign{##\crcr$\hfill{\lim}\hfil$\crcr
\noalign{\kern1pt\nointerlineskip}\rightarrowfill\crcr\noalign
{\kern -3pt}}}}\limits} 
\def\lomapr#1{\smash{\mathop{\relbar\joinrel\longrightarrow}\limits^{#1}}}
 \def\verylomapr#1{\smash{\mathop{\relbar\joinrel\relbar\joinrel\relbar\joinrel\longrightarrow}\limits^{#1}}}
\def\phi{\varphi} 
\def\epsilon{\varepsilon}
\newcommand{\ovk}{\overline{K} }
\newcommand{\dr}{\operatorname{dR} }
 \newcommand{\colim}{\operatorname{colim} }
 \newcommand{\proeet}{\operatorname{pro\acute{e}t} } 
  \newcommand{\qproeet}{\operatorname{qpro\acute{e}t} } 
\newcommand{\eet}{\operatorname{\acute{e}t} }
  \newcommand{\cond}{\operatorname{cond} } 
 \newcommand{\Hom}{{\rm{Hom}} }
 \newcommand{\Ext}{\operatorname{Ext} }
\newcommand{\Gal}{\operatorname{Gal} }
\newcommand{\can}{ \operatorname{can} }
\newcommand{\synt}{ \operatorname{syn} }
\newcommand{\st}{\operatorname{st} }
 \newcommand{\coker}{\operatorname{coker} }  
 \newcommand{\crr}{\operatorname{cr} }
 \newcommand{\sff}{{\mathcal{F}}}
 \newcommand{\sg}{{\mathcal{G}}}
 \newcommand{\scc}{{\mathcal{C}}}
 \newcommand{\sll}{{\mathcal{L}}}
 \newcommand{\so}{{\mathcal O}}
\newcommand{\sd}{{\mathcal{D}}} 
\newcommand{\sm}{{\mathcal{M}}}
\newcommand{\wotimes}{\widehat{\otimes}}
 \newcommand{\wt}{\widetilde}
 \newcommand{\wh}{\widehat}
   \numberwithin{equation}{section}
\def\R{{\mathrm R}}
\def\O{{\cal O}}
 \def\A{{\bf A}} \def\B{{\bf B}}
\def\Q{{\bf Q}} \def\Z{{\bf Z}}
\def\C{{\bf C}}
\def\N{{\bf N}}
\def\O{{\cal O}}
\def\G{{\cal G}} 
\def\Qbar{\overline{\bf Q}}
\def\rg{{\rm R}\Gamma}
\def\epsilon{\varepsilon}
\def\dual{{\boldsymbol *}}
\def\bmu{{\boldsymbol\mu}}
\numberwithin{equation}{section}
\begin{document}
\title[Arithmetic duality  for $p$-adic   pro-\'etale cohomology of  analytic curves]
 {Arithmetic duality  for $p$-adic   pro-\'etale cohomology of  analytic curves}
 \author{Pierre Colmez} 
\address{CNRS, IMJ-PRG, Sorbonne Universit\'e, 4 place Jussieu, 75005 Paris, France}
\email{pierre.colmez@imj-prg.fr} 
\author{Sally Gilles}
\address{Max Plack Institute for Mathematics, Vivatsgasse 7,
53111 Bonn, Germany}
\email{gilles.mpim-bonn.mpg.de}
\author{Wies{\l}awa Nizio{\l}}
\address{CNRS, IMJ-PRG, Sorbonne Universit\'e, 4 place Jussieu, 75005 Paris, France}
\email{wieslawa.niziol@imj-prg.fr}
 \date{\today}
\thanks{The authors' research was supported in part by the grant NR-19-CE40-0015-02 COLOSS and by the grant No. DMS-1928930 
of the National Science Foundation  while P.C. and W.N. were  in
residence at the Mathematical Sciences Research Institute in Berkeley, California, during the Spring 2023 semester.}
 \begin{abstract}
 We prove a Poincar\'e duality for arithmetic $p$-adic pro-\'etale cohomology of smooth dagger curves over 
finite extensions of $\Q_p$.
 We deduce it, via the Hochschild-Serre spectral sequence,  from geometric comparison theorems 
combined with Tate and Serre dualities.
 The compatibility of all the products involved is checked via reduction to the ghost circle, for which we also prove a Poincar\'e duality (showing that it behaves like a proper smooth analytic variety of dimension $1/2$).
 Along the way we  study functional analytic properties of arithmetic $p$-adic pro-\'etale cohomology and prove that the usual cohomology is nuclear Fr\'echet and the compactly supported one --  of compact type.
 \end{abstract}
\maketitle
\tableofcontents

\section{Introduction}
This paper is a contribution to the study of dualities in $p$-adic pro-\'etale cohomology of analytic varieties. 
    
  \subsection{Statement of the main theorem}  
Let $p$ be a prime and let $K$ be a finite extension of $\Q_p$. Our main theorem  is  the following duality result. 
\begin{theorem}{\rm (Arithmetic Poincar\'e duality)} \label{main-arithmetic0} Let $X$ be a smooth, geometrically irreducible,  dagger variety\footnote{Dagger varieties, introduced by Grosse-Kl\"onne in \cite{GK}, are rigid analytic varieties with overconvergent structure sheaves.} of dimension $1$ over $K$. Assume that $X$ is proper, Stein, or affinoid. Then: 
\begin{enumerate}
\item
There is a natural trace map isomorphism of solid $\Q_p$-vector spaces\footnote{See Chapter \ref{def-compact} for a definition of compactly supported pro-\'etale cohomology.}
\begin{equation}
\label{trace0}
{\rm Tr}_X: \, H^4_{\proeet,c}(X,\Q_p(2))\stackrel{\sim}{\to} \Q_p. 
\end{equation}
\item
The  pairing
$$
H^i_{\proeet}(X,\Q_p(j))\otimes^{\Box}_{\Q_p} H^{4-i}_{\proeet,c}(X,\Q_p(2-j)){\to } H^4_{\proeet,c}(X,\Q_p(2))\xrightarrow[\sim]{{\rm Tr}_X}\Q_p
$$
is a perfect duality, i.e., we have  induced  isomorphisms of solid $\Q_p$-vector spaces 
\begin{align*}
 &\gamma_{X,i}:  H^i_{\proeet}(X,\Q_p(j))\stackrel{\sim}{\to} H^{4-i}_{\proeet,c}(X,\Q_p(2-j))^*,\\
& \gamma_{X,i}^c: H^i_{\proeet,c}(X,\Q_p(j))\stackrel{\sim}{\to} H^{4-i}_{\proeet}(X,\Q_p(2-j))^*,
\end{align*}
where $(-)^*$ denotes $\underline{\Hom}_{\Q_p}(-,\Q_p)$. 
\end{enumerate}
\end{theorem}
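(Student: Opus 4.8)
The plan is to deduce the arithmetic duality from its geometric counterpart over $C:=\widehat{\ovk}$ together with local Tate duality over $K$, organised by the Hochschild--Serre spectral sequence; write $G_K=\Gal(\ovk/K)$. One has
\[
E_2^{a,b}=H^a\big(G_K,H^b_{\proeet}(X_C,\Q_p(j))\big)\Longrightarrow H^{a+b}_{\proeet}(X,\Q_p(j))
\]
together with its compact-support analogue. Since $\dim X=1$ the geometric terms vanish for $b\notin\{0,1,2\}$ (for $b\notin\{0,1\}$ when $X$ is Stein or affinoid) while $\mathrm{cd}_p(G_K)=2$, so both spectral sequences are supported in the square $0\le a,b\le 2$ and each $H^n_{\proeet}(X,\Q_p(j))$, $H^n_{\proeet,c}(X,\Q_p(j))$ carries a three-step filtration with graded pieces the $E_\infty^{a,b}$, $a+b=n$. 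No differential touches the top corner, so the edge map is an isomorphism $H^4_{\proeet,c}(X,\Q_p(2))\xrightarrow{\sim}H^2\big(G_K,H^2_{\proeet,c}(X_C,\Q_p(2))\big)$; as $X$ is geometrically irreducible the geometric trace gives $H^2_{\proeet,c}(X_C,\Q_p(1))\xrightarrow{\sim}\Q_p$, and $\Tr_X$ is defined as the edge map followed by $H^2(G_K,-)$ of this trace and then the local Tate trace $H^2(G_K,\Q_p(1))\xrightarrow{\sim}\Q_p$, which proves part (1).

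For part (2) the two dualities to be composed are as follows. The geometric Poincar\'e duality $H^i_{\proeet}(X_C,\Q_p(j))\xrightarrow{\sim}H^{2-i}_{\proeet,c}(X_C,\Q_p(1-j))^{*}$ will be read off from the geometric comparison theorems --- syntomic descent expressing $p$-adic pro-\'etale cohomology in terms of Hyodo--Kato and de Rham/coherent data --- and reduced to Serre duality between $\Omega^1$ and $\so$ together with the Poincar\'e pairing on Hyodo--Kato cohomology; this also pins down the geometric cup product. Local Tate duality gives $H^a(G_K,V)\xrightarrow{\sim}H^{2-a}(G_K,V^{\dual}(1))^{*}$ for the (topological) $G_K$-representations $V$ occurring as geometric cohomology groups, in the solid form appropriate here. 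Composing the two produces, for each $(a,b)$, a perfect pairing between the $E_2^{a,b}$-term for $H^{\bullet}_{\proeet}(X,\Q_p(j))$ and the $E_2^{2-a,\,2-b}$-term for $H^{\bullet}_{\proeet,c}(X,\Q_p(2-j))$; once this is checked to be compatible with the differentials $d_r$ it passes to a perfect pairing of $E_\infty$-pages, hence --- the filtrations being split in the solid category, see below --- to the isomorphisms $\gamma_{X,i}$ and $\gamma_{X,i}^c$.

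The real content is the \emph{compatibility of all the products}: one must show that the arithmetic cup product followed by $\Tr_X$ induces on the graded pieces exactly the pairing just built, i.e.\ that the Hochschild--Serre cup product is computed by the geometric cup product tensored with the Galois cup product, and --- the delicate point --- that the geometric trace (manufactured from the syntomic/Hyodo--Kato formalism) agrees with the Tate trace under the top edge map. I expect to handle this, as the abstract indicates, by \emph{reduction to the ghost circle}: covering $X$ (or rather its skeleton) by disks and annuli and using Mayer--Vietoris reduces the comparison of all the products to a single universal computation on the ghost circle, for which one proves a Poincar\'e self-duality $H^i\times H^{1-i}\to\Q_p$ directly --- it behaves like a proper smooth analytic variety of dimension $1/2$ --- and this fixes every normalisation constant. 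This reduction, rather than the spectral-sequence bookkeeping, is where I expect the main difficulty to lie.

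It remains to supply the functional-analytic input that makes the statement meaningful in solid $\Q_p$-vector spaces, to be established along the way: $H^i_{\proeet}(X,\Q_p(j))$ is nuclear Fr\'echet and $H^i_{\proeet,c}(X,\Q_p(j))$ is of compact type (in particular both are reflexive; in the proper case they are finite-dimensional), and the $E_2$-terms $H^a(G_K,H^b_{\proeet}(X_C,\Q_p(j)))$ and their compact-support analogues inherit such properties because continuous $G_K$-cohomology of these spaces is computed by a finite complex. In the category of solid $\Q_p$-vector spaces the Hochschild--Serre filtrations then split into short exact sequences of reflexive objects, on which $(-)^{*}=\underline{\Hom}_{\Q_p}(-,\Q_p)$ is exact and biduality holds; hence an isomorphism on associated gradeds is an isomorphism on the total spaces, completing the proof.
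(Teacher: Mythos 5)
Your overall architecture (Hochschild--Serre, trace map as edge map composed with geometric trace and Tate trace, reduction of the product compatibilities to the ghost circle, the functional-analytic statements) matches the paper, but the central mechanism of part (2) rests on an input that does not exist. You propose to read off a geometric Poincar\'e duality $H^i_{\proeet}(X_C,\Q_p(j))\xrightarrow{\sim}H^{2-i}_{\proeet,c}(X_C,\Q_p(1-j))^{*}$ from the comparison theorems and then tensor it with Tate duality to pair $E_2^{a,b}$ with $E_2^{2-a,2-b}$. For proper $X$ this is exactly what happens (via Zavyalov--Gabber--Mann), but for Stein or affinoid curves no such geometric duality holds: already for the open disc one has $H^1(D_C,\Q_p(1))\simeq\so(D_C)/C$, an infinite-dimensional $C$-vector space, while $H^1_{c}(D_C,\Q_p)=0$, so the complementary-degree pairing you posit is between a huge space and zero. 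The obstruction is structural: the coherent constituents are in duality via Serre duality, which is a $C$-bilinear pairing landing in degree $3$ (and cannot be converted into a $\Q_p$-duality since $[C:\Q_p]=\infty$), whereas the $\Q_p$-rational constituents pair in degree $2$. Over $C$ the paper only \emph{conjectures} a Verdier-type duality valued in $\R\Hom_{\rm VS}(-,\Q_p(1))$, precisely because the $\Ext^1$-shift is needed to reconcile the degrees.

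Consequently the bidegree bookkeeping $(a,b)\leftrightarrow(2-a,2-b)$ on the $E_2$-page is wrong. What actually happens is that one first uses the comparison exact sequences to filter each $H^b(X_C,\Q_p(j))$ by a coherent piece ($C$-twist of $\Omega^1(X)$ or $H^1_c(X,\so_X)$) and an almost-$C$-representation (Hyodo--Kato) piece, and then applies the generalized Tate computation $H^a(\sg_K,W\otimes^{\Box}_K C(s))=W$ for $s=0$, $a=0,1$ and $0$ otherwise. After taking Galois cohomology, the coherent pieces pair in Galois degrees summing to $1$ (via Serre duality composed with $\Tr_{K/\Q_p}$ and the class of $\log\chi$), while the finite-rank $\Q_p$-pieces pair in Galois degrees summing to $2$ (Poitou--Tate). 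So the duality interlaces different columns of the two spectral sequences rather than pairing $E_2^{a,b}$ with $E_2^{2-a,2-b}$; it is organized by a three-step filtration on each $H^i$ whose graded pieces are matched individually. Your ghost-circle reduction is the right tool for verifying the resulting product compatibilities (the paper's Explicit Reciprocity Law, proved via $(\varphi,\Gamma)$-module Koszul complexes and an explicit residue computation), but the duality statement it must establish is the filtered one just described, not a composition of a geometric duality with Tate duality. You would need to restructure the argument accordingly before the rest of the plan can go through.
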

\begin{remark} 
{\rm (i)}
If the curve $X$ is proper then the  cohomology  groups $H^i_{\proeet}(X,\Q_p(j))$ are of finite 
rank over $\Q_p$; 
this is also the case if $j\neq 1$ and
if $X$ is affinoid (or Stein with $H^1_{\rm dR}(X)$ finite dimensional).

{\rm (ii)} If $X$ is Stein, $H^i_{\proeet}(X,\Q_p(j))$ is a 
nuclear\footnote{Nuclear in the classical sense not in the solid sense.} Fr\'echet 
and $H^i_{\proeet,c}(X,\Q_p(j))$ 
is a space of compact type; if $X$ is a dagger affinoid -- it is the other way around. 
\end{remark}
\begin{remark}If $X$ is proper or Stein we have a more general derived duality: the duality map 
    \begin{equation}
    \label{derived11}
     {\gamma_{{X}}}: \R\Gamma_{\proeet}({X},\Q_p(j)) \xrightarrow{} {\mathbb D}(\R\Gamma_{\proeet,c}({X},\Q_p(2-j))[4]),
\end{equation}
where ${\mathbb D}(-)=\R\underline{\Hom}_{\Q_p}(-,\Q_p)$, is a quasi-isomorphism in $\sd(\Q_{p,\Box})$.
 The isomorphisms $\gamma_{X,i}$ are obtained from it because all the higher $\Ext$ groups involved vanish since $H^i_{\proeet,c}(X,\Q_p(j))$ is of compact type.
 The isomorphisms $\gamma^c_{X,i}$ are obtained from $\gamma_{X,i}$ by dualizing using the fact that the spaces $H^i_{\proeet,c}(X,\Q_p(j))$ are (solid) reflexive. 

  \end{remark}
  We venture to conjecture  an  arithmetic Poincar\'e duality in any dimension:
  \begin{conjecture}
Let $X$ be a smooth Stein dagger variety over $K$, geometrically irreducible, of dimension $d$. Then: 
\begin{enumerate}
\item The cohomology groups $H^i_{\proeet}(X,\Q_p(j))$ and $H^i_{\proeet,c}(X,\Q_p(j))$ are nuclear Fr\'echet and of compact type, respectively. 
\item  We have (quasi-)isomorphisms in $\sd(\Q_{p,\Box})$
\begin{align*}
\R\Gamma_{\proeet}(X,\Q_p(j)) & \simeq {\mathbb D}(\R\Gamma_{\proeet,c}(X,\Q_p(d+1-j))[2d+2]),\\
H^i_{\proeet}(X,\Q_p(j)) & \simeq H^{2d+2-i}_{\proeet,c}(X,\Q_p(d+1-j))^*,\\
H^i_{\proeet,c}(X,\Q_p(j)) & \simeq H^{2d+2-i}_{\proeet}(X,\Q_p(d+1-j))^*.
\end{align*}
\end{enumerate}
\end{conjecture}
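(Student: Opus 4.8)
The plan is to prove the conjecture by reducing the arithmetic duality to a \emph{geometric} Poincar\'e duality over $C:=\widehat{\overline K}$ via Galois descent, exactly in the spirit of the curve case established above; write $G_K:=\Gal(\ovk/K)$. The argument rests on two inputs. The first is a geometric Poincar\'e duality in dimension $d$: a $G_K$-equivariant trace isomorphism $H^{2d}_{\proeet,c}(X_C,\Q_p(d))\stackrel{\sim}{\to}\Q_p$ together with a $G_K$-equivariant quasi-isomorphism $\R\Gamma_{\proeet}(X_C,\Q_p(j))\simeq\mathbb D(\R\Gamma_{\proeet,c}(X_C,\Q_p(d-j))[2d])$ in $\sd(\Q_{p,\Box})$, plus the expected functional-analytic structure of the geometric cohomologies. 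The second is a solid, continuous version of local Tate duality: for $G_K$-modules $N$ in the relevant class one has a functorial quasi-isomorphism $\mathbb D(\R\Gamma(G_K,N))\simeq\R\Gamma(G_K,\mathbb D(N)(1))[2]$, encoding $\operatorname{cd}_p(G_K)=2$ and the trace $H^2(G_K,\Q_p(1))\stackrel{\sim}{\to}\Q_p$.

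I would first establish part (1), which is needed to pass from a derived duality to the duality of individual cohomology groups. For a smooth Stein dagger variety one has a syntomic-descent fundamental diagram expressing $\R\Gamma_{\proeet}(X,\Q_p(j))$ -- and likewise its geometric analogue over $C$ -- as a homotopy limit built out of the Hyodo--Kato cohomology $\R\Gamma_{\HK}(X)$ (with its $\phi$ and $N$ operators) and the filtered de Rham cohomology $\R\Gamma_{\dr}(X)$, together with $\bdr^+$-cohomology; for Stein $X$ these ingredients are nuclear Fr\'echet, being limits, along a Stein exhaustion, of coherent and overconvergent cohomologies of dagger affinoids, and the operations involved (kernels of $\phi-p^j$ and of $N$, quotient by $F^j$, homotopy fibers) preserve this property in the derived sense, so $H^i_{\proeet}(X,\Q_p(j))$ is nuclear Fr\'echet. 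Dually, the compactly supported fundamental diagram is built from $\R\Gamma_{\HK,c}(X)$ and $\R\Gamma_{\dr,c}(X)$, which are of compact type, whence $H^i_{\proeet,c}(X,\Q_p(j))$ is of compact type; in particular both families are reflexive and admit no higher solid $\Ext$ into $\Q_p$.

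With these in hand, the Hochschild--Serre spectral sequence identifies $\R\Gamma_{\proeet}(X,\Q_p(j))$ with $\R\Gamma(G_K,\R\Gamma_{\proeet}(X_C,\Q_p(j)))$, and similarly for compact supports, and the derived duality becomes a twist-chase: setting $M:=\R\Gamma_{\proeet,c}(X_C,\Q_p(d+1-j))$, solid Tate duality gives $\mathbb D(\R\Gamma_{\proeet,c}(X,\Q_p(d+1-j))[2d+2])\simeq\R\Gamma(G_K,\mathbb D(M)(1))[-2d]$; the geometric duality applied with twist $j-1$ gives $\mathbb D(M)\simeq\R\Gamma_{\proeet}(X_C,\Q_p(j-1))[2d]$, hence $\mathbb D(M)(1)\simeq\R\Gamma_{\proeet}(X_C,\Q_p(j))[2d]$, and the right-hand side collapses to $\R\Gamma(G_K,\R\Gamma_{\proeet}(X_C,\Q_p(j)))\simeq\R\Gamma_{\proeet}(X,\Q_p(j))$, which is the asserted quasi-isomorphism; the arithmetic trace on $H^{2d+2}_{\proeet,c}(X,\Q_p(d+1))$ is the composite of the geometric trace with the Tate trace. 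Since $H^\bullet_{\proeet,c}(X,\Q_p(\cdot))$ is of compact type, $\mathbb D$ commutes with cohomology up to a degree shift, so the derived isomorphism yields $H^i_{\proeet}(X,\Q_p(j))\simeq H^{2d+2-i}_{\proeet,c}(X,\Q_p(d+1-j))^*$; the reverse family follows by dualizing and using reflexivity.

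The real difficulty is the first input, geometric Poincar\'e duality for Stein dagger varieties of dimension $d\ge 2$. One must construct the geometric trace $H^{2d}_{\proeet,c}(X_C,\Q_p(d))\stackrel{\sim}{\to}\Q_p$ -- presumably again through the syntomic description, assembling it from Serre and Poincar\'e dualities for $\R\Gamma_{\dr}$ and $\R\Gamma_{\HK}$ (and their compactly supported counterparts for Stein spaces) together with duality for $\bdr^+$-cohomology -- and then prove that the several cup products in play (pro-\'etale, syntomic, Hyodo--Kato, de Rham, and the Galois pairing) are mutually compatible; the curve-case device of reducing to the ghost circle has no obvious higher-dimensional analogue, so one expects instead a d\'evissage along the weight and Hodge filtrations, or a direct comparison of the product structures carried by the fundamental exact sequences. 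Secondary but necessary points are the finiteness and vanishing statements for the de Rham and Hyodo--Kato cohomologies of Stein dagger varieties that control the degrees in which $H^i_{\proeet}$ and $H^i_{\proeet,c}$ are concentrated, the normalization of the trace using geometric irreducibility, and making the solid Tate duality robust enough to apply to the nuclear Fr\'echet and compact-type coefficients that actually occur.
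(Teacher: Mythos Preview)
The statement you are addressing is a \emph{conjecture} in the paper; there is no proof in the paper to compare against. More importantly, your proposed argument has a genuine gap at its core: the ``first input'' you assume, a $G_K$-equivariant geometric Poincar\'e duality of the form
\[
\R\Gamma_{\proeet}(X_C,\Q_p(j))\simeq\mathbb D\big(\R\Gamma_{\proeet,c}(X_C,\Q_p(d-j))[2d]\big),
\]
is not merely unproven---it is \emph{false} for Stein varieties, already in dimension~$1$. The paper's own Example~1.6 computes, for the open unit disc $D$ over $C$, that $H^0_{\proeet}(D,\Q_p)\simeq\Q_p$ while $H^2_{\proeet,c}(D,\Q_p(1))\simeq\Q_p\oplus(\so(\partial D)/\so(D))$, and these are certainly not $\Q_p$-dual to one another: the coherent piece and the $\Q_p$-piece live in the wrong degrees for a Poincar\'e-type pairing over $C$. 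The paper explains precisely this phenomenon and formulates instead a Verdier-type duality (Conjecture~1.8) taking values in $\R\Hom_{\rm VS}(-,\Q_p(1))$ in the category of $v$-sheaves of $\Q_p$-vector spaces on ${\rm Perf}_C$, with a different twist and a nontrivial $\Ext^1_{\rm VS}({\mathbb G}_a,\Q_p(1))\simeq C$. Your reduction to a naive $\mathbb D$-duality over $C$ therefore cannot work; this is exactly why, even for curves, the paper does \emph{not} argue by Galois descent from a geometric Poincar\'e duality, but rather replaces geometric duality by comparison theorems and reduces explicitly to the ghost circle.

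A secondary issue is your ``solid Tate duality'' $\mathbb D(\R\Gamma(G_K,N))\simeq\R\Gamma(G_K,\mathbb D(N)(1))[2]$ for the infinite-dimensional $N$ that actually arise (e.g.\ $N=\so(X_C)/C$ or almost $C$-representations). Poitou--Tate duality holds for finite-rank $\Q_p$-representations; extending it functorially to the nuclear Fr\'echet or compact-type coefficients here is not automatic, and the paper treats such Galois cohomology groups by hand via generalized Tate formulas rather than by an abstract duality statement. Finally, your sketch of part~(1) is in the right spirit but glosses over the actual work: even for curves the paper's proof (Theorems~4.5 and~5.3) requires a delicate analysis of the Hochschild--Serre spectral sequence, compactness of transition maps, and extension lemmas for nuclear Fr\'echet and compact-type spaces, none of which follows formally from ``the operations preserve this property in the derived sense.''
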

  \begin{example}
The starting point of our study of dualities for $p$-adic pro-\'etale cohomology of analytic spaces was the  computation for $X=D$, the open unit disc. For example, we computed that:
  \begin{align*}
H^1_{\proeet}(X,\Q_p(1)) & \simeq (\so(D)/K)\oplus H^1(\sg_K,\Q_p(1)),\\
 H^3_{\proeet,c}(X,\Q_p(1)) & \simeq (\so(\partial D)/\so(D))\oplus H^1(\sg_K,\Q_p),
\end{align*}
where $\sg_K=\Gal(\overline{K}/K)$ and $\partial D:=\varprojlim_n D\moins D_n$ 
is the ``ghost circle'', 
boundary of the unit disk
(here, $D_n$ is the closed ball $\{v_p(z)\geq \frac{1}{n}\}$).  
The splitting is not canonical but it is compatible with products. 
This computation  showed   that there is a duality of the form stated in Theorem \ref{main-arithmetic0} and, moreover, that it is induced by Galois duality and coherent duality (i.e., Serre duality):
\begin{align*}
H^i(\sg_K,\Q_p) & \simeq H^{2-i}(\sg_K,\Q_p(1))^*,\\
H^0(D,\Omega^1_D) & \simeq H^1_c(D,\so_D)^*.
\end{align*}
We used here the isomorphisms (the first one is almost the
definition of $H^1_c(D,\so_D)$, taking into account vanishing of
$H^2(D,\so_D)$; the second is induced by $f\mapsto df$: that it is an isomorphism
is equivalent to the fact that $H^1_{\rm dR}(D)=0$):
\begin{equation}
\label{kichasz1}
H^1_c(D,\so_D)\stackrel{\sim}{\leftarrow} \so(\partial D)/\so(D),\quad H^0(D,\Omega^1_D)\stackrel{\sim}{\leftarrow} \so(D)/K.
\end{equation}
Since $[K:\Q_p]<\infty$, the above coherent $K$-duality can be turned into a $\Q_p$-duality by
composing with ${\rm Tr}_{K/\Q_p}$.
\end{example}
\begin{remark} 
The above results for the open disk do not involve the solid formalism, and
we attempted at first to write this paper using classical functional analysis as in \cite{CDN3}.
 We passed to solid formalism  because of two reasons.
 One was a need for a derived dual that would  work with Mayer-Vietoris sequences and which does not seem to exist in the classical world in the generality we wanted.
 The second one was 
a construction of topological Hochschild-Serre spectral sequences which we did not know how to do in the classical set-up. 
\end{remark}
\subsection{The proof of Theorem \ref{main-arithmetic0}} 
An analog of Theorem \ref{main-arithmetic0} holds for schemes and the proof uses Galois descent from   geometric, i.e., over $C:=\wh{\overline{K}}$, Poincar\'e duality and   Galois duality (the Hochschild-Serre spectral sequence does not degenerate at $E_2$ but it does degenerate at $E_3$).
 An analogous argument, starting with geometric Poincar\'e duality due to Zavyalov \cite{Zav21}, Gabber, and Mann \cite{Mann}, yields Theorem \ref{main-arithmetic0} for  proper rigid analytic varieties (in fact, in any dimension). 

  We use here a similar strategy.
 In our setting we do not yet have  the geometric duality so we replace it with comparison isomorphisms.
 They allow us to pass from 
  geometric $p$-adic pro-\'etale cohomology to de Rham data and the terms of the Hochschild-Serre spectral sequence can be identified with coherent cohomology and Galois cohomology of $\Q_p$-vector spaces with some finiteness properties.
 Our Poincar\'e duality is now deduced, via this spectral sequence, from coherent and Galois dualities. 
  
$\bullet$ {\em Geometric comparison results.} 
Recall that,  for smooth Stein rigid analytic varieties over $K$,
 we have  comparison theorems between geometric $p$-adic pro-\'etale cohomology and geometric syntomic cohomology.
 In the case of usual cohomology they are due to  Colmez-Dospinescu-Nizio{\l} \cite{CDN3} and Colmez-Nizio{\l}  \cite{CN5}; in the case of compactly supported cohomology --  to Achinger-Gilles-Nizio{\l} \cite{AGN}.

  In the case where $X$ is a smooth, geometrically irreducible, Stein curve over $K$, they yield:
\begin{enumerate}
\item  Vanishings: 
\begin{align}\label{vanishing1}
 & H^i_{\proeet}(X_C,\Q_p)=0, \quad \mbox{ for } i\neq 0,1,\\
& H^i_{\proeet,c}(X_C,\Q_p)=0, \quad \mbox{ for } i\neq 1,2.\notag
\end{align}
\item Isomorphisms: 
$$H^0_{\proeet}(X_C,\Q_p)\simeq \Q_p,\quad  
H^1_{\proeet,c}(X_C,\Q_p(1)) \xrightarrow{\sim}{\rm HK}^1_{c}(X_C,1),$$
where\footnote{Here  $\breve{F}$ is the completion of the maximal unramified extension of the fraction field $F$ of the Witt vectors of the residue field of $K$ and $\wh{\B}^+_{\st}$ is the semistable period ring of Fontaine in its Banach form.} 
 $ {\rm HK}^j_{*}(X_C,i) : =(H^j_{{\rm HK},*}(X_C){\otimes}^{\Box}_{\breve{F}}\wh{\B}^+_{\st})^{N=0,\phi=p^i}$ and $H^i_{{\rm HK},*}(X_C)$, for $*=[\phantom{x}],c$, is the Hyodo-Kato cohomology. 
\item  Exact sequences: 
\begin{align}\label{comp1}
   0\to  \so(X_C)/C\to   & H_{\proeet}^1(X_C,\Q_p(1))\to {\rm HK}^1(X_C,1) \to 0 \\
 {\rm HK}^1_{c}(X_C,2) \to H^{1}{\rm DR}_{c}(X_C,2)\to  & H^2_{\proeet,c}(X_C,\Q_p(2))\to\Q_p(1)\to 0,\notag
\end{align}
where  we set 
\begin{align*}
& {\rm DR}_{c}(X_C,i) : =  (H^1_{c}(X,\so_X){\otimes}^{\Box}_{K}(\B^+_{\dr}/F^i)\to H^1_{c}(X,\Omega^1_X){\otimes}^{\Box}_{K}(\B^+_{\dr}/F^{i-1}))[-1],
\end{align*}

\end{enumerate}

The last sequence yields the definition of the geometric trace map $${\rm Tr}_{X_C}: H^2_{\proeet,c}(X_C,\Q_p(1))\to  \Q_p.$$  
The arithmetic trace map \eqref{trace0} 
is defined  as the composition
$$ {\rm Tr}_{X}: H^{4}_{\proeet,c}(X,\Q_p(2))\simeq H^2(\sg_K,H^{2}_{\proeet,c}(X,\Q_p(2)))\lomapr{ {\rm Tr}_{X_C}(1)} H^2(\sg_K,\Q_p(1))\lomapr{{\rm Tr}_K}\Q_p.
$$

{$\bullet$} {\em Galois descent.} For $s\in\Z$, we have Hochschild-Serre spectral sequences
\begin{equation}\label{HS0}
E^{i,j}_2=H^i(\sg_K,H^j_{\proeet,*}(X_C,\Q_p(s)))\Rightarrow H^{i+j}_{\proeet,*}(X,\Q_p(s)).
\end{equation}
The  computations  done above show that to understand the terms of the spectral sequences \eqref{HS0} we need to understand Galois cohomology of 
\begin{align}
\label{niedziela2}
H^0(X,\Omega^1_{X})\otimes^{\Box}_KC(s),\quad H^1_{c}(X,\so_X){\otimes}^{\Box}_{K}C(s),\quad  {\rm HK}^1_{c}(X_C,1)(s).
\end{align}
We used here   the isomorphisms \eqref{kichasz1}. 
We claim that,  in the case $H^1_{{\rm HK}}(X_C)$ is of finite rank over $\breve{F}$,   this  cohomology  is or trivial or  isomorphic to
\begin{equation}\label{terms1}
H^0(X,\Omega^1_{X}),\quad H^1_{c}(X,\so_X),\quad H^i(\sg_K, V),
\end{equation}
where $V$ is a finite rank  $\Q_p$-Galois representation. Indeed, note that the last group in \eqref{niedziela2}  is an almost $C$-representation~\cite{Fo-Cp}
  (a finite rank $C$-vector space plus/minus a finite rank $\Q_p$-vector space). Our claim then follows 
 easily from a generalization of  Tate's computations:
$$
H^i(\sg_K, C(j))\simeq \begin{cases} K & \mbox{ if } j=0, i=0,1;\\
0 & \mbox{ otherwise.}
\end{cases}
$$

 By the vanishing results \eqref{vanishing1}, the spectral sequences \eqref{HS0} have only two nontrivial columns,  hence degenerate at $E_2$.
 Looking at the terms of the spectral sequences in \eqref{terms1}, we see that  if we knew that the arithmetic pro-\'etale product is compatible with the coherent and Galois products we would have our duality (at least in the case the Hyodo-Kato cohomology is of finite rank but we can always reduce to that case).
 However, we have found  it very difficult to check this compatibility  for a general curve $X$ as above: the main difficulty arises from the exact sequence \eqref{comp1} that does not behave well with respect to products.
 We decided thus to pass from $X$ to simpler curves. 
 
 Nevertheless,  the above computations imply the following functional analytic result:
 \begin{theorem}
 Let $X$ be a smooth Stein dagger curve over $K$, geometrically irreducible. Then
the cohomology groups $H^i_{\proeet}(X,\Q_p(j))$ and $H^i_{\proeet,c}(X,\Q_p(j))$ are nuclear Fr\'echet and of compact type, respectively. 
 \end{theorem}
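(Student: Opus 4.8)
The plan is to deduce this from the Hochschild--Serre spectral sequences \eqref{HS0} --- obtained, in $\sd(\Q_{p,\Box})$, from a descent equivalence $\R\Gamma_{\proeet,*}(X,\Q_p(s))\simeq\R\Gamma(\sg_K,\R\Gamma_{\proeet,*}(X_C,\Q_p(s)))$ --- together with the geometric comparison results recalled above. By \eqref{vanishing1} these spectral sequences have only finitely many nonzero terms and degenerate at $E_2$, so each $H^i_{\proeet}(X,\Q_p(j))$, resp.\ $H^i_{\proeet,c}(X,\Q_p(j))$, carries a finite filtration whose graded pieces are, as solid $\Q_p$-vector spaces, the $H^a(\sg_K,H^b_{\proeet}(X_C,\Q_p(j)))$, resp.\ the $H^a(\sg_K,H^b_{\proeet,c}(X_C,\Q_p(j)))$, with $a+b=i$. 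Now, inside solid $\Q_p$-vector spaces, the full subcategory of (classically) nuclear Fr\'echet spaces and the full subcategory of spaces of compact type are each stable under strict extensions and under kernels and cokernels of morphisms between their objects, they are exchanged by $(-)^{*}$, and they are stable under the countable homotopy limits, resp.\ colimits, along nuclear transition maps that will occur below. It therefore suffices to show that every $H^a(\sg_K,H^b_{\proeet}(X_C,\Q_p(s)))$ is nuclear Fr\'echet and every $H^a(\sg_K,H^b_{\proeet,c}(X_C,\Q_p(s)))$ is of compact type.

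Next I would feed in the comparison theorems. By \eqref{vanishing1} and the identifications recalled above, for all but one value of $b$ the geometric group $H^b_{\proeet,*}(X_C,\Q_p(s))$ is a finite-rank $\Q_p$-Galois representation, and then $H^a(\sg_K,-)$ of it is finite-dimensional over $\Q_p$, so trivially both nuclear Fr\'echet and of compact type. For the one remaining value of $b$, the exact sequences \eqref{comp1} together with the identifications \eqref{kichasz1} present $H^b_{\proeet,*}(X_C,\Q_p(s))$ as a strict extension, one term of which is a Hyodo--Kato/$\B^+_{\dr}$ contribution --- assembled from $\HK^1_*(X_C,\,\cdot\,)$ and $H^1\DR_*(X_C,\,\cdot\,)$ --- while the other is a ``coherent'' contribution filtered by pieces of the form $W\otimes^{\Box}_K C(t)$, where $W$ runs over $\so(X)$, $H^0(X,\Omega^1_X)$, $H^1_c(X,\so_X)$ and $H^1_c(X,\Omega^1_X)$.

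It then remains to compute the $\sg_K$-cohomology of the coherent and of the Hyodo--Kato contributions, and this is where the reduction to finite Hyodo--Kato rank enters. Assume first that $H^1_{\HK}(X_C)$ has finite rank over $\breve{F}$. For a coherent piece $W\otimes^{\Box}_K C(t)$ one invokes a version of Tate's theorem with Fr\'echet, resp.\ compact type, coefficients --- deduced from Tate's computation of $H^{*}(\sg_K,C(t))$ and exactness of $W\otimes^{\Box}_K(-)$ --- to get $\R\Gamma(\sg_K,W\otimes^{\Box}_K C(t))\simeq W\oplus W[-1]$ if $t=0$ and $\simeq 0$ otherwise; since $\so(X)$ and $H^0(X,\Omega^1_X)$ are nuclear Fr\'echet (coherent cohomology of the Stein dagger curve $X$, the higher coherent cohomology vanishing) and $H^1_c(X,\so_X)$, $H^1_c(X,\Omega^1_X)$ are of compact type (by Serre duality and reflexivity they are $H^0(X,\Omega^1_X)^{*}$ and $\so(X)^{*}$, compare \eqref{kichasz1}), each such contribution is nuclear Fr\'echet, of compact type, or zero. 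For the pieces $\HK^1_*(X_C,\,\cdot\,)$ and $H^1\DR_*(X_C,\,\cdot\,)$, finite rank over $\breve{F}$ makes them almost $C$-representations, whose continuous $\sg_K$-cohomology is finite-dimensional over $\Q_p$. Combining, every $H^a(\sg_K,H^b_{\proeet}(X_C,\Q_p(s)))$ is nuclear Fr\'echet and every $H^a(\sg_K,H^b_{\proeet,c}(X_C,\Q_p(s)))$ of compact type, so by the first paragraph $H^i_{\proeet}(X,\Q_p(j))$ is a finite iterated extension of nuclear Fr\'echet spaces --- hence nuclear Fr\'echet --- and $H^i_{\proeet,c}(X,\Q_p(j))$ a finite iterated extension of compact-type spaces --- hence of compact type. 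Finally, the general case reduces to this one: exhaust $X$ by Stein dagger subcurves $V_n\Subset V_{n+1}$ with $H^1_{\dr}(V_n)$ finite-dimensional --- for instance the interiors of a dagger-affinoid exhaustion --- so that $H^1_{\HK}(V_{n,C})$ has finite rank, and conclude from $\R\Gamma_{\proeet}(X,\Q_p(j))\simeq\holim_n\R\Gamma_{\proeet}(V_n,\Q_p(j))$ and $\R\Gamma_{\proeet,c}(X,\Q_p(j))\simeq\colim_n\R\Gamma_{\proeet,c}(V_n,\Q_p(j))$, the transition maps being nuclear because $V_n\Subset V_{n+1}$, using the stability of the two classes under such countable homotopy (co)limits.

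I expect the main difficulty to be not any isolated computation but running the entire d\'evissage at the level of solid/topological vector spaces: one must check that the comparison sequences \eqref{comp1}, the Hochschild--Serre spectral sequences, Serre duality and Tate's computation all hold in $\sd(\Q_{p,\Box})$ with strict morphisms, so that the spectral-sequence terms really carry the expected topology and the formal stability properties of the two subcategories may be invoked. This, together with the Fr\'echet-coefficient form of Tate's theorem and the verification that the exhaustion (co)limits stay within the two classes --- a point that is delicate since a general colimit of Fr\'echet spaces need not be Fr\'echet --- is where the real work lies, and is precisely the reason the paper is cast in the solid formalism.
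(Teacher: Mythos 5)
Your proposal follows essentially the same route as the paper: Hochschild--Serre descent, the geometric comparison sequences, generalized Tate formulas with Fr\'echet/compact-type coefficients, reduction to the case of finite Hyodo--Kato rank, and an exhaustion by naive interiors of (dagger) affinoids with compact transition maps, concluding by a Mittag-Leffler limit (resp.\ compact colimit) argument. The points you flag as ``the real work'' are indeed where the paper invests its effort (strictness of the extensions and propagation of compactness of transition maps from coherent to pro-\'etale cohomology, as in Lemmas \ref{ext10} and \ref{ext10.1}, plus a Serre-duality factorization through Banach spaces for $H^1_c(-,\so)$); two small slips in your write-up --- $H^1_{\proeet,c}(X_C,\Q_p(1))$ is an almost $C$-representation rather than a finite-rank $\Q_p$-representation, and the Hochschild--Serre spectral sequence is only guaranteed to degenerate at $E_3$, not $E_2$ --- do not affect the argument, since the relevant Galois cohomology groups and the target of $d_2$ are finite-dimensional over $\Q_p$.
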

 This is because we know these properties for $H^0(X,\Omega^1_{X})$ and  $H^1_{c}(X,\so_X)$, respectively, hence for the terms of the Hochschild-Serre spectral sequences, and the extension problems arising from the spectral sequences can be solved. 

{$\bullet$} {\em Reductions.} By a simple limit argument, to prove the derived Poincar\'e duality \eqref{derived11}, we can pass from a general Stein curve to a wide open curve, i.e., to a complement of a finite number of closed discs in a proper curve (we allow for a finite field extension here). For wide open curves we use a Mayer-Vietoris argument to reduce to proper curves, open discs, and open annuli. For proper curves we know the result, for open discs and annuli we reduce the computation to the one of their boundaries. 

{$\bullet$} {\em Arithmetic duality for ghost circle.} We show that, in duality theory, the boundary of an open disc, a ghost circle,  behaves like a proper rigid analytic variety over $K$ of dimension $1/2$.
\begin{theorem}{\rm (Arithmetic duality for ghost circle)}\label{main-arithmeticY0} Let $D$ be an     open disc over $K$.  Let $Y:=\partial D$ be the boundary of $D$. Then: 
\begin{enumerate}
\item  There is a natural trace map isomorphism of  solid $\Q_p$-vector spaces
$${\rm Tr}_Y: \, H^3_{\proeet}(Y,\Q_p(2))\stackrel{\sim}{\to} \Q_p. 
$$
\item The  pairing
\begin{equation}
\label{pairing11}
H^i_{\proeet}(Y,\Q_p(j))\otimes^{\Box}_{\Q_p} H^{3-i}_{\proeet}(Y,\Q_p(2-j))\to H^3_{\proeet}(Y,\Q_p(2))\stackrel{\sim}{\to}\Q_p
\end{equation}
is a perfect duality, i.e., we have the  induced  isomorphism of solid $\Q_p$-vector spaces
\begin{align*}
 & \gamma_Y: \quad H^i_{\proeet}(Y,\Q_p(j))\stackrel{\sim}{\to} H^{3-i}_{\proeet}(Y,\Q_p(2-j))^*.
\end{align*}
\end{enumerate}
\end{theorem}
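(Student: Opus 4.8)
The plan is to run, for the ghost circle, the same Galois-descent argument as for smooth Stein curves, with $Y=\partial D=\varprojlim_n(D\moins D_n)$ in the role of a ``proper smooth variety of dimension $1/2$'': its geometric cohomology sits in degrees $0,1$ (so ``$2d=1$''), its dualizing sheaf is $\Omega^1_Y$ (so ``$d=1$'' for the twist), and after Galois descent one lands in degrees $0,\dots,3$ with trace in degree $3$ and twist $2$. The ingredients are: (i) the $p$-adic comparison theorems, used in the limit over the annuli $D\moins D_n$, to compute $\rg_{\proeet}(Y_C,\Q_p(j))$ and express it through coherent cohomology of $Y$ and a rank-one Hyodo--Kato line; (ii) the ``dimension $1/2$'' coherent (Serre) duality of the ghost circle; (iii) the Hochschild--Serre spectral sequence $E^{i,k}_2=H^i(\sg_K,H^k_{\proeet}(Y_C,\Q_p(s)))\Rightarrow H^{i+k}_{\proeet}(Y,\Q_p(s))$ together with Tate's local duality; and (iv) the compatibility of the pro-\'etale cup product with the coherent and Galois cup products. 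It is convenient to organise (i)--(ii) around the localisation-type distinguished triangle relating $\rg_{\proeet}(Y,\Q_p(r))$, $\rg_{\proeet}(D,\Q_p(r))$ and $\rg_{\proeet,c}(D,\Q_p(r))$ (the ghost circle being the boundary term), using the explicit comparison descriptions of the latter two complexes recalled above --- but \emph{not} the Poincar\'e duality of $D$, which is deduced downstream from the present theorem.

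For (i): each $D_C\moins D_{n,C}$ is a smooth Stein annulus, for which the comparison gives $H^0_{\proeet}=\Q_p$, vanishing of $H^i_{\proeet}$ for $i\neq 0,1$, and $0\to\so((D\moins D_n)_C)/C\to H^1_{\proeet}((D\moins D_n)_C,\Q_p(1))\to{\rm HK}^1((D\moins D_n)_C,1)\to 0$ with the last term $\cong\Q_p$, spanned by the $d\log z$-class. Passing to the limit --- coherent cohomology being a colimit, since $\so(Y)=\varinjlim_n\so(D\moins D_n)=\so(\partial D)$ --- yields $H^0_{\proeet}(Y_C,\Q_p)=\Q_p$, a short exact sequence $0\to\so(\partial D_C)/C\to H^1_{\proeet}(Y_C,\Q_p(1))\to\Q_p\to0$, and vanishing in the remaining degrees. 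For (ii) I would construct the geometric trace ${\rm Tr}_{Y_C}\colon H^1_{\proeet}(Y_C,\Q_p(1))\to\Q_p$ (the projection to the Hyodo--Kato line, i.e. the residue of $f\,d\log z$) and establish a geometric duality for $Y_C$ built on the key coherent fact that $\so(\partial D)\cong\so(D)\oplus H^1_c(D,\so_D)\cong\so(D)\oplus\so(D)^*$ is reflexive and self-dual, so that the residue pairing $\so(\partial D)\otimes\Omega^1(\partial D)\to K$ is perfect \emph{with no cohomological shift} --- the precise sense in which $Y$ has ``dimension $1/2$''. Combined with the Frobenius structure on the Hyodo--Kato line and the fundamental exact sequences of $p$-adic Hodge theory, this identifies the $C$-linear dual of the de Rham/Hyodo--Kato data computing $\rg_{\proeet}(Y_C,\Q_p(j))$ with that computing $\rg_{\proeet}(Y_C,\Q_p(1-j))$, up to a Tate twist.

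For (iii): the spectral sequence has only the rows $k=0,1$, and one checks that the single possibly-nonzero $d_2$ vanishes for each relevant $s$, so it degenerates at $E_2$. By Tate's computation ($H^i(\sg_K,C(n))=0$ for $n\neq0$, $=K$ for $n=0$ and $i\in\{0,1\}$) the big piece $\so(\partial D_C)/C=W\wotimes_KC$, with $W:=\so(\partial D)/K$, feeds a copy of $W$ into both $H^1_{\proeet}(Y,\Q_p(1))$ (via $H^0(\sg_K,-)$) and $H^2_{\proeet}(Y,\Q_p(1))$ (via $H^1(\sg_K,-)$), while the finite-rank pieces feed $H^i(\sg_K,-)$ of the Hyodo--Kato line and of $\Q_p(1)$; all the $H^n_{\proeet}(Y,\Q_p(j))$ become explicit. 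In particular $H^3_{\proeet}(Y,\Q_p(2))=H^2(\sg_K,H^1_{\proeet}(Y_C,\Q_p(2)))$, and ${\rm Tr}_Y$ is defined as $H^3_{\proeet}(Y,\Q_p(2))\xrightarrow{{\rm Tr}_{Y_C}(1)}H^2(\sg_K,\Q_p(1))\xrightarrow{{\rm Tr}_K}\Q_p$; it is an isomorphism because the $C$-part of $H^1_{\proeet}(Y_C,\Q_p(2))$ has vanishing $H^2(\sg_K,-)$. The duality isomorphisms then follow by matching, term by term in the degenerate spectral sequence, $H^i_{\proeet}(Y,\Q_p(j))$ with $H^{3-i}_{\proeet}(Y,\Q_p(2-j))^*$: the finite-rank pieces are dual by Tate's local duality, and the two copies of $W$ (in degrees $1$ and $2$, the latter twisted by $\log\chi$) are dual by the ``dimension $1/2$'' coherent duality of $Y$ followed by $H^0(\sg_K,C)\otimes H^1(\sg_K,C)\to H^1(\sg_K,C)\cong K$ --- here $W\cong W^*$ because $\so(\partial D)$ is self-dual and the line of constants matches the complementary hyperplane in the dual. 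Packaging this as a quasi-isomorphism $\rg_{\proeet}(Y,\Q_p(j))\simeq{\mathbb D}(\rg_{\proeet}(Y,\Q_p(2-j))[3])$ in $\sd(\Q_{p,\Box})$ uses that $W$ is nuclear Fr\'echet, so that ${\mathbb D}$ is compatible with the extensions and the relevant higher $\Ext$'s vanish.

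The main obstacle is (iv), the compatibility of products --- exactly the point isolated in the introduction as the reason for passing to the ghost circle. One must show that, under the Hochschild--Serre filtration and the comparison isomorphisms, the pro-\'etale cup product $H^i_{\proeet}(Y,\Q_p(j))\otimes^{\Box}H^{3-i}_{\proeet}(Y,\Q_p(2-j))\to H^3_{\proeet}(Y,\Q_p(2))$ is the product of the geometric cup product on $Y_C$ (which, via the comparison, is the residue pairing on the coherent part and the de Rham/Hyodo--Kato product elsewhere) with the Galois cup product, and that ${\rm Tr}_Y={\rm Tr}_K\circ(\text{residue})$. The delicate case is the $W$-against-$W$ pairing: since $H^2_{\proeet}(Y_C,\Q_p(2))=0$, the geometric cup product $H^1_{\proeet}(Y_C,\Q_p(1))\otimes H^1_{\proeet}(Y_C,\Q_p(1))\to H^2_{\proeet}(Y_C,\Q_p(2))$ vanishes, so the ``leading'', lowest-filtration term of the $W\times W$ pairing is zero and the pairing is a secondary product, governed by how a single coherent class is realised in two consecutive filtration degrees of $\rg_{\proeet}(Y,\Q_p(1))$. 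Pinning this down requires tracking simultaneously the comparison isomorphisms, the syntomic and de Rham cup products, the residue map on $\Omega^1_Y$, the Hyodo--Kato trace and the Galois cup product, and checking that they are mutually compatible; this is feasible precisely because the cohomology of $Y$ (and of $D$) is so explicit. Everything else --- degeneration, the explicit shape of the cohomology, $W\cong W^*$, and the functional-analytic finiteness needed for the solid formalism --- is comparatively formal.
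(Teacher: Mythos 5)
Your architecture coincides with the paper's: compute $H^\ast(Y_C)$ as a colimit over annuli, set up the three-step filtration on $H^i(Y,\Q_p(j))$ coming from Hochschild--Serre and the syntomic sequence, define ${\rm Tr}_Y$ as $H^2(\sg_K,{\rm Tr}_{Y_C}(1))$ followed by ${\rm Tr}_K$, and reduce perfectness to an ``explicit reciprocity law'' saying that the pairing respects the filtration and induces the Poitou--Tate pairing and the residue pairing on the graded pieces. You have also correctly isolated the hard point: the $W\times W$ pairing (with $W=\so(Y)/K$) is a \emph{secondary} product, since the geometric cup product $H^1(Y_C,\Q_p(1))\otimes H^1(Y_C,\Q_p(1))\to H^2(Y_C,\Q_p(2))=0$ vanishes.

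But that hard point is exactly where your proposal stops: you assert that pinning down the secondary product ``is feasible because the cohomology is so explicit,'' which is not an argument. A secondary product of this kind cannot be read off from the induced maps on the graded pieces of the two filtrations or from the comparison isomorphisms on cohomology groups; it depends on how classes are represented at the cochain level across two adjacent filtration steps. The paper supplies the missing mechanism by descending to $\widehat K_\infty$ and modelling $\rg(Y,\Q_p(j))$ by the $(\partial,\varphi,\gamma)$-Koszul complex ${\rm Kos}_{\partial,\varphi,\gamma}(\widetilde\O^{[u,v]}(j))$, in which the arithmetic directions ($\varphi-1$, $\gamma_K-1$) and the geometric direction ($t\partial$) appear simultaneously with an explicit cup-product formula. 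Only then can one write down cocycles $z^1(g)$, $z^2(f)$ lifting coherent classes $g,f\in\so(Y)_0$, compute $z^1(g)\cup z^2(f)$ monomial by monomial in $T$, and evaluate the trace via ${\rm res}_\pi$ and the normalized Tate trace (Propositions \ref{fanto23} and \ref{fanto5}) to land on $\tfrac{1}{\log\chi(\gamma_K)}{\rm Tr}_{K/\Q_p}({\rm res}(f\,dg))$; the same model also yields the orthogonality statements ($F^a\perp F^b$ for $a+b\le 1$) and the compatibility of ${\rm Tr}_Y$ with ${\rm Tr}_K\circ h^2_K$. Without this explicit cochain-level model and computation --- or a substitute for it --- claim (iv), and hence the theorem, is not established; note also that the normalization constant $\tfrac{1}{\log\chi(\gamma_K)}$, which your sketch cannot see, is essential for the left and right vertical maps in the comparison diagrams to be isomorphisms rather than merely maps.
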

We will now sketch the proof of this theorem. We define an ascending filtration on $H^{i}_{\proeet}(Y,\Q_p(j))$:
$$
F^2_{i,j}=H^{i}_{\proeet}(Y,\Q_p(j))\supset F^1_{i,j}\supset F^0_{i,j}\supset F^{-1}_{i,j}=0,
$$
such that 
\begin{align*}
 & F^2_{i,j}/F^1_{i,j}\simeq H^{i-1}(\sg_K, \Q_p(j-1)),\\
& F^1_{i,j}/F^0_{i,j}\simeq H^{i-1}(\sg_K, \so(Y_C)/C(j-1)),\\
  & F^0_{i,j}\simeq H^i(\sg_K, \Q_p(j)).
\end{align*}
It is helpful to visualize this filtration in the following way (with $H^i(-\Q_p(j)):=H^i_{\proeet}(-\Q_p(j))$):
$$
\xymatrix@R=6mm@C=15pt{ & &0\ar[d]  & 0\ar[d]\\
0\ar[r] &  F^0_{i,j}:=H^{i}(\sg_K, \Q_p(j))\ar[r] \ar@{=}[d] & F^1_{i,j} \ar[r] \ar[d] & H^{i-1}(\sg_K, \frac{\so(Y_C)}{C}(j-1))\ar[d] \ar[r] & 0\\
0\ar[r] &  H^{i}(\sg_K, \Q_p(j))\ar[r] & F^2_{i,j}:=H^i(Y,\Q_p(j))\ar[r] \ar[d] &  H^{i-1}(\sg_K, H^{1}(Y_C,\Q_p(j)))\ar[r] \ar[d]& 0 \\
& &   H^{i-1}(\sg_K,\Q_p(j-1) )\ar@{=}[r] \ar[d] &  H^{i-1}(\sg_K,\Q_p(j-1))\ar[d]\\
& & 0 & 0
}
$$
 The middle exact row comes from the filtration induced by the Hochschild-Serre spectral sequence; the right exact column is induced by the syntomic filtration from the analog\footnote{Take this sequence for an annuli and go to the limit towards the boundary.} of the exact sequence \eqref{comp1}.  The term $F^1_{i,j}$ is defined as a pullback of the top right square. 

 Now, the key computation is the following:
\begin{theorem}{\rm (Explicit Reciprocity Law)}
We have: 
\begin{enumerate}
\item  The pairing (\ref{pairing11}) is compatible with the above filtration.
\item  On the associated grading the pairing  (\ref{pairing11}) yields a pairing induced by the Galois cohomology pairing and coherent pairing. 
\end{enumerate}
\end{theorem}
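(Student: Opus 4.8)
The plan is to establish both assertions by reducing everything to the boundary of an annulus and to the explicit description of cohomology classes via Fontaine's theory of $(\varphi,\Gamma)$-modules, exactly as in the classical explicit reciprocity laws of Kato and Colmez. First I would compute both sides of the pairing \eqref{pairing11} term by term against the filtration $F^\bullet_{i,j}$ introduced above. By construction, $F^0_{i,j}\simeq H^i(\sg_K,\Q_p(j))$ consists of classes pulled back from the base $\Spec K$, and $F^2_{i,j}/F^1_{i,j}\simeq H^{i-1}(\sg_K,\Q_p(j-1))$ consists of classes that are ``supported at the boundary'' in the sense that they come from the connecting map in the syntomic exact sequence; the middle piece $F^1_{i,j}/F^0_{i,j}\simeq H^{i-1}(\sg_K,\so(Y_C)/C(j-1))$ carries the genuinely geometric information. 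The pairing $\gamma_Y$ has degree $(3,2)$ total weight, so on the level of these graded pieces the only nonzero pairings can be: $F^0$ against $F^2/F^1$ (complementary in both cohomological and Tate degree), $F^1/F^0$ against itself, and $F^2/F^1$ against $F^0$. This forces compatibility with the filtration \emph{provided} we can show the "off-diagonal" pairings (e.g. $F^0$ against $F^0$, or $F^1/F^0$ against $F^2/F^1$) vanish, which is a degree/weight count combined with the vanishing results for $H^i(\sg_K,C(j))$ recalled in the introduction.

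Second, for part (2) I would identify the surviving pairings explicitly. The pairing of $F^0_{i,j}\simeq H^i(\sg_K,\Q_p(j))$ with $F^2_{3-i,2-j}/F^1_{3-i,2-j}\simeq H^{i}(\sg_K,\Q_p(1-j))$... — more precisely, matching Tate twists, $F^0_{i,j}$ pairs with the "boundary" graded piece of $H^{3-i}_{\proeet}(Y,\Q_p(2-j))$, which is $H^{2-i}(\sg_K,\Q_p(1-j))$, and $H^i(\sg_K,\Q_p(j))\otimes H^{2-i}(\sg_K,\Q_p(1-j))\to H^2(\sg_K,\Q_p(1))\to\Q_p$ is precisely Tate's local duality pairing. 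This is the "trivial" half. The interesting half is the self-pairing on the middle graded piece $H^{i-1}(\sg_K,\so(Y_C)/C(j-1))$: here one uses that $\so(Y_C)/C$ is the space $\so(\partial D)/\so(D)$ appearing in \eqref{kichasz1}, which by Serre/coherent duality is dual to $H^1_c(D,\so_D)$, twisted appropriately; combined with Tate twisting and the residue pairing on $\so(Y_C)$, this yields exactly the coherent (Serre) pairing composed with the Galois cup product. The point is that the cup product on the ghost circle factors through the "residue at the boundary," and this residue, written in coordinates, is the same formula as in the explicit reciprocity law for $(\varphi,\Gamma)$-modules over the Robba ring.

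Concretely, the key steps in order are: (a) recall the $(\varphi,\Gamma)$-module / Robba-ring model for $\R\Gamma_{\proeet}(Y,\Q_p(j))$ from the annulus computation, so that cohomology classes become explicit Laurent series / differential forms modulo the $\varphi=1$ and $\Gamma$-actions; (b) write the cup product \eqref{pairing11} in this model as (cup product in Galois cohomology of $\sg_K$) composed with (residue pairing on the Robba ring), which is the content of the construction of the trace map ${\rm Tr}_Y$; (c) check that each of the three subquotients $F^0, F^1/F^0, F^2/F^1$ is carried by a sub/quotient Robba-ring complex that is \emph{isotropic or orthogonal} for the residue pairing in the right bidegrees — this gives filtration-compatibility, item (1); (d) on $F^1/F^0$, identify the residue pairing with the Serre duality pairing $H^0(D,\Omega^1_D)\times H^1_c(D,\so_D)\to K$ of \eqref{kichasz1} tensored with the Galois cup product, and on $F^0$ vs.\ $F^2/F^1$ identify it with Tate local duality — this is item (2).

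The main obstacle I expect is step (b)–(d): proving that the pro-\'etale cup product, defined abstractly (via the solid/derived formalism and the syntomic description), really \emph{equals} the composite of the naive Galois cup product with the coherent residue pairing. This is the genuine "explicit reciprocity law" content and is the analogue of the hard compatibility that the authors say fails to be checkable directly for a general curve $X$ — the reason it works here is that on the ghost circle the relevant complexes are one-dimensional enough (literally "dimension $1/2$") that the cup product can be written down on the nose in the $(\varphi,\Gamma)$-module model and compared term by term with Fontaine–Herr / Tate cup products and with the algebraic residue. So the bulk of the work is a careful bookkeeping of the syntomic filtration's interaction with cup products, together with one explicit local computation of a residue, rather than any new conceptual input.
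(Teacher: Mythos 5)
Your overall strategy is the one the paper follows: pass to the $(\varphi,\Gamma)$-module/Koszul model of $\R\Gamma(Y,\Q_p(j))$ built from the Robba ring over $\B^{[u,v]}_{K_\infty}$, check that the comparison quasi-isomorphism respects products, filtrations and traces, prove the orthogonality relations at the cochain level, and identify the two surviving graded pairings by an explicit residue computation on monomials. Your step (c) — isotropy/orthogonality of the sub/quotient Robba-ring complexes — is exactly how the paper proves part (1), and your identification of the $F^0$-versus-top-graded piece pairing with Tate local duality is correct.

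Two points need correction. First, your initial justification of part (1) — ``a degree/weight count combined with the vanishing results for $H^i(\sg_K,C(j))$'' — does not work for the cross terms. For instance, $F^1_{i,j}/F^0_{i,j}\simeq H^{i-1}(\sg_K,\tfrac{\so(Y_C)}{C}(j-1))$ paired against $F^2_{3-i,2-j}/F^1_{3-i,2-j}\simeq H^{2-i}(\sg_K,\Q_p(1-j))$ lands via the Galois cup product in $H^1(\sg_K,\tfrac{\so(Y_C)}{C})\simeq \so(Y)/K\neq 0$, so no weight or degree count kills it. The actual reason for orthogonality is multiplicative and sits at the level of Laurent series: the filtration separates the constant term in $T$ from the rest, the product of a series with vanishing constant term by a constant again has vanishing constant term, and the trace ${\rm Tr}_Y$ only sees the constant term of the degree-$3$ Koszul cochain; in addition the pairing of the two $F^0$ pieces vanishes identically because of the shape of the Koszul cup-product formula (the $e_3$-components involved are zero). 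Second, the coherent pairing appearing on the middle graded piece is the residue \emph{self}-pairing $\O(Y)/K\otimes^{\Box}_{\Q_p}\O(Y)/K\to\Q_p$, $(f,g)\mapsto{\rm Tr}_K\,{\rm res}(f\,dg)$ of \eqref{fanto11}, not the Serre duality pairing $H^0(D,\Omega^1_D)\times H^1_c(D,\so_D)\to K$ of \eqref{kichasz1}: the latter concerns the open disc and is \emph{deduced} from the ghost-circle statement afterwards. With these two repairs — and granting the (substantial) verification that the comparison map is compatible with cup products, which you correctly flag as the hard content — your outline matches the paper's proof.
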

The proof of this Theorem interpolates between syntomic and $(\phi,\Gamma)$-techniques. 

  This finishes the proof of Theorem \ref{main-arithmeticY0} and hence of Theorem \ref{main-arithmetic0}, the main theorem of this paper.
\subsection{Conjectural geometric duality} 
We finish the introduction with a discussion of what we think  happens over $C$. 
Let us start with the example that guided our study. 
\begin{example}
Let $D$ be the open unit disc over $C$.  The nontrivial cohomology groups are:
\begin{align*}
H^0_{\proeet}(D,\Q_p(j))  \simeq \Q_p(j),\quad &
H^1_{\proeet}(D,\Q_p(j))  \simeq (\so(D)/C)(j-1),\\
H^2_{\proeet,c}(D,\Q_p(j)) & \simeq \Q_p(j-1)\oplus(\so(\partial D)/\so(D))(j-1).
\end{align*}
It looks like we have the right groups for a duality (for appropriate choices of $j$):
$\so(D)/C$ and $\so(\partial D)/\so(D)$ are $C$-vector spaces
in duality via coherent duality and $\Q_p(j)$ and $\Q_p(j-1)$ can be put in duality.
But the degrees are wrong for a Poincar\'e-type duality to work: 
coherent duality adds up to degree $3$ but $\Q_p$-vector space duality adds up to degree $2$. 
Also, since $[C:\Q_p]=\infty$, it is impossible to turn a $C$-duality into a $\Q_p$-duality.
\end{example}
To find a set-up in which a duality could be restored,
 we turned to the category of Vector Spaces, 
i.e., $v$-sheaves of topological $\Q_p$-vector spaces on the category ${\rm Perf}_C$
of perfectoid spaces over $C$ (that it is not unreasonnable to do so
is due to the fact that there is a natural way~\cite{CN4}
 to turn pro-\'etale geometric cohomology groups into VS's). 
There we have the following computation~\cite[Prop.\,10.16]{Esp}
 of $\Ext$-groups:
\begin{align*}
\Hom_{\rm VS}(\Q_p,\Q_p(1))\simeq \Q_p(1),\quad \Ext^1_{\rm VS}(\Q_p,\Q_p(1))=0,\\
\Hom_{\rm VS}({\mathbb G}_a,\Q_p(1))=0,\quad \Ext^1_{\rm VS}({\mathbb G}_a,\Q_p(1))\simeq C.
\end{align*}
Moreover, $\Ext^i$, for $i\geq 2$, vanish by ~\cite[Th. 3.8]{ALB}. 
The nontrivial $\Ext$-group in the second row is generated 
by the fundamental exact sequence of Banach-Colmez spaces
$$
0\to \Q_p(1)\to {\mathbb B}_{\crr}^{+,\phi=p}\to {\mathbb G}_a\to 0
$$

  The above computations, ignoring functional analytic questions, yield a Verdier duality isomorphisms
(with $H^i_{*}(-\Q_p(j)):=H^i_{{\proeet},*}(-\Q_p(j))$): 
\begin{align*}
 \Ext^1_{\rm VS}(H^2_{c}(D,\Q_p(2-j)),\Q_p(1))\stackrel{\sim}{\to}& H^1(D,\Q_p(j)),\\
0\to \Ext^1_{\rm VS}(H^1(D,\Q_p(2-j)),\Q_p(1))\to  & H^2_{c}(D,\Q_p(j))\to \Hom_{\rm VS}( H^0(D,\Q_p(2-j)),\Q_p(1))\to 0 
\end{align*}
Guided by this, we venture a conjectural statement of geometric duality for $p$-adic pro-\'etale cohomology in any dimension (again, ignoring topology):
\begin{conjecture}{\rm (Geometric Verdier duality)}
Let $X$ be a smooth Stein rigid analytic  variety over $C$, connected, of dimension $d$. There is a natural quasi-isomorphism
$$
\R\Gamma_{\proeet}(X,\Q_p(j))\simeq \R\Hom_{\rm VS}(\R\Gamma_{\proeet,c}(X,\Q_p(d+1-j))[2d],\Q_p(1)).
$$
\end{conjecture}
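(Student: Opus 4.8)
We sketch a possible line of attack, following the strategy of the proof of Theorem~\ref{main-arithmetic0}.

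\emph{Strategy.} The plan is to transport that proof into the category ${\rm VS}$, letting the $\Ext$-computations recalled above --- together with the vanishing $\Ext^i_{\rm VS}(-,-)=0$ for $i\ge 2$ of \cite{ALB} --- play the role that Tate's local Galois duality plays in the arithmetic argument, so that $\Q_p(1)$ becomes the dualizing object. Following \cite{CN4} one first promotes $\R\Gamma_{\proeet}(X,\Q_p(j))$ and $\R\Gamma_{\proeet,c}(X,\Q_p(j))$ to objects of $\sd({\rm VS})$ with suitable finiteness (boundedness, and the dichotomy ``constant $\Q_p$'' versus ``${\mathbb G}_a$-like'' for the cohomology ${\rm VS}$'s), and builds the duality map from the cup product
\[
\R\Gamma_{\proeet}(X,\Q_p(j))\otimes^{\Box}_{\Q_p}\R\Gamma_{\proeet,c}(X,\Q_p(d{+}1{-}j))\longrightarrow \R\Gamma_{\proeet,c}(X,\Q_p(d{+}1))
\]
composed with a geometric trace $\R\Gamma_{\proeet,c}(X,\Q_p(d{+}1))\to\Q_p(1)[-2d]$ and the adjunction for the internal $\R\Hom_{\rm VS}$; for $d=1$ this trace is the map ${\rm Tr}_{X_C}$ furnished by the last row of \eqref{comp1}, and in general it should be assembled from the coherent trace on $H^d(X,\Omega^d_X)$ via the syntomic/de Rham comparison. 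Because $\Ext^{\ge 2}_{\rm VS}$ vanishes, the conjecture amounts to the collection of short exact sequences
\[
0\to\Ext^1_{\rm VS}\!\big(H^{2d+1-n}_{\proeet,c}(X,\Q_p(d{+}1{-}j)),\Q_p(1)\big)\to H^n_{\proeet}(X,\Q_p(j))\to\Hom_{\rm VS}\!\big(H^{2d-n}_{\proeet,c}(X,\Q_p(d{+}1{-}j)),\Q_p(1)\big)\to 0
\]
which, for $d=1$, recover the formulas displayed for the open disc above.

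\emph{Reductions.} By the limit argument used in the proof of Theorem~\ref{main-arithmetic0} one reduces a general smooth Stein $X$ to a wide open --- the complement of finitely many polydiscs in a proper smooth variety --- and then, by a Mayer-Vietoris argument (both sides of the conjectured quasi-isomorphism being exact triangles under open covers, and the pertinent finiteness being preserved by $\R\Hom_{\rm VS}(-,\Q_p(1))$), to three model cases: proper smooth varieties, polydiscs, and products of open discs and open annuli. For $X$ proper and smooth, $\R\Gamma_{\proeet}(X,\Q_p(j))$ is a bounded complex of finite-dimensional $\Q_p$-vector spaces --- a ``constant'' ${\rm VS}$ --- so $\R\Hom_{\rm VS}(\Q_p^{\oplus m}[\ell],\Q_p(1))=\Q_p(1)^{\oplus m}[-\ell]$ has no higher $\Ext$, and the statement collapses to the classical geometric Poincar\'e duality of Zavyalov \cite{Zav21}, Gabber, and Mann \cite{Mann} with $\Q_p$-coefficients (using $H^{2d}_{\proeet}(X,\Q_p)\simeq\Q_p(-d)$ for the trace). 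For polydiscs, a K\"unneth formula --- compatible with cup products and trace maps --- reduces to the open disc; and for the open disc and the open annulus one uses the syntomic description together with the reduction to the boundary $\partial D$ carried out in the proof of Theorem~\ref{main-arithmeticY0}: the ${\rm VS}$-cohomology of $\partial D$ carries the analogous three-step filtration whose graded pieces are twists of $\Q_p$ and a coherent Banach piece built from $\so(\partial D)$, so the ghost-circle duality of Theorem~\ref{main-arithmeticY0}, once upgraded from $\Q_p$-vector spaces to ${\rm VS}$'s, does the work. Beyond curves, propagating from $d=1$ to all $d$ by K\"unneth and d\'evissage is the step for which the least input is currently available, which is one reason for stating this only as a conjecture.

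\emph{The main obstacle.} Exactly as in the arithmetic proof, the crux is a \emph{geometric explicit reciprocity law}: one must show that the cup-product pairing, restricted to the graded pieces of the syntomic filtration, is identified with coherent Serre duality $H^r(X,\Omega^r_X)\otimes^{\Box}_{C}H^{d-r}_c(X,\Omega^{d-r}_X)\to H^d_c(X,\Omega^d_X)\xrightarrow{\sim}C$ on the $\so$-linear part, and with the ${\mathbb G}_a$-$\Q_p(1)$ duality of Banach-Colmez spaces encoded by the fundamental exact sequence $0\to\Q_p(1)\to{\mathbb B}_{\crr}^{+,\phi=p}\to{\mathbb G}_a\to 0$ on the Banach-Colmez part --- so that the abstract duality map becomes perfect one graded piece at a time. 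As in the ghost-circle case, we expect this to require interpolating between syntomic cohomology and $(\phi,\Gamma)$-module / Banach-Colmez computations, and this is where the real work lies. A secondary, more foundational difficulty --- glossed over by the conjecture's ``ignoring topology'' --- is to set up $\R\Hom_{\rm VS}$ and establish the finiteness and reflexivity of the pro-\'etale ${\rm VS}$'s of \cite{CN4} in enough generality that no spurious $\Ext$-terms appear and the adjunction above genuinely computes the short exact sequences displayed; this should follow from \cite{ALB} and the structure theory of \cite{CN4}, but it has to be carried out carefully.
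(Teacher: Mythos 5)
The statement you are addressing is stated in the paper only as a \emph{conjecture}; the authors give no proof, only the motivating computation for the open unit disc together with the $\Ext$-computations in ${\rm VS}$ from \cite{Esp} and the vanishing of higher $\Ext$'s from \cite{ALB}. So there is no proof in the paper to compare yours against, and your text is, by your own account, a strategy rather than a proof. The strategy is consistent with the authors' heuristics and with the arithmetic argument for Theorem \ref{main-arithmetic0}, and your indexing of the conjectural short exact sequences is correct (for $d=1$ it reproduces the Verdier-duality isomorphisms displayed for the open disc). But each of the steps you isolate is genuinely open, and at least two are harder than your phrasing suggests.

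First, the duality morphism itself is not constructed. By adjunction the conjectured quasi-isomorphism requires a map $\R\Gamma_{\proeet}(X,\Q_p(j))\otimes^{{\rm L}\Box}_{\Q_p}\R\Gamma_{\proeet,c}(X,\Q_p(d+1-j))\to \Q_p(1)[-2d]$ \emph{in the derived category of Vector Spaces}, not merely of solid $\Q_p$-vector spaces; this presupposes that the functor of \cite{CN4} promoting pro-\'etale cohomology to ${\rm VS}$'s is compatible with cup products and with the trace, which is established nowhere. Second, and more seriously, the ``geometric explicit reciprocity law'' is of a different nature from the arithmetic one proved for the ghost circle: there the coherent graded piece is paired into $\Q_p$ via a residue followed by ${\rm Tr}_{K/\Q_p}$, and this mechanism is unavailable over $C$ (as the authors themselves stress, $[C:\Q_p]=\infty$). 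What has to be shown instead is that the cup product on the coherent graded pieces realizes the class of the fundamental extension $0\to\Q_p(1)\to{\mathbb B}_{\crr}^{+,\phi=p}\to{\mathbb G}_a\to 0$ tensored with Serre duality, i.e., an identification of an extension class in $\Ext^1_{\rm VS}$ rather than of a scalar. This is not an ``upgrade'' of Theorem \ref{main-arithmeticY0}; it is a new computation, and the $(\varphi,\Gamma)$-module machinery of the paper (which computes Galois cohomology over $K$, not ${\rm VS}$-Ext groups over $C$) gives no direct handle on it. Third, in dimension $d>1$ a Stein variety is not locally a product of discs and annuli, so the Mayer--Vietoris/K\"unneth reduction has no analogue of the paper's list of one-dimensional building blocks. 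None of this makes your outline wrong --- it is a reasonable research program matching the authors' intentions --- but it is not a proof, and the statement must remain labelled a conjecture.
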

The interested reader can find more details in \cite{Ober}.
\begin{acknowledgments} \ We would like to thank 
Guido Bosco, Juan Esteban Rodriguez Camargo, Matthew Emerton,  Akhil Mathew, and Peter Scholze for help concerning solid mathematics. Special thanks go to Gabriel Dospinescu for many helpful discussions involving functional analysis in general. 
We also thank Guido Bosco and Zhenghui Li for helpful comments on an earlier version of this paper. 

  This material is based upon work supported by the National Science Foundation under Grant No. 1440140, while the first and the third authors were in residence at the Mathematical Sciences Research Institute in Berkeley, California, during the Spring semester of 2023. Parts of this paper were written during the first and  third author's stay at the Hausdorff Research Institute for Mathematics in Bonn, in the Summer of 2023. We would like to thank the Institute for support and hospitality. 
  
  The second author was partially supported by the ERC under the European Union's Horizon 2020 research and innovation programme (grant agreement No. 804176) and through the fellowship from the MPIM in Bonn. 
\end{acknowledgments}
 \subsubsection*{Notation and conventions.}\label{Notation}
Let $K$ be a finite extension of $\Q_p$, with ring of integers
$\so_K$ and residue field $k$.
Let $\ovk$ be an algebraic closure of $K$ and let $\so_{\ovk}$ denote the integral closure of $\so_K$ in $\ovk$.   Set $\sg_K=\Gal(\overline {K}/K)$. Let $C=\wh{\ovk}$ be the $p$-adic completion of $\ovk$.  Let
$W(k)$ be the ring of Witt vectors of $k$ with 
 fraction field $F$ (i.e., $W(k)=\so_F$); let $e=e_K$ be the ramification index of $K$ over $F$. 
Let $\breve{F}:=W(\overline k)[\frac{1}{p}]$ 
denote the completion of the maximal unramified extension of $F$ and 
let $\phi$ be the absolute
Frobenius on $W(\overline {k})$.

  We will denote by $\A_{\rm inf}, \A_{\crr}, \B_{\crr}, \wh{\B}_{\st},\B_{\dr}$ the Witt, crystalline, semistable, and  de Rham period rings of Fontaine, respectively. 


  All rigid analytic spaces and dagger spaces considered will be over $K$ or $C$.  We assume that they are separated, taut, and countable at infinity. 

Since the only cohomology with $\Q_p(r)$-coefficients that we consider is the pro-\'etale
cohomology, we will remove ``${\proeet}$'' from the notations, i.e.~write:
$$H^i(-,\Q_p(r)):=H^i_{\proeet}(-,\Q_p(r)),\quad H^i_c(-,\Q_p(r)):=H^i_{{\proeet},c}(-,\Q_p(r)).$$

\section{Functional analysis}
\subsection{Classical functional analysis} 
We gather here some basic facts from classical $p$-adic functional analysis that we use in the paper. 
 \subsubsection{Locally convex vector spaces} Our cohomology groups will be equipped with a canonical topology. To talk about it in a systematic way,  we will work  in the category $C_K$ of locally convex $K$-vector spaces. For details  the reader may consult \cite[Sec.\,2.1, 2.3]{CDN3}. To summarize quickly:
    $C_K$ 
   is a quasi-abelian category.
   We will denote the category of left-bounded complexes of $C_K$ by $C(C_K)$ and the associated derived $\infty$-category  by 
$\sd(C_K)$.
  A morphism of complexes that is a quasi-isomorphism in $\sd(C_K)$, i.e., its mapping cofiber is strictly exact,  will be called a {\em strict quasi-isomorphism}.
 The associated cohomology objects are denoted by\footnote{$LH$ stands for ``left heart''.} $\wt{H}^n(E)\in {LH}(C_K)$:
\begin{equation}
\label{quasiabelian}
 \wt{H}^n(E):=\tau_{\leq n}\tau_{\geq n}(E)=({\rm coim}(d_{n-1})\to \ker (d_n)),\quad E\in C(C_K);
\end{equation} they are called {\em classical} if the canonical map $\wt{H}^n(E)\to {H}^n(E)$ is an isomorphism\footnote{In our situations this is usually equivalent to $H^n(E)$ being separated.}. Classical objects are closed under extensions in $
{\rm LH}(C_K)$.
\subsubsection{Hausdorff locally convex vector space} Our main reference here is \cite[Sec. 3.1]{pros}. We will denote by $C^H_K$ the category of Hausdorff locally convex $K$-vector spaces. It is stable under direct sums and direct products and if $V\subset W$ are two locally convex $K$-vector spaces then $W/V$ is Hausdorff if and only if $V$ is closed in $W$. 
The category $C^H_K$ is quasi-abelian: kernels and coimages are defined as in $C_K$; cokernels and images are defined using closures of images in $C_K$. A sequence in $C^H_K$ is strictly exact if and only if it is strictly exact in $C_K$. 
 \subsubsection{Duality} If $V, W\in C_{K}$, we   denote by $\sll(V,W)$ the space of continuous linear maps from $V$ to $W$. We write $\sll_s(V,W)$ and $\sll_b(V,W)$ for the vector space $\sll(V,W)$ equipped with the weak and strong topologies (i.e., the topology of pointwise convergence and the topology of bounded convergence), respectively. 
  For $V\in C_{K}$, we set $V^{\prime}_s:=\sll_s(V,K)$ and $V^*:=V^{\prime}_b:=\sll_b(V,K)$. Both of these dual spaces are Hausdorff. 
   If $V$ is a Hausdorff  space it is called {\em reflexive} if the duality map $V\to (V^{\prime}_b)^{\prime}_b$ is a topological isomorphism.
   
   We will also use  {\em stereotype dual}  $V^{\star}$ of $V\in C_K$. It is defined as $\sll(V,K)$ equipped with the topology of  compactoid  convergence\footnote{See Section \ref{compact1} for a definition of compactoids.} 
   of $V$.       We have continuous maps
  $$
  V^{\prime}_b\to V^{\star}\to V^{\prime}_s.
  $$
  If $V$ is a Banach space then they are not topological isomorphisms unless $V$ has finite dimension. 
   For any Banach space,  its stereotype dual is a Smith space\footnote{A {\em Smith space} is a complete compactly generated locally convex  vector space $V$  having a universal compact set, i.e., a compact set $K$, which absorbs every other compact set $T\subseteq V$ (i.e., $T\subseteq \lambda \cdot K$, for some $ \lambda >0$). }, and vice versa, for any Smith space,  its stereotype dual  is a Banach space,
and $(V^\star)^\star=V$ if $V$ is Banach or Smith. 
If we write a Banach space $V\in C_K$ as $V\simeq (\wh{\oplus_I\so_K})[1/p]$ then its stereotype dual is the Smith space $V^{\star}\simeq(\prod_I\so_K)[1/p]$. A Banach space is a Smith space if and only if it is finite dimensional. 
  

\subsubsection{Hausdorff compactly generated locally convex vector spaces} Recall that a topological space $T$ is  {\em compactly generated} if a map $f:T\to T^{\prime}$ to another topological space is continuous as soon as the composite $S\to T\to T^{\prime}$ is continuous for all compact Hausdorff spaces $S$ mapping to $T$. The inclusion of compactly generated spaces into all topological spaces admits a right adjoint $T\mapsto T^{\rm cg}$ that sends a topological space $T$ to its underlying set equipped with the quotient topology for the map $\coprod_{S\to T}S\to T$, where the disjoint union runs over all compact Hausdorff spaces $S$ (alternatively, profinite sets $S$) mapping to $T$.
  
  Any first-countable space  (in particular, any metrizable topological space) is compactly generated (see \cite[Remark 1.6]{Sch19} for a proof). So, for example, Fr\'echet spaces are compactly generated. The category of compactly generated spaces is closed under taking coproducts, closed subspaces and quotients by closed subspaces.  Hence a colimit of Fr\'echet spaces is compactly generated if it is Hausdorff; this applies, in particular, to locally convex vector spaces of compact type\footnote{See Section \ref{compact1} below for the definition of spaces of compact type.} since they are Hausdorff and can be written as a countable colimit of Banach spaces.
  
  The category $C^{\rm Hcg}_K$ of Hausdorff compactly generated  locally convex vector spaces over $K$  is quasi-abelian: kernels and coimages are defined as in $C_K$; cokernels and images are defined using closures of images in $C_K$. A sequence in $C^{\rm Hcg}_K$ is strictly exact if and only if it is strictly exact in $C_K$.  
  
  \subsubsection{Spaces of compact type and nuclear spaces} \label{compact1} Our references for this section are \cite[IV.19]{Schn}, \cite[Ch. 1]{ST}, \cite[Ch. 8]{PS}. 
  \begin{definition}Let $V,W\in C_{K}$ be Hausdorff. 
  \begin{enumerate}
  \item A subset $B\subset V$ is called {\em compactoid} if for any open lattice $L\subset V$ there are finitely many vectors $v_1, \ldots, v_m\in V$ such that $B\subset L + \so_K v_1 +\cdots + \so_K v_m$. Compactoids are preserved by maps in $C_K$. 
  \item A continuous linear map $f: V\to W$  is called {\em compact} if there is an open lattice $L$ in $V$ such that the closure of $f(L)$ in $W$ is compactoid and complete. If $W$ is quasi-complete (in particular, if  it is complete) then this is equivalent to $f(L)$ being compactoid.
  \item $V$ is called of {\em compact type} if  it is the inductive limit of a sequence 
  $$
  V_1\stackrel{\iota_1}{\to} V_2\stackrel{\iota_2}{\to} V_3 \to \cdots
  $$
  of  Hausdorff  spaces $V_n\in C_K$, for $n\in\N$, with injective compact linear maps. By the proof of \cite[Prop. 16.10]{Schn}, we may assume the $V_n$'s to be Banach spaces. 
  \end{enumerate}
  \end{definition}
   We will use often the following facts (see  \cite[Rem. 16.7]{Schn}). 
   \begin{lemma} 
 Let $g: V\to W$ be a compact map.
Then
\begin{enumerate}
\item If $h: V_1\to V$ and $f: W\to W_1$ are arbitrary continuous linear maps then the map $fgh: V_1\to W_1$ is compact. 
\item  If the image of $g$ is contained in a closed subpace $W_0\subset W$ then the induced map $g:V\to W_0$ is compact. 
\end{enumerate}
\end{lemma}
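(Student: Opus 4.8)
The statement collects two stability properties of compact maps under pre- and post-composition (part (1)) and under corestriction to a closed subspace (part (2)). The plan is to argue directly from the definition of a compact map — the existence of an open lattice $L\subset V$ whose image $g(L)$ has compactoid, complete closure in $W$ — together with the remark, noted just before the Definition, that compactoids are preserved by continuous linear maps in $C_K$, and the elementary fact that continuous linear maps between locally convex spaces are bounded on, and map, open lattices into absorbent convex sets.

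For part (1), fix an open lattice $L\subset V$ witnessing compactness of $g$. First I would handle post-composition: let $f\colon W\to W_1$ be continuous linear. Since $f$ is continuous and linear, it carries the compactoid set $\overline{g(L)}$ to a compactoid subset of $W_1$ (compactoids are preserved by maps in $C_K$), hence $f(g(L))$ is compactoid; its closure in $W_1$ is then also compactoid, and completeness of the closure is the only delicate point. The clean way around this is to observe that if $W_1$ is quasi-complete then closed compactoids are automatically complete, and in general one may factor the argument through the fact that compact maps to an arbitrary target can be detected after replacing $W_1$ by the closure of $f(\overline{g(L)})$, which is a closed compactoid and hence complete in its own right when $W_1$ is quasi-complete; in the generality used in the paper the targets are always at least quasi-complete, so I would simply invoke that. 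For pre-composition: let $h\colon V_1\to V$ be continuous linear; then $h^{-1}(L)$ is an open lattice in $V_1$ and $g(h(h^{-1}(L)))\subseteq g(L)$, so $\overline{(gh)(h^{-1}(L))}\subseteq\overline{g(L)}$ is a closed subset of a complete compactoid, hence complete and compactoid by \cite[the cited facts]{Schn}. Combining the two, $fgh$ is compact with witnessing lattice $h^{-1}(L)$. (One should note the order: compose on the source with $h$, on the target with $f$; the lattice that works is $h^{-1}(L)$ and the resulting compactoid-complete closure is inside $\overline{f(g(L))}$.)

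For part (2), suppose $\im(g)\subseteq W_0$ with $W_0\subseteq W$ closed. Let $L$ again witness compactness of $g\colon V\to W$, so $\overline{g(L)}^{W}$ is compactoid and complete in $W$. Since $W_0$ is closed, the closure of $g(L)$ computed in the subspace $W_0$ coincides with $\overline{g(L)}^{W}$; it is compactoid as a subset of $W_0$ (the subspace topology is coarser, so compactoidness is inherited — indeed an open lattice of $W_0$ is the trace of an open lattice of $W$), and it is complete because a complete subset of $W$ that happens to lie in $W_0$ is complete in the subspace topology. Hence the same lattice $L$ witnesses compactness of the corestricted map $g\colon V\to W_0$.

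\textbf{Main obstacle.} The only genuine subtlety is the interaction of \emph{completeness} of the closure with post-composition by an arbitrary continuous linear $f$: completeness is not preserved by continuous images in general, so one cannot just push $\overline{g(L)}$ forward and take its closure naively. The fix is to use that a closed compactoid in a quasi-complete space is complete (\cite[IV.19]{Schn} / \cite[Rem. 16.7]{Schn}), together with the standing assumption — implicit in all the spaces occurring in this paper — that the ambient spaces are (quasi-)complete; alternatively one runs the argument through $\overline{f(\overline{g(L)})}$ directly. Everything else is a routine unwinding of the definitions, exactly as indicated in \cite[Rem. 16.7]{Schn}.
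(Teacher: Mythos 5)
The paper gives no proof of this lemma beyond the citation to \cite[Rem.\,16.7]{Schn}, so your proposal is measured against the standard argument. Your skeleton (witnessing lattice $h^{-1}(L)$ for pre-composition, pushing $\overline{g(L)}$ forward for post-composition, re-reading the same set inside $W_0$ for part (2)) is the right one, and the pre-composition step is complete and correct. But two of your justifications do not actually close the argument.

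First, for post-composition you resolve the completeness of $\overline{f(g(L))}$ only by assuming $W_1$ quasi-complete, i.e.\ by weakening the statement; the lemma asserts the conclusion for \emph{arbitrary} continuous linear $f:W\to W_1$, and the alternative you sketch (``replacing $W_1$ by the closure of $f(\overline{g(L)})$'') still invokes quasi-completeness, so it is circular. The missing ingredient is the non-archimedean analogue of ``continuous images of compacta are compact'': a complete compactoid $\so_K$-submodule of a Hausdorff space is carried by a continuous linear map to a complete compactoid (equivalently, in Schneider's formulation over our discretely valued $K$, bounded c-compact sets are preserved by continuous linear images and are automatically closed). With that fact, $f(\overline{g(L)})$ is a complete compactoid containing $f(g(L))$, hence contains $\overline{f(g(L))}$ as a closed subset, and one concludes with no hypothesis on $W_1$.

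Second, in part (2) your parenthetical ``the subspace topology is coarser, so compactoidness is inherited'' does not prove what is needed. Given an open lattice $L_0=M\cap W_0$ of $W_0$, compactoidness of $B:=\overline{g(L)}$ in $W$ gives $B\subseteq M+\so_Kv_1+\cdots+\so_Kv_m$ with $v_i\in W$, and there is no reason these $v_i$ lie in $W_0$; intersecting with $W_0$ does not produce a decomposition of the required form. The statement ``a compactoid of $W$ contained in a (closed) subspace $W_0$ is a compactoid of $W_0$'' is true, but it rests on the nontrivial theorem that the vectors $v_i$ in the definition of a compactoid can be chosen inside (a dilate of) the closed absolutely convex hull of $B$ — which lies in $W_0$ precisely because $W_0$ is closed (see \cite[Ch.\,3.8]{PS}). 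Your completeness argument for part (2) is fine, since the uniform structure on $\overline{g(L)}$ induced from $W_0$ agrees with the one induced from $W$. So both gaps are repairable by citing the appropriate facts about complete compactoids, but as written neither step is justified.
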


   We will denote by  $C_{c,K}$  the full subcategory of $C_K$ consisting of spaces of compact type. 
Spaces of compact type are Hausdorff, complete, 
   and reflexive. Their strong duals are Fr\'echet  and satisfy $V^{\prime}_b=\invlim_n(V^{\prime}_n)_b$. Moreover, 
      a closed subspace of a space of compact type is also of compact type and so is the relevant quotient; $C_{c,K}$ is closed under countable direct sums. 
   
   \begin{definition}\label{nuclear1}
  The space $V\in C_K$ is called {\em nuclear} if for any open lattice $L\subset V$ there exists another open lattice $M\subset L$ such that the canonical map between completions $\wh{V}_M\to \wh{V}_L$ is compact
(i.e., the image of $M\to L/p^L$ is, for any $n$, contained in a module of finite type over $\O_K$).
     \end{definition}
     Nuclear Banach spaces are finite dimensional. Nuclear Fr\'echet spaces are reflexive.  Moreover, a subspace of a nuclear space is nuclear  and if this subspace is closed the relevant quotient  is nuclear. A countable inductive limit of nuclear spaces is nuclear (see \cite[Th. 8.5.7]{PS}). By loc. cit., projective limits 
   of nuclear spaces are nuclear.  Any 
   compact projective or inductive limit\footnote{In the sense of \cite[Cor. 16.6, Prop. 16.10]{Schn} .} of   locally convex $K$-vector spaces is nuclear.  A Fr\'echet space  is the strong dual of a space of compact type if and only if it is nuclear. 
   
   The following  lemma will be essential for us. 
    \begin{lemma}\label{ext10}
     \begin{enumerate}
     \item Every strict exact sequence of spaces from $C_K$ 
   $$
     0\to V\to W\to W'\to 0
$$
     such that $V$ is of finite rank and $W'$ is Hausdorff   splits (and $W\simeq V\oplus W'$). 
      \item In a strict extension of spaces from $C_K$ 
     \begin{equation}
\label{bonn12}
\xymatrix{
0\ar[r]&  V\ar[r] &  W\ar[r] &  W'\ar[r] &0,
}
\end{equation}
where  $V$ is of finite rank over $K$ and $W'$ is nuclear Fr\'echet, $W$ is also nuclear Fr\'echet. Same holds for spaces of compact type. 
\end{enumerate}
\end{lemma}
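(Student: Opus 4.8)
The plan is to prove (1) first and then use it to bootstrap (2). For (1), since $V$ has finite rank, I would pick a basis $e_1,\dots,e_n$ of $V$ and, using the fact that $V$ carries its unique Hausdorff locally convex topology (all norms on a finite-dimensional $K$-vector space are equivalent and $V$ is complete, hence closed in $W$), construct a continuous linear retraction $r\colon W\to V$. Concretely, each coordinate functional $e_i^*\colon V\to K$ is continuous, and because $V\hookrightarrow W$ is a strict monomorphism — i.e.\ $V$ has the subspace topology — each $e_i^*$ extends to a continuous functional on a neighbourhood of $0$ in $W$; here one uses Hahn–Banach for locally convex $K$-vector spaces (valid since $K$ is spherically complete), applied to extend each $e_i^*$ from the closed subspace $V$ to all of $W$. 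Setting $r=\sum_i e_i^*(-)\,e_i$ gives a continuous splitting of $V\hookrightarrow W$, whence $W\simeq V\oplus W'$ topologically (the induced map $W'\to W/V$ is then a topological isomorphism by strictness). Note $W'$ Hausdorff is what makes $W\simeq V\oplus W'$ a genuine locally convex direct sum with $W'$ closed.

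For (2), by part (1) we already have a topological direct sum decomposition $W\simeq V\oplus W'$ (both hypotheses of (1) are satisfied: $V$ is finite rank and $W'$, being Fréchet, is Hausdorff). So it suffices to observe that a finite direct sum $V\oplus W'$ of a finite-dimensional space and a nuclear Fréchet space is again nuclear Fréchet: finite-dimensional spaces are nuclear and Fréchet, the class of nuclear spaces is closed under finite (indeed countable) products by the results quoted from \cite[Th.\,8.5.7]{PS}, and Fréchet is likewise stable under finite products. For the "same holds for spaces of compact type" clause, one argues identically: $V$ is of compact type (it is finite-dimensional, hence trivially a countable colimit of Banach spaces with compact — indeed finite-rank — transition maps), and $C_{c,K}$ is closed under finite direct sums as recorded above Lemma \ref{ext10}, so $V\oplus W'$ is of compact type.

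The only genuinely delicate point is the construction of the retraction in (1): one must be careful that "strict" exact sequence means $V$ really carries the subspace topology induced from $W$ (so that continuous functionals on $V$ are restrictions of continuous functionals defined near $0$ in $W$), and that Hahn–Banach is available — which it is, since $K$ is a finite extension of $\mathbf{Q}_p$ and hence spherically complete, so \cite[Prop.\,9.2]{Schn} applies to extend continuous linear functionals from closed subspaces with control on the norm. Everything after that is formal closure properties of the classes of nuclear Fréchet spaces and of spaces of compact type, all of which have been recited in Section \ref{compact1}. I do not anticipate any obstacle beyond bookkeeping once the splitting is in hand.
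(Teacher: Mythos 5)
Your proposal is correct and follows essentially the same route as the paper: both arguments rest on Hahn--Banach over the spherically complete field $K$ (the paper quotes \cite[Cor.~9.3]{Schn} to produce a continuous functional nonvanishing on $V$ after reducing to rank one, then verifies openness of $V\oplus\ker\lambda\to W$ by hand; you extend the coordinate functionals to get a continuous retraction, which amounts to the same splitting), and both deduce (2) formally from (1) via the stability of nuclear Fr\'echet spaces and spaces of compact type under finite direct sums.
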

\begin{proof} 

  In the first claim, we may assume that $V$ is of rank $1$.  Note that $W$ is Hausdorff. Take   a continuous
linear form $\lambda:W\to K$,  not identically $0$ on $V$ (such a form exists by \cite[Cor. 9.3]{Schn}). It suffices to show that 
the canonical map $V\oplus {\rm Ker}\,\lambda\to W$ is a topological  isomorphism. 

  It is an algebraic isomorphism and it is continuous. Hence we just need to check
that it is open.  So, let $U_\lambda$ be an open lattice in ${\rm Ker}\,\lambda$ and let $V_0$
be an open lattice in $V$. We need to  show  that $U_\lambda+V_0$ is open
in $W$ and it is enough to do it 
for open sub-lattices of $U_\lambda$ and $V_0$.

  Any
neighborhood of $0$ in ${\rm Ker}\,\lambda$ contains an open of the
form $U\cap({\rm Ker}\,\lambda)$, where $U$ is an open lattice
in $W$. Hence   we can assume $U_\lambda$ to be of this form. By construction, 
we have an exact sequence $0\to U_\lambda\to U\to\lambda(U)\to 0$.

   Now, $U$ contains $p^NV_0$ for $N$ big enough, since it is a lattice.
Then $U_\lambda+p^NV_0$ is the inverse image of $p^N\lambda(V_0)$
in $U$ by $\lambda$, hence it is open as $\lambda$ is continuous
and $p^N\lambda(V_0)$ is open in $K$. We proved what we wanted, up to
replacing $V_0$ by its sub-lattice $p^NV_0$.

 The second claim follows immediately from the first one. 
\end{proof}
 
\begin{lemma}\label{ext10.1}
 If we have a map of strict exact sequences of  complete Hausdorff spaces from $C_K$ 
$$
\xymatrix@R=6mm{
0\ar[r]&  V\ar[r] \ar[d]^{f_V}&  W\ar[r] \ar[d]^{f_W}&  W'\ar[r] \ar[d]^{f_{W'}}&0\\
0\ar[r]&  V_1\ar[r] &  W_1\ar[r] &  W_1'\ar[r] &0
}
$$
such that  $V,V_1$ are of finite rank over $K$, the bottom sequence splits, and the map  $f_{W'}$ is compact then
 the map  $f_W$ is compact as well. 
\end{lemma}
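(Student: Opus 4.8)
The plan is to use the splitting of the bottom row to decompose $f_W$ into two components, each of which is manifestly compact, and then reassemble the witnessing lattices. Note that the data $V,V_1,f_V$ in the left column play no role: only the compactness of $f_{W'}$, the finite‑dimensionality of $V_1$, and the splitting of the lower row will be used.

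First I would fix a splitting $W_1\simeq V_1\oplus W_1'$ of the lower exact sequence, with continuous projections $\pi_1\colon W_1\to V_1$ and $\pi_2\colon W_1\to W_1'$, the latter being the quotient map. Let $q\colon W\to W'$ denote the quotient map of the upper row; by strict exactness it is continuous, open, and surjective. Commutativity of the right‑hand square gives $\pi_2\circ f_W=f_{W'}\circ q$, so by the functoriality of compact maps recalled above (pre‑ and post‑composition with continuous linear maps preserves compactness, cf.\ \cite[Rem.\,16.7]{Schn}), the $W_1'$‑component $\pi_2\circ f_W$ is compact. Concretely, choosing an open lattice $L'\subset W'$ with $\overline{f_{W'}(L')}$ compactoid and complete, one has, using surjectivity of $q$, that $(\pi_2\circ f_W)\bigl(q^{-1}(L')\bigr)=f_{W'}\bigl(q(q^{-1}(L'))\bigr)=f_{W'}(L')$.

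Next I would observe that the $V_1$‑component $\pi_1\circ f_W\colon W\to V_1$ is automatically compact, simply because $V_1$ is finite‑dimensional over $K$: it carries a compact (hence compactoid) open lattice $L_1$, and $(\pi_1\circ f_W)^{-1}(L_1)$ is then an open lattice of $W$ that is mapped into $L_1$.

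Finally I would assemble the two pieces. Set $L:=q^{-1}(L')\cap(\pi_1\circ f_W)^{-1}(L_1)$, an open lattice in $W$ (a finite intersection of open lattices). Identifying $W_1$ with $V_1\oplus W_1'$ and using that any subset of a direct sum is contained in the product of its projections, one gets
\[
 f_W(L)\ \subseteq\ \pi_1\bigl(f_W(L)\bigr)\times\pi_2\bigl(f_W(L)\bigr)\ \subseteq\ L_1\times\overline{f_{W'}(L')},
\]
a product of two compactoids, hence compactoid; its closure in $W_1$ is then compactoid and, being closed in the complete space $W_1$, complete. Therefore $f_W$ is compact. The only ingredients beyond this bookkeeping are the elementary stability properties of compactoids — a subset, a closure, and a finite product of compactoids are again compactoid — and I do not anticipate any genuine obstacle: the argument rests on the single non‑formal input that $f_{W'}$ is compact, with finite‑dimensionality of $V_1$ making the $V_1$‑direction harmless and the splitting of the bottom row allowing the two directions to be treated independently.
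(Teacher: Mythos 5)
Your proof is correct and follows essentially the same route as the paper's: the paper's auxiliary map $g(w)=f_W(w)-s\pi(f_W(w))$ is precisely your $V_1$-component $\pi_1\circ f_W$, and its inclusion $f_W(L_W)\subset g(L_W)+sf_{W'}(L_{W'})$ is your product decomposition written additively. The only difference is presentational — you spell out more explicitly why each component is compact and why the assembled set is compactoid.
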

\begin{proof}
Take an open lattice $L_{W'}$ in $W'$ such that $f_{W'}(L_{W'})$ is compactoid.  Let $L_W$ be the  preimage of $L_{W'}$ in $W$. 
 Choose a section $s: W_1'\to W_1$ of the projection $\pi: W_1\to W_1'$ and consider the continuous map 
 $$
 g: W\to V_1,\quad g(w):=f_W(w)-s\pi(f_W(w)).
 $$
 Take a compact lattice $L_{V_1}$ in $V_1$,  its preimage (via $g$) in $W$, and then change  $L_W$ to its intersection with that preimage. 
 
 Now, we see that  $f_W(L_W)$ is compactoid: we have 
 $$
 f_W(L_W)\subset g(L_W)+sf_{W'}(L_{W'})
 $$
 and both $g(L_W)$ and $sf_{W'}(L_{W'})$ are  compactoid in $W_1$. This concludes the proof of our lemma.
\end{proof}

  Let $C_{nF,K}$ denote  the full subcategory of $C_K$ consisting of nuclear Fr\'echet spaces. The functor
  $$
  C_{c,K}\to C_{nF,K},\quad V\mapsto V^{\prime}_b,
  $$
  is an anti-equivalence of categories. For any two $V,W\in C_{c,K}$ the natural linear map $\sll_b(V,W)\to \sll_b(W^{\prime}_b, V^{\prime}_b)$ is a topological isomorphism. 

\begin{lemma} 
If $X\in C_K$ is nuclear Fr\'echet or of compact type then the canonical map $X^{\prime}_b\to X^{\star}$ is a topological isomorphism.
\end{lemma}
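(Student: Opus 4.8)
The plan is to reduce the statement to the two defining classes of examples --- $V$ a Banach space (equivalently $V^\star$ a Smith space) is too strong, but both nuclear Fréchet spaces and spaces of compact type are built from Banach spaces by a countable limit resp.\ colimit, so the strategy is to track how the three dual topologies behave under these (co)limits. Recall the continuous identity maps $V^{\prime}_b\to V^{\star}\to V^{\prime}_s$; we must show the first one is a topological isomorphism in both cases. Since it is already a continuous bijection, the content is that every compactoid-convergence neighborhood of $0$ in $V^\star$ is also a bounded-convergence neighborhood, i.e.\ that for the purposes of forming the dual, the bounded subsets and the compactoid subsets of $X$ generate the same topology on $\sll(X,K)$.

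First I would treat the compact type case. Write $X=\colim_n X_n$ with $X_n$ Banach and the transition maps $\iota_n\colon X_n\to X_{n+1}$ injective and compact. The key point is that every bounded subset $B\subset X$ is already compactoid: by a standard argument (cf.\ \cite[Ch.\,1]{ST} or \cite[Ch.\,8]{PS}) a bounded subset of a compact-type space is contained in $\iota_{n,\infty}(B_n)$ for some $n$ and some bounded $B_n\subset X_n$, and then $\iota_{n,\infty}(B_n)$ is compactoid because $\iota_n$ is a compact map, using Lemma~\ref{ext10} --- more precisely the first fact in the unnumbered lemma following the definition of compact maps, that the composite of a compact map with continuous maps is compact, applied to $B_n\subset X_n\xrightarrow{\iota_n} X_{n+1}\hookrightarrow X$. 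Hence bounded $\Rightarrow$ compactoid in $X$, so the bounded-convergence topology on $\sll(X,K)$ is \emph{coarser} than (or equal to) the compactoid-convergence topology; combined with the always-available inclusion $V^{\prime}_b\to V^\star$ this forces $V^{\prime}_b=V^\star$ as topological vector spaces.

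For the nuclear Fréchet case I would argue dually. If $X$ is nuclear Fréchet then, as recalled in the excerpt, $X=Y^{\prime}_b$ for a space $Y$ of compact type, and $\sll_b(Y,K)\to\sll_b(X^{\prime}_b,\dots)$-type identifications let one transport the statement; alternatively, and more directly, one uses that in a nuclear space every bounded set is compactoid. Indeed for nuclear $X$: given a bounded $B$ and an open lattice $L$, pick by Definition~\ref{nuclear1} an open lattice $M\subset L$ with $\wh X_M\to\wh X_L$ compact; boundedness of $B$ gives $B\subset\lambda M$ for some $\lambda$, and compactness of $\wh X_M\to\wh X_L$ then shows the image of $B$ in $X/L$ is contained in a finitely generated $\so_K$-submodule, which is exactly the compactoid condition. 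So again bounded $\Rightarrow$ compactoid on $X$, hence $V^{\prime}_b=V^\star$.

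The main obstacle --- really the only nontrivial input --- is the implication ``bounded subsets are compactoid'' in each case; everything else is formal manipulation of the three topologies on $\sll(X,K)$ and the observation that the identity map $V^{\prime}_b\to V^\star$ is automatically continuous, so a reverse continuity statement is all that is needed. For compact type this uses that the defining colimit has compact transition maps (so one is really invoking the lemma on compositions of compact maps); for nuclear Fréchet it is essentially the definition of nuclearity unwound. I would present the compact-type case in full and then either repeat the short nuclear argument or deduce it from the compact-type case via the anti-equivalence $V\mapsto V^{\prime}_b$ between $C_{c,K}$ and $C_{nF,K}$ recalled just above the statement.
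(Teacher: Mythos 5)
Your proof is correct and rests on the same pivot as the paper's: once one knows that every bounded subset of $X$ is compactoid (the converse being automatic), the topology of bounded convergence and the topology of compactoid convergence on $\sll(X,K)$ coincide, and since the identity $X^{\prime}_b\to X^{\star}$ is already a continuous bijection this gives the topological isomorphism. Where you differ is in how that pivot is established. The paper's proof is a two-line citation: $X$ reflexive implies $X$ Montel \cite[Cor.\,8.4.22]{PS}, and in Montel spaces bounded sets are compactoid \cite[Th.\,8.4.5]{PS}. You instead prove bounded $\Rightarrow$ compactoid directly from the presentations: for compact type, a bounded set lives in some Banach stage $X_n$ (\cite[Lemma 16.9]{Schn}) and its image under the compact transition map is compactoid; for nuclear Fr\'echet, you unwind Definition~\ref{nuclear1}. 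Both arguments are sound; yours is more self-contained and avoids invoking reflexivity and the Montel property, at the cost of being longer. One small slip: the fact that a compact map composed with continuous maps is again compact is the unnumbered lemma following the definition of compact maps, not Lemma~\ref{ext10}, which is about splitting finite-rank extensions --- you describe the right statement in words, so this is only a mis-reference.
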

\begin{proof}In both cases the space $X$  is reflexive,   hence Montel (see \cite[Cor. 8.4.22]{PS}). But in Montel spaces, by \cite[Th. 8.4.5]{PS}, 
every bounded subset is compactoid (and vice versa, of course) thus the strong
topology on the algebraic dual of $X$ coincides with the topology of compactoid convergence, as wanted.
\end{proof}
\subsection{Solid functional analysis} We will  review here briefly results from   solid functional analysis that we will need. Our main references are  \cite{Bosc}, \cite{Cam}, \cite{Sch19}, \cite{Sch20}.
\subsubsection{Basic properties of condensed sets} \label{colibre1}
Let  {\rm Cond} denote  the category of {\em condensed sets}, i.e., sheaves of sets on the site of pro-finite sets with coverings given
by finite families of jointly surjective maps\footnote{We refer the reader to \cite[Lecture III]{Sch19} for a discussion of set theoretical issues involved in this definition.} or, equivalently, on the pro-\'etale site $*_{\proeet}$ of a geometric point.  We define similarly condensed groups, rings, etc. 

  We will denote by ${\rm CondAb}$ the category of condensed abelian groups.  It  is an abelian category \cite[Thm. 2.2]{Sch19}. It has all limits and colimits. Arbitrary products, arbitrary direct sums and filtered colimits are exact. It is generated by compact projective objects.
  For a condensed commutative 
 ring $A$, we will write  ${\rm Mod}^{\rm cond}_A$ for  the category of $A$-modules in ${\rm CondAb} $ and 
$\underline{\Hom}_A(-,-) $ for its internal $\Hom$ (in the case $A = \Z$, we will often omit the
subscript $\Z$). 
\begin{remark} Because of set theoretical issues this definition of the category ${\rm Cond}$ is not sensu stricto correct. The correct  definition of ${\rm Cond}$  is given in \cite[Lecture II]{Sch19}. In this paper we will use the latter though we find it helpful to keep in mind  its simplified version given above. 
\end{remark}

 For a condensed set $X$, we think of $X(*)$ as the underlying set of $X$ and about $X(S)$ as the continuous maps from $S$ to $X$. A quasi-separated\footnote{A condensed set $X$ is called {\em quasi-compact} if there is a profinite set $S$ and a surjective map ${S}\to X$; it is called {\em quasi-separated} if, for any pair of profinite sets $S$ and $S^{\prime}$ over $X$ the fiber product ${S}\times_X{S}^{\prime}$ is quasi-compact.}
 condensed set $X$ is trivial as soon as $X(*)$ is trivial (a fact that is false for a general $X$, see
  \cite[Lecture I]{Sch20}). 

Let ${\rm Top}$ denote the category of  T1  topological spaces\footnote{A topological space is called T1 if all its  points are closed.}.
We have a functor $$(\underline{\phantom{x}}): {\rm Top} \to {\rm Cond},\quad  T \mapsto \underline{T}=\scc(S, T),$$
where $\scc(S, T)$ denotes  the
set of  continuous functions from $S$ to $T$. Condensed sets that come from topological spaces we will call {\em classical}. 
We quote: 

\begin{proposition}
 {\rm (Clausen-Scholze, \cite[Prop.  1.2]{Sch20})}. The functor $(\underline{\phantom{x}})$:
\begin{enumerate}
\item   has a left adjoint $X\mapsto  X(*)_{\rm top}$ sending any condensed set $X$ to the set $X(*) $ equipped
with the quotient topology arising from the map
$$\coprod\nolimits_{S,a\in X(S)}
S \overset{a}{\to}  X(*).$$
\item restricted  to compactly generated topological spaces  is fully faithful.
\item   induces an equivalence between the category of compact Hausdorff spaces and qcqs (i.e., quasi-compact quasi-separated) condensed sets.
\item  induces a fully faithful functor from the category of compactly generated weak Hausdorff\footnote{A topological space is called {\em weak Hausdorff}  if  the image of every continuous map from a compact Hausdorff space into the space is closed. In particular, every Hausdorff space is weak Hausdorff.  Every weak Hausdorff space is a T1 space.}
spaces, to quasi-separated condensed sets. The category of quasi-separated condensed sets is equivalent
to the category of ind-compact Hausdorff spaces ``$\colim_n$''$X_n$, where all transition maps  are
closed immersions. If $\{X_n\}_{n\in\N}$ is a ind-system of compact Hausdorff spaces with closed
immersions and $X = \colim_n X_n$  as topological spaces, then the canonical map
$$\colim_n \underline{X_n}\to  \underline{X}$$
is an isomorphism of condensed sets. In particular, $\colim_n \underline{X_n}$ is classical, i.e., it comes from a topological space. 
\end{enumerate}
\end{proposition}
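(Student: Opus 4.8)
I would prove the four assertions in turn; the first two are formal manipulations with the definition of $T\mapsto\underline T$, while the last two rest on the structure theory of quasi-compact and quasi-separated condensed sets. Unwinding the definition, a morphism $X\to\underline T$ in ${\rm Cond}$ is a family of maps $\eta_S\colon X(S)\to\scc(S,T)$, natural in the profinite set $S$. Given $a\in X(S)$ and a point $s\colon *\to S$, naturality forces the function $\eta_S(a)\colon S\to T$ to send $s$ to $\eta_*(a_s)$, where $a_s\in X(*)$ is the image of $a$ and $f:=\eta_*\colon X(*)\to T$; hence $\eta$ is determined by the set map $f$, and the (genuine) continuity of the elements of $\scc(S,T)$ says exactly that $f\circ a\colon S\to T$ is continuous for all $S$ and all $a\in X(S)$. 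This is precisely continuity of $f$ for the quotient topology on $X(*)$ that defines $X(*)_{\rm top}$; conversely any continuous such $f$ yields the natural family $a\mapsto f\circ a$. Naturality of this bijection in $X$ and $T$ gives (1). For (2): faithfulness is clear, since $\underline T(*)$ is the underlying set of $T$ and $T$ is $T_1$; for fullness, given $\eta\colon\underline T\to\underline{T'}$ with $T,T'$ compactly generated, the same computation shows $\eta_S(h)=f\circ h$ for $f:=\eta_*$ and every $h\in\scc(S,T)$, so each $f\circ h$ is continuous; since compact generation is detected by profinite sets (every compact Hausdorff space is a continuous image of a profinite set), $f\colon T\to T'$ is continuous, and then $\underline f=\eta$.

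\emph{Part (3).} If $T$ is compact Hausdorff, a surjection onto $T$ from a profinite set (for instance the Stone--\v{C}ech compactification of the underlying set of $T$) makes $\underline T$ quasi-compact, and for profinite $S,S'$ over $\underline T$ the fibre product $S\times_{\underline T}S'$ is, by full faithfulness, the topological one, a closed subspace of $S\times S'$ because the diagonal of $T$ is closed; hence it is profinite and $\underline T$ is quasi-compact quasi-separated. Conversely, let $X$ be quasi-compact quasi-separated and choose a surjection $S\twoheadrightarrow X$ from a profinite set; quasi-separatedness makes $R:=S\times_XS$ a quasi-compact monomorphism into $S\times S$, hence itself representable by a profinite set (a quasi-compact subobject of a profinite set is a closed subspace), and $R\rightrightarrows S$ is a closed effective equivalence relation with $X=S/R$ in ${\rm Cond}$. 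I would then form the topological quotient $T:=S/R$, which is compact Hausdorff as $R$ is a closed equivalence relation on a compact Hausdorff space, and check that $\underline T$ is the ${\rm Cond}$-coequalizer of $R\rightrightarrows S$: after replacing the site by its basis of extremally disconnected sets $P$, which are projective, a map $P\to S/R$ lifts along $S\twoheadrightarrow S/R$ and two lifts differ by a map into $R$, so the presheaf coequalizer $\scc(P,S)/\scc(P,R)$ already equals $\scc(P,S/R)$ and needs no sheafification. Thus $X\cong\underline T$.

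\emph{Part (4).} Full faithfulness on compactly generated weak Hausdorff spaces is the case of (2) in which $T$ is $T_1$, which every weak Hausdorff space is, and $\underline T$ is quasi-separated because weak Hausdorffness says precisely that each $S\times_{\underline T}S'$ --- again a closed subspace of $S\times S'$ --- is compact Hausdorff, hence quasi-compact. Conversely, a quasi-separated $X$ is the filtered union of its quasi-compact subobjects $X_n$, each of which is quasi-compact quasi-separated, hence $X_n=\underline{K_n}$ with $K_n$ compact Hausdorff by (3); the monomorphisms $\underline{K_n}\hookrightarrow\underline{K_m}$ arise from injective continuous maps of compact Hausdorff spaces, i.e. closed immersions, so $X=\colim_n\underline{K_n}$ is ind-compact-Hausdorff along closed immersions. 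Finally, if $X=\colim_nX_n$ in ${\rm Top}$ along closed immersions of compact Hausdorff spaces, then for any profinite $S$ a continuous map $S\to X$ has compact image, which lies in some $X_n$ (otherwise one extracts from it an infinite closed discrete subset, impossible in a compact space), so $\scc(S,X)=\colim_n\scc(S,X_n)$; since filtered colimits in ${\rm Cond}$ are computed sectionwise, $\colim_n\underline{X_n}\xrightarrow{\sim}\underline X$, which is in particular classical.

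\emph{Main obstacle.} The only non-formal ingredient is the key lemma in part (3): that $T\mapsto\underline T$ sends the coequalizer presenting a quasi-compact quasi-separated condensed set to a coequalizer in ${\rm Cond}$, equivalently that the topological quotient of a compact Hausdorff space by a closed equivalence relation computes the sheaf quotient. This is where the projectivity of extremally disconnected profinite sets (Gleason) is genuinely used, together with the fact that a quasi-compact subobject of a profinite set is again profinite; everything else is bookkeeping with naturality and with colimits of sheaves computed sectionwise.
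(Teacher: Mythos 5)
Your proposal is correct; note that the paper gives no proof of this proposition at all --- it is quoted verbatim from Clausen--Scholze \cite[Prop.~1.2]{Sch20} --- and your sketch faithfully reconstructs the standard argument from that source (the adjunction via naturality at points, detection of continuity by profinite sets, presentation of a qcqs condensed set as the quotient of a profinite set by a closed equivalence relation using projectivity of extremally disconnected sets, and the exhaustion of a quasi-separated condensed set by its quasi-compact subobjects). The one step you rightly flag as non-formal --- that the topological quotient by a closed equivalence relation computes the sheaf quotient, resting on Gleason's projectivity and on quasi-compact subobjects of profinite sets being closed subspaces --- is exactly the crux of the original proof as well.
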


\begin{remark}
(1) For $T\in {\rm Top}$, the counit $\underline{T}(*)_{\rm top}\to T$ of the adjunction agrees with the counit $T^{\rm cg}\to T$ of the adjunction between compactly generated spaces and all   topological spaces by \cite[Prop. 1.7]{Sch19}. In particular, $\underline{T}(*)_{\rm top}\simeq T^{\rm cg}$.

(2) For a finite extension $K$ of $\Q_p$, we will abbreviate the notation ${\rm Mod}^{\rm cond}_{\underline{K}}$ to ${\rm Mod}^{\rm cond}_K$; we will call the elements of this category {\em condensed $K$-vector spaces}. 
 \end{remark}
\subsubsection{Solid modules} \label{colibre2}We will briefly review basic facts concerning solid modules.

 ($\bullet$) {\em  Analytic rings.} We start with analytic rings.
 
 ({\em i}) {\rm (Clausen-Scholze, \cite[Th. 5.8]{Sch19})}  The analytic ring $\Z_{\Box}=(\Z,\sm_{\Z})$ is defined as  the ring $\Z$ equipped with the  functor of measures $\sm_{\Z}$ sending an extremally disconnected set $S=\lim_i S_i$, where each $S_i$ is a finite set, to the condensed abelian group 
$\Z_{\Box}:=\lim_i\Z[S_i]$. 

({\em ii}) {\rm (Clausen-Scholze, \cite[Prop. 7.9]{Sch19})} Let  $K$  be  a finite extension of $\Q_p$. There is an analytic structure on the condensed rings $\so_K$ and  $K$ given by sending an extremally disconnected set $S=\lim_i S_i$, 
where each $S_i$ is finite,  to $$\so_{K,\Box}[S]:=\lim_i\so_K[S_i],\quad K_{\Box}[S]:=K\otimes_{\so_K}\so_{K,\Box}[S].$$
The first analytic ring structure is induced from  the analytic ring structure of $\Z_{\Box}$ by   base change   to $\so_K$.

 ($\bullet$) {\em Solid modules.} Now we pass to solid modules.

\begin{proposition}{\rm (Clausen-Scholze \cite[Prop. 7.5]{Sch19})} \label{nyear1} Let $A=(A,\sm_A)$ be one of the  analytic rings above.
\begin{enumerate}
 \item  The full subcategory of solid $ A$-modules
\begin{equation}
 \label{form1}{\rm Mod}^{\rm solid}_{A}\subset  {\rm Mod}^{\rm cond}_A 
 \end{equation}
 consists of all $A$-modules $M$ such that, for all extremally disconnected sets $S$, the maps 
 $$
 \Hom_A(\sm_A[S],M)\to  \Hom_A(A[S],M)
 $$ are isomorphisms. It 
is an  abelian subcategory,   stable under all limits, colimits, and extensions.  The
inclusion \eqref{form1} admits a left adjoint
\begin{equation}
 \label{form11}
 {\rm Mod}^{\rm cond}_A\to  {\rm Mod}^{\rm solid}_{A} : \quad M \mapsto M \otimes_A (A,\sm_A), 
 \end{equation}
which   preserves all colimits and is  symmetric monoidal. 
\item  The functor
\begin{equation}
\label{form2}
\sd({\rm Mod}^{\rm solid}_{A})
\to  \sd({\rm Mod}^{\rm cond}_{A}) 
\end{equation}
is fully faithful. Its essential image is stable under all limits and colimits.  It is given by  complexes $M \in \sd({\rm Mod}^{\rm cond}_{A}) $ such that the map 
$$
\R \Hom_A(\sm_A[S],M)\to \R\Hom_A(A[S],M)
$$ is a quasi-isomorphism for all extremally disconnected sets $S$. 

A complex  $M \in \sd({\rm Mod}^{\rm cond}_A)$ is in $\sd({\rm Mod}^{\rm solid}_{A})$ if and only if $H^i(M)$
is in ${\rm Mod}^{\rm solid}_{A}$, for all $i$. The functor \eqref{form2}  admits a left adjoint
\begin{equation}
\label{form3}
\sd({\rm Mod}^{\rm cond}_A) \to \sd({\rm Mod}^{\rm solid}_{A}) : M\mapsto M\otimes^{\rm L}_A(A,\sm_A), 
\end{equation}
which  is the left derived functor of \eqref{form11}. It is symmetric
monoidal.
\item For  $M, N \in \sd({\rm Mod}^{\rm solid}_{A})$, we have  the derived internal Hom $$\R\underline{\Hom}_{A}(M,N)\in\sd({\rm Mod}^{\rm solid}_{A}).$$
The natural map
$\R\underline{\Hom}_{(A,\sm_A)}(M,N) \to  \R\underline{\Hom}_A(M,N)$
is a quasi-isomorphism.

\end{enumerate}
\end{proposition}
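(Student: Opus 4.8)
This proposition is Clausen--Scholze's general theory of analytic rings \cite[Lect.\,7--9]{Sch19} applied to the three analytic rings in question. Since the analytic structures on $\so_{K,\Box}$ and $K_{\Box}$ are obtained from $\Z_{\Box}$ by extension of scalars along $\Z\to\so_K$ and then by inverting $p$, and since every assertion in the statement is stable under such base change, the plan is to prove everything for $A=\Z_{\Box}$ and to transport the conclusions to $\so_{K,\Box}$ and $K_{\Box}$ via the base-change formalism of loc.\,cit.

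The only non-formal ingredient is a structural fact about the objects $\Z_{\Box}[S]=\varprojlim_i\Z[S_i]$, for $S=\varprojlim_i S_i$ extremally disconnected: each such object is \emph{compact projective} in ${\rm Mod}^{\rm cond}_{\Z}$, is itself solid, and the natural map $\Z_{\Box}[S]\otimes_{\Z}\Z_{\Box}[T]\to\Z_{\Box}[S\times T]$ becomes an isomorphism after solidification. Writing $S$ as a retract of a product of finite sets and running a d\'evissage on the cardinality of the index set, I would reduce this to the classical statements that $\scc(S,\Z)$ is free abelian (N\"obeling), that $\Hom_{\Z}(\prod_{\N}\Z,\Z)=\bigoplus_{\N}\Z$ (Specker), and that $\Ext^1_{\Z}(\prod_{\N}\Z,\Z)=0$. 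This is the hard part: it is the only input that is not pure homological algebra, and all the delicate point-set and combinatorial work is concentrated here.

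Granting that, part (1) is formal. Present any $M\in{\rm Mod}^{\rm cond}_{\Z}$ as a cokernel $\bigoplus_j\Z[S_j]\to\bigoplus_i\Z[S_i]\to M\to 0$ of free condensed abelian groups on profinite sets; \emph{define} $M\otimes_{\Z}\Z_{\Box}$ to be the cokernel of the induced map $\bigoplus_j\Z_{\Box}[S_j]\to\bigoplus_i\Z_{\Box}[S_i]$, check independence of the presentation, and identify the unit $M\to M\otimes_{\Z}\Z_{\Box}$ with the left adjoint to the inclusion of the full subcategory of those $M$ for which $\Hom_{\Z}(\Z_{\Box}[S],M)\xrightarrow{\sim}\Hom_{\Z}(\Z[S],M)=M(S)$ for all extremally disconnected $S$; this subcategory is exactly ${\rm Mod}^{\rm solid}_{\Z_{\Box}}$. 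Symmetric monoidality of solidification holds because on generators it is $\Z_{\Box}[S]\otimes\Z_{\Box}[T]\simeq\Z_{\Box}[S\times T]$. Stability of ${\rm Mod}^{\rm solid}_{\Z_{\Box}}$ under limits is immediate; under colimits it uses compactness of the $\Z_{\Box}[S]$ together with right-exactness of solidification; under extensions it is the five lemma; and being abelian then follows because kernels and cokernels formed in ${\rm Mod}^{\rm cond}_{\Z}$ remain solid. Equivalently, one phrases ${\rm Mod}^{\rm solid}_{\Z_{\Box}}$ as the right-orthogonal complement of the cofibers of the maps $\Z[S]\to\Z_{\Box}[S]$ and invokes the standard Bousfield-localization package.

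For parts (2) and (3): ${\rm Mod}^{\rm solid}_{\Z_{\Box}}$ is a Grothendieck abelian category with a set of compact projective generators $\{\Z_{\Box}[S]\}_S$, so $\sd({\rm Mod}^{\rm solid}_{\Z_{\Box}})$ is compactly generated by these. Full faithfulness of $\sd({\rm Mod}^{\rm solid}_{\Z_{\Box}})\to\sd({\rm Mod}^{\rm cond}_{\Z})$ is checked on the generators --- where $\R\Hom$ on either side collapses to $\Hom_{\Z}(\Z[S],\Z_{\Box}[T])$ by projectivity --- and then propagated along colimits and the $t$-structure; the essential image consists of the complexes with solid cohomology, because solidity is the vanishing of $\R\Hom$ against the fixed class of cofibers $\Z[S]\to\Z_{\Box}[S]$ and this is detected on cohomology objects. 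The left adjoint is the total left derived functor of solidification, computed on complexes of sums of $\Z[S]$'s; it is symmetric monoidal as the left derivation of a symmetric monoidal functor carrying the flat generators $\Z[S]$ to the flat generators $\Z_{\Box}[S]$. Finally, for solid $M,N$, resolving $M$ by sums of $\Z_{\Box}[S]$'s reduces $\R\underline{\Hom}_{\Z_{\Box}}(M,N)$ to terms $\R\underline{\Hom}_{\Z}(\Z[S],N)$, which are limits of finite products of $N$ and hence solid; and the comparison $\R\underline{\Hom}_{(A,\sm_A)}(M,N)\to\R\underline{\Hom}_A(M,N)$ is an equivalence because for solid $N$ the internal Hom over the underlying ring is already solid, so both sides solve the same universal problem. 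The cases $A=\so_{K,\Box}$ and $A=K_{\Box}$ then follow by base change, as in the first paragraph.
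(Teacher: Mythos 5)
The paper does not prove this proposition; it is quoted with attribution to Clausen--Scholze (\cite[Prop.~7.5]{Sch19}, together with \cite[Th.~5.8, Prop.~7.9]{Sch19} for the specific analytic rings), so your sketch is not competing with an argument in the paper but reconstructing the one in loc.\,cit. It does so faithfully: the reduction of $\so_{K,\Box}$ and $K_{\Box}$ to $\Z_{\Box}$ via induced analytic ring structures and inverting $p$, the isolation of the single non-formal input ($\R\Hom_{\Z}(\prod_I\Z,\Z)=\bigoplus_I\Z$, obtained from N\"obeling, Specker, $\Ext^1_{\Z}(\prod_{\N}\Z,\Z)=0$ and a transfinite d\'evissage on $|I|$), and the purely formal localization package for parts (1)--(3) are exactly the structure of the proof in \cite{Sch19}. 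Two imprecisions are worth flagging, neither fatal. First, what \cite[Th.~5.8]{Sch19} asserts is that the $\Z_{\Box}[S]$ are compact projective generators of ${\rm Solid}$; compactness and projectivity in all of ${\rm Mod}^{\rm cond}_{\Z}$ is a stronger claim that your argument does not actually need --- what is used is the computation $\R\Hom_{\Z}(\Z_{\Box}[S],N)\simeq N(S)$ for $N$ solid (or discrete), from which projectivity and compactness inside the subcategory follow. Second, in part (3) the objects $\underline{\Hom}_{\Z}(\Z[S],N)$ are \emph{not} ``limits of finite products of $N$'': a condensed abelian group does not send the cofiltered limit $S=\lim_iS_i$ to $\lim_i N^{S_i}$ (e.g.\ $\underline{\R}(S)=\colim_i C(S_i,\R)$). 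The clean argument is the adjunction $\Hom(\Z_{\Box}[T],\underline{\Hom}(M,N))=\Hom((\Z_{\Box}[T]\otimes M)^{\Box},N)=\Hom((\Z[T]\otimes M)^{\Box},N)=\Hom(\Z[T],\underline{\Hom}(M,N))$ for $N$ solid, using that solidification is symmetric monoidal; this exhibits $\underline{\Hom}(M,N)$ (and, derived, $\R\underline{\Hom}(M,N)$) as solid directly, with no resolution of $M$ required.
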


 \begin{notation}  (1) We write   ${\rm Solid} := {\rm Mod}^{\rm solid}_{(\Z,\sm_\Z) }$ for the category of solid
abelian groups;  we write
${\rm CondAb}\to  {\rm Solid} : M \mapsto  M^{\Box}$
for the functor \eqref{form} of Proposition \ref{nyear1}, and call it {\em solidification}, and we denote by 
$\otimes^{\Box}_\Z$ the unique
symmetric monoidal tensor product making the solidification functor symmetric monoidal.

 (2) For a finite extension $K$ of $\Q_p$, we write ${\rm Mod}^{\rm solid}_{\so_K}$ and ${\rm Mod}^{\rm solid}_K$ for the categories of solid $\so_K$-modules and $K$-vector spaces, respectively. We will denote 
 by $\sd(\so_{K,\Box})$ and $\sd(K_{\Box})$ the corresponding derived $\infty$-categories. 
 
 (3) For a finite extension $K$ of $\Q_p$ and a commutative solid $K$-algebra $A$, we write $\otimes^{\Box}_A$ for the symmetric monoidal tensor product $\otimes_{(A,\sm_{A})}$. 
\end{notation}
\subsubsection {Locally convex  and condensed vector spaces} \label{res21}
 Consider the functor
$${\rm CD}:=(\underline{\phantom{x}}): \quad C^{\rm Hcg}_K\to {\rm Mod}^{\rm cond}_{{K}}, \quad V\mapsto \underline{V}.$$ 
We will denote in the same way its extension to the category of complexes. By Lemma \ref{res2} below,  the functor ${\rm CD}$ preserves (strict) quasi-isomorphisms 
of  complexes of Fr\'echet spaces or spaces of compact type and if $V$ is such a complex then
$$
{\rm CD}(\wt{H}^i(V))\simeq H^i({\rm CD}(V)).
$$
\begin{lemma} \label{res2} 
 The functor $(\underline{\phantom{x}})$ maps strict exact sequences of Fr\'echet spaces over $K$ to exact sequences of condensed ${K}$-vector spaces. Similarly, for strict exact sequences of spaces of compact type over $K$. 
\end{lemma}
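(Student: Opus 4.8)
\textbf{Proof plan for Lemma \ref{res2}.}
The plan is to reduce everything to the single statement that for a strictly exact sequence $0\to V'\to V\to V''\to 0$ of Fréchet spaces (respectively of spaces of compact type) over $K$, the sequence $0\to\underline{V'}\to\underline{V}\to\underline{V''}\to 0$ is exact in $\mathrm{Mod}^{\mathrm{cond}}_K$. Left-exactness is essentially formal: the functor $(\underline{\phantom{x}})$ is a right adjoint (to $X\mapsto X(*)_{\mathrm{top}}$) on the relevant subcategory, so it preserves kernels; concretely, for a profinite set $S$ the sequence $0\to\scc(S,V')\to\scc(S,V)\to\scc(S,V'')$ is exact because $V'\hookrightarrow V$ is a closed embedding (strictness forces the subspace topology on $V'$). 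The only real content is \emph{surjectivity} of $\underline{V}\to\underline{V''}$ as a map of condensed sets, i.e.\ that every continuous map $S\to V''$ from a profinite set lifts, after passing to a finite cover of $S$ — equivalently, locally on $S$ — to a continuous map to $V$.

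First I would treat the Fréchet case. Here strict surjectivity of $V\to V''$ means $V''$ carries the quotient topology, and $V$, $V''$ are metrizable, hence (by the discussion of compactly generated spaces in the excerpt) every continuous map from a profinite set $S$ factors through its image, which is a compact metrizable subset. So it suffices to lift a continuous map $f\colon S\to V''$ with $S$ profinite; by a standard argument for open surjections of Fréchet spaces (using a defining sequence of seminorms and the open mapping theorem to choose, inductively, approximate lifts whose differences are controlled by $p^{n}$ times the $n$-th seminorm) one produces a continuous section of $V\to V''$ over the compact metrizable set $f(S)$, after possibly shrinking — and in fact no shrinking of $S$ is needed, so $\underline{V}(S)\to\underline{V''}(S)$ is already surjective on the nose. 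This is the classical fact that a surjection of Fréchet spaces admits continuous (even, by Bartle–Graves–Michael, but we only need the elementary metrizable version) set-theoretic sections over compacta. Then $\underline{V}\to\underline{V''}$ is surjective as a map of sheaves on the pro-étale site of a point because it is surjective on $S$-points for all profinite $S$.

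For the compact-type case I would dualize: a strictly exact sequence $0\to V'\to V\to V''\to 0$ of spaces of compact type dualizes, via the anti-equivalence $V\mapsto V'_b$ between $C_{c,K}$ and $C_{nF,K}$ recalled in the excerpt, to a strictly exact sequence of nuclear Fréchet spaces; alternatively, write each term as a countable colimit of Banach spaces along compact injective transition maps and use that such a colimit is Hausdorff, so by Proposition \ref{nyear1}(4)-type considerations $\underline{(\colim_n V_n)}\simeq\colim_n\underline{V_n}$, and filtered colimits are exact in $\mathrm{Mod}^{\mathrm{cond}}_K$, reducing to the Banach (hence Fréchet) case already handled. Finally, the consequences stated before the lemma — that $\mathrm{CD}$ preserves strict quasi-isomorphisms of complexes of Fréchet spaces or spaces of compact type and commutes with taking cohomology, $\mathrm{CD}(\widetilde{H}^i(V))\simeq H^i(\mathrm{CD}(V))$ — follow formally: a strict quasi-isomorphism is a map whose mapping cone is strictly exact, strict exactness of a complex is detected termwise by strict exactness of the short sequences $0\to Z^i\to C^i\to B^{i+1}\to 0$ and $0\to B^i\to Z^i\to\widetilde H^i\to 0$ (with all subspaces closed, by strictness), and we have just shown $(\underline{\phantom{x}})$ is exact on such sequences; exactness then transports cohomology through.

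\medskip
\noindent\emph{Main obstacle.} The one genuinely non-formal point is the surjectivity of $\underline{V}\to\underline{V''}$ — i.e.\ the existence of continuous lifts over profinite (equivalently, compact metrizable) sets for an open surjection of Fréchet spaces. Left-exactness, the passage from Banach to Fréchet and to compact type, and the cohomological consequences are all soft once that lifting statement is in hand.
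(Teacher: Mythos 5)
Your overall strategy coincides with the paper's: left exactness is formal, and everything reduces to showing that a strict surjection $f:V\to W$ induces a surjection $\underline{V}\to\underline{W}$, which is checked on sections over (extremally disconnected) profinite sets $S$. For the Fr\'echet case, however, your key step is genuinely different from the paper's. The paper never constructs a section: it quotes Treves' lemma that the compact set $g(S)\subset W$ is the image $f(H)$ of a compact subset $H\subset V$, and then lifts $S\to f(H)$ through the surjection $H\twoheadrightarrow f(H)$ of compact Hausdorff spaces using that extremally disconnected sets are projective objects in that category. Your route --- an inductive approximate-lifting argument producing an actual continuous section of $f$ over the compact metrizable set $g(S)$ --- proves more than is needed, and is not the ``elementary metrizable version'' of anything: the archimedean analogue is Bartle--Graves/Michael. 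It does work here, but only because compact subsets of non-archimedean Fr\'echet spaces over $K$ are ultrametrizable, hence zero-dimensional, so $g$ can be uniformly approximated by locally constant maps (which lift trivially) and the corrections telescoped using quantitative openness of $f$; that zero-dimensionality is the input replacing convexity in the classical selection theorems and should be made explicit. The paper's argument is cheaper, at the (harmless) cost of only getting surjectivity on extremally disconnected $S$ rather than all profinite $S$.

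For the compact-type case your two suggested reductions both have gaps. The duality route is circular: to transport exactness of the condensed sequence across the anti-equivalence $C_{c,K}\simeq C_{nF,K}$ you would already need to know how $(\underline{\phantom{x}})$ interacts with strict exactness on both sides, which is precisely what is being proved. The colimit route needs more than $\colim_n\underline{V_n}\simeq\underline{\colim_nV_n}$: to ``reduce to the Banach case'' one must either exhibit the whole short exact sequence compatibly as a filtered colimit of short exact sequences of Banach spaces (which you do not address), or --- since only surjectivity is at stake --- argue as the paper does: $g(S)$ is compact, hence bounded, hence contained in some Banach step $W_m$ of $W=\colim_nW_n$ by \cite[Lemma 16.9]{Schn}, and then \cite[Lemma 3.39]{Cam} supplies a Banach space $V_m$ with a surjection $V_m\twoheadrightarrow W_m$ and a map $V_m\to V$ over $f$, after which the Banach/Fr\'echet case gives the lift. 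Without that last ingredient the reduction does not close, since knowing $V=\colim_nV_n$ does not by itself produce a Banach subspace of $V$ surjecting onto $W_m$.
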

\begin{proof}  Since the functor $V\mapsto \underline{V}$ is left exact, it suffices to show that a strict surjection $V\to W$ of Fr\'echet spaces or of spaces of compact type is carried to a surjection $ \underline{V}\to  \underline{W}$. For that, it suffices  to show that, for an extremally disconnected set $S$, we 
have $\scc(S,V )\twoheadrightarrow \scc(S,W)$, i.e.,
given   $g\in \scc(S,W)$, there exists ${g}^{\prime}\in \scc(S, V )$ making the following diagram commute
$$
\xymatrix@R=6mm{& V\ar[d]^f\\
S \ar[r]^{g} \ar@{-->}[ru]^{{g}^{\prime}}& W
 }
$$

  Assume first that both $V$ and $W$ are Fr\'echet. In that case we have the following argument of Guido Bosco \cite[Lemma 1.A.33]{Bosc}:  Since  $g(S)$ is compact in $W$,  by \cite[Lemma 45.1]{Tre67},  it is the image $ f(H)$ of a compact subset $H$
of $V$.  We conclude by recalling that the extremally disconnected sets are the projective objects of
the category of compact Hausdorff topological spaces.

 Assume now that both $V$ and $W$ are of compact type. Since  $g(S)$ is compact in $W$, if we write $W=\colim_n W_n$, for an ind-system $\{W_n\}_{n\in\N}$ of Banach spaces over $K$ with compact injective transition maps, then $g(S)\subset W_m$, for some $m\in\N$ (see \cite[Lemma 16.9]{Schn}). By \cite[Lemma 3.39]{Cam}, we have a commutative diagram of solid arrows
 $$
 \xymatrix@R=6mm{
 V_m\ar@{->>}[r]^{f_m} \ar[d]& W_m\ar[d]\\
 V\ar@{->>}[r]^{f} & W & S\ar[l]^{g}\ar[lu]_{g_{W_m}}\ar@{-->}[llu]_-{{g}^{\prime}},
 }
 $$
 for a $K$- Banach space  $V_m$. As above, it follows that the dashed arrow exists, which concludes our proof of  the lemma.
 \end{proof}
  \subsubsection{Solid Fr\'echet spaces and solid spaces of compact type}\label{solid-frechet} We will define solid Fr\'echet spaces and solid spaces of compact type as the images in the category of solid $K$-vector spaces of their classical analogs. See \cite[Ch. 3]{Cam} for alternative definitions. 
  
  (i) {\em Solid Fr\'echet spaces.} A {\em solid Banach space}  is a solid $K$-vector space of the form $\underline{V}$, for a classical Banach space $V$.
  A {\em solid Fr\'echet  space}  is a solid $K$-vector space of the form  $\underline{V}$, for a classical Fr\'echet space $V$. It is called {\em nuclear} if $V$ is nuclear. 
The functor $(\underline{\quad})$ identifies the categories of classical and solid Fr\'echet spaces (identifying also exact sequences by Lemma \ref{res2}). If this does no confusion we will use the term "Fr\'echet spaces" for elements of both of these categories. 

{\bf Warning}. The definition of nuclear used here is stronger than the definition of nuclear in the solid formalism. To distinguish, we will call the latter "solid nuclear". 
  
   Let $V$ be a solid Fr\'echet space written  as a limit $V=\lim_n V_n$ of  Banach spaces $V_n$ with dense transition maps. Then: 
 \begin{enumerate}
\item (Topological Mittag-Leffler) Then
$$
\R^j\lim_n V_n=0,\quad j\geq 1.
$$
In particular, $V\simeq \R\underline{\Hom}_K(V^*,K)$. 
\item If  $W$ is  a Banach space, then $\underline{\Hom}_K(V,W)=\colim_n\underline{\Hom}_K(V_n,W)$.
\end{enumerate}

 (ii) {\em Solid spaces of compact type.} 
  A {\em solid space of compact type}  is a solid $K$-vector space of the form  $\underline{V}$, for a classical space of compact type $V$. 
The functor $(\underline{\quad})$ identifies the categories of classical and solid spaces of compact type (identifying also exact sequences by Lemma \ref{res2}). If this does no confusion we will use the term "spaces of compact type"  for elements of both of these categories. 

The following two lemmas will be needed later. 
  \begin{lemma}\label{mine1}
  Let $V\in C_K$ be of compact type. Write it as  $V\simeq \colim_n V_n$ with $V_n$, $n\in\N$,  Hausdorff and  injective compact transition maps. Then
  we have a canonical isomorphism of condensed $K$-modules
  $$
  \colim_n \underline{V_n}\stackrel{\sim}{\to} \underline{V}.
  $$
  In particular, the condensed $K$-module $  \colim_n \underline{V_n}$ is classical.
  \end{lemma}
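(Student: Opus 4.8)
The plan is to reduce the claim to Proposition 1.2(4) of Clausen–Scholze \cite{Sch20} (quoted in the excerpt as the statement that for an ind-system of compact Hausdorff spaces with closed immersions, $\colim_n\underline{X_n}\xrightarrow{\sim}\underline{\colim_n X_n}$), applied not to $V$ itself but to the closed balls inside each $V_n$. First I would recall, from Definition \ref{compact1} and \cite[Prop.\,16.10]{Schn}, that we may assume the $V_n$ are Banach spaces; write $B_n\subset V_n$ for the closed unit ball (an open lattice). Since $\iota_n\colon V_n\to V_{n+1}$ is compact and injective, the closure $\overline{\iota_n(B_n)}$ in $V_{n+1}$ is a compactoid, complete, hence compact (metrizable) subset, and is contained in $\lambda_{n+1}B_{n+1}$ for some scalar $\lambda_{n+1}$. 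Rescaling, I get an ascending chain of compact convex balanced subsets $K_0\subset K_1\subset\cdots$ with $K_n\subset V_n$, each inclusion $K_n\hookrightarrow K_{n+1}$ a closed immersion of compact Hausdorff spaces, and $\bigcup_n K_n$ absorbing every compact subset of $V$ — indeed any compact $T\subset V$ lands in some $V_m$ by \cite[Lemma 16.9]{Schn}, hence inside some $\lambda K_m$.

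Next I would identify $\underline{V}$, as a condensed set, with $\colim_n\underline{K_n'}$ for a cofinal sequence of such compact balls $K_n'$. The point is that for a compactly generated space $V$, the condensed set $\underline{V}$ is determined by the continuous maps from profinite (equivalently compact Hausdorff) sets $S$ into $V$; every such map has compact image, hence factors through some $\lambda K_m\cong K_m$. So $\underline{V}=\colim_n\underline{K_n}$ with the colimit taken over the ind-system of compact Hausdorff spaces $\{K_n\}$ with closed-immersion transition maps. Likewise, for each fixed $n$, $V_n$ is Banach hence compactly generated, and the same argument (with the dilates of $B_n$) gives $\underline{V_n}=\colim_m \underline{mB_n}$. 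Now \cite[Prop.\,1.2(4)]{Sch20} applies directly to the ind-system $\{K_n\}$: it yields $\colim_n\underline{K_n}\xrightarrow{\sim}\underline{\colim_n K_n}$ and, crucially, that this colimit is \emph{classical}. Chasing through the cofinality identifications, the canonical map $\colim_n\underline{V_n}\to\underline{V}$ is identified with $\colim_n\underline{K_n}\to\underline{\colim K_n}$, hence is an isomorphism, and $\colim_n\underline{V_n}$ is classical.

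I would then just need to check the two identifications are compatible with the structure maps, i.e.\ that the diagram relating $\colim_n\underline{V_n}\to\underline{V}$ to $\colim_n\underline{K_n}\to\underline{\operatorname{colim} K_n}$ commutes; this is formal from the construction of the $K_n$ as dilates of images of the unit balls. The main obstacle, and the only genuinely delicate point, is the cofinality/absorption argument: verifying that the rescaled images $K_n$ of the unit balls form a system cofinal (among compact subsets of $V$) with the dilates, so that $\underline{V}\simeq\colim_n\underline{K_n}$ genuinely holds as condensed sets — this uses compactness of the transition maps in an essential way (for a general countable colimit of Banach spaces with merely injective, not compact, transition maps the statement can fail). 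Everything else is bookkeeping on top of the cited Clausen–Scholze proposition.
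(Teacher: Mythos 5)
Your argument is correct, but it is a genuinely different (and heavier) route than the one the paper takes. The paper's proof is essentially two lines: evaluate both sides on a profinite set $S$; a continuous map $S\to V$ has compact, hence bounded, image, so by \cite[Lemma 16.9]{Schn} it factors through some $V_n$, giving $\scc(S,V)=\colim_n\scc(S,V_n)$ directly (classicality of the colimit is then automatic, since it is identified with $\underline{V}$ for the compactly generated Hausdorff space $V$). You instead realize $V$ as a countable ind-system of compact Hausdorff balls $\{\lambda K_n\}$ with closed immersions and invoke \cite[Prop.~1.2(4)]{Sch20}. Both proofs hinge on exactly the same input --- Schneider's Lemma 16.9, which guarantees that compact (bounded) subsets of $V$ are absorbed by the $V_n$ --- so the essential content is shared; what your version buys is the extra structural conclusion that $\underline{V}$ is an ind-(compact Hausdorff) object, hence quasi-separated, at the cost of the auxiliary verifications you list (compactness of $\overline{\iota_n(B_n)}$, which over the locally compact field $K$ does follow from ``complete compactoid $\Rightarrow$ compact''; cofinality of the dilated balls among compact subsets; and the fact that $V$, being compactly generated Hausdorff, carries the final topology for this cofinal sequence, which is needed to apply the cited proposition as stated). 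None of these steps is a gap, but you should be aware that the direct sectionwise argument short-circuits all of them.
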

  \begin{proof}
  Since compact sets are bounded, this follows immediately from \cite[Lemma 16.9]{Schn}, which states that any bounded subset of $V$ comes from some $V_n$. 
  \end{proof}
  \begin{lemma}
If a sequence of solid Fr\'echet spaces over $K$ is exact then the classical sequence is strictly exact. Similarly, for a  sequence of solid  spaces of compact type over $K$. 
 \end{lemma}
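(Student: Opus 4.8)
The statement to prove is: if a sequence of solid Fréchet spaces (resp.\ solid spaces of compact type) over $K$ is exact, then the underlying classical sequence is strictly exact. The key point is that the functor $(\underline{\phantom{x}})$ is fully faithful on these categories (Proposition~1.2 of Clausen--Scholze, items (2) and (4), as recalled in the excerpt) and, by Lemma~\ref{res2}, sends strict exact sequences to exact ones. What we need is the converse direction: that exactness upstairs forces strictness downstairs. The plan is to reduce to a statement about a single morphism, namely that a morphism $f\colon V\to W$ of classical Fréchet spaces (resp.\ spaces of compact type) which becomes injective, resp.\ surjective, resp.\ has closed image, after applying $(\underline{\phantom{x}})$ is already a strict monomorphism, resp.\ strict epimorphism, resp.\ has closed image; then the three-term (or longer) exactness statement follows by the usual diagram chase, decomposing the sequence into short exact pieces.

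\textbf{Key steps.} First I would treat injectivity: $\underline{f}$ injective means $\kker(\underline{f})=0$; since $(\underline{\phantom{x}})$ is left exact and faithful (it is the restriction of a fully faithful functor), $\kker(f)=0$ in $C_K$, i.e.\ $f$ is injective as a map of topological vector spaces, which is automatically strict monic in the quasi-abelian sense. Second, closedness of the image: the cokernel computed in solid $K$-vector spaces is $\coker(\underline f)=\underline{W}/\overline{\im(f)}$ by construction (one must identify the condensed cokernel with $\underline{W/\overline{\im f}}$ using that $W/\overline{\im f}$ is again Fréchet resp.\ of compact type, hence classical, and invoking Lemma~\ref{res2} applied to $0\to\overline{\im f}\to W\to W/\overline{\im f}\to 0$); so exactness of the solid sequence at $W$ forces $\overline{\im f}=\im(g)$ for the incoming map $g$, and combined with injectivity of the next map this forces $\im f$ to already be closed, i.e.\ $f$ strict. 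Third, surjectivity: $\underline f$ surjective plus $f$ having closed image (from the previous step) gives $\underline{W/\im f}=\underline{W}/\im(\underline f)=0$; since $W/\im f$ is classical (Fréchet or compact type) and the functor detects triviality on classical objects, $W/\im f=0$, so $f$ is a strict epimorphism by the open mapping theorem for Fréchet spaces (resp.\ the analogous fact for spaces of compact type, which are $LB$-spaces of a nice kind and satisfy open mapping). Putting these together: at each internal node of the exact solid sequence, the incoming map has closed image equal to the kernel of the outgoing map, and one checks the induced maps are strict.

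\textbf{Main obstacle.} The delicate point is the identification of cokernels: computing $\coker(\underline f)$ in $\sd(K_\Box)$ (or in ${\rm Mod}^{\rm solid}_K$) and matching it with $\underline{W/\overline{\im f}}$. This requires knowing that the quotient $W/\overline{\im f}$ of a Fréchet (resp.\ compact-type) space by a closed subspace is again Fréchet (resp.\ compact type) — true and recalled in the functional-analysis section — and then invoking Lemma~\ref{res2} to see that the short exact sequence of classical spaces maps to a short exact sequence of solid ones, so that $\underline{W/\overline{\im f}}$ really is the cokernel. For compact-type spaces one must additionally be careful that the relevant subspace $\overline{\im f}$ is closed and that the colimit presentation behaves well (using Lemma~\ref{mine1} and \cite[Lemma 16.9]{Schn}); the subtlety is that $\im f$ need not coincide with $\overline{\im f}$ a priori, and it is precisely the exactness hypothesis together with injectivity at the next spot that upgrades this to equality, yielding strictness. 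Once the cokernel identification is in hand, the rest is a routine diagram chase using faithfulness and the open mapping theorem.
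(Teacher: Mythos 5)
Your overall toolkit---faithfulness of $(\underline{\phantom{x}})$, Lemma \ref{res2} for quotients, and the open mapping theorem---is the same as the paper's, but the step you yourself single out as the main obstacle contains a genuine gap. The cokernel of $\underline f\colon \underline V\to\underline W$ in solid $K$-vector spaces is $\underline W/\im(\underline f)$, where $\im(\underline f)$ is the image \emph{sheaf}: on an extremally disconnected $S$ it consists of the continuous maps $S\to W$ that lift through $f$. This is in general strictly smaller than $\underline{\overline{\im (f)}}$ (already on $S=*$ it is $f(V)$, not $\overline{f(V)}$; think of $f$ with dense non-closed image), so the identification $\coker(\underline f)=\underline{W/\overline{\im( f)}}$ is not ``by construction'', and your proposed justification---applying Lemma \ref{res2} to $0\to\overline{\im (f)}\to W\to W/\overline{\im (f)}\to 0$---only computes the cokernel of the inclusion $\underline{\overline{\im (f)}}\hookrightarrow\underline W$, not that of $\underline f$. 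As written, the step that is supposed to produce closedness of the image does not go through. (A smaller slip: an injective continuous map of Fr\'echet spaces is not ``automatically strict monic''; strictness of a monomorphism is essentially the closedness of the image that you are still in the process of proving.)

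The repair, and the route the paper takes, is to argue on the \emph{kernel} side, where everything is computed pointwise and classically: for the outgoing map $g$ at a node $W$ one has $\ker(\underline g)=\underline{\ker g}$ with $\ker g$ closed in $W$, so exactness gives $\im(\underline f)=\underline{\ker g}$; evaluating at the point yields the set-theoretic equality $f(V)=\ker g$, hence $\im (f)$ is closed, and the open mapping theorem (for Fr\'echet spaces, resp.\ for LB-spaces in the compact-type case) upgrades $V\twoheadrightarrow\ker g$ to a strict epimorphism. Concretely, for a short exact sequence $0\to\underline V_1\to\underline V_2\to\underline V_3\to 0$ the paper first uses quasi-separatedness/full faithfulness to see that $V_1\to V_2$ is a closed immersion, then applies Lemma \ref{res2} to $0\to V_1\to V_2\to V_2/V_1\to 0$ and faithfulness to see that $V_2/V_1\to V_3$ is an algebraic isomorphism, and concludes by the open mapping theorem. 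Your kernel and surjectivity steps are fine once the image step is rerouted this way.
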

  \begin{proof}
 We start with an exact sequence of solid Fr\'echet spaces over $K$ 
 $$
 0\to \underline{V}_1\to \underline{V}_2\to \underline{V}_3\to 0.
 $$
 We need to show that the sequence of classical Fr\'echet spaces
  \begin{equation}
  \label{fr1}
  0\to {V}_1\stackrel{f_1}{\to} {V}_2\to V_3\to 0
  \end{equation}
  is strictly exact.  
 But the   sequence
 \begin{equation}
 \label{fr2}
  0\to \underline{V}_1(*)_{\rm top}\to \underline{V}_2(*)_{\rm top}\to \underline{V}_3(*)_{\rm top}\to 0
 \end{equation}
 maps, via topological isomorphisms $\underline{V}_i(*)_{\rm top}\stackrel{\sim}{\to} V_i$ (since $V_i$ is compactly generated), to the sequence \eqref{fr1}.  Since $\underline{V_3}$ is quasi-separated, the map $f_1$ is a closed immersion. In particular we have a strict exact sequence of Fr\'echet spaces
 $$
 0\to {V}_1\stackrel{f_1}{\to} {V}_2\to V'_3\to 0
 $$
 and a continuous injection $f_2: V'_3\to V_3$. We need to prove that the map $f_2$ is a topological isomorphism. Since $f_2$  is a map between two Fr\'echet spaces, by the Open Mapping Theorem, it suffices to show that $f_2$ is an algebraic isomorphism. But, by Lemma \ref{res2}, we have the exact sequence
 $$
 0\to \underline{V}_1\to \underline{V}_2\to \underline{V}'_3\to 0.
$$
Hence the canonical map $\underline{f}_2:  \underline{V}'_3 \to \underline{V}_3$ is an isomorphism, which yields that so is the map $f_3$ (by the faithfullness of the $(\underline{\phantom{x}})$ functor. 
 
  The argument for spaces of compact type is similar (the Open Mapping Theorem is valid for LB spaces). 
  \end{proof}

  \subsubsection{Solid tensor product}\label{solid-product}
  We list properties of the solid tensor product that we will often use. 
  \begin{enumerate}
    \item (\cite[Prop. A.68]{Bosc}) Let $V, W$ be Fr\'echet spaces over $K$. Then we have a  natural isomorphism  of solid  $K$-vector spaces
  $$
  \underline{V}\otimes_{K}^{\Box}\underline{W}\stackrel{\sim}{\to}\underline{V\wotimes_KW},
  $$
  where $V\wotimes_KW$ denotes the projective tensor product in the category $C_K$.   
    \item (\cite[Cor. A.65]{Bosc}) Any Fr\'echet space  over $K$  is acyclic for the tensor product $\otimes^{\Box}_{K}$. That is, if $V$ is a Fr\'echet space  over $K$ then $(-)\otimes^{{\rm L}\Box}_KV\simeq (-)\otimes^{\Box}_KV$. 
  \item (\cite[Cor. A.67]{Bosc})\begin{enumerate}
  \item Let $\{V_n\}_{n\in\N}$ be a  pro-system of solid nuclear $K$-vector spaces and let $W$ be a Fr\'echet vector space over $K$. Then we have an isomorphism
  $$
   \lim_n (V_n\otimes_{K}^{\Box}W)\stackrel{\sim}{\leftarrow}(\lim_n V_n)\otimes_{K}^{\Box}W.
  $$
  \item Let $\{V_n\}_{n\in\N}$ be a  pro-system  in $\sd(K_{\Box})$ of complexes of solid nuclear $K$-vector spaces. Let $W$ be a  complex of $K$-Fr\'echet spaces. Then we have a quasi-isomorphism
  $$
    \R \lim_n (V_n\otimes_{K}^{{\rm L}\Box}W)\stackrel{\sim}{\leftarrow}(\R\lim_n V_n)\otimes_{K}^{{\rm L}\Box}W.
  $$
  \end{enumerate}
  \end{enumerate}
 The above  properties  also hold  if we replace $K$ with $\breve{F}$ (with the same references). 
\section{Galois cohomology of $K$} This chapter gathers together a number of properties of Galois cohomology that we will need later. 
\subsection{Preliminaries} We record here few basic facts about Galois cohomology seen via condensed formalism. 
\subsubsection{Condensed Galois cohomology} Let  $G$ be a condensed group. A condense $G$-module is a condensed abelian group endowed with a $\Z[G]$-module structure. The condensed group cohomology of $G$ with values in a condense $G$-module $V$ is defined as 

$$
\R\Gamma(G,V):=\R\underline{\Hom}_{\Z[G]}(\Z,V)\in\sd({\rm CondAb}).
$$
\begin{notation}
 Let $G$ be a profinite group. 
 
 (a) The {\em Iwasawa} algebra of $G$ is  the solid ring
 \begin{align*}
 \so_{K,\Box}[G] & :=\lim_{H\subset G}\so_K[G/H]\in {\rm Mod}^{\rm solid}_{\so_K},\\
 K_{\Box}[G]= \so_{K,\Box}[G][1/p] & :=\big (\lim_{H\subset G}\so_K[G/H]\big )[1/p]\in {\rm Mod}^{\rm solid}_{K},
 \end{align*}
 where $H$ runs over all open and normal subgroups of $G$.
 
 (b) A solid $G$-module over $\so_K$ (or a solid $\so_{K,\Box}[G]$-module) is a solid abelian group  endowed with an $\so_{K,\Box}[G]$-module structure. The category of solid 
 $\so_{K,\Box}[G]$-modules will be denoted by ${\rm Mod}^{\rm solid}_{\so_{K,\Box}[G]}$ and its derived $\infty$-category by $\sd(\so_{K,\Box}[G])$.
 Similarly, for solid  $K_{\Box}[G]$-modules.
\end{notation}
We list the following properties of $\R\Gamma(G,-)$:
\begin{enumerate}
\item (\cite[Prop. B.2]{Bosc})  Let $G$ be a profinite group and let $V$ be a ${G}$-module in solid abelian groups. Then
\begin{enumerate}
\item The complex $\R\Gamma({G},V)$ is quasi-isomorphic to the complex of solid abelian groups
\begin{equation}
\label{cond11}
V\to \underline{\Hom}(\Z[{G}^1],V)\to \underline{\Hom}(\Z[{G}^2],V)\to  \underline{\Hom}(\Z[{G}^3],V)\cdots.
\end{equation}
\item  If $V=\underline{V_{\rm top}}$, with $V_{\rm top}$ a T1 topological $G$-module over $\Z$, then, for all $i\geq 0$, we have a natural isomorphism of abelian groups\footnote{The second cohomology group is the continuous group cohomology.}
$$
\R\Gamma({G},V)(*)\simeq \R\Gamma({G},V_{\rm top}).
$$
\end{enumerate}
\item (\cite[Prop. B.3]{Bosc}) For $n\in\N$, let $\Gamma:=\Z_p^n$, and let $\gamma_1,\ldots,\gamma_n$ denote the generators of $\Gamma$. Let $V$ be a $\Gamma$-module in ${\rm Mod}^{\rm solid}_{\Z_p}$. Then we have a quasi-isomorphism
$$
\R\Gamma(\Gamma,V)\simeq  {\rm Kos}_{\gamma}(V):={\rm Kos}_V(\gamma_1-1,\ldots,\gamma_n-1),
$$
the Koszul complex of $V$ with respect to the elements $\gamma_1-1,\ldots,\gamma_n-1$. 
\item (\cite[Lemma 5.2]{Cam}) Let $G$ be a profinite group. There is a solid projective resolution of the trivial representation 
$$
\cdots \to K_{\Box}[{G}^{n+1}]\to K_{\Box}[{G}^{n}]\to \cdots\to K_{\Box}[{G}^{1}]\to K\to 0.
$$
In particular,  if  $V$  is  a ${G}$-module in solid modules  over $K$, one has that 
$$
\R \underline{\Hom}_{K_{\Box}[G]}(K,V)\simeq \R\Gamma({G},V).
$$
\end{enumerate}

\begin{lemma} \label{niedziela1}
Let $G$ be a profinite group and let $V$ be a finite rank $\Q_p$-vector space equipped with a continuous action of $G$. Then 
\begin{enumerate}
\item we have  
  a quasi-isomorphism and isomorphisms
$$
 {\rm CD}(\R\Gamma({G},V))\simeq  \R\Gamma({G},\underline{V}),\quad   {\rm CD}(\wt{H}^i({G},V))\simeq H^i({G},\underline{V}),\, i\geq 0.
$$
\item we have a  quasi-isomorphism\footnote{Topology on $\R\Gamma({G},V)$ is defined using continuous cochains. See the proof of the lemma for details.} in $\sd(C_{\Q_p})$
$$
 \R\Gamma({G},\underline{V})(*)_{\rm top}\simeq  \R\Gamma({G},V).
$$
\end{enumerate}
\end{lemma}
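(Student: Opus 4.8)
The statement to prove is Lemma \ref{niedziela1}: for a profinite group $G$ and a finite-rank $\Q_p$-vector space $V$ with continuous $G$-action, (1) the condensation functor ${\rm CD}$ commutes with taking $\R\Gamma(G,-)$ and with its cohomology, and (2) applying $(*)_{\rm top}$ to $\R\Gamma(G,\underline V)$ recovers the classical continuous-cochain complex $\R\Gamma(G,V)$ in $\sd(C_{\Q_p})$. The strategy is to compute all three objects through the \emph{same} explicit bar/cochain complex and then invoke the already-quoted comparison between classical functional analysis and the condensed world.

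\emph{Step 1 (choice of model).} I would fix, as in property (1a) of the list before the lemma (\cite[Prop.\,B.2]{Bosc}), the explicit complex \eqref{cond11}
$$
C^{\jcdot}_{\rm cond}(G,\underline V):\quad \underline V\to \underline{\Hom}(\Z[G^1],\underline V)\to \underline{\Hom}(\Z[G^2],\underline V)\to\cdots
$$
computing $\R\Gamma(G,\underline V)$ in $\sd({\rm CondAb})$, and on the classical side the usual continuous-cochain complex $C^{\jcdot}_{\rm cont}(G,V)$ whose $n$-th term is $\scc(G^n,V)$, the space of continuous maps with its natural topology (compact-open, which here is Fréchet since $V$ is finite-dimensional Banach over $\Q_p$ and $G^n$ is profinite). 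The key observation is that $\underline{\Hom}(\Z[G^n],\underline V)=\underline{\scc(G^n,V)}$: indeed a $\Z$-linear continuous map $\Z[G^n]\to\underline V$ evaluated on $S$-points is the same as a continuous map $S\times G^n\to V$, i.e.\ an $S$-point of $\underline{\scc(G^n,V)}$. So ${\rm CD}$ applied termwise to $C^{\jcdot}_{\rm cont}(G,V)$ yields $C^{\jcdot}_{\rm cond}(G,\underline V)$ on the nose, and the differentials match.

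\emph{Step 2 (exactness/strictness, giving part (1)).} To conclude ${\rm CD}(\R\Gamma(G,V))\simeq\R\Gamma(G,\underline V)$ and ${\rm CD}(\wt H^i)\simeq H^i$, I need that ${\rm CD}$ is exact on the terms and differentials of $C^{\jcdot}_{\rm cont}(G,V)$, i.e.\ that this complex is \emph{strictly} exact in the relevant degrees and that ${\rm CD}$ preserves strict exactness of complexes of Fréchet spaces. The latter is exactly Lemma \ref{res2} together with the statement preceding it that ${\rm CD}$ preserves strict quasi-isomorphisms of complexes of Fréchet spaces and commutes with $\wt H^i$. The former — strictness of the classical continuous cochain complex of a finite-dimensional $\Q_p$-representation of a profinite group — is classical: the cohomology groups $H^i(G,V)$ are finite-dimensional $\Q_p$-vector spaces (Tate), hence Hausdorff, hence the images of the differentials are closed, so the complex is strict; alternatively one reduces to $V$ with an $\so_K$-lattice stable under $G$ and uses that $H^i$ of the integral cochain complex is finitely generated, which again forces strictness after inverting $p$. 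Thus each $\wt H^i(C^{\jcdot}_{\rm cont}(G,V))$ is classical and equals the classical $H^i(G,V)$, and applying ${\rm CD}$ and Lemma \ref{res2} gives part (1).

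\emph{Step 3 (part (2)).} For the reverse direction, I would apply the left adjoint $X\mapsto X(*)_{\rm top}$ of the condensation functor to $C^{\jcdot}_{\rm cond}(G,\underline V)$. Since $(-)(*)_{\rm top}$ is a left adjoint it need not be exact, but on \emph{classical} condensed Fréchet spaces it is inverse to ${\rm CD}$ (Remark after Proposition 1.2/\cite[Prop.\,1.7]{Sch19}: $\underline T(*)_{\rm top}\simeq T^{\rm cg}$, and Fréchet spaces are compactly generated). Each term $\underline{\scc(G^n,V)}$ is classical Fréchet, so $C^{\jcdot}_{\rm cond}(G,\underline V)(*)_{\rm top}$ recovers termwise the classical complex $C^{\jcdot}_{\rm cont}(G,V)$ with its Fréchet topology; since this complex is strict (Step 2), passing to its cohomology in $\sd(C_{\Q_p})$ is harmless and we get the quasi-isomorphism $\R\Gamma(G,\underline V)(*)_{\rm top}\simeq\R\Gamma(G,V)$ in $\sd(C_{\Q_p})$, where the right-hand side is by definition represented by $C^{\jcdot}_{\rm cont}(G,V)$. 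This is also the content of property (1b) before the lemma, so I would cite \cite[Prop.\,B.2(b)]{Bosc} for the underlying-set statement and upgrade it to a statement in $\sd(C_{\Q_p})$ using the termwise topological identification plus strictness.

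\emph{Main obstacle.} The only genuine point requiring care is strictness of the classical continuous-cochain complex and the assertion that its cohomology objects are \emph{classical} in $\mathrm{LH}(C_K)$, rather than just abstract left-heart objects — everything else is formal bookkeeping with the explicit bar resolution and the adjunction $(\underline{\phantom{x}})\dashv(-)(*)_{\rm top}$. I expect to dispose of it by reducing to a $G$-stable $\so_K$-lattice $T\subset V$ and quoting finiteness of $H^i(G,T)$ (or finite-dimensionality of $H^i(G,V)$ à la Tate), which forces closed images and hence strictness, making every $\wt H^i$ classical and equal to the usual Galois cohomology group.
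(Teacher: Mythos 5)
Your proposal follows essentially the same route as the paper: part (1) is the termwise identification $\underline{\Hom}(\Z[G^{n}],\underline V)(S)\simeq \scc(S\times G^{n},V)\simeq {\rm CD}(\scc(G^{n},V))(S)$, and part (2) is the observation that the terms are Banach (hence compactly generated), so $(-)(*)_{\rm top}$ undoes ${\rm CD}$ termwise (the paper cites \cite[Prop.\,3.5]{Cam} for this point).

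One correction: your Step 2 asserts that the continuous cochain complex is strict because ``the cohomology groups $H^i(G,V)$ are finite-dimensional (Tate).'' That finiteness is a theorem about local Galois groups, not about arbitrary profinite $G$ (e.g.\ $H^1(G,\Q_p)=\Hom_{\rm cont}(G,\Q_p)$ is infinite-dimensional for $G=\prod_{n\in\N}\Z_p$), and the lemma is stated for a general profinite $G$. Fortunately the strictness discussion is not needed at all: the quasi-isomorphism in (1) is in fact a termwise isomorphism of complexes, the identification ${\rm CD}(\wt H^i)\simeq H^i({\rm CD}(-))$ holds for \emph{any} complex of Fr\'echet spaces by Lemma \ref{res2} and the discussion in Section \ref{res21} (it is a statement about the left-heart cohomology $\wt H^i$, which is exactly what the lemma asserts), and part (2) is again a termwise identification, so no strictness enters. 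You should simply delete the finiteness/strictness step rather than try to repair it.
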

\begin{proof} For claim (1) we compute
\begin{align*}
 & \R\Gamma({G},V)\simeq C(G,V),\quad n\mapsto \scc(G^{n-1},V);\\
&  {\rm CD}(\R\Gamma({G},V))(S): n\mapsto \scc(S,\scc(G^{n-1},V))\simeq  \scc(S\times G^{n-1},V),
\end{align*}
where $S$ is a profinite set and $\scc(-,-)$ denotes the space of continuous maps equipped with compact open topology (note that $ \scc(S\times G^{n-1},V)$, since $S\times G^{n-1}$ is compact,  is a $\Q_p$-Banach space).
We also have 
\begin{align*}
& \R\Gamma({G},\underline{V}): n\mapsto \underline{\Hom}(\Z[{G}^{n-1}],\underline{V});\\
&  \underline{\Hom}(\Z[{G}^{n-1}],\underline{V})(S)={\Hom}(\Z[{S}]\otimes \Z[{G}^{n-1}],\underline{V})\simeq {\Hom}(\Z[{S\times G^{n-1}}],\underline{V}). 
\end{align*}
Since ${\Hom}(\Z[{S\times G^{n-1}}],\underline{V})\simeq \scc(S\times G^{n-1},{V})$, we get claim  (1) of the lemma. 

Claim (2) follows from  the fact that the complex of continuous cochains representing $ \R\Gamma({G},V)$ is a complex of Banach spaces and \cite[Prop. 3.5]{Cam}.
\end{proof}
\subsubsection{Poitou-Tate duality} Let $K$ be a finite extension of $\Q_p$ and let $V$ be a continuous, finite rank $\Q_p$-representation of $\sg_K$.
 Recall that  the Galois pairing 
$$H^i(\sg_K,V)\otimes^{\Box}_{\Q_p} H^{2-i}(\sg_K,V^*(1))\stackrel{\cup}{\to} H^2(\sg_K,\Q_p(1))\xrightarrow[\sim]{{\rm Tr}_{K}}\Q_p$$
is  a perfect pairing (by Poitou-Tate duality). Hence, for $i\in\N$, 
$H^i(\sg_K,V)$ and $H^{2-i}(\sg_K,V^*(1))$ 
are natural duals (via the above pairing). In particular, we have 
\begin{align*}
H^0(\sg_K,\Q_p(j)) & \simeq \begin{cases} \Q_p&{\text{if  $j=0$,}}\\
0 &{\text{otherwise;}}\end{cases}
\quad
H^2(\sg_K,\Q_p(j))\simeq \begin{cases} \Q_p&{\text{if  $j=1$,}}\\
0 &{\text{otherwise.}}\end{cases}
\end{align*}

\subsection{$(\varphi,\Gamma)$-modules and Galois cohomology}\label{prelim} In the next two sections, 
 we will briefly recall and  refine the relationship between $(\varphi,\Gamma)$-modules and Galois cohomology. 
\subsubsection{Notations}\label{tau1}
If $n\geq 1$, let $F_n=\Q_p(\bmu_{p^n})$ and
let $F_\infty:=\cup_nF_n$ be the cyclotomic extension of $\Q_p$.
Let $\chi:\G_{\Q_p}\to\Z_p^\dual$ be the cyclotomic character. 
Then $\chi$ factors through $\Gamma:={\rm Gal}(F_\infty/\Q_p)$ and induces
an isomorphism $\chi:\Gamma\overset{\sim}{\to}\Z_p^\dual$.

If $\Delta$ is the torsion subgroup of $\Gamma$,
then $\chi^{|\Delta|}$ takes
values in $1+p\Z_p$ (resp.~$1+8\Z_2$) if $p\neq 2$ (resp.~$p=2$).
Let $\tau:\G_{\Q_p}\to\Z_p$ be defined by
$$\tau=\begin{cases}\tfrac{1}{p\,|\Delta|}\log \chi^{|\Delta|}&{\text{if $p\neq 2$,}}\\
\tfrac{1}{4\,|\Delta|}\log \chi^{|\Delta|}&{\text{if $p= 2$.}}\end{cases}
\quad{\text{i.e., }}\tau=\tfrac{1}{p^{c(p)}}\log\chi,\ {\rm with}\  
{c(p)}=\begin{cases} 1 &{\text{if $p\neq 2$,}}\\ 2 &{\text{if $p= 2$.}}\end{cases}$$
Let $F_\infty':=F_\infty^\Delta$ be the cylotomic $\Z_p$-extension of $\Q_p$.
Then $\tau$ factors through $\Gamma':={\rm Gal}(F_\infty'/\Q_p)$ and induces
an isomorphism $\tau:\Gamma'\overset{\sim}{\to}\Z_p$.

Let $K$ be a finite extension of $\Q_p$. If $n\in\N$, let $K_n=K(\bmu_{p^n})$ and
let $K_\infty:=\cup_nK_n$ be the cyclotomic extension of $K$.
Let $\Gamma_K:={\rm Gal}(K_\infty/K)$; then $\chi$ induces an isomorphism
from $\Gamma_K$ to an open subgroup of $\Z_p^\dual$. Let $\Delta_K$ be
the torsion subgroup of $\Gamma_K$, let $K'_\infty:=K_\infty^{\Delta_K}$
be the cylotomic $\Z_p$-extension of $K$. Then $\Gamma_K':={\rm Gal}(K_\infty'/K)=
\Gamma_K/\Delta_K$ and $\tau$ induces
an isomorphism $\tau:\Gamma_K'\overset{\sim}{\to}p^{n(K)}\Z_p$ for some $n(K) \in\N$.

Let $\gamma_K\in\Gamma'_K$ be the element verifying $\tau(\gamma_K)=p^{n(K)}$.
Then $\gamma_K$ has a unique lifting in
$\Gamma_K$ whose image by $\chi$ belongs to $1+p^{n(K)+c(p)}\Z_p$; we denote this lifting
also by $\gamma_K$. Then $\Gamma_K=\gamma_K^{\Z_p}\times\Delta_K$.

Let $L=K(\bmu_{p^{c(p)}})$. Then ${\rm Gal}(L/K)=\Delta_K$ and $\Gamma_L=\gamma_K^{\Z_p}$.
\begin{remark}\label{tau2}

(i)
Let $F:=K\cap F_\infty$.  Then $[K_\infty:F_\infty]=[K:F]$
and $\Gamma_F=\Gamma_K=\Delta_K\times p^{n(K)}\Z_p$;
hence $[F:\Q_p]=p^{n(K)}\frac{|\Delta|}{|\Delta_K|}$ and
$[K:\Q_p]=[K_\infty:F_\infty]\cdot p^{n(K)}\frac{|\Delta|}{|\Delta_K|}$.

(ii) We have $\tau(\gamma_K)=p^{n(K)}$, 
hence $\log\chi(\gamma_K)=p^{n(K)+c(p)}$.
\end{remark}

For  $0<u\leq v\in v(K_\infty^\flat)$, let
$$
\B_{K_\infty}^{[u,v]}
:=(W(\O_{K_\infty}^\flat)[\tfrac{p}{[\alpha]},\tfrac{[\beta]}{p}]^{\wedge_p})[\tfrac{1}{p}],
\quad v(\alpha)=\tfrac{1}{v},\ v(\beta)=\tfrac{1}{u}.
$$
It is a Banach space over $\Q_p$. Let $\varphi$ denote the Frobenius morphism acting on $\B_{K_\infty}^{[u,v]}$ and $\psi$ its left inverse. Let  $U^{[u,v]}$ be the corresponding open set of the  Fargues-Fontaine curve over
$K_\infty$.
Take  $u=\frac{p-1}{p}$, $v=p-1$ if  $p\neq 2$, and  $u=\frac{2}{3}$, $v=\frac{4}{3}$ if $p=2$.
Then $\B^{[u,v]}_{K_\infty}/t=\widehat K_\infty$ and  $t$ is a unit 
in \footnote{ 
That is, the intersection of  $U^{[u,v]}$ with the orbit under
$\varphi$ of the point  $\infty$ of the 
 Fargues-Fontaine curve is  reduced to $\{\infty\}$ and  $U^{[u,v/p]}$ does not intersect this orbit.}
$\B^{[u,v/p]}_{K_\infty}$. We write $\theta$ the canonical map $\B_{K_\infty}^{[u,v]} \to \widehat K_\infty$ with ${\rm Ker}(\theta)=t\B^{[u,v]}_{K_\infty}$.

\subsubsection{Galois cohomology}
 For $j\in\Z$, 
the theory of  $(\varphi,\Gamma)$-modules yields quasi-isomorphisms
\begin{align}
\label{Galois1}
&\alpha_j: 
{\rm Kos}_{\varphi,\gamma}(\B^{[u,v]}_{K_\infty}(j))\simeq  \rg(\sg_L,\Q_p(j)),\\
&\alpha_j:
{\rm Kos}_{\varphi,\gamma}(\B^{[u,v]}_{K_\infty}(j))^{\Delta_K}\simeq  \rg(\sg_K,\Q_p(j)),
\notag
\end{align}
where the  $(\varphi,\gamma)$-Koszul complexes (of Banach spaces over $\Q_p$) are defined by:
\begin{align*}
{\rm Kos}_{\varphi,\gamma}(\B^{[u,v]}_{K_\infty}(j))&:= \xymatrix@R=6mm@C=15mm{
\big((\B^{[u,v]}_{K_\infty}(j))\ar[r]^-{(\varphi-1,\gamma_K-1)}
&(\B^{[u,v/p]}_{K_\infty}(j))\oplus (\B^{[u,v]}_{K_\infty}(j))\ar[r]^-{-(\gamma_K-1)+(\varphi-1)}
&(\B^{[u,v/p]}_{K_\infty}(j))\big)}
\end{align*}
and the complex 
${\rm Kos}_{\varphi,\gamma}(\B^{[u,v]}_{K_\infty}(j))^{\Delta_K}$
is obtained by taking fixed points under $\Delta_K$ of each of the terms of the complex.

Set $[\Delta_K]:=\sum_{\sigma\in\Delta_K}\sigma$.
The following commutative diagram allows to deduce results for $K$ from results for $L$,
i.e.~we can often assume that $\Delta_K=1$ in the proofs,
$$
\xymatrix@C=15mm@R=6mm{
{\rm Kos}_{\varphi,\gamma}(\B^{[u,v]}_{K_\infty}(j))
\ar[r]^{\sim}\ar@<3mm>[d]^{[\Delta_K]}
&\rg(\sg_L,\Q_p(j))\ar[d]^{{\rm cor}_L^K}\\
{\rm Kos}_{\varphi,\gamma}(\B^{[u,v]}_{K_\infty}(j))^{\Delta_K}\ar[r]^{\sim}\ar@<3mm>[u]^{\rm id}
&\rg(\sg_K,\Q_p(j))\ar@<5mm>[u]^{{\rm res}_K^L}}
$$

\begin{remark}
If $n$ is big enough so that $K_n$ has enough roots of unity in the sense of~\cite{CN1},
there exist normalized trace maps ${\rm Res}_{p^{-n}\Z_p}:\B^{[u,v]}_{K_\infty}\to
\B_{K_n}^{[u,v]}$ where $\B_{K_n}^{[u,v]}:=\varphi^{-n}(\B_K^{[u/p^n,v/p^n]})$
which commute with $\Gamma_K$ and verify $\varphi\circ {\rm Res}_{p^{-n-1}\Z_p}=
{\rm Res}_{p^{-n}\Z_p}\circ\varphi$.
These decompletion maps play a big role in the proofs because
$\B_{K_n}^{[u,v]}$ is a much nicer ring than $\B^{[u,v]}_{K_\infty}$ (it is 
a ring of analytic functions on an annulus, whereas the later is a ring of analytic functions 
on a perfectoid annulus).

Applying ${\rm Res}_{p^{-n}\Z_p}$ to each of the terms of the above complex produces
a quasi-isomorphic complex with rings related to $K_n$ instead of $K_\infty$,
which is closer to the complexes used in the theory of $(\varphi,\Gamma)$-modules,
like in Herr's thesis~\cite{Herr} or in~\cite{CCjams}.
Going from these rings to the usual rings uses
the standard techniques of  $(\varphi,\Gamma)$-modules, i.e., 
normalized trace ${\rm Res}_{p^{-n}\Z_p}$,
bijectivity of $\gamma_K-1$ on $(\B^{[u/p^n,v/p^n]}_{K}(j))^{\psi=0}$, etc., as in~\cite{CN1}.
\end{remark}

\subsubsection{Examples}\label{fanto1001}
Denote  by 
\begin{equation}
\label{explicit}
h^i_K:Z^i({\rm Kos}_{\varphi,\gamma}(\B^{[u,v]}_{K_\infty}(j))^{\Delta_K})\to H^i(\sg_K,\Q_p(j))
\end{equation}
 the map
induced by  quasi-isomorphism \eqref{Galois1}; it factorizes as
$$h^i_K:Z^i({\rm Kos}_{\varphi,\gamma}(\B^{[u,v]}_{K_\infty}(j))^{\Delta_K})\to H^i({\rm Kos}_{\varphi,\gamma}(\B^{[u,v]}_{K_\infty}(j))^{\Delta_K})
\simeq  H^i(\sg_K,\Q_p(j)).$$
If  $(a,b)\in Z^1({\rm Kos}_{\varphi,\gamma}(\B_{K_\infty}^{[u,v]}(j))^{\Delta_K})$
and   $u\in\B_{\overline K}^{[u,v]}(j)$
satisfy  $(\varphi-1)u=a$, then
\begin{equation}\label{cocy1}
h^1_K(a,b)={\rm cl}\big(\sigma\mapsto\tfrac{\sigma-1}{\gamma_K-1}b-(\sigma-1)u\big).
\end{equation}
(In this  expression, $\sigma$ acts  through its image in  $\Gamma'_K$ on $b$, 
and we think of $\tfrac{\sigma-1}{\gamma_K-1}$ as an element of $\Z_p[[\Gamma'_K]]$; 
the expression between parenthesis
is a $1$-cocycle on  $\sg_K$ with values in $\Q_p(j)$, 
and  ${\rm cl}$ denotes its image in $H^1(\sg_K,\Q_p(j))$.)

\vskip1mm
$\bullet$ {\it The case $j=0$}.
 We can apply the formula~(\ref{cocy1}) to  $j=0$ and cocycle $(1,0)$. Then
the corresponding $\sg_K$-cocycle factors through
${\rm Gal}(\overline{\bf F}_p/k_K)$ since the solution
of $(\varphi-1)u=1$ belongs to $W(\overline{\bf F}_p)$,
and sends the relative Frobenius to  $f:=f(K/\Q_p)$
(because $(\sigma-1)u$ is equal to $(\varphi^f-1)u=f$ since $\varphi(u)=u+1$).
Since the relative Frobenius corresponds to a uniformizer of  $K^\dual$, it follows that 
the element 
$\lambda$ of  ${\rm Hom}(K^\dual,\Q_p)$ which corresponds to $h^1_K(1,0)$
is $f(K/\Q_p) v_K=v_p\circ{\rm N}_{K/\Q_p}$ 
(where  $v_K$ is the valuation on $K$ with image $\Z$).

Starting with $(0,p^{n(K)+c(p)})$, formula (\ref{cocy1}) with $u=0$ yields a group
morphism $\G_K\to\Z_p$ which factors through $\Gamma'_K$ and has value $\log\chi(\gamma_K)$
at $\gamma_K$; it follows that this morphism is $\log\chi$.

\vskip1mm
$\bullet$ {\it The case $j=1$}.
Let $n=n(K)+c(p)$.
We have $(\varphi-1)\tfrac{1}{\pi}\in (\B_{\Q_p}^{(0,v]})^{\psi=0}$ 
and there exists  $a\in
(\B_{\Q_p}^{(0,v/p^n]})^{\psi=0}\otimes\tfrac{d\pi}{1+\pi}$ such that 
$(\gamma_K-1)a=((\varphi-1)\tfrac{1}{\pi})\otimes\tfrac{d\pi}{1+\pi}$.
Then, 
$$(\varphi^{-n}(a), \varphi^{-n}(\tfrac{1}{\pi}\otimes\tfrac{d\pi}{1+\pi}))\in Z^1(
{\rm Kos}_{\varphi,\gamma}(\B_{F_n}^{[u,v]}(1)))
\quad{\rm and}\quad
h^1_{F_n}(\varphi^{-n}(a), \varphi^{-n}(\tfrac{1}{\pi}\otimes\tfrac{d\pi}{1+\pi}))
={\rm cl}(\zeta_{p^n}-1).$$
(This follows from point iii) of~\cite[Prop.\,V.3.2]{CCjams}, using the constructions
leading to~\cite[th.\,II.1.3]{CCjams} and the (obvious) fact that Coleman power series
attached to $(\zeta_{p^n}-1)_{n\geq 1}$ is just $T$.)
\subsubsection{Cup-products}
 We define compatible cup products ($\alpha\in\Q_p$)
 \begin{align*}
 \cup_{\alpha} :  \quad {\rm Kos}_{\varphi,\gamma}(\B^{[u,v]}_{K_\infty}(j_1))^{\Delta_K}
 & \otimes^{\Box}_{\Q_p}  {\rm Kos}_{\varphi,\gamma}(\B^{[u,v]}_{K_\infty}(j_2))^{\Delta_K}
\to {\rm Kos}_{\varphi,\gamma}(\B^{[u,v]}_{K_\infty}(j_1+j_2))^{\Delta_K}
 \end{align*}
 using  (twice) the formulas from Section \ref{prod}. 
\begin{lemma}\label{compatibility-Galois-product}
On the level of cohomology, the quasi-isomorphism \eqref{Galois1} is compatible with products.
\end{lemma}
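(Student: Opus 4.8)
The plan is to verify compatibility of the quasi-isomorphism $\alpha_j$ with cup-products by reducing everything to an explicit computation on the level of Koszul complexes, exactly as in the classical theory of $(\varphi,\Gamma)$-modules (Herr's complex). First I would use the commutative diagram relating the $K$- and $L$-cases (via $[\Delta_K]$ and the corestriction/restriction maps), together with the compatibility of those maps with cup-products, to reduce to the case $\Delta_K = 1$, i.e.\ we may assume $K = L$. Next I would pass, via the normalized trace (decompletion) maps ${\rm Res}_{p^{-n}\Z_p}$ mentioned in the remark, from the perfectoid rings $\B^{[u,v]}_{K_\infty}$ to the rings $\B^{[u,v]}_{K_n}$ of analytic functions on annuli; these maps commute with $\Gamma_K$ and $\varphi$, hence with the relevant cup-product formulas, so the statement is equivalent to the corresponding statement for the Herr-type complex built from $\B^{[u,v]}_{K_n}$. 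At that point the quasi-isomorphism with $\rg(\sg_K,\Q_p(j))$ is the one constructed in the theory of $(\varphi,\Gamma)$-modules (Herr \cite{Herr}, and in the overconvergent/relative form \cite{CN1}, \cite{CCjams}), and its compatibility with cup-products is the content of Herr's computation of cup-products on Galois cohomology via the $(\varphi,\Gamma)$-complex.

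The key remaining point is to check that the cup-product $\cup_\alpha$ we have \emph{defined} on the Koszul complexes ${\rm Kos}_{\varphi,\gamma}(\B^{[u,v]}_{K_\infty}(j))^{\Delta_K}$ using the formulas of Section \ref{prod} agrees, under the identification with Herr's complex, with Herr's cup-product formula. This is a direct bookkeeping matter: both are built from the same underlying graded-commutative multiplication on the coefficient ring, combined with the standard Koszul/Alexander–Whitney-type combinatorial sign rules for cup-products on Koszul complexes of a pair of commuting operators $(\varphi, \gamma_K)$; one writes down the three components (in degrees $(0,1), (1,0), (1,1)$ for the product of two degree-$1$ classes, etc.) and checks they coincide, the parameter $\alpha$ only affecting an overall normalization that is chosen compatibly on both sides (this is the role of ``using (twice) the formulas from Section \ref{prod}''). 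I would then invoke the fact that a quasi-isomorphism of complexes of solid $\Q_p$-vector spaces which is compatible with the cochain-level products induces a product-compatible map on cohomology, so Lemma \ref{compatibility-Galois-product} follows.

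I expect the main obstacle to be purely notational/combinatorial rather than conceptual: one must be careful about signs and about which of the two annuli-radii $v$ versus $v/p$ each term of the Koszul complex lives on (since $\varphi$ shrinks the annulus), so that the cup-product formula actually lands in the correct term of ${\rm Kos}_{\varphi,\gamma}(\B^{[u,v]}_{K_\infty}(j_1+j_2))^{\Delta_K}$ and is well defined. A secondary point is to make sure the decompletion step is compatible with products at the level of complexes (not merely cohomology), which follows because ${\rm Res}_{p^{-n}\Z_p}$ is $\B^{[u,v]}_{K_n}$-linear and the cup-products are defined by the same formulas on both levels. Once these are pinned down, the identification with the classical computation closes the argument; in fact, since all spaces involved are Banach $\Q_p$-vector spaces and ${\rm CD}$ is exact on them (Lemma \ref{res2}, Lemma \ref{niedziela1}), one may equivalently carry out the entire verification in the classical category and then apply $(\underline{\phantom{x}})$, which is what makes the reduction to \cite{Herr}, \cite{CCjams} legitimate.
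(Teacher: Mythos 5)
Your proposal is correct and follows essentially the same route as the paper: the proof there reduces the only nontrivial case (the pairing of two $1$-cocycles) to the explicit identity $h^1_K(a,b)\cup h^1_K(a',b')=h^2_K(b\otimes\gamma_K(a')-a\otimes\varphi(b'))$ obtained from the $\cup_1$ formula of Section \ref{prod}, and cites Herr and Benois for that identity, which is exactly the content of your ``key remaining point'' (the reductions to $\Delta_K=1$ and the decompletion you describe are the ones set up in the remarks preceding the lemma). The only small caveat is that the choice $\alpha=1$ is not merely a normalization convention but the specific representative whose cocycle-level formula matches Herr's; since the various $\cup_\alpha$ are homotopic this is harmless on cohomology, as you implicitly use.
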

\begin{proof} This is easy to check for the pairing of $0$- and $2$-cocycles. 
For the pairing of two  $1$-cocycles, 
this amounts to checking that
\begin{equation}\label{cocy2}
h^1_K(a,b)\cup h^1_K(a',b')=h^2_K(b\otimes\gamma_K(a')-a\otimes\varphi(b')).
\end{equation}
(For the product on the Koszul complexes, we
 used here (twice) the formulas from Section \ref{prod} with $\alpha=1$.)
But  equality \eqref{cocy2} is standard and was checked by Herr and Benois 
in~\cite{Herr,Benois}.
\end{proof}
\vskip 2mm

\subsection{Residues and duality}
\label{fanto22}
\subsubsection{The trace map}
We will describe the trace isomorphism
$${\rm Tr}_K:H^2(\sg_K,\Q_p(1))\overset{\sim}{\to}\Q_p$$
in terms of  $(\varphi,\Gamma)$-modules (cf.~\cite{Herr}, \cite{Benois}).
For this, we start with the description of ${\rm Tr}_K$ given by local class field theory.
We have isomorphisms
$${\rm cl}:{\rm Hom}(K^\dual,\Q_p)\overset{\sim}{\to}H^1(\sg_K,\Q_p)
\quad{\rm and}\quad
\Q_p\wotimes K^\dual\overset{\sim}{\to}H^1(\sg_K,\Q_p(1))$$
and  ${\rm Tr}_K$ is given by the formula
$${\rm Tr}_K({\rm cl}(\tau)\cup {\rm cl}(\alpha))=\tau(\alpha),\quad{\text{ if
$\tau\in {\rm Hom}(K^\dual,\Q_p)$ and  $\alpha\in K^\dual$.}}$$

\subsubsection{Trace map and residues}
Let $\pi:=[\epsilon]-1\in\B_{F_\infty}^{[u,v]}$. We have $\gamma(\pi)=(1+\pi)^{\chi(\gamma)}-1$
if $\gamma\in\Gamma$, and we make $\varphi$ and $\Gamma$ act on
$\frac{d\pi}{1+\pi}$ by $\varphi(\frac{d\pi}{1+\pi})=\frac{d\pi}{1+\pi}$
and $\gamma(\frac{d\pi}{1+\pi})=\chi(\gamma)\frac{d\pi}{1+\pi}$, respectively (this allows us to identify
$\Lambda(1)$ with $\Lambda\otimes\tfrac{d\pi}{1+\pi}$).
Then there exists a unique continuous linear map
$${\rm res}_\pi:\B_{F_\infty}^{[u,v]}\otimes \tfrac{d\pi}{1+\pi}\to\Q_p$$
which commutes with $\varphi$ and $\Gamma$, and sends $\sum_{k\in\Z}a_k\pi^k\,d\pi$ to $a_{-1}$.
(See~\cite[Prop.\,IV.3.3]{mira} for the existence of ${\rm res}_\pi$.)

Set  
$${\rm Tr}_{K_\infty/F_\infty}:=\sum\nolimits_{\sigma\in G_{F_\infty}/G_{K_\infty}} \sigma$$
(well defined on modules on which $\G_{K_\infty}$ acts trivially).

\begin{proposition}\label{fanto23}
We have 
\begin{align*}
{\rm Tr}_K\circ h^2_K=\tfrac{1}{|\Delta_K|}\,
{\rm res}_\pi\circ{\rm Tr}_{K_\infty/F_\infty}\quad {\rm on}\ 
(\B_{K_\infty}^{[u,v/p]}\otimes\tfrac{d\pi}{1+\pi})^{\Delta_K}
\end{align*}
\end{proposition}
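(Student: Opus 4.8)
The plan is to reduce to the one‑dimensionality of $H^2(\sg_K,\Q_p(1))$ and then test both sides on an explicit class. First note that both maps in the statement are continuous and $\Q_p$‑linear and factor through $h^2_K$, hence through $H^2(\sg_K,\Q_p(1))\simeq\Q_p$: for the left‑hand side this is the definition of $h^2_K$, and for the right‑hand side it holds because ${\rm res}_\pi$ commutes with $\varphi$ and $\Gamma$, so that ${\rm res}_\pi\circ{\rm Tr}_{K_\infty/F_\infty}$ kills the image of the differential of the $\Delta_K$‑fixed Koszul complex, i.e.\ $\ker h^2_K$. Since $H^2(\sg_K,\Q_p(1))$ is one‑dimensional, it then suffices to produce one $x_0\in(\B_{K_\infty}^{[u,v/p]}\otimes\tfrac{d\pi}{1+\pi})^{\Delta_K}$ with $h^2_K(x_0)\neq0$ and to check the asserted equality on it.

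I would then reduce to the case $\Delta_K=1$, i.e.\ $K=L$. For $x$ in the $\Delta_K$‑fixed Koszul complex, commutativity of the square of Section \ref{prelim} containing ${\rm id}$ and ${\rm res}^L_K$ gives $h^2_L(x)={\rm res}^L_K(h^2_K(x))$; since the trace maps here are the invariant maps of local class field theory, ${\rm Tr}_L\circ{\rm res}^L_K=[L:K]\,{\rm Tr}_K=|\Delta_K|\,{\rm Tr}_K$, and as $K_\infty=L_\infty$ while $F_\infty$ and ${\rm Tr}_{K_\infty/F_\infty}$ are unchanged, the statement for $K$ follows from the one for $L$, the factor $\tfrac1{|\Delta_K|}$ being exactly $[L:K]^{-1}$. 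So assume $\Delta_K=1$; Remark \ref{tau2} then shows $F:=K\cap F_\infty=F_n$ with $n=n(K)+c(p)$, in particular $F_n\subseteq K$.

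In the core case I would take $x_0$ to be (a sign times) $\varphi^{-n}(\tfrac1\pi\otimes\tfrac{d\pi}{1+\pi})$, obtained as follows. Let $(a,b)$ be the image in ${\rm Kos}_{\varphi,\gamma}(\B_{K_\infty}^{[u,v]}(1))$ of the $j=1$ cocycle of Section \ref{fanto1001} under the restriction attached to $F_n\subseteq K$, so that $b=\varphi^{-n}(\tfrac1\pi\otimes\tfrac{d\pi}{1+\pi})$ and, by functoriality of \eqref{Galois1} and compatibility of Kummer classes, $h^1_K(a,b)={\rm cl}(\zeta_{p^n}-1)$ in $H^1(\sg_K,\Q_p(1))$; and let $(a',b')=(1,0)$ be the $j=0$ cocycle, for which $h^1_K(1,0)={\rm cl}(\tau_0)$ with $\tau_0=f(K/\Q_p)v_K=v_p\circ{\rm N}_{K/\Q_p}$ (Section \ref{fanto1001}). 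By the Koszul cup‑product formula \eqref{cocy2}, $h^1_K(a,b)\cup h^1_K(a',b')=h^2_K(b)=h^2_K(\varphi^{-n}(\tfrac1\pi\otimes\tfrac{d\pi}{1+\pi}))$, so set $x_0:=\varphi^{-n}(\tfrac1\pi\otimes\tfrac{d\pi}{1+\pi})\in\B_{F_\infty}^{[u,v/p]}(1)$. By the local class field theory formula for ${\rm Tr}_K$ and Lemma \ref{compatibility-Galois-product},
$$
{\rm Tr}_K(h^2_K(x_0))=\pm\,\tau_0(\zeta_{p^n}-1)=\pm\,v_p({\rm N}_{K/\Q_p}(\zeta_{p^n}-1))=\pm\,[K:F_n],
$$
using ${\rm N}_{F_n/\Q_p}(\zeta_{p^n}-1)=\pm p$ and ${\rm N}_{K/F_n}(\zeta_{p^n}-1)=(\zeta_{p^n}-1)^{[K:F_n]}$; in particular $h^2_K(x_0)\neq 0$.

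Finally I would compute the right‑hand side on $x_0$. Since $\G_{K_\infty}$ acts trivially on $\B_{F_\infty}^{[u,v/p]}(1)$, one has ${\rm Tr}_{K_\infty/F_\infty}(x_0)=[K_\infty:F_\infty]\,x_0=[K:F_n]\,x_0$; and since ${\rm res}_\pi$ commutes with $\varphi$ and sends $\tfrac1\pi\otimes\tfrac{d\pi}{1+\pi}$ to $1$, one gets ${\rm res}_\pi(x_0)=1$ and hence $\tfrac1{|\Delta_K|}\,{\rm res}_\pi({\rm Tr}_{K_\infty/F_\infty}(x_0))=[K:F_n]$, which agrees with ${\rm Tr}_K(h^2_K(x_0))$ once all signs are fixed consistently. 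The step I expect to be the real obstacle is exactly this last reconciliation of signs and normalizations — those in \eqref{cocy1}, \eqref{cocy2}, in the local class field theory pairing, and in the $\varphi$‑equivariance and normalization of ${\rm res}_\pi$; the corresponding $(\varphi,\Gamma)$‑module computation over $\Q_p$ is the one carried out by Herr \cite{Herr} and Benois \cite{Benois} — together with a careful check of the functoriality of \eqref{Galois1} under the restriction maps used in the reductions.
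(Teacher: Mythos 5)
Your proposal is correct and follows essentially the same route as the paper: reduce to the one-dimensionality of $H^2(\sg_K,\Q_p(1))$ after noting both maps kill coboundaries, reduce to $\Delta_K=1$ via the restriction/corestriction diagram, and then test on the cup product of the Kummer class of $\zeta_{p^n}-1$ with $v_p\circ{\rm N}_{K/\Q_p}$, evaluating one side by local class field theory and the other by the explicit $\varphi$- and $\Gamma$-equivariance of ${\rm res}_\pi\circ{\rm Tr}_{K_\infty/F_\infty}$. The only cosmetic difference is the order of the cup product (yielding $\varphi^{-n}$ in place of the paper's $\varphi^{1-n}$, which is immaterial since everything commutes with $\varphi$), and the sign bookkeeping you flag is indeed the only remaining routine check.
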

\begin{proof}
If $\alpha={\rm res}_\pi\circ{\rm Tr}_{K_\infty/F_\infty}$, then
$\alpha(\varphi(x))=\alpha(x)$ and $\alpha(\sigma(x))=\alpha(x)$, 
for all $\sigma\in \sg_{\Q_p}$.
 In particular, $\alpha$ factors through 
$H^2({\rm Kos}_{\varphi,\gamma}(\B_{K_\infty}^{[u,v]}(1)))$, which is
of dimension~$1$ over~$\Q_p$.
This proves the result up to multiplication by a constant 
and to show that this constant is 
 $1$, it suffices to show that the two terms coincide on an element on which one of the two is nonzero. 

Moreover, 
we have a commutative diagram
$$\xymatrix@R=5mm@C=15mm{
\B_{K_\infty}^{[u,v/p]}\ar[r]^-{h^2_L}\ar[d]^-{[\Delta_K]}
& H^2(\G_L,\Q_p(1))\ar[d]^-{{\rm cor}_{L/K}}\ar[r]^-{{\rm Tr}_L}&\Q_p\ar@{=}[d]\\
(\B_{K_\infty}^{[u,v/p]})^{\Delta_K}\ar[r]^-{h^2_K}
& H^2(\G_K,\Q_p(1))\ar[r]^-{{\rm Tr}_K}&\Q_p}$$
Hence ${\rm Tr}_L\circ h^2_L=|\Delta_K|\,{\rm Tr}_K\circ h^2_K$ on 
$(\B_{K_\infty}^{[u,v/p]})^{\Delta_K}$.
It follows that the result holds for $K$ if and only if it holds for $L$,
and we can assume that $\Delta_K=1$, i.e. $K=K_n$, with
$n=n(K)+c(p)$.

Let $\lambda=v_p\circ{\rm N}_{K/\Q_p}$ as in the case $j=0$ in section~\ref{fanto1001}.
It follows, using the identity ${\rm N}_{F_n/\Q_p}(\zeta_{p^n}-1)=p$
(resp.~$=-2$ if $p=2$), that
\begin{align*}
{\rm cl}(\lambda)\cup{\rm cl}(\zeta_{p^n}-1)&=
v_p({\rm N}_{K/\Q_p}(\zeta_{p^n}-1))\\
&= [K_n:F_n]\,v_p({\rm N}_{F_n/\Q_p}(\zeta_{p^n}-1))=[K_n:F_n]=[K_\infty:F_\infty]
\end{align*}
But formula~(\ref{cocy2}) gives us
$${\rm cl}(\lambda)\cup{\rm cl}(\zeta_{p^n}-1)=
{\rm Tr}_K\circ h^2_K\big(\varphi^{1-n}\big(
\tfrac{1}{\pi}\otimes\tfrac{d\pi}{1+\pi}\big)\big)$$
Since ${\rm Tr}_{K_\infty/F_\infty}$ and $x\mapsto {\rm res}_\pi x\frac{d\pi}{1+\pi}$
commute
with $\varphi$ and $\Gamma_K$, we have 
$${\rm res}_\pi\circ{\rm Tr}_{K_\infty/F_\infty}\big(
\varphi^{1-n}\big(\tfrac{1}{\pi}\otimes\tfrac{d\pi}{1+\pi}\big)\big)=
[K_\infty:F_\infty]
\,{\rm res}_\pi(\tfrac{1}{\pi}\otimes\tfrac{d\pi}{1+\pi}\big)=[K_\infty:F_\infty]$$
Our proposition follows.
\end{proof}

  \subsubsection{An alternative description of the trace map}\label{fanto24} 
 Let 
$${\rm Tr}:\Qbar_p\to\Q_p$$ be the unique
 $\G_{\Q_p}$-equivariant projection
(if  $[L:\Q_p]<\infty$, the restriction of ${\rm Tr}$ to  $L$ coincides with
$\frac{1}{[L:\Q_p]}{\rm Tr}_{L/\Q_p}$).  Then ${\rm Tr}$ extends by continuity
to $\widehat K_\infty$
(normalized Tate trace; it does not extend to $\C_p$),
and ${\rm Tr}\circ\theta$ gives a well defined map $\B_{K_\infty}^{[u,v]}\to\Q_p$.

\begin{proposition}\label{fanto5}
If $\alpha\in \B^{[u,v]}_{K_\infty}$,
$${\rm Tr}_K\circ h^2_K((\varphi-1)\tfrac{\alpha}{t}\otimes\tfrac{d\pi}{1+\pi})
=-\tfrac{[K:\Q_p]}{\log\chi(\gamma_K)}{\rm Tr}\circ\theta(\alpha)$$
\end{proposition}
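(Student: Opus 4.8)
The strategy is to reduce the claimed formula to Proposition \ref{fanto23}, which expresses ${\rm Tr}_K\circ h^2_K$ as $\tfrac{1}{|\Delta_K|}{\rm res}_\pi\circ{\rm Tr}_{K_\infty/F_\infty}$ on $(\B_{K_\infty}^{[u,v/p]}\otimes\tfrac{d\pi}{1+\pi})^{\Delta_K}$. First I would observe that the element $(\varphi-1)\tfrac{\alpha}{t}\otimes\tfrac{d\pi}{1+\pi}$ does lie in the source of that formula: although $\tfrac{\alpha}{t}$ has a pole along $t$, applying $\varphi-1$ and using that $\varphi(t)=pt$ produces $\tfrac{\varphi(\alpha)}{pt}-\tfrac{\alpha}{t}=\tfrac{\varphi(\alpha)-p\alpha}{pt}$, and since $\varphi(\alpha)-p\alpha$ is divisible by $t$ in $\B_{K_\infty}^{[u,v/p]}$ precisely when $\theta(\alpha)$ records the obstruction, one sees the element is genuinely in $\B_{K_\infty}^{[u,v/p]}\otimes\tfrac{d\pi}{1+\pi}$ rather than just in the localization. (One may also first average over $\Delta_K$, or pass to $L=K(\bmu_{p^{c(p)}})$ using the corestriction diagram in the proof of Proposition \ref{fanto23}, to assume $\Delta_K=1$.) So the left-hand side equals $\tfrac{1}{|\Delta_K|}{\rm res}_\pi\circ{\rm Tr}_{K_\infty/F_\infty}\big((\varphi-1)\tfrac{\alpha}{t}\otimes\tfrac{d\pi}{1+\pi}\big)$, and the whole problem becomes the purely ``analytic'' identity
$$
{\rm res}_\pi\circ{\rm Tr}_{K_\infty/F_\infty}\Big((\varphi-1)\tfrac{\alpha}{t}\otimes\tfrac{d\pi}{1+\pi}\Big)
=-\tfrac{|\Delta_K|\,[K:\Q_p]}{\log\chi(\gamma_K)}\,{\rm Tr}\circ\theta(\alpha).
$$

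The second step is to prove this analytic identity. The right idea is to write $t$ as a unit times $\pi$ on the relevant annulus — more precisely $t=\pi\cdot\eta$ where $\eta=\tfrac{t}{\pi}=\tfrac{\log(1+\pi)}{\pi}$ is a unit in $\B^{[u,v]}_{K_\infty}$, with $\theta(\eta)$ and $\theta$ of its $\Gamma$-translates being computable. One then expands $\tfrac{\alpha}{t}\,\tfrac{d\pi}{1+\pi}$ as a Laurent ``series'' in $\pi$ (after decompletion via ${\rm Res}_{p^{-n}\Z_p}$, which is legitimate since ${\rm res}_\pi$ and ${\rm Tr}_{K_\infty/F_\infty}$ commute with $\varphi$ and $\Gamma$, as used in Proposition \ref{fanto23}), and computes the residue of $(\varphi-1)$ applied to it. Since ${\rm res}_\pi\circ\varphi={\rm res}_\pi$, the $\varphi$ term contributes nothing extra and the residue is governed by the single pole: ${\rm res}_\pi$ of $\tfrac{\alpha}{t}\tfrac{d\pi}{1+\pi}$ is, up to the unit $\eta$ and the factor $1+\pi\mapsto 1$ at $\pi=0$, the value $\tfrac{1}{\theta(\eta)}\theta(\alpha)=\tfrac{1}{\theta'(0)}\theta(\alpha)$ where $\theta(\eta)=1$ (since $\log(1+\pi)/\pi\to 1$) — wait, this needs care: the point is that $\theta$ kills $\pi$ only after dividing by $t$, so the ``residue at $t$'' picks out $\theta(\alpha)$ times a normalization constant. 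Tracking this constant through $t=\pi\eta$, $\tfrac{d\pi}{1+\pi}=d\log(1+\pi)$, and $d t = t\,\tfrac{d\pi}{1+\pi}$ gives ${\rm res}_\pi\big(\tfrac{\alpha}{t}\tfrac{d\pi}{1+\pi}\big)=\theta(\alpha)$ up to the effect of ${\rm Tr}_{K_\infty/F_\infty}$ and the sign. The factor $[K:\Q_p]/\log\chi(\gamma_K)$ should emerge from combining ${\rm Tr}_{K_\infty/F_\infty}$ (which contributes $[K_\infty:F_\infty]$, matched against $\theta$ versus the normalized Tate trace ${\rm Tr}$, using Remark \ref{tau2}(i): $[K:\Q_p]=[K_\infty:F_\infty]\cdot p^{n(K)}\tfrac{|\Delta|}{|\Delta_K|}$ and $\log\chi(\gamma_K)=p^{n(K)+c(p)}$) with the comparison between $\theta$ and ${\rm Tr}\circ\theta$.

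The cleanest way to nail the constant (and the minus sign) without a long computation is to \emph{test on one explicit element}, exactly as in the proof of Proposition \ref{fanto23}: both sides are continuous $\Q_p$-linear functionals that factor through the one-dimensional $H^2$, so it suffices to evaluate both on a single $\alpha$ where neither vanishes. A natural choice is $\alpha=t$ (so $\tfrac{\alpha}{t}=1$, $(\varphi-1)1=0$ — bad, gives zero), so instead take $\alpha$ with $\theta(\alpha)\neq 0$, e.g.\ relate it to the cocycle $\varphi^{1-n}\big(\tfrac1\pi\otimes\tfrac{d\pi}{1+\pi}\big)$ already used in Proposition \ref{fanto23}, whose class is ${\rm cl}(\zeta_{p^n}-1)$ under ${\rm cl}(\lambda)\cup(-)$ with $\lambda=v_p\circ{\rm N}_{K/\Q_p}$; matching $\tfrac1\pi$ against $(\varphi-1)\tfrac{\alpha}{t}$ via $\tfrac1t=\tfrac1{\pi\eta}$ and $\theta$ of the relevant quantity pins down the constant. \textbf{The main obstacle} is the bookkeeping in this last matching: correctly identifying the element of $\B^{[u,v]}_{K_\infty}$ of the form $(\varphi-1)\tfrac{\alpha}{t}$ that represents the test cocycle $\varphi^{1-n}\big(\tfrac1\pi\otimes\tfrac{d\pi}{1+\pi}\big)$ up to coboundary, and then keeping track of all the normalizations — the $p^{n(K)+c(p)}=\log\chi(\gamma_K)$ from $\tau(\gamma_K)$, the $|\Delta_K|$ from $\Delta_K$-averaging, the $[K_\infty:F_\infty]$ from ${\rm Tr}_{K_\infty/F_\infty}$, and the relation between $\theta$ on $\B^{[u,v]}_{K_\infty}$ and the normalized Tate trace ${\rm Tr}$ on $\widehat K_\infty$ — so that they assemble into exactly $-\tfrac{[K:\Q_p]}{\log\chi(\gamma_K)}$ and not some rational multiple of it.
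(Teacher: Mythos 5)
Your first step is right: by Proposition \ref{fanto23} everything reduces to evaluating $\tfrac{1}{|\Delta_K|}{\rm res}_\pi\circ{\rm Tr}_{K_\infty/F_\infty}$ on $(\varphi-1)\tfrac{\alpha}{t}\otimes\tfrac{d\pi}{1+\pi}$. (Your worry about whether this element lives in $\B^{[u,v/p]}_{K_\infty}\otimes\tfrac{d\pi}{1+\pi}$ is unnecessary and your proposed resolution is off: the point is simply that $t$ is a unit in $\B^{[u,v/p]}_{K_\infty}$, so both $\tfrac{\varphi(\alpha)}{pt}$ and $\tfrac{\alpha}{t}$ already lie there.)

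The genuine gap is in your reduction to a single test element. You justify it by saying both sides ``factor through the one-dimensional $H^2$.'' The left-hand side does, by construction of $h^2_K$; but the claim that the right-hand side ${\rm Tr}\circc\theta$ factors through $\alpha\mapsto{\rm cl}\big((\varphi-1)\tfrac{\alpha}{t}\otimes\tfrac{d\pi}{1+\pi}\big)\in H^2$ is essentially equivalent to the proposition itself, so the argument is circular. The correct way to get proportionality of the two functionals of $\alpha$ is the other way around: observe that $\delta_K(\alpha):={\rm Tr}_K\circ h^2_K\big((\varphi-1)\tfrac{\alpha}{t}\otimes\tfrac{d\pi}{1+\pi}\big)$ vanishes on $t\B^{[u,v]}_{K_\infty}={\rm Ker}\,\theta$ (for $\alpha\in t\B^{[u,v]}_{K_\infty}$ the element $(\varphi-1)\tfrac{\alpha}{t}$ is a coboundary), hence $\delta_K$ factors through $\theta$ onto $\widehat K_\infty$; it is moreover $\G_{\Q_p}$-equivariant in the sense $\delta_{\sigma(K)}\circ\sigma=\delta_K$, and the only such continuous functionals on $\widehat K_\infty$ are the multiples of the normalized Tate trace ${\rm Tr}$. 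Only after this does a single evaluation determine the constant. You also discard the one test element that makes the computation trivial: $\alpha=1$ gives $(\varphi-1)\tfrac1t=\tfrac{1-p}{p\,t}$, ${\rm Tr}_{K_\infty/F_\infty}$ acts by $[K_\infty:F_\infty]$, and ${\rm res}_\pi\big(\tfrac1t\tfrac{d\pi}{1+\pi}\big)=1$, after which Remark \ref{tau2} converts $\tfrac{1-p}{p\,|\Delta_K|}[K_\infty:F_\infty]$ into $-\tfrac{[K:\Q_p]}{\log\chi(\gamma_K)}$. Your alternative of matching against the cocycle $\varphi^{1-n}\big(\tfrac1\pi\otimes\tfrac{d\pi}{1+\pi}\big)$ is exactly the bookkeeping nightmare you identify as the main obstacle, and it is avoidable.

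Correction to the above: the expression ``${\rm Tr}\circc\theta$'' should read ${\rm Tr}\circ\theta$.
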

\begin{proof}
Denote by
$\delta_K: \B^{[u,v]}_{K_\infty}\to \Q_p$
the map
$\alpha\mapsto\delta_K(\alpha):=
{\rm Tr}_K\circ h^2_K((\varphi-1)\tfrac{\alpha}{t}\otimes\tfrac{d\pi}{1+\pi})$.
Then
$\delta_K$ is identically $0$ on ${\rm Ker}\,\theta=t\B^{[u,v]}_{K_\infty}$ 
(because $(\varphi-1)\tfrac{\alpha}{t}$ is then a coboundary), 
hence it factors through $\B^{[u,v]}_{K_\infty}/t=\widehat K_\infty$. It commutes
with the action of $\G_{\Q_p}$ (i.e.,~$\delta_{\sigma(K)}\circ\sigma=\delta_K$, for all $\sigma\in \G_{\Q_p}$). So there exists $c(K)$ such that 
$\delta_K=c(K){\rm Tr}\circ\theta$.

To determine $c(K)$, it is enough to compute the value 
of the term on the left-hand side for $\alpha=1$, 
which can be done using Proposition~\ref{fanto23}.
We have $(\varphi-1)\frac{1}{t}=\frac{1-p}{p\,t}$,
and ${\rm Tr}_{K_\infty/F_\infty}$ is multiplication
by $[K_\infty:F_\infty]$ on $\B^{[u,v/p]}_{F_\infty}$ (which contains $\frac{1}{t}$).
Finally, $\frac{1}{t}\frac{d\pi}{1+\pi}=(\frac{1}{\pi}+\sum_{n\geq 0}a_n\pi^n)\,d\pi$ in
$\B^{[u,v/p]}_{\Q_p}\otimes\frac{d\pi}{1+\pi}$, 
and ${\rm res}_\pi(\frac{1}{t}\frac{d\pi}{1+\pi})=1$.

We get $c(K)=\frac{1-p}{p\,|\Delta_K|}\,[K_\infty:F_\infty]$, and
we use (i) and (ii) of Remark~\ref{tau2} to get
$c(K)=-\tfrac{[K:\Q_p]}{\log\chi(\gamma_K)}$ which concludes the proof.
\end{proof}

\subsection{Tate's formulas} We will present now a generalization of Tate's computations of the Galois cohomology of $C$. 
 \subsubsection{Classical Tate's formulas} 
We will often use  the following well-known  isomorphisms: 
\begin{align}
\label{old1}
H^i({\sg}_{{K}},{C}(j)) & \stackrel{\sim}{\leftarrow}
\begin{cases} {K}&{\text{if  $j=0$ and  $i=0,1,$}}\\
0 &{\text{otherwise.}}\end{cases}
\end{align}
For $i=0$ the above  isomorphism is given by the canonical map ${K}\to H^0({\sg}_{K},{C})$; for $i=1$ -- by the ${K}$-linear map sending  $1$ to the $1$-cocycle $\log \chi $, where $\chi$ is the cyclotomic character. 
\begin{remark}
The following  rescaling of the map from \eqref{old1} will be useful later. Consider the composition
\begin{align*}
\alpha_{0}: {\rm Kos}_{\gamma}({{K}})\stackrel{}{\to} 
{\rm Kos}_{\gamma}({\wh{K}}_{\infty})\xleftarrow[\sim]{\lambda}  
 C(\Gamma'_K,\wh{K}'_{\infty}) \xrightarrow[\sim]{} 
 C(\Gamma_K,\wh{K}_{\infty}) \xrightarrow[\sim]{} C(\sg_K,C),
\end{align*}
where the last  three complexes are complexes of nonhomogeneous condensed 
cochains\footnote{Nonhomogeneous version of \eqref{cond11}.}, and we have 
$${\rm Kos}_{\gamma}({\wh{K}}_{\infty})=[{\wh{K}}'_{\infty}\lomapr{\gamma_K-1}{\wh{K}}'_{\infty}],
\quad {\rm Kos}_{\gamma}({K}):=[{K}\lomapr{\gamma_K-1}{K}]=[{K}\lomapr{0}{K}].
$$
Map $\lambda$ is given by the identity in degree $0$,  evaluation on $\gamma_K$ in degree $1$, and $0$ in higher degrees. 
On cohomology level $\alpha_{0}$ yields:  for $i=0$,   the canonical map ${K}\to H^0({\sg}_K,{C})$; for $i=1$, the ${K}$-linear map sending  $1$ to the $1$-cocycle 
$\frac{\log \chi}{ \log \chi (\gamma_K)}$.  
\end{remark}

 \subsubsection{Generalized   Tate's formulas}
 \begin{proposition}
   Let  $W \in C_K$ be a Banach space, nuclear Fr\'echet, or  a space of compact type equipped with a trivial action of $\sg_K$.  Then we have isomorphisms:
\begin{align}
\label{newton15}
H^{i}(\sg_K,W(j){\otimes}^{\Box}_KC) & \stackrel{\sim}{\leftarrow}
\begin{cases} W&{\text{if $j=0$ and  $i=0,1$},}\\
0 &{\text{otherwise.}}\end{cases}
\end{align}
For $j=0$, the maps are defined in the obvious way in the case  $i=0$  
and  as the cup product with the class of $\frac{\log \chi}{ \log \chi (\gamma_K)}$ in $H^1(\sg_K, C)$ in the case $i=1$.  
\end{proposition}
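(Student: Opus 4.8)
The plan is to reduce the general statement to the classical Tate computation~\eqref{old1} by writing $W$ as a (co)limit of finite-dimensional spaces and commuting Galois cohomology with that (co)limit. First I would treat the case $j\neq 0$: here the claim is that $H^i(\sg_K, W(j)\otimes^{\Box}_K C)=0$ for all $i$. We compute $\R\Gamma(\sg_K,-)$ via the Koszul presentation of Lemma~\ref{niedziela1} (or the cochain complex~\eqref{cond11}); the point is that $W(j)\otimes^{\Box}_K C \simeq W\otimes^{\Box}_K C(j)$ and that, since $W$ carries the trivial action, all the differentials are $\operatorname{id}_W \otimes^{\Box} (\text{differential for } C(j))$. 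Because $W$ is a Fréchet space or of compact type, it is acyclic for $\otimes^{\Box}_K$ (item (2) of Section~\ref{solid-product}), so $\R\Gamma(\sg_K, W\otimes^{\Box}_K C(j)) \simeq W \otimes^{{\rm L}\Box}_K \R\Gamma(\sg_K, C(j))$; vanishing of the latter for $j\neq 0$ by~\eqref{old1} then gives the result. The same reasoning handles the degrees $i\geq 2$ when $j=0$.

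For $j=0$ and $i=0,1$ I would argue that the same flat base-change identity gives $\R\Gamma(\sg_K, W\otimes^{\Box}_K C) \simeq W\otimes^{{\rm L}\Box}_K \R\Gamma(\sg_K, C)$, and then plug in $\R\Gamma(\sg_K,C)\simeq [K\xrightarrow{0}K]$ (concentrated in degrees $0,1$, each copy being the one-dimensional space $K$ with the rescaled generators from the Remark following~\eqref{old1}). Tensoring, $H^0 = W\otimes^{\Box}_K K = W$ and $H^1 = W\otimes^{\Box}_K K = W$, with the $i=1$ isomorphism literally being $w\mapsto w\otimes \frac{\log\chi}{\log\chi(\gamma_K)}$, i.e. cup product with that class — which is exactly the asserted description. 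One has to be a little careful that the identification $\R\Gamma(\sg_K,C)\simeq [K\to K]$ is as a complex of solid $K$-vector spaces and that $W$ is genuinely flat against it; both are covered by the cited acyclicity and by the fact that $K$ in each degree is a finite free (hence flat) solid $K$-module.

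To make the flat base-change step rigorous I would be careful about which presentation of $\R\Gamma(\sg_K,C)$ to use. The cleanest route is: express $\sg_K$-cohomology of $C$-coefficients via the $(\varphi,\Gamma)$-module Koszul complex, or more elementarily via the decompletion $C^{\sg_{K_\infty}} $-reduction, so that $\R\Gamma(\sg_K, C(j))$ is represented by a bounded complex of $K$-Banach spaces; then $W\otimes^{\Box}_K(-)$ applied termwise computes $\R\Gamma(\sg_K, W\otimes^{\Box}_K C(j))$ because $W$ is acyclic for $\otimes^{\Box}_K$ against Banach spaces (again Section~\ref{solid-product}, plus item (3) there for the limit/colimit compatibility if one prefers to write $W$ as $\lim_n$ or $\operatorname{colim}_n$ of Banach pieces). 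Taking cohomology and invoking~\eqref{old1} on each Banach term finishes the computation.

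The main obstacle I expect is not the finite-dimensional Tate input, which is classical, but verifying cleanly that solid tensor product by $W$ commutes with the totalization of the cochain complex computing $\R\Gamma(\sg_K, C(j))$ — i.e. the interchange of $\otimes^{\Box}_K W$ with the (possibly infinite) products $\underline{\Hom}(\Z[\sg_K^n], C(j))$ appearing in~\eqref{cond11}. This is where one genuinely uses that $W$ is Fréchet or of compact type (not an arbitrary solid $K$-vector space): the acyclicity and the product-compatibility statements of Section~\ref{solid-product} are exactly tailored to this, so the argument goes through, but it is the step that requires the hypotheses on $W$ and should be spelled out rather than asserted. Once that commutation is in hand, the identification of the explicit maps (canonical inclusion for $i=0$, cup with $\tfrac{\log\chi}{\log\chi(\gamma_K)}$ for $i=1$) is immediate from the $W=K$ case recorded in the Remark after~\eqref{old1}.
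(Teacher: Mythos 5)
Your argument is essentially correct but follows a genuinely different route from the paper. You prove a projection-formula statement $\R\Gamma(\sg_K, W\otimes^{\Box}_K C(j))\simeq W\otimes^{{\rm L}\Box}_K\R\Gamma(\sg_K,C(j))$ and then feed in the classical computation \eqref{old1} together with the splitting $\R\Gamma(\sg_K,C)\simeq K\oplus K[-1]$ from the Remark. The paper instead redoes Tate's argument directly with coefficients in $C\otimes^{\Box}_K W$: for $W$ Banach it uses the locally-constant-functions trick to get the inflation isomorphism from $H^i(\Gamma'_K,\widehat K'_\infty\otimes^{\Box}_K W)$, then the decompletion $\widehat K'_\infty=K\oplus R$ with $\gamma_K-1$ invertible with continuous inverse on $R$; the nuclear Fr\'echet and compact-type cases are then reduced to the Banach case by commuting $\lim_n$ (via Mittag--Leffler, using density of the projections) and $\colim_n$ (via compactness of $\Z[\sg_K]$ in ${\rm CondAb}$) past Galois cohomology. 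Your route is more formal and identifies the maps (inclusion in degree $0$, cup product with $\tfrac{\log\chi}{\log\chi(\gamma_K)}$ in degree $1$) for free; the paper's route avoids having to justify any base-change isomorphism at the level of cochains.

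One caveat on the step you rightly flag as the crux: the commutation $W\otimes^{\Box}_K\underline{\Hom}(\Z[\sg_K^{n}],C(j))\simeq\underline{\Hom}(\Z[\sg_K^{n}],W\otimes^{\Box}_KC(j))$ is \emph{not} literally one of the facts listed in Section \ref{solid-product} (those concern tensor products of Fr\'echet spaces, acyclicity, and commutation with pro-limits of nuclear spaces), so you cannot simply cite that section. You need, in addition, the identity $\scc(S,K)\wotimes_KV\simeq\scc(S,V)$ for $S$ profinite and $V$ Banach (e.g.\ via an orthonormal basis of $\scc(S,K)$, so both sides are $c_0(I,V)$), combined with item (1) of Section \ref{solid-product} to pass to the solid tensor product; the Fr\'echet and compact-type cases then follow by commuting the relevant $\lim_n$ and $\colim_n$ exactly as the paper does in \eqref{ciemno11} and \eqref{ciemno12}. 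With that supplied, your proof is complete.
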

\begin{proof} 
Assume first that $W$ is a Banach space. We will use the fact that,
 if  $X$ is profinite and  $Y$ is a  Banach space, 
all continuous functions from  $X$ to  $Y^0/p^n$ (where  $Y^0$ is the unit ball of  $Y$)
are locally constant.  We can apply this to  $X=\sg_K\times\cdots\times \sg_K$ and use original Tate's arguments to deduce that 
the inflation map 
 $H^i(\Gamma'_K,\widehat K'_\infty\otimes^{\Box}_K W)\to H^i(\G_K, C\otimes^{\Box}_K W)$ is an  isomorphism.
 Then,  we use the fact that  $\widehat{K}'_\infty=K\oplus R$ 
with $\gamma_K-1$ invertible with a continuous inverse on 
 $R$. It follows that $H^i(\Gamma'_K,R\otimes^{\Box}_K W)=0$, for all   $i$, and 
that the canonical map  $H^i(\Gamma'_K,W)\to H^i(\Gamma'_K,\widehat K'_\infty\otimes^{\Box}_K W)$
 is an  isomorphism.
We have proved our proposition in the Banach case.

 Assume now that $W$ is a nuclear Fr\'echet space.  Write $W\simeq \lim_n W_n$, for a projective system $\{W_n\}_{n\in\N}$, of Banach spaces with compact transition maps and such that the projections $p_s: \lim_n W_n\to W_s$, $s\in\N$,  have dense images (see \cite[Ch. 16]{Schn} for why this is possible). We have 
\begin{align}\label{ciemno11}
H^i(\sg_K,W(j)\otimes^{\Box}_KC) & \simeq H^i(\sg_K,(\lim_nW_n(j))\otimes^{\Box}_KC)\simeq  H^i(\sg_K,\lim_n(W_n(j)\otimes^{\Box}_KC))\\
 & \stackrel{\sim}{\to} H^i(\sg_K,\R\lim_n(W_n(j)\otimes^{\Box}_KC)) \stackrel{\sim}{\to}\lim_nH^i(\sg_K,W_n(j)\otimes^{\Box}_KC).\notag
\end{align}
The third isomorphism follows from the fact that the pro-system $\{W_n(j)\otimes^{\Box}_KC\}_{n\in\N}$ is  Mittag-Leffler (it is a pro-system of $C$-Banach spaces with projection maps $p_s{\otimes} {\rm Id}$ having dense images).
The fourth isomorphism follows from the fact that $\R^1\lim_nH^{i-1}(\sg_K,W_n(j)\otimes^{\Box}_KC)=0$ since the pro-system $\{H^i(\sg_K,W_n(j)\otimes^{\Box}_KC)\}_{n\in\N}$ is  Mittag-Leffler: by \eqref{old1}, this system is or  trivial or  isomorphic to the pro-system $\{W_n\}_{n\in\N}$, with projection maps $p_s$ having dense images maps. 

 Having the topological isomorphisms \eqref{ciemno11}, by \eqref{old1}  we have,
 in the nontrivial cases, topological isomorphisms
$$
\lim_nH^i(\sg_K,W_n(j)\otimes^{\Box}_KC)\stackrel{\sim}{\leftarrow}\lim_nW_n \stackrel{\sim}{\leftarrow}W,
$$
as wanted.

 Finally, assume that $W$ is of compact type.  We  write $W\simeq \colim_n W_n$, for an inductive system $\{W_n\}_{n\in\N}$, of Banach spaces with injective, compact transition maps. Then
 \begin{align}\label{ciemno12}
H^i(\sg_K,W(j)\otimes^{\Box}_KC) & \simeq H^i(\sg_K,(\colim_nW_n(j))\otimes^{\Box}_KC)\simeq  H^i(\sg_K,\colim_n(W_n(j)\otimes^{\Box}_KC))\\
 & \stackrel{\sim}{\leftarrow} \colim_nH^i(\sg_K,W_n(j)\otimes^{\Box}_KC).\notag
\end{align}
The third isomorphism follows from the fact that $\Z[\sg_K]$ is a compact object in ${\rm CondAb}$. 

 Having the  isomorphisms \eqref{ciemno12}, by \eqref{old1}  we have in the nontrivial cases  isomorphisms
$$
\colim_nH^i(\sg_K,W_n(j)\otimes^{\Box}_KC)\stackrel{\sim}{\leftarrow}\colim_nW_n \stackrel{\sim}{\rightarrow}W,
$$
as wanted.
\end{proof}

\section{Pro-\'etale cohomology} In this chapter, we study    the properties of  pro-\'etale cohomology of smooth dagger  curves over $K$. Moreover we assume that the curve is either proper, or Stein, or a dagger affinoid. 
\subsection{Topology on $p$-adic pro-\'etale cohomology} Let $X$ be a  rigid analytic variety over $K$ or~$C$. 
\subsubsection{The condensed approach}

\begin{definition} \label{condensed11}
 \begin{enumerate}
 \item We define the pro-\'etale site of $X$ as
 $
 X_{\proeet}:=X^{\Diamond}_{\qproeet},
 $
 where $X^{\Diamond}$ is the diamond associated to $X$ and $X^{\Diamond}_{\qproeet}$ denotes the quasi-pro-\'etale site of $X^{\Diamond}$ \cite[Def. 14.1]{Sch21}.
 \item  For  a sheaf $\sff $ on $X_{\proeet}$ with values in $\sd({\rm CondAb})$,  the pro-\'etale cohomology complex 
 $$
 \R\Gamma_{\proeet,\Box}(X,\sff)\in \sd({\rm CondAb}).
 $$
 This is because the category ${\rm CondAb}$ is closed under all limits and colimits (see Section \ref{colibre1}). 
The  pro-\'etale cohomology groups $H^i_{\proeet,\Box}(X,\sff)$, for $i\geq 0$,  are  objects of ${\rm CondAb}$. Similarly, for a sheaf $\sff $ with values in $\sd({\rm Mod}^{\rm cond}_{\Q_p})$.
 \end{enumerate}
 \end{definition}
 If a sheaf $\sff$  in Definition \ref{condensed11} has values in $\sd({\rm Solid})$ then $\R\Gamma_{\proeet,\Box}(X,\sff)$ has values in $\sd({\rm Solid})$ as well because the category ${\rm Solid}$ is closed under all limits and colimits (see Section \ref{colibre2}). Similarly, for the category ${\rm Mod}^{\rm solid}_{\Q_p}$.
 \subsubsection{Comparison with the classical approach} 
 Recall that $\R\Gamma(X,\Q_p):=\R\Gamma_{\proeet}(X,\Q_p)\in \sd(C^{\rm Hcg}_{\Q_p})$. Locally it is a complex of Banach spaces over $\Q_p$; globally -- of Fr\'echet spaces over $\Q_p$. 

 \begin{lemma}  \label{morning1}
\begin{enumerate}
\item We have a natural quasi-isomorphism in $\sd({\rm Mod}^{\cond}_{\Q_p})$ 
$$
{\rm CD}(\R\Gamma(X,\Q_p))\simeq \R\Gamma_{\Box}(X,\Q_p).
$$
\item We have a natural   isomorphism in $\sd(C_{\Q_p})$
$$
 \R\Gamma_{\Box}(X,\Q_p)(*)_{\rm top}\simeq \R\Gamma(X,\Q_p).
$$
\item We have a natural quasi-isomorphism in ${\rm Mod}^{\cond}_{\Q_p}$
$${\rm CD}(\wt{H}^i(X,\Q_p))\simeq H^i_{\Box}(X,\Q_p). $$
\end{enumerate}
\end{lemma}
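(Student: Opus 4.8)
The plan is to deduce all three statements from the local-to-global structure of pro-\'etale cohomology together with the exactness and compatibility properties of the condensification functor $\mathrm{CD}$ established in Section \ref{res21} (in particular Lemma \ref{res2}, which says $\mathrm{CD}$ carries strict exact sequences of Fr\'echet spaces, or of spaces of compact type, to exact sequences, and commutes with passage to cohomology). The key input is that, for the curves under consideration, $\R\Gamma(X,\Q_p)$ can be represented by a complex of Fr\'echet spaces (locally, of Banach spaces), obtained from a \v{C}ech-type or hypercover computation on an affinoid exhaustion. So the first step is to fix such a representative: choose an increasing admissible covering $X=\bigcup_n U_n$ by affinoids (or, in the proper case, a finite good covering), compute $\R\Gamma(U_n,\Q_p)$ as a bounded complex of Banach spaces, and present $\R\Gamma(X,\Q_p)\simeq \R\lim_n \R\Gamma(U_n,\Q_p)$ as a complex of Fr\'echet spaces; the higher $\R\lim$ vanish by topological Mittag-Leffler (Section \ref{solid-frechet}).

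For claim (1): both sides are computed by applying a sheaf-cohomology functor to $\Q_p$ on $X_{\proeet}$, the only difference being the target category ($C^{\mathrm{Hcg}}_{\Q_p}$ versus $\mathrm{Mod}^{\mathrm{cond}}_{\Q_p}$). Since $\mathrm{CD}$ is a right adjoint (hence commutes with limits) and, by Lemma \ref{res2}, is exact on the relevant subcategories, it commutes with the homotopy limits and the totalization defining pro-\'etale cohomology; concretely, applying $\mathrm{CD}$ term-by-term to the chosen Fr\'echet representative yields a representative of $\R\Gamma_{\Box}(X,\Q_p)$, because $\mathrm{CD}$ of a product of Banach spaces is the product of the condensed Banach spaces and $\mathrm{CD}$ commutes with $\R\lim$ over the exhaustion. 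This gives the natural quasi-isomorphism $\mathrm{CD}(\R\Gamma(X,\Q_p))\simeq \R\Gamma_{\Box}(X,\Q_p)$.

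For claim (2): apply the left adjoint $(-)(*)_{\mathrm{top}}$ to the quasi-isomorphism of (1). By Proposition 1.2 of \cite{Sch20} (quoted in Section \ref{res21}) together with Remark (1) there, for any compactly generated $T$ one has $\underline{T}(*)_{\mathrm{top}}\simeq T^{\mathrm{cg}}=T$, and Fr\'echet spaces (and spaces of compact type) are compactly generated; so $(-)(*)_{\mathrm{top}}$ applied to the Fr\'echet representative of $\mathrm{CD}(\R\Gamma(X,\Q_p))$ recovers $\R\Gamma(X,\Q_p)$ on the nose, in $\sd(C_{\Q_p})$. For claim (3): combine (1) with the second assertion of the displayed consequence of Lemma \ref{res2}, namely that for a complex $V$ of Fr\'echet spaces (or of compact type) one has $\mathrm{CD}(\widetilde{H}^i(V))\simeq H^i(\mathrm{CD}(V))$; applied to our representative this gives $\mathrm{CD}(\widetilde H^i(X,\Q_p))\simeq H^i(\mathrm{CD}(\R\Gamma(X,\Q_p)))\simeq H^i_{\Box}(X,\Q_p)$.

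The main obstacle, and the only place requiring care, is ensuring that $\R\Gamma(X,\Q_p)$ genuinely admits a representative by a complex of Fr\'echet spaces with strict differentials (equivalently, that the $\widetilde H^i$ are classical, i.e.\ the cohomology is separated), so that Lemma \ref{res2} applies and commutes with the homotopy limits involved; for proper $X$ this is immediate (finite-dimensional cohomology), while for the Stein and affinoid cases one invokes the finiteness/separatedness results recalled in Section \ref{compact1} and the structure of the covering, reducing to the fact that a countable $\R\lim$ of complexes of Banach spaces along dense transition maps is strict. Once strictness is in hand, everything else is a formal manipulation of adjoint functors.
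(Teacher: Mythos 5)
Your reductions for claims (2) and (3) — applying the left adjoint $(-)(*)_{\rm top}$ to a Fr\'echet representative and invoking $\underline{T}(*)_{\rm top}\simeq T$ for compactly generated $T$, respectively combining claim (1) with the compatibility of ${\rm CD}$ with $\wt{H}^i$ from Section \ref{res21} — are sound and match what the paper does. The gap is in claim (1), at the sentence ``both sides are computed by applying a sheaf-cohomology functor to $\Q_p$ on $X_{\proeet}$, the only difference being the target category.'' This is precisely the point that needs proof, not a starting assumption. The classical object $\R\Gamma(X,\Q_p)\in\sd(C^{\rm Hcg}_{\Q_p})$ is not literally sheaf cohomology valued in a topological category (that category is only quasi-abelian and is not suited to injective resolutions); its topology is built from explicit local representatives. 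The condensed object $\R\Gamma_{\Box}(X,\Q_p)$ is genuine sheaf cohomology on the quasi-pro-\'etale site of the diamond. Your \v{C}ech/exhaustion argument correctly reduces the global comparison to the local one (CD commutes with the products, totalizations and $\R\lim$'s involved, given strictness), but it never establishes the base case: for an affinoid or other local piece $U$, why is ${\rm CD}(\R\Gamma(U,\Q_p))\simeq\R\Gamma_{\Box}(U,\Q_p)$?

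That local identification is the actual content of the lemma, and the paper supplies it by a different mechanism: locally, $\R\Gamma(X,\Q_p)$ is represented by continuous group cohomology of the (geometric) fundamental group, and Lemma \ref{niedziela1} matches the complex of continuous cochains $n\mapsto \scc(G^{n-1},V)$ with the condensed cochain complex $n\mapsto\underline{\Hom}(\Z[G^{n-1}],\underline V)$, using that $\Hom(\Z[S\times G^{n-1}],\underline V)\simeq\scc(S\times G^{n-1},V)$ for profinite $S$. Without this (or an equivalent local comparison, e.g.\ via a hypercover by $w$-contractibles where both sides reduce to spaces of continuous functions, together with a check that the classical topology is the one induced by that presentation), the argument is circular. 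I would fix your write-up by replacing the quoted sentence with an explicit local comparison along the lines of Lemma \ref{niedziela1}; the rest of your plan then goes through and is essentially the paper's proof.
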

\begin{proof} Claims  (1) and (2) follow from the fact that $\R\Gamma(X,\Q_p)$ is represented locally by Galois cohomology of the fundamental group and we have Lemma \ref{niedziela1}. Claim (3) follows from claim (1) and Section \ref{res21}. 
\end{proof}
\begin{remark} \label{morning1other}By the same arguments, the analog of Lemma \ref{morning1} holds for  de Rham cohomology (de Rham complex) as well as for  Hyodo-Kato cohomology  (see \cite[Sec. 4.2]{CN4} for the definition of the latter).
\end{remark}
\subsection{Hochschild-Serre spectral sequence}  
We record  here the  Hochschild-Serre spectral sequence for pro-\'etale cohomology.
\begin{lemma}\label{Serre}{\rm (Bosco, \cite[Prop. 4.12]{Bosc})}
Let $X$ be a rigid analytic  variety over $K$. There is a  natural Hochschild-Serre spectral sequence
\begin{equation}
\label{chicago0}
E^{a,b}_2 =H^a(\sg_K,H^b_{\Box}(X_C,\Q_p(j)))\Rightarrow H^{a+b}_{\Box}(X,\Q_p(j)).
\end{equation}
\end{lemma}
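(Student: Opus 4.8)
The plan is to realise \eqref{chicago0} as the first hyper-cohomology spectral sequence of the composite ``pro-\'etale cohomology of $X_C$, followed by condensed Galois cohomology of $\sg_K$''; the only genuinely geometric input is a Galois descent isomorphism, and the rest is homological algebra in the abelian category of solid $\Q_p$-vector spaces. So the first step is to establish, for the constant sheaf $\Q_p(j)$, a natural quasi-isomorphism in $\sd({\rm Mod}^{\rm solid}_{\Q_p})$
$$
\R\Gamma_{\Box}(X,\Q_p(j))\ \xrightarrow{\ \sim\ }\ \R\Gamma\big(\sg_K,\ \R\Gamma_{\Box}(X_C,\Q_p(j))\big),
$$
where the target is the condensed group cohomology $\R\underline{\Hom}_{\Q_{p,\Box}[\sg_K]}(\Q_p,-)$ applied to $\R\Gamma_{\Box}(X_C,\Q_p(j))$, viewed as an object of $\sd(\Q_{p,\Box}[\sg_K])$ via its functorial $\sg_K$-action. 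This rests on the fact that $\operatorname{Spd}C\to\operatorname{Spd}K$ is a $\underline{\sg_K}$-torsor for the quasi-pro-\'etale topology ($\overline K/K$ being pro-(finite Galois) with group $\sg_K$, and completion not changing the associated diamond); pulling back, $X_C^\Diamond\to X^\Diamond$ is a $\underline{\sg_K}$-torsor in $X^\Diamond_{\qproeet}$, and since $\Q_p(j)$ is pulled back from $\operatorname{Spd}K$ it carries its tautological descent datum, so quasi-pro-\'etale descent produces the displayed map. Working condensedly is what makes the Galois action usable here: its ``continuity'' is automatically recorded by the module structure over the condensed group ring $\Q_{p,\Box}[\sg_K]=\big(\lim_H\Q_p[\sg_K/H]\big)$, and the comonadic (\v{C}ech) resolution of $\Q_p$ attached to the torsor is the bar resolution $\cdots\to\Q_{p,\Box}[\sg_K^{n+1}]\to\cdots\to\Q_p\to 0$ computing $\R\underline{\Hom}_{\Q_{p,\Box}[\sg_K]}(\Q_p,-)$.

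Next I would extract the spectral sequence from this. Pick a complex $M^\bullet$ of solid $\Q_{p,\Box}[\sg_K]$-modules, concentrated in non-negative degrees, representing $\R\Gamma_{\Box}(X_C,\Q_p(j))$. Then $\R\Gamma(\sg_K,M^\bullet)$ is computed by the total complex $T^\bullet$ of the non-homogeneous bar double complex $C^{a,b}=\underline{\Hom}(\Z[\sg_K^a],M^b)$ (the bar resolution is contractible as a complex of solid $\Q_p$-vector spaces, hence its terms are acyclic for $\underline{\Hom}_{\Q_{p,\Box}[\sg_K]}(-,N)$, any $N$). Since $a\geq 0$ and $b\geq 0$, this is a first-quadrant double complex, so both of its filtration spectral sequences converge. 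Filtering by the bar degree $a$, and using that $\underline{\Hom}(\Z[\sg_K^a],-)$ is exact ($\Z[\sg_K^a]$ being projective in ${\rm CondAb}$, as $\sg_K^a$ is profinite), one takes cohomology first in $b$, getting $E_1^{a,b}=\underline{\Hom}(\Z[\sg_K^a],H^b_{\Box}(X_C,\Q_p(j)))$, and then in $a$, obtaining
$$
E_2^{a,b}=H^a\big(\sg_K,\ H^b_{\Box}(X_C,\Q_p(j))\big)\ \Longrightarrow\ H^{a+b}(T^\bullet)\ \cong\ H^{a+b}_{\Box}(X,\Q_p(j)),
$$
the final isomorphism being the descent quasi-isomorphism above. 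This is \eqref{chicago0}, and since ${\rm cd}(\sg_K)=2$ it moreover lives in the three columns $a=0,1,2$.

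The only step with real content is the descent isomorphism, and within it the identification of the descent object with condensed Galois cohomology --- i.e., matching the \v{C}ech/comonadic resolution coming from the $\underline{\sg_K}$-torsor with the bar resolution of $\Q_p$ over the condensed Iwasawa algebra $\Q_{p,\Box}[\sg_K]$, which is where profiniteness of $\sg_K$ and the structure of that ring enter. Since this is exactly \cite[Prop. 4.12]{Bosc}, I would simply cite that reference; the passage to the spectral sequence is then formal.
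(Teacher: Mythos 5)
Your proposal follows essentially the same route as the paper: both identify the \v{C}ech nerve of the $\underline{\sg_K}$-torsor $X_C^{\Diamond}\to X^{\Diamond}$ with $X_C\times\sg_K^{n-1}$, match the resulting complex with the bar complex computing condensed Galois cohomology (the paper carries out the key computation $\R\Gamma(Y\times S,\Q_p)\simeq\R\underline{\Hom}(\Z_{\Box}[S],\R\Gamma(Y,\Q_p))$ by reducing to w-contractible $Y$, which you legitimately defer to the cited reference), and then read off the spectral sequence from the first-quadrant double complex. One small correction: $\Z[\sg_K^a]$ is not projective in ${\rm CondAb}$ for $\sg_K^a$ merely profinite (only extremally disconnected sets give projectives there); the exactness you need is that $\Z_{\Box}[\sg_K^a]$ is internally projective in ${\rm Solid}$, which is what the paper invokes and which suffices since all modules in play are solid.
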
 
\begin{proof} We pass to the world of diamonds. 
Since $X_C\to X$ is a $\sg_K$-torsor, we have isomorphisms
\begin{equation}\label{pierre1}
(X_C/X)^n\simeq X_C\times \sg_K^{n-1},\quad n\geq 1.
\end{equation}
It suffices thus to show that, for any  adic space $Y$ over ${\rm Spa}(K,\so_K)$ and any profinite set $S$,
we have a natural quasi-isomorphism in $\sd({\rm CondAb})$ (take $Y=X_C$ and $S=\sg_K^{n-1}$ in the notation from \eqref{pierre1})
$$\R\Gamma(Y\times S,\Q_p)\simeq  \R\underline{\Hom}(\Z_{\Box}[S], \R\Gamma(Y,\Q_p)).
$$

  This is local on $Y$, hence  we may assume that 
  $Y $ is  a w-contractible space over ${\rm Spa}(K,\so_K)$. 
Then we have the following natural quasi-isomorphisms 
\begin{equation}\label{chicago10}
\R\Gamma(Y\times S,\Q_p) \simeq \underline{\Q_p(Y\times S)} \simeq  \underline{\scc(S,\Q_p(Y ))} \simeq \underline{\Hom}(\Z[S],\underline{\Q_p(Y )}) \simeq \underline{\Hom}(\Z_{\Box}[S],\underline{\Q_p(Y )}).
 \end{equation}
To see  the    first quasi-isomorphism note that, for any profinite set $T$, we have 
$$
\R\Gamma(Y\times T,\Z/p^n) \simeq \R\Gamma_{\eet}(Y\times T,\Z/p^n) \simeq \Z/p^n(Y\times T),
$$ 
where the last quasi-isomorphism holds because $Y\times T$ is a strictly  totally disconnected perfectoid space (by \cite[Lemma 7.19]{Sch21}),  and  the pro-system $\{\Z/p^n(Y\times T)\}_{n\in\N}$ is   Mittag-Leffler. The second quasi-isomorphism in \eqref{chicago10} follows from the fact that, for any profinite set $S^{\prime}$, 
 $$\scc(S^{\prime},\scc(S, \Q_p(Y )))\simeq \scc(S^{\prime} \times S,\Q_p(Y )).$$  

 Now,  since  $\Z_{\Box}[S]$ is an internally projective object in {\rm Solid}, the right-hand side of \eqref{chicago10}
identifies with $$ \R \underline{\Hom}(\Z_{\Box}[S],\underline{\Q_p(Y )})\simeq  \R \underline{\Hom}(\Z_{\Box}[S],\R\Gamma(Y,\Q_p)),
$$ as wanted.
 \end{proof}

\subsection{Cohomology of Stein curves}We will now discuss arithmetic and geometric pro-\'etale cohomology of smooth Stein curves. 
\subsubsection{Geometric  cohomology of Stein varieties} 
We start with geometric pro-\'etale cohomology. 
 Let $X$ be a smooth Stein  variety over $K$, geometrically irreducible.  By \cite[Th. 5.14]{CN5},
$\R\Gamma(X_C,\Q_p(j))\in\sd(C_{\Q_p})$ has classical  cohomology. Moreover, $H^i(X_C,\Q_p(j))$, for $i\geq 0$,  is  Fr\'echet  and we have a Galois equivariant strict map of  strictly exact sequences of Fr\'echet spaces
 \begin{equation}
 \label{Stein}
 \xymatrix@R=5mm@C=6mm{
 0\ar[r] &  \Omega^{i-1}(X_C)/\ker d\ar[r] \ar@{=}[d] & H^i(X_C,\Q_p(i))\ar[r] \ar[d]^{\alpha} &  (H^i_{\rm HK}(X_C)\wh{\otimes}_{\breve{F}}\wh{\B}^+_{\st})^{N=0,\phi=p^i}\ar[d]^{\iota_{\rm HK}\otimes\theta} \ar[r] & 0\\
  0\ar[r] &  \Omega^{i-1}(X_C)/\ker d\ar[r]^-{d} & \Omega^i(X_C)^{d=0}\ar[r] & H^i_{\dr}(X_C)\ar[r] & 0.}
 \end{equation}
 {\bf Warning:} These spaces are not nuclear Fr\'echet over $\Q_p$ ($C$ is not a nuclear Banach space over $\Q_p$ because it is not finite dimensional over $\Q_p$) !
 
 Similarly, in the condensed language, we have the following:
  \begin{lemma} The cohomology  $H^i_{\Box}(X_C,\Q_p(j))$, for $i\geq 0$,  is  Fr\'echet  and we have a Galois equivariant  map of  exact sequences of Fr\'echet spaces
\begin{equation}
 \label{Stein-cond}
 \xymatrix@R=5mm@C=6mm{
 0\ar[r] &  \Omega^{i-1}(X_C)/\ker d\ar[r] \ar@{=}[d] & H_{\Box}^i(X_C,\Q_p(i))\ar[r] \ar[d]^{\alpha} & 
  (H^i_{{\rm HK},\Box}(X_C){\otimes}^{\Box}_{\breve{F}}\wh{\B}^+_{\st})^{N=0,\phi=p^i}\ar[d]^{\iota_{\rm HK}\otimes\theta} \ar[r] & 0\\
  0\ar[r] &  \Omega^{i-1}(X_C)/\ker d\ar[r]^-{d} & \Omega^i(X_C)^{d=0}\ar[r] & H^i_{\dr,\Box}(X_C)\ar[r] & 0.}
 \end{equation}
 \end{lemma}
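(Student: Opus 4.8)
The plan is to obtain \eqref{Stein-cond} by applying the condensification functor ${\rm CD}=(\underline{\phantom{x}})$ to the classical diagram \eqref{Stein} and then recognizing the resulting condensed objects. First I would recall from \cite[Th.\,5.14]{CN5} — the input statement that uses Steinness — that $\R\Gamma(X_C,\Q_p(j))$ has classical cohomology, that each $H^i(X_C,\Q_p(j))$ is Fréchet over $\Q_p$, and, crucially, that the two rows of \eqref{Stein} are \emph{strictly} exact sequences of Fréchet spaces. From Lemma \ref{morning1}(3), applied over $C$ (the lemma is stated for $X$ over $K$ or $C$), one gets a natural isomorphism $H^i_{\Box}(X_C,\Q_p(j))\simeq{\rm CD}(H^i(X_C,\Q_p(j)))$, hence $H^i_{\Box}(X_C,\Q_p(j))$ is a solid Fréchet space; here the twist is harmless, since $\Q_p(j)$ is a rank-one free Galois module and ${\rm CD}$ commutes with $-\otimes\Q_p(j)$. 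The de Rham and Hyodo--Kato analogs come the same way via Remark \ref{morning1other}.

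Next I would apply ${\rm CD}$ to \eqref{Stein}. Functoriality of ${\rm CD}$ turns the vertical maps $\alpha$, $\iota_{\rm HK}\otimes\theta$ and the identities of \eqref{Stein} into those of \eqref{Stein-cond} and preserves Galois equivariance. Lemma \ref{res2} says ${\rm CD}$ carries strictly exact sequences of Fréchet spaces to exact sequences of condensed $K$-vector spaces, so the two rows become exact. It then remains to identify ${\rm CD}$ of the upper-right term of \eqref{Stein}: for this I would use that ${\rm CD}$ is symmetric monoidal on Fréchet spaces, i.e.\ $\underline{V}\otimes^{\Box}_{\breve{F}}\underline{W}\simeq\underline{V\wotimes_{\breve{F}}W}$ (Section \ref{solid-product}(1), valid over $\breve{F}$ as well), which gives ${\rm CD}(H^i_{\rm HK}(X_C)\wotimes_{\breve{F}}\wh{\B}^+_{\st})\simeq H^i_{{\rm HK},\Box}(X_C)\otimes^{\Box}_{\breve{F}}\wh{\B}^+_{\st}$, together with the fact that ${\rm CD}$ is left exact (it is a right adjoint, with left adjoint $X\mapsto X(*)_{\rm top}$), so it commutes with the kernels cutting out the conditions $N=0$ and $\phi=p^i$. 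Thus ${\rm CD}$ of \eqref{Stein} is exactly \eqref{Stein-cond}.

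The main obstacle is not in the condensed bookkeeping but in having the input in the right form: I need the rows of \eqref{Stein} to be \emph{strictly} exact as sequences of Fréchet spaces, since that is precisely the hypothesis of Lemma \ref{res2}; this rests on \cite[Th.\,5.14]{CN5} and genuinely uses Steinness (Fréchetness of the coherent terms $\Omega^{i-1}(X_C)/\ker d$ and $\Omega^i(X_C)^{d=0}$, and strictness of the syntomic and Hyodo--Kato comparison maps). A lesser subtlety worth flagging is that $\wh{\B}^+_{\st}$ and $H^i_{\rm HK}(X_C)$ are merely Fréchet — not nuclear — over $\breve{F}$ (so that $C$ is not nuclear over $\Q_p$ and the spaces in \eqref{Stein-cond} are not nuclear Fréchet over $\Q_p$, as the warning records); but the monoidal identity above holds for arbitrary Fréchet spaces, and the eigenspace conditions commute with ${\rm CD}$ by left exactness, so no nuclearity hypothesis enters anywhere in the argument.
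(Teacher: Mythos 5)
Your proposal is correct and follows the same overall architecture as the paper: apply ${\rm CD}$ to \eqref{Stein}, invoke Lemma \ref{res2} on the strictly exact rows of Fr\'echet spaces, and use Lemma \ref{morning1} and Remark \ref{morning1other} to identify all terms except the Hyodo--Kato one, whose identification reduces, by left exactness of ${\rm CD}$, to commuting ${\rm CD}$ with the completed tensor product. The one place you diverge is that last step: you identify ${\rm CD}(H^i_{\rm HK}(X_C)\wotimes_{\breve F}\wh{\B}^+_{\st})$ with $H^i_{{\rm HK},\Box}(X_C)\otimes^{\Box}_{\breve F}\wh{\B}^+_{\st}$ in one stroke via the monoidal property of Section \ref{solid-product}(1) (Bosco's Prop.\,A.68 over $\breve F$), which is legitimate since $\wh{\B}^+_{\st}$ is Banach and $H^i_{\rm HK}(X_C)$, being a countable limit of finite-rank spaces, is Fr\'echet over $\breve F$. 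The paper instead writes $H^i_{\rm HK}(X_C)\simeq\lim_n H^i_{\rm HK}(X_{n,C})$ with finite-rank pieces, commutes ${\rm CD}$ with the limit, handles each finite-rank piece trivially, and then reassembles using Section \ref{solid-product}(3). Your route is shorter; the paper's detour has the side benefit of exhibiting $H^i_{{\rm HK},\Box}(X_C)$ explicitly as a limit of finite-rank solid spaces (a description it reuses to justify commuting the solid tensor product with the limit, and implicitly later in the finiteness arguments), but as a proof of the lemma itself both arguments are complete.
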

 \begin{proof} We apply the functor ${\rm CD}(-)$ to the diagram \eqref{Stein}. Since all the spaces in that diagram are Fr\'echet, by Lemma \ref{res2},  we obtain a map of exact sequences. By Lemma \ref{morning1} and Remark \ref{morning1other}, it remains to show that
 $$
{\rm CD} ((H^i_{{\rm HK}}(X_C)\wh{\otimes}_{\breve{F}}\wh{\B}^+_{\st})^{N=0,\phi=p^i}) \simeq (H^i_{{\rm HK},\Box}(X_C){\otimes}^{\Box}_{\breve{F}}\wh{\B}^+_{\st})^{N=0,\phi=p^i}.
 $$
 Or, since the functor ${\rm CD}(-)$ is left exact, that
 $$
 {\rm CD} (H^i_{{\rm HK}}(X_C)\wh{\otimes}_{\breve{F}}\wh{\B}^+_{\st})\simeq (H^i_{{\rm HK},\Box}(X_C){\otimes}^{\Box}_{\breve{F}}\wh{\B}^+_{\st}).
 $$
 
   But, since $H^i_{{\rm HK}}(X_C)\simeq \lim_n H^i_{{\rm HK}}(X_{n,C})$, where $H^i_{{\rm HK}}(X_{n,C})$ are of finite rank over $\breve{F}$, we have
 \begin{align*}
{\rm CD} (H^i_{{\rm HK}}(X_C)\wh{\otimes}_{\breve{F}}\wh{\B}^+_{\st}) & \simeq 
  {\rm CD}(\lim_nH^i_{{\rm HK}}(X_{n,C})\wh{\otimes}_{\breve{F}}\wh{\B}^+_{\st})\simeq   \lim_n{\rm CD} (H^i_{{\rm HK}}(X_{n,C})\wh{\otimes}_{\breve{F}}\wh{\B}^+_{\st})\\
   & \simeq 
   \lim_n({\rm CD} (H^i_{{\rm HK}}(X_{n,C})){\otimes}^{\Box}_{\breve{F}}\wh{\B}^+_{\st})\simeq   \lim_n(H^i_{{\rm HK},\Box}(X_{n,C}){\otimes}^{\Box}_{\breve{F}}\wh{\B}^+_{\st})\\
    & \simeq  (H^i_{{\rm HK},\Box}(X_C){\otimes}^{\Box}_{\breve{F}}\wh{\B}^+_{\st}),
   \end{align*}
   The   second isomorphism follows from the fact  that the functor ${\rm CD}(-)$ commutes with limits; the third one --  from the fact that  $H^i_{{\rm HK}}(X_{n,C})$ is of finite rank. The last one -- from the fact that $\wh{\B}^+_{\st}$ is Banach and $H^i_{{\rm HK},\Box}(X_C)$ is a product of spaces of finite rank. 
 \end{proof}
 {\bf Notation:} From now on we will omit the $\Box$ in $\R\Gamma_{\Box}(-,-)$ and other cohomologies. This should not cause confusion. 
  \subsubsection{Arithmetic cohomology of Stein curves} 
We pass now to  arithmetic pro-\'etale cohomology. 
  Let $X$ be a smooth Stein curve over $K$, geometrically irreducible. 
We will look at  its arithmetic pro-\'etale cohomology complex
$\R\Gamma(X,\Q_p(j))\in\sd(\Q_{p,\Box}).$
\begin{theorem}\label{final2}
\begin{enumerate}
\item The cohomology of $\R\Gamma(X,\Q_p(j)), j\in\Z,$ is  nuclear Fr\'echet. 
\item  Let $\{X_n\}_{n\in\N}$ be a strictly increasing open covering of $X$ by Stein varieties and let $ i,j\in\Z$. Then $\R^1\lim_nH^i(X_n,\Q_p(j))=0$. Hence we have an isomorphism
$$
H^i(X,\Q_p(j))\stackrel{\sim}{\to} \lim_nH^i(X_n,\Q_p(j)).
$$
\end{enumerate}
\end{theorem}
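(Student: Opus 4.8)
The plan is to deduce both statements from the Hochschild--Serre spectral sequence of Lemma~\ref{Serre},
$$E_2^{a,b}=H^a(\sg_K,H^b(X_C,\Q_p(j)))\Longrightarrow H^{a+b}(X,\Q_p(j)),$$
together with the geometric comparison results recalled in the introduction and the Galois cohomology computations of Section~3. First I would pin down the $E_2$-page. By~\eqref{vanishing1} only the rows $b=0,1$ survive, and the single a priori possible differential $d_2\colon E_2^{0,1}\to E_2^{2,0}=H^2(\sg_K,\Q_p(j))$ vanishes: after a harmless finite extension of $K$ the curve $X$ acquires a rational point, and then the edge morphism $H^2(\sg_K,\Q_p(j))\to H^2(X,\Q_p(j))$ is split injective, which forces $d_2=0$ (the general case following by restriction). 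Hence the spectral sequence degenerates at $E_2$, and for every $i$ there is a short exact sequence of solid $\Q_p$-vector spaces
$$0\to H^i(\sg_K,\Q_p(j))\to H^i(X,\Q_p(j))\to H^{i-1}(\sg_K,H^1(X_C,\Q_p(j)))\to 0$$
whose left-hand term is finite-dimensional over $\Q_p$. By Lemma~\ref{ext10}(2), part~(1) reduces to showing that $H^{a}(\sg_K,H^1(X_C,\Q_p(j)))$ is nuclear Fr\'echet, and part~(2) to showing that the analogous pro-systems in $n$ have vanishing $\R^1\lim$.

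Next I would compute $H^a(\sg_K,H^1(X_C,\Q_p(j)))$, first assuming $H^1_{\rm HK}(X_C)$ of finite rank over $\breve{F}$ (the general case is reduced to this below). After twisting, the comparison sequence~\eqref{comp1} exhibits $H^1(X_C,\Q_p(j))$ as a $\sg_K$-equivariant extension
$$0\to(\so(X_C)/C)(j-1)\to H^1(X_C,\Q_p(j))\to {\rm HK}^1(X_C,j)\to 0$$
of ${\rm HK}^1(X_C,j):=(H^1_{\rm HK}(X_C)\otimes^{\Box}_{\breve{F}}\wh{\B}^+_{\st})^{N=0,\phi=p^j}$ by $(\so(X_C)/C)(j-1)\simeq(\so(X)/K)\otimes^{\Box}_KC(j-1)$. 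The generalized Tate formula~\eqref{newton15}, applied to the nuclear Fr\'echet space $W=\so(X)/K$ ($\so(X)$, hence $\so(X)/K$, being nuclear Fr\'echet since $X$ is Stein), gives $H^a(\sg_K,(\so(X)/K)\otimes^{\Box}_KC(j-1))\simeq\so(X)/K$ for $a=0,1$ if $j=1$ and $0$ otherwise; while ${\rm HK}^1(X_C,j)$, being in this case an almost $C$-representation, has finite-dimensional $\Q_p$-cohomology (by Poitou--Tate together with~\eqref{newton15}). Running the long exact Galois cohomology sequence of the extension, one finds that $H^a(\sg_K,H^1(X_C,\Q_p(j)))$ sits in a short exact sequence $0\to N\to H^a(\sg_K,H^1(X_C,\Q_p(j)))\to F\to 0$ with $N$ a subquotient of $\so(X)/K$ and $F$ finite-dimensional over $\Q_p$; since $F$ is finite-dimensional the sequence splits, so $H^a(\sg_K,H^1(X_C,\Q_p(j)))$ is nuclear Fr\'echet, and part~(1) in the finite-rank case follows from the displayed sequence and Lemma~\ref{ext10}(2). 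For a general Stein curve I would then invoke part~(2): choosing a Stein exhaustion $X=\bigcup_nX_n$ by opens with finite-dimensional de Rham cohomology, $H^i(X,\Q_p(j))=\lim_nH^i(X_n,\Q_p(j))$ is a countable projective limit of nuclear Fr\'echet spaces, hence nuclear Fr\'echet.

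For part~(2) I would use the functoriality in $X$ of the spectral sequence and of all the identifications above, which makes them compatible with the open immersions $X_n\hookrightarrow X_{n+1}$; by cofinality of Stein exhaustions I may assume each $X_n$ has finite-rank Hyodo--Kato cohomology. Since $X=\colim_nX_n$ one has $\R\Gamma(X,\Q_p(j))\simeq\R\lim_n\R\Gamma(X_n,\Q_p(j))$, so the claim amounts to $\R^1\lim_nH^i(X_n,\Q_p(j))=0$ for all $i$; applying $\R\lim_n$ to the two-step (degenerate) filtration this reduces to $\R^1\lim_nE_2^{a,b}(X_n)=0$ for $b\in\{0,1\}$. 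For $b=0$ the pro-system $\{E_2^{a,0}(X_n)\}_n=\{H^a(\sg_K,\Q_p(j))\}_n$ is constant, so $\R^j\lim_n=0$ for $j\geq1$. For $b=1$, the extension description of $E_2^{a,1}(X_n)=H^a(\sg_K,H^1(X_{n,C},\Q_p(j)))$ from the previous paragraph reduces, via the $\R\lim_n$-triangle, the vanishing of $\R^1\lim_n$ to that of $\R^1\lim_n$ of the two pro-systems $\{\so(X_n)/K\}_n$ and $\{H^a(\sg_K,{\rm HK}^1(X_{n,C},j))\}_n$. The first vanishes because the transition maps of $\{\so(X_n)/K\}_n$ --- restriction of functions to relatively compact Stein subdomains --- have dense image, so this system is topologically Mittag--Leffler in the sense of Section~\ref{solid-frechet}; the second vanishes because, under the finite-rank assumption, it is a pro-system of finite-dimensional $\Q_p$-vector spaces. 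Combining these gives $\R^1\lim_nH^i(X_n,\Q_p(j))=0$ and hence the asserted isomorphism.

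The step I expect to be the main obstacle is the reduction from the finite-rank Hyodo--Kato case to the general one, equivalently the identification of $H^a(\sg_K,H^1(X_C,\Q_p(j)))$ with a nuclear Fr\'echet space when $H^1_{\rm HK}(X_C)$ is infinite-dimensional: there ${\rm HK}^1(X_C,j)=\lim_n(H^1_{\rm HK}(X_{n,C})\otimes^{\Box}_{\breve{F}}\wh{\B}^+_{\st})^{N=0,\phi=p^j}$ is a countable projective limit of almost $C$-representations, and one must check both that $\R^1\lim_n$ of the (finite-dimensional) Galois cohomology of the pieces vanishes, so that $\sg_K$-cohomology commutes with the limit, and that the resulting countable projective limit of finite-dimensional $\Q_p$-spaces is genuinely nuclear Fr\'echet rather than merely Fr\'echet. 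Both are consequences of the Mittag--Leffler property of pro-systems of finite-dimensional spaces, but organising the compatibilities --- in particular making precise that the reduction to a finite-rank exhaustion does not depend on the chosen exhaustion, where cofinality of Stein exhaustions and the insensitivity of $\lim$ and $\R^1\lim$ to cofinal restriction are used --- is where most of the work lies. All of the derived-limit and extension manipulations take place in $\sd(\Q_{p,\Box})$, which is precisely why the solid formalism is invoked.
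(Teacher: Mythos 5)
Your proposal is correct and follows the paper's overall strategy (Hochschild--Serre spectral sequence, the comparison exact sequence, generalized Tate formulas, Mittag--Leffler), but it reorganizes the argument around two genuinely different choices. First, you kill $d_2^{0,1}$ by the rational-point/restriction--corestriction trick, so the spectral sequence degenerates at $E_2$ for every Stein curve; the paper uses this trick only for discs and annuli and otherwise works at $E_3$, carrying $\ker d_2$ and $\coker d_2$ through the $\R^1\lim$ computation in part (2). Your route makes part (2) cleaner. Second, for nuclearity you argue termwise --- each $H^i(X_n,\Q_p(j))$ is nuclear Fr\'echet as an extension of a finite-rank space by a (sub)quotient of $\so(X_n)/K$ --- and then invoke stability of nuclear Fr\'echet spaces under countable projective limits. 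The paper instead proves the stronger Lemma \ref{paris20}, that the transition maps $H^i(X_{n+1},\Q_p(j))\to H^i(X_n,\Q_p(j))$ are \emph{compact}, and deduces nuclearity from the fact that a compact projective limit is nuclear; this stronger statement is what is reused later (e.g.\ in Proposition \ref{final3}, where the colimit over naive interiors of a dagger affinoid is of compact type precisely because the transition maps are compact), so your weaker input suffices for Theorem \ref{final2} itself but not for its downstream applications. The one step you elide that the paper spends real effort on is the passage from exact sequences of \emph{solid} $\Q_p$-vector spaces to \emph{strict} exact sequences of classical locally convex spaces, which is needed before Lemma \ref{ext10} and the splitting arguments legitimately apply; the paper's quasi-separatedness/closed-immersion argument is what allows one to call these solid cohomology groups ``nuclear Fr\'echet'' in the classical sense. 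That, together with the Mittag--Leffler bookkeeping in the infinite-rank Hyodo--Kato case that you already flag, is where the remaining work lies.
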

\begin{proof}{\bf  Claim (1).} By \eqref{chicago0}, we have  the Hochschild-Serre spectral sequence 
\begin{equation}
\label{ss12}
E^{a,b}_2 =H^a(\sg_K,H^b(X_C,\Q_p(j)))\Rightarrow H^{a+b}(X,\Q_p(j)).
\end{equation}
From diagram \eqref{Stein-cond}, we know that 
the only  nontrivial  cohomology groups of $X_C$ are in degrees 0,1. Hence, from the spectral sequence \eqref{ss12}, we get that $H^i(X,\Q_p(j))=0$, for $i\geq 4$,  and we have the long exact sequence 
    \begin{align}\label{ncis0}
0   \to&  H^0(\sg_K,H^0(X_C,\Q_p(j)))\to H^0(X,\Q_p(j))\to H^{-1}(\sg_K,H^1(X_C,\Q_p(j)))\\
   \to & H^1(\sg_K, H^0(X_C,\Q_p(j)))\to H^1(X,\Q_p(j))\to  H^0(\sg_K,H^1(X_C,\Q_p(j)))\notag\\
      \stackrel{d_2}{\to} & H^2(\sg_K, H^0(X_C,\Q_p(j)))\to H^2(X,\Q_p(j))\to  H^1(\sg_K,H^1(X_C,\Q_p(j)))\notag\\
            \to & H^3(\sg_K, H^0(X_C,\Q_p(j)))\to H^3(X,\Q_p(j))\to  H^2(\sg_K,H^1(X_C,\Q_p(j)))\to 0\notag
 \end{align}
 
  \vskip2mm
  (i) {\em The groups $H^0(X,\Q_p(j))$ and $H^3(X,\Q_p(j))$.} \vskip2mm
  
  Diagram \eqref{ncis0} yields the isomorphisms
 \begin{align}\label{ncis2}
 & H^0(X,\Q_p(j))= \Q_p,\\
  &  H^3(X,\Q_p(j)) \stackrel{\sim}{\to}H^2(\sg_K, H^1(X_C,\Q_p(j))).\notag
 \end{align}
 The top line of  diagram \eqref{Stein-cond} gives  the exact sequence
 \begin{equation*}
 \label{ncis1}
 0\to \so(X_C)/C\to H^1(X_C,\Q_p(1))\to {\rm HK}^1(X_C,1)\to 0
 \end{equation*}
Applying  Galois cohomology to it we get the exact sequence (we set $s:j-1$)
 \begin{align}\label{ncis4}
  0 & \to H^0(\sg_K, (\so(X_C)/C)(s))\to H^0(\sg_K,H^1(X_C,\Q_p(1))(s))\to H^0(\sg_K, {\rm HK}^1(X_C,1)(s))\\
    & \to H^1(\sg_K, (\so(X_C)/C)(s))\to H^1(\sg_K,H^1(X_C,\Q_p(1))(s))\to H^1(\sg_K, {\rm HK}^1(X_C,1)(s))\notag\\
   &\to H^2(\sg_K, (\so(X_C)/C)(s))\to H^2(\sg_K,H^1(X_C,\Q_p(1))(s))\to H^2(\sg_K, {\rm HK}^1(X_C,1)(s))\to 0.\notag
 \end{align}
 Using it, the isomorphisms \eqref{ncis2}, and the  generalized Tate's formulas \eqref{newton15}, we get the isomorphism
 \begin{equation}
 \label{sund1}
 H^3(X,\Q_p(j)) \stackrel{\sim}{\to}H^2(\sg_K, {\rm HK}^1(X_C,1)(s)).
 \end{equation}
 
 \vskip2mm
 (ii) {\em Key lemma.} Claim (1) of Theorem \ref{final2} follows from the following fact. 
 \vskip2mm
   Let $\{X_n\}_{n\in\N}$ be a strictly increasing covering of $X$ by adapted naive interiors of affinoids, i.e., there exists  a strictly increasing (Stein) covering $\{\overline{X}_n\}_{n\in\N}$ of $X$ such that $X_{n+1}$ is a naive interior in $\overline{X}_{n+1}$ adapted to $X_{n}$. 
   
    \begin{remark}\label{naive1}
By definition, a {\em naive interior} of a smooth (dagger) affinoid is a Stein subvariety whose complement is open and quasi-compact. It is easy to see that, for a pair of (dagger) affinoids 
$X_1\Subset X_2$ there exists a naive interior $X_2^0\subset X_2$ such that $X_1\subset X_2^0\subset X_2$. We will say  that $X_2$ is {\em adapted} to $X_1$. 
 \end{remark}
  
\begin{lemma}\label{paris20}
The transition maps
  \begin{equation}\label{}
  f_{i,n}: \quad H^i(X_{n+1},\Q_p(j))\to H^i(X_n,\Q_p(j)), \quad n\geq 0,
  \end{equation}
  are compact maps of  (nuclear) Fr\'echet spaces.
  \end{lemma}
  \begin{proof}
   By the  computations in (i), which can be applied to each $X_n$ since that variety  is Stein, this is clear for  $i=0$. 
   
For  $i=3$,  the isomorphism \eqref{sund1} above combined with the fact that  ${\rm HK}^1(X_{n,C},1)$ is 
an almost $C$-representation (because $H^1_{\rm HK}(X_{n,C})$ is of finite rank over $\breve{F}$ since $H^1_{\dr}(X_{n,C})$ is of finite rank over $C$) yields that   $H^3(X_n,\Q_p(j))$  is a  finite rank $\Q_p$-vector space. Hence the maps $f_{3,n}$ are as wanted. 
   
   It remains to treat the cases of $i=1,2$.
 We start with showing  that the spaces $$H^a(\sg_K,H^b(X_{n,C},\Q_p(j))),\quad a,b\in\Z,
 $$ appearing the spectral sequence \eqref{ss12} are nuclear Fr\'echet. To see that, we apply  Galois cohomology to the top row of diagram \eqref{Stein-cond} for $X_n$  and obtain the exact sequence (we set $s:=j-b$; ${\rm HK}^j(X_{n,C},i):=(H^j_{{\rm HK}}(X_{n,C}){\otimes}^{\Box}_{\breve{F}}\wh{\B}^+_{\st})^{N=0,\phi=p^i}$)
\begin{align}\label{help1}
  \to H^{a-1}(\sg_K, & {\rm HK}^b(X_{n,C},b)(s))   \stackrel{\partial_{a-1}}{\longrightarrow}  H^a(\sg_K,(\Omega^{b-1}(X_{n,C})/\ker d)(s))\\
   & \to  H^a(\sg_K,H^b(X_{n,C},\Q_p(j))) \to   H^a(\sg_K, {\rm HK}^b(X_{n,C},b)(s)) \notag \\
    & \stackrel{\partial_{a}}{\longrightarrow}   H^{a+1}(\sg_K,(\Omega^{b-1}(X_{n,C})/\ker d)(s))\to \notag
\end{align}  We claim that
the spaces  $H^{i}(\sg_K,(\Omega^{b-1}(X_{n,C})/\ker d)(s))$ and $ H^{i}(\sg_K,  {\rm HK}^b(X_{n,C},b)(s)) $ are nuclear Fr\'echet. Indeed, for  the first one this follows from  the generalized Tate's  isomorphism \eqref{newton15}:
 if nontrivial $$
 H^{i}(\sg_K,(\Omega^{b-1}(X_{n,C})/\ker d)(s))\simeq \Omega^{b-1}(X_n)/\ker d
 $$
 since $\Omega^{b-1}(X_{n,C})/\ker d\simeq (\Omega^{b-1}(X_n)/\ker d)\otimes_{K}^{\Box}C$; 
  and the fact that $\Omega^{b-1}(X_n)/\ker d$ is a nuclear Fr\'echet. 
For the second one, we use the  fact that $H^{i-1}(\sg_K, {\rm HK}^b(X_{n,C},b)(s))$ is a finite rank $\Q_p$-vector space by the isomorphism \eqref{newton15} (since  ${\rm HK}^b(X_{n,C},b)$ is an almost $C$-representation).

 The above computations  imply that
the maps $\partial_{a-1}$ and $\partial_{a}$ in \eqref{help1} are between nuclear Fr\'echet spaces hence $H^a(\sg_K,H^b(X_{n,C},\Q_p(j)))$ is an extension of two nuclear Fr\'echet spaces. In fact, it is an extension of a  finite rank $\Q_p$-vector space by a nuclear Fr\'echet space  hence a nuclear Fr\'echet space.

  We proceed now to prove Lemma \ref{paris20} for $i=1,2$. From \eqref{ncis0}, we  get a long exact sequence
 \begin{align}\label{lane2}
0 &  \to H^1(\sg_K, \Q_p(j))\to H^1(X_n,\Q_p(j))\to H^0(\sg_K,H^1(X_{n,C},\Q_p(j)))\\
  & \stackrel{d_{2,n}}{ \to } H^2(\sg_K, \Q_p(j))\to H^2(X_n,\Q_p(j)) \to H^1(\sg_K,H^1(X_{n,C},\Q_p(j)))\to 0\notag
  \end{align}
 
 (a) {\em Case $j\neq 1$.} In this case, we have   $H^2(\sg_K, \Q_p(j))=0$.  Thus, it suffices to show that the spaces $H^i(\sg_K,H^1(X_{n,C},\Q_p(j)))$, for $i=0,1$,  are of finite rank over $\Q_p$. For that, since  
  the vector spaces $H^i(\sg_K, {\rm HK}^1(X_{n,C},1)(j-1))$, for $i=0,1$, are of finite rank over $\Q_p$, it suffices to notice that $H^i(\sg_K, (\so(X_{n,C})/C)(j-1))=0$, for $i=0,1$, by the generalized Tate's formulas \eqref{newton15}.
 
  
(b) {\em Case $j=1$.} To start, we claim that the spaces $H^i(X_n,\Q_p(1))$, $i=1,2$  are nuclear Fr\'echet. Indeed, we have shown above that   the spaces $H^i(\sg_K,H^1(X_{n,C},\Q_p(j)))$ are   Fr\'echet. We also know that we have exact sequences
$$
0\to V_{0,i,n}\stackrel{g_{i,n}}{\to} V_{1,i,n}\to  H^i(X_n,\Q_p(j))\to 0,
$$
with $V_{0,i,n}, V_{1,i,n}$ solid Fr\'echet $K$-vector spaces. Moreover,  $H^i(X_n,\Q_p(j))$ is quasi-separated since it is  an extension of quasi-separated solid $K$-vector spaces (see \eqref{lane2}). It follows that the map $g_{i,n}$ is quasi-compact and, hence, the induced map 
$g_{i,n}: V_{0,i,n}(*)_{\rm top}\to V_{1,i,n}(*)_{\rm top}$ is a closed embedding. As a result, $V_{1,i,n}(*)_{\rm top}/V_{0,i,n}(*)_{\rm top}$ is a (classical) Fr\'echet space and then this implies that $H^i(X_n,\Q_p(j))\simeq {\rm CD}(V_{1,i,n}(*)_{\rm top}/V_{0,i,n}(*)_{\rm top})$ is Fr\'echet, as wanted. By Lemma \ref{ext10}, as an extension of a nuclear Fr\'echet space by a finite rank vector space, it is nuclear.

   We will show below (in   fact (c)) that the pro-systems $\{H^i(\sg_K,H^1(X_{n,C},\Q_p(j)))\}_{n\in\N}$ have compact transition maps (we will call such systems {\em compact}).  Then the pro-system $\{\ker d_{2,n}\}_{n\in\N}$ is also compact. And, by Lemma \ref{ext10.1}, the maps $f_{i,n}$, $i=1,2$, from our theorem are compact, as wanted.

  (c) {\em  The pro-systems $\{H^i(\sg_K,H^1(X_{n,C},\Q_p(1))(s))\}_{n\in\N}$, for $i=0,1$, are compact.} 
  To prove that, note that
  the vector spaces $H^i(\sg_K, {\rm HK}^1(X_{n,C},1)(s))$, for $i=0,1,$ are of finite rank over $\Q_p$. Hence, by \eqref{ncis4},  it suffices  to show that the pro-systems 
  $\{ H^i(\sg_K, (\so(X_{n,C})/C)(s))\}_{n\in\N}$ are compact. But this is clear since, by the generalized  Tate's formulas \eqref{newton15} (note that $\so(X_{n})/K$ is a nuclear Fr\'echet), these pro-systems are or trivial or we have   isomorphisms
$$
  \{ H^i(\sg_K, (\so(X_{n,C})/C)(s))\}_{n\in\N}  \simeq \{ \so(X_{n})/K\}_{n\in\N}.\qedhere
$$
\end{proof}
  
 This finishes the proof  of  claim (1) of the theorem.
 
 \vskip2mm
  {\bf Claim (2).}  It suffices to show that 
 $$
 H^i(X,\Q_p(j))\stackrel{\sim}{\to} \lim_nH^i(X_n,\Q_p(j)).
 $$
Or  that $\R^1\lim_nH^i(X_n,\Q_p(j))=0$, $i, j\in\Z$. Since the spectral sequence \eqref{ss12} degenerates at $E_3$ it suffices to show that $\R^1\lim_n E^{a,b}_3(X_n)=0$. Since cohomological dimension of $\sg_K$ is $2$, we have
 $$
 E_3^{a,b}(X_n)=\begin{cases} \ker d_2^{0,b}(X_n) & \mbox{ if } a=0,\\
  E_2^{1,b}(X_n)& \mbox{ if } a=1,\\
  \coker d_2^{2,b+1}(X_n) & \mbox{ if } a=2,
 \end{cases}
 $$
 where $d_2^{0,b}(X_n): H^0(\sg_K, H^b(X_{n,C},\Q_p(j)))\to H^2(\sg_K, H^{b-1}(X_{n,C},\Q_p(j)))$ is the only nontrivial differential in the spectral sequence \eqref{ss12}. Hence, using  the computations from  claim (1), we get
 immediately that $\R^1\lim_n E^{1,b}_3(X_n)=0$ and, since $$\R^1\lim_n H^2(\sg_K, H^{b-1}(X_{n,C},\Q_p(j)))=0,$$ that $\R^1\lim_n E^{2,b}_3(X_n)=0$.  It remains to show that 
 $$
 \R^1\lim_nE^{0,b}_3(X_n)= \R^1\lim_n\ker d_2^{0,b}(X_n)=0.
 $$ 
 
   From the exact sequence \eqref{help1}, since $H^0(\sg_K, {\rm HK}^b(X_{n,C},b)(s))$ is finite over $\Q_p$ (because ${\rm HK}^b(X_{n,C},b)(s)$  is an almost $C$-representation), it suffices to show that $\R^1\lim_nH^0(\sg_K,\Omega^{b-1}(X_{n,C})/\ker d)=0$. But, by the generalized Tate's  isomorphism \eqref{newton15}, 
 $H^0(\sg_K,\Omega^{b-1}(X_{n,C})/\ker d)\simeq \Omega^{b-1}(X_{n})/\ker d$, so it suffices to show that $\R^1\lim_n \Omega^{b-1}(X_{n})=0$ but this is known. 
\end{proof}
\subsection{Filtration} \label{filtration1} Let $X$ be a smooth  geometrically irreducible Stein analytic curve over $K$. 
Let $i,j\in\Z$. Under certain conditions, there exists an ascending filtration on $H^{i}(X,\Q_p(j))$:
$$
F^2_{i,j}=H^{i}(X,\Q_p(j))\supset F^1_{i,j}\supset F^0_{i,j}\supset F^{-1}_{i,j}=0,
$$
such that we have  isomorphisms
\begin{align*}
 & F^2_{i,j}/F^1_{i,j}\simeq H^{i-1}(\sg_K, \Q_p(j-1)),\\
& F^1_{i,j}/F^0_{i,j}\simeq H^{i-1}(\sg_K, \tfrac{\so(X_C)}{C}(j-1)),\\
  & F^0_{i,j}/F^{-1}_{i,j}\simeq H^i(\sg_K, {\rm HK}^1(X_C,1)(j-1)),
\end{align*}
where we set ${\rm HK}^1(X_C,1)=(H^1_{{\rm HK}}(X_C){\otimes}^{\Box}_{\breve{F}}\wh{\B}^+_{\st})^{N=0,\phi=p}$. 
We can  visualize this filtration in the following way:
\begin{equation}
\label{nie22}
\xymatrix@C=3mm@R=5mm{ & &0\ar[d]  & 0\ar[d]\\
0\ar[r] &  F^0_{i,j}:=H^{i}(\sg_K, \Q_p(j))\ar[r] \ar@{=}[d] & F^1_{i,j} \ar[r] \ar[d] & H^{i-1}(\sg_K, \tfrac{\so(X_C)}{C}(j-1))\ar[d] \ar[r] & 0\\
0\ar[r] &  H^{i}(\sg_K, \Q_p(j))\ar[r] & F^2_{i,j}:=H^i(X,\Q_p(j))\ar[r] \ar[d]^{} &  H^{i-1}(\sg_K, H^{1}(X_C,\Q_p(j)))\ar[r] \ar[d]^{}& 0 \\
& &   H^{i-1}(\sg_K, {\rm HK}^1(X_C,1)(j-1))\ar@{=}[r] \ar[d] &  H^{i-1}(\sg_K,{\rm HK}^1(X_C,1)(j-1))\ar[d]\\
& & 0 & 0
}
\end{equation}
The above  diagram is a map of exact  sequences. The right  column is induced by the syntomic filtration (see diagram \eqref{Stein-cond}). 
The middle  row comes from 
 the Hochschild-Serre spectral sequence \eqref{chicago0} (we note that $\Q_p(j)\simeq H^{0}(X_C,\Q_p(j))$) and the vanishing of geometric cohomologies obtained from diagram \eqref{Stein-cond}. We assume that the middle row and the right column are exact. 
 The term $F^1_{i,j}$ is defined as a pullback of the top right square.

\subsection{Arithmetic cohomology of dagger affinoids} 
Let $X$ be a smooth geometrically connected dagger affinoid over $K$. We will now study its arithmetic pro-\'etale cohomology. They key tool is the (studied above)   arithmetic pro-\'etale cohomology of smooth Stein curves.
\begin{proposition}\label{final3}
The cohomology of $\R\Gamma(X,\Q_p(j)), j\in\Z,$ is  of compact type. 
\end{proposition}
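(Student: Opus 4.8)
The plan is to deduce the result for a dagger affinoid $X$ from the Stein case (Theorem \ref{final2}) by an overconvergence/colimit argument, exploiting the fact that a dagger affinoid is, by definition, a filtered colimit of its Stein "naive interiors". Concretely, write $X$ as the dagger structure on an affinoid whose underlying rigid space sits inside a strictly increasing Stein exhaustion: there is a cofinal system of Stein subvarieties $X^0 \Subset X$ (naive interiors, in the sense of Remark \ref{naive1}) with $X = \dirlim X^0$ in the appropriate sense, so that on cohomology we get $\R\Gamma(X,\Q_p(j)) \simeq \colim \R\Gamma(X^0,\Q_p(j))$, the colimit running over a countable cofinal family of Stein naive interiors $X_n^0$ adapted to each other. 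This identification on the level of pro-\'etale cohomology is the input I would take from the theory of dagger spaces (it is the dagger analogue of the statements already used for rigid spaces in the excerpt, e.g.\ Remark \ref{morning1other} and the overconvergent comparison results cited in the introduction).

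First I would set up the exhaustion: choose $X_1^0 \Subset X_2^0 \Subset \cdots$ a strictly increasing cofinal family of Stein naive interiors, each $X_{n+1}^0$ adapted to $X_n^0$, with $X = \colim_n X_n^0$. Then $H^i(X,\Q_p(j)) = \colim_n H^i(X_n^0,\Q_p(j))$. By Theorem \ref{final2}(1) each $H^i(X_n^0,\Q_p(j))$ is a nuclear Fr\'echet space, and by Lemma \ref{paris20} the transition maps $H^i(X_{n+1}^0,\Q_p(j)) \to H^i(X_n^0,\Q_p(j))$ are compact. Dualizing, the system $\{H^i(X_n^0,\Q_p(j))^*\}_n$ is an inductive system of (strong duals of nuclear Fr\'echet spaces — hence spaces of compact type, by the anti-equivalence recalled in \S\ref{compact1}) with injective compact transition maps, so $\colim_n H^i(X_n^0,\Q_p(j))^*$ is of compact type. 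The remaining point is to identify this colimit of duals with the cohomology of $X$ — i.e.\ to show $H^i(X,\Q_p(j)) \simeq \colim_n H^i(X_n^0,\Q_p(j))^*$, equivalently that passing from the Stein interiors to the dagger affinoid replaces each cohomology group by its dual. This is exactly the mechanism behind the isomorphisms \eqref{kichasz1}–type statements and the "it is the other way around" phenomenon announced in Remark (ii) after Theorem \ref{main-arithmetic0}: coherent duality (Serre duality) exchanges $H^0(X,\Omega^1)$ for Stein $X$ with $H^1_c$ for dagger affinoid $X$, and likewise on the level of the Hochschild-Serre building blocks \eqref{terms1}.

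The cleanest route is therefore to run the Hochschild-Serre spectral sequence \eqref{chicago0} for $X$ and, as in the proof of Theorem \ref{final2}, reduce the claim to the corresponding statement for the $E_2$-terms $H^a(\sg_K, H^b(X_C,\Q_p(j)))$. Using the dagger analogue of diagram \eqref{Stein-cond} together with the generalized Tate formulas \eqref{newton15}, these terms are built, by extensions, from: finite-rank $\Q_p$-vector spaces (Galois cohomology of almost $C$-representations coming from the Hyodo-Kato part, which are finite-dimensional), and the coherent pieces $H^0(\sg_K, (\Omega^{b-1}(X_C)/\ker d)(s))$. For a dagger affinoid, $\Omega^{b-1}(X)/\ker d$ and $H^1_c(X,\so_X)$ are of compact type rather than nuclear Fr\'echet — this is the classical overconvergent statement (e.g.\ $\so^\dagger(X)/K$ is of compact type for a dagger affinoid $X$) — so \eqref{newton15} shows the $E_2$-terms are of compact type. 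Since $C_{c,K}$ is stable under closed subspaces, the relevant quotients, countable direct sums, and extensions by finite-rank spaces (Lemma \ref{ext10}(2)), the cohomology $H^i(X,\Q_p(j))$, being an iterated extension of such terms, is of compact type.

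The main obstacle I anticipate is the functional-analytic bookkeeping needed to guarantee that all the relevant sequences are strictly exact and that the extension problems stay inside the category of solid spaces of compact type: one must know that $\Omega^0(X)/K$ (or more precisely the dagger de Rham / coherent pieces) is genuinely of compact type for $X$ a dagger affinoid, that the $E_2$-differentials and connecting maps in \eqref{help1} are maps of compact-type spaces (so their kernels and cokernels are again compact type), and — crucially — that the extensions appearing in the Hochschild-Serre filtration do not leave the category, which is where Lemma \ref{ext10}(2) and Lemma \ref{ext10.1} do the work. Verifying these in the dagger setting is parallel to, but slightly more delicate than, the Stein case treated in Theorem \ref{final2}, since there it was nuclear Fr\'echet (closed under such operations for the same formal reasons) and here it is the "opposite" category $C_{c,K}$; the formal closure properties of $C_{c,K}$ recalled in \S\ref{compact1} are exactly what make the argument go through.
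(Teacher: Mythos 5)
There is a genuine error in your setup, and it sinks the first (main) route. For a dagger affinoid $X$ with dagger presentation $\{X_h\}$, the pro-\'etale cohomology is by definition a \emph{colimit over the decreasing system of rigid neighbourhoods} of $\wh{X}$: one has $\wh{X}\subset X_{h+1}\subset X_h^0\subset X_h$ for the adapted naive interiors, and $\R\Gamma(X,\Q_p(j))\simeq\colim_h\R\Gamma(X_h^0,\Q_p(j))$ with restriction maps going from the larger Stein space to the smaller one. You instead take an \emph{increasing} exhaustion $X_1^0\Subset X_2^0\Subset\cdots$ "with $X=\colim_n X_n^0$"; for such a system the restrictions $H^i(X_{n+1}^0)\to H^i(X_n^0)$ form a \emph{pro}-system, and the cohomology of the union is its $\R\lim$ (that is the Stein situation of Theorem \ref{final2}), not a colimit. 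To escape this you propose to identify $H^i(X,\Q_p(j))$ with $\colim_n H^i(X_n^0,\Q_p(j))^*$; this is not a "remaining point" to be checked but a false identification — the passage from interiors to the dagger affinoid is not given by topological duality (that usual and compactly supported cohomology are dual is the main theorem of the paper, not an available input, and in any case it would relate $H^i$ of $X$ to $H^{4-i}_c$ of the interiors, not to $H^i{}^*$). With the correctly oriented system the proof is immediate and needs no dualization: each $H^i(X_h^0,\Q_p(j))$ is nuclear Fr\'echet by Theorem \ref{final2}, the transition maps $H^i(X_h^0,\Q_p(j))\to H^i(X_{h+1}^0,\Q_p(j))$ are compact by Lemma \ref{paris20}, and a countable inductive limit of Fr\'echet spaces with compact transition maps is of compact type.

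Your alternative Hochschild--Serre route is not wrong in principle, but it does not avoid the issue: the assertion that the coherent pieces $\so^\dagger(X)/K$, $\Omega^\dagger(X)$, etc.\ are of compact type for a dagger affinoid is exactly the statement that they are colimits, over the \emph{decreasing} system of naive interiors, of Fr\'echet spaces along compact transition maps. Once that is in hand, running the spectral sequence term by term (with Lemmas \ref{ext10} and \ref{ext10.1} to control the extensions) is a much longer detour to the same conclusion.
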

\begin{proof} Let $\{X_h\}$ be the dagger presentation of the dagger structure on $X$. Denote by $X_h^0$  a naive interior of $X_h$ adapted to $\{X_h\}$. The canonical quasi-isomorphism
 $$\R\Gamma(X, \Q_p(j))\simeq \colim_h\R\Gamma(X^0_h,\Q_p(j)),\quad j\in\Z
 $$ yields an isomorphism
$$
H^i(X,\Q_p(j))\stackrel{\sim}{\to} \colim_hH^i(X^0_h,\Q_p(j)),\quad i,j\in\Z.
$$
 We note that $X^0_h$,  $h\in\N$, is a smooth Stein variety.     By Lemma \ref{paris20},    the ind-systems $\{H^i(X^0_h,\Q_p(j))\}_{h\in\N}$, for $i\in\N$, have compact transition maps (between Fr\'echet spaces).
This proves our proposition. 
\end{proof}

\subsection{Examples} 
In this section we will compute $p$-adic pro-\'etale cohomology of open discs,  
annuli, and their boundaries --  the basic building blocks of analytic curves.
 \subsubsection{Open disc}  
Let $D$ be an open disc over $K$.
  \begin{lemma}{\rm (Geometric cohomology)} \label{ball0} Let $j\in\Z$. 
 We have $\sg_K$-equivariant  isomorphisms
\begin{equation}\label{ball1}
   H^i(D_C,\Q_p(j))\simeq \begin{cases}  \Q_p(j) & \mbox{ if } i=0,\\
(\so(D_C)/C)(j-1) & \mbox{ if } i=1,\\
0 & \mbox{ if }  i\geq 2.
\end{cases}
\end{equation}
Moreover,  $d:\so(D_C)/C{\to} \Omega^1(D_C)$ is an isomorphism. 
 \end{lemma}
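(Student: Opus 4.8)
\emph{The plan} is to deduce the lemma from the structural description of geometric pro-\'etale cohomology of smooth Stein curves recorded in \eqref{Stein} (equivalently \eqref{Stein-cond}), specialized to $X=D$. An open disc over $K$ is a smooth, geometrically irreducible Stein curve — it is exhausted by its closed subdiscs $D_n$, which are affinoids — so the result of \cite{CN5} behind \eqref{Stein} applies: $\R\Gamma(D_C,\Q_p(j))$ has classical, Fr\'echet cohomology, and for each $i\geq 0$ there is a $\sg_K$-equivariant strictly exact sequence
$$
0\to \Omega^{i-1}(D_C)/\ker d\to H^i(D_C,\Q_p(i))\to \big(H^i_{\rm HK}(D_C)\wotimes_{\breve{F}}\wh{\B}^+_{\st}\big)^{N=0,\phi=p^i}\to 0.
$$
Over $C$ the Tate twist $\Q_p(j-i)$ is trivial as a pro-\'etale sheaf, so $H^i(D_C,\Q_p(j))\simeq H^i(D_C,\Q_p(i))(j-i)$ as $\sg_K$-modules; I would therefore first evaluate the sequence above for each $i$ and then twist. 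This reduces everything to two inputs: the de Rham (coherent) cohomology of $D_C$ and the Hyodo--Kato cohomology $H^i_{\rm HK}(D_C)$.

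\emph{The coherent input} is the Poincar\'e lemma for the open disc, which I would verify by termwise antidifferentiation: for $f=\sum_{n\geq 0}a_nT^n\in\so(D_C)$ the series $g:=\sum_{n\geq 0}\tfrac{a_n}{n+1}T^{n+1}$ still converges on $D_C$ — for $r<r'<1$ one has $|a_n/(n+1)|\,r^{n+1}=\big(|a_n|(r')^n\big)\cdot(r/r')^n|n+1|^{-1}\cdot r$, and $(r/r')^n|n+1|^{-1}\to 0$ since $|n+1|^{-1}\leq n+1$ — and $dg=f\,dT$. This shows that $d\colon\so(D_C)/C\xrightarrow{\sim}\Omega^1(D_C)$ is an isomorphism, the kernel of $d$ on $\so(D_C)$ being the constants $C$; this is the last assertion of the lemma. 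Consequently $\Omega^k(D_C)=0$ for $k\geq 2$, $\Omega^1(D_C)/\ker d=0$, and $H^0_{\rm dR}(D_C)=C$ while $H^i_{\rm dR}(D_C)=0$ for $i\geq 1$. Since $D$ is geometrically connected and, on its affinoid exhaustion, Hyodo--Kato and de Rham cohomology have the same finite rank (using here the overconvergent Poincar\'e lemma; alternatively cite the computation of $H^*_{\rm HK}(D_C)$ in \cite{CN4}), I get $H^0_{\rm HK}(D_C)=\breve{F}$ and $H^i_{\rm HK}(D_C)=0$ for $i\geq 1$.

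\emph{Putting it together}: for $i=0$ the sequence reads $H^0(D_C,\Q_p)\simeq(\wh{\B}^+_{\st})^{N=0,\phi=1}=\Q_p$ (equivalently, $H^0(D_C,\Q_p)=\Q_p$ because $D_C$ is connected), hence $H^0(D_C,\Q_p(j))\simeq\Q_p(j)$; for $i=1$ the Hyodo--Kato term vanishes, so $H^1(D_C,\Q_p(1))\simeq\so(D_C)/C$ and, twisting, $H^1(D_C,\Q_p(j))\simeq(\so(D_C)/C)(j-1)$; for $i\geq 2$ both outer terms vanish, so $H^i(D_C,\Q_p(i))=0$ and hence $H^i(D_C,\Q_p(j))=0$. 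All identifications are $\sg_K$-equivariant because the maps in \eqref{Stein} are.

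\emph{The only genuinely delicate point} — and it is mild — is the vanishing $H^i_{\rm HK}(D_C)=0$ for $i\geq 1$: one must work with the overconvergent (dagger) structure on the closed subdiscs, since the naive affinoid de Rham cohomology of a closed disc need not vanish whereas the overconvergent one does. Everything else is a routine specialization of \eqref{Stein}; in fact this is the motivating example of the paper and is already contained in \cite{CDN3}, which could be cited directly instead of re-deriving it.
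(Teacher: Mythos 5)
Your proposal is correct and follows essentially the same route as the paper: deduce $H^i_{\rm HK}(D_C)=0$ for $i\geq 1$ from the vanishing of $H^i_{\dr}(D_C)$ via the Hyodo--Kato isomorphism, and then read off the cohomology from the fundamental exact sequence \eqref{Stein-cond} for the Stein curve $D$. The extra details you supply (the explicit termwise antidifferentiation and the remark on overconvergent versus naive affinoid de Rham cohomology) are correct but not needed beyond what the paper's two-line argument already invokes.
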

 \begin{proof} The Hyodo-Kato isomorphism $\iota_{\rm HK}:H^i_{\rm HK}(D_C){\otimes}_{\breve{F}}^{{\Box}}C\simeq H^i_{\dr}(D_C)$ and the fact that 
 $H^i_{\dr}(D_C)=0$, for $i\geq 1$, yield 
 that $H^i_{\rm HK}(D_C)=0$, for $i\geq 1$. 
Now, since $D$ is Stein, our lemma follows  from diagram \eqref{Stein-cond}. 

  The last claim is equivalent to $H^1_{\rm dR}(D_C)=0$.
 \end{proof}
  
Consider now the Hochschild-Serre spectral sequence (from Lemma \ref{Serre}): 
\begin{equation}
\label{HS} 
E^{a,b}_2 =H^a(\sg_K,H^b(D_C,\Q_p(j)))\Rightarrow H^{a+b}(D,\Q_p(j)).
\end{equation} 
By Lemma \ref{ball0}, the only nonzero rows are those of degrees $b=0,1$. We get:
   
\begin{lemma}{\rm (Arithmetic cohomology)}\label{mist1} Let $i\geq 0, j\in\Z$. 
 We have  exact sequences
   \begin{align*}
   & 0\to H^i(\sg_K, H^{0}(D_C,\Q_p(j)))\to H^i(D, \Q_p(j)) \to H^{i-1}(\sg_K, H^{1}(D_C,\Q_p(j)))\to 0,\\
 &   0\to H^i(\sg_K, \Q_p(j))\to H^{i}(D,\Q_p(j))\to H^{i-1}(\sg_K, (\so(D_C)/C)(j-1))\to 0.
 \end{align*}
\end{lemma}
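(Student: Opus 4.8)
The plan is to degenerate the Hochschild--Serre spectral sequence \eqref{HS}. By Lemma \ref{ball0} its $E_2$-page is concentrated in the two rows $b=0,1$, so it degenerates at $E_3$ and its only possibly nonzero differentials are $d_2^{a,1}\colon E_2^{a,1}\to E_2^{a+2,0}$; once these all vanish, the abutment filtration on $H^i(D,\Q_p(j))$ collapses to a short exact sequence
$$0\to H^i(\sg_K,H^0(D_C,\Q_p(j)))\to H^i(D,\Q_p(j))\to H^{i-1}(\sg_K,H^1(D_C,\Q_p(j)))\to 0.$$
Since $H^0(D_C,\Q_p(j))=\Q_p(j)$ and $H^1(D_C,\Q_p(j))=(\so(D_C)/C)(j-1)$ by Lemma \ref{ball0}, this is exactly both of the asserted sequences at once, so the whole lemma reduces to the vanishing of the $d_2^{a,1}$.

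First I would isolate the single differential that is not automatically zero. By the generalized Tate formula \eqref{newton15}, applied to the nuclear Fr\'echet space $\so(D)/K$ (note $\so(D_C)/C\simeq(\so(D)/K)\otimes^{\Box}_KC$), the source $E_2^{a,1}=H^a(\sg_K,(\so(D_C)/C)(j-1))$ vanishes unless $j=1$ and $a\in\{0,1\}$; and for $j=1$ the target $E_2^{3,0}=H^3(\sg_K,\Q_p(1))$ vanishes because $\operatorname{cd}(\sg_K)=2$, while $E_2^{2,0}=H^2(\sg_K,\Q_p(1))\simeq\Q_p$. Hence the only thing left to prove is that
$$d_2^{0,1}\colon\ H^0(\sg_K,H^1(D_C,\Q_p(1)))=\so(D)/K\ \longrightarrow\ H^2(\sg_K,\Q_p(1))\simeq\Q_p$$
is zero.

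This last vanishing is the only non-formal point, and I would get it from the abundance of automorphisms of the open disc. Any $\lambda\in\so_K^\times$ gives an automorphism $g_\lambda\colon z\mapsto\lambda z$ of $D$ over $K$; the base change $g_\lambda\otimes_K C$ commutes with the $\sg_K$-action on $D_C$, hence $g_\lambda$ induces an automorphism of the $\sg_K$-torsor $D_C\to D$ over $\operatorname{id}_{\sg_K}$ and so acts functorially on \eqref{HS}, commuting with $d_2$. On $E_2^{2,0}=H^2(\sg_K,H^0(D_C,\Q_p(1)))$ it acts trivially, since an automorphism of the connected space $D_C$ acts as the identity on $H^0(D_C,\Q_p(1))=\Q_p(1)$; on $E_2^{0,1}=\so(D)/K$ it acts by $z^n\mapsto\lambda^n z^n$. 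Therefore $d_2^{0,1}\bigl((\lambda^n-1)z^n\bigr)=(g_\lambda^*-1)\,d_2^{0,1}(z^n)=0$ for every $n\geq 1$. Choosing $\lambda\in\so_K^\times$ with $\lambda^n\neq 1$ for all $n\geq 1$ (e.g.\ $\lambda=1+p^2$), each $\lambda^n-1$ is a unit of $K$, so $d_2^{0,1}$ vanishes on every line $Kz^n$ with $n\geq 1$; these span a dense subspace of $\so(D)/K\cong\{f\in\so(D):f(0)=0\}$ (Taylor polynomials are dense), and $d_2^{0,1}$ is a morphism of solid $\Q_p$-vector spaces, hence continuous, so $d_2^{0,1}=0$. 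Consequently \eqref{HS} degenerates as above, which proves the two exact sequences for all $i\geq 0$ and $j\in\Z$. (One can alternatively deduce the vanishing from the syntomic comparison of \cite{CDN3,CN5}, which realises $\so(D)/K$ as a direct summand of $H^1(D,\Q_p(1))$ restricting isomorphically onto $(\so(D_C)/C)^{\sg_K}$, cf.\ the computation recorded in the Introduction.)
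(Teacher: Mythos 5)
Your proposal is correct, and the formal reduction (two-row spectral sequence, generalized Tate's formula \eqref{newton15} killing $E_2^{a,1}$ unless $j=1$ and $a\in\{0,1\}$, cohomological dimension $2$ killing $E_2^{3,0}$, so that everything hinges on $d_2^{0,1}\colon \so(D)/K\to H^2(\sg_K,\Q_p(1))$) matches the paper. Where you diverge is the proof that $d_2^{0,1}=0$. The paper observes that this vanishing is equivalent to injectivity of the edge map $H^2(\sg_K,H^0(D_C,\Q_p(j)))\to H^2(D,\Q_p(j))$, and gets that injectivity for free because a $K$-rational point of $D$ splits the structure map $D\to \operatorname{Sp}(K)$, so the edge map has a retraction. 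Your argument instead exploits the $\so_K^\times$-action $z\mapsto\lambda z$: equivariance of the spectral sequence, triviality of the action on $E_2^{2,0}$, the eigenvalue computation on monomials, density of polynomials in $\so(D)_0$, and continuity of $d_2^{0,1}$ as a map of solid (hence, via full faithfulness on Fr\'echet spaces, classical continuous) $\Q_p$-vector spaces. Both are valid; the paper's one-line section argument is shorter and is what gets reused for the annulus (after a finite base change to acquire a rational point, cf.\ Lemma \ref{mist3}), whereas yours trades the rational point for a large automorphism group plus a density/continuity step, and your parenthetical alternative via the syntomic splitting of $H^1(D,\Q_p(1))$ is essentially the computation recorded in the Introduction.
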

 \begin{proof}
The second exact sequence is a translation of the first, granted formula (\ref{ball1}). 
The spectral sequence \eqref{HS} yields the   exact sequence
 \begin{align}
 \label{mistake1}
 0 & \to H^0(\sg_K,H^0(D_C,\Q_p(j)))\to H^0(D,\Q_p(j))\to H^{-1}(\sg_K,H^1(D_C,\Q_p(j)))\\
  &{\to} H^1(\sg_K,H^0(D_C,\Q_p(j)))\to H^1(D,\Q_p(j))\to H^0(\sg_K,H^1(D_C,\Q_p(j))) \notag\\
  & \stackrel{d_2}{\to} H^2(\sg_K,H^0(D_C,\Q_p(j))\to H^2(D,\Q_p(j))\to H^1(\sg_K,H^1(D_C,\Q_p(j)))\to 0\notag
 \end{align}
First, we  prove that it splits into short exact sequences. If $j \neq 1$, this is trivial as the last terms of the first two lines are $0$.  

Let us now assume that $j=1$. We need to show that
the differential $d_2$ in (\ref{mistake1}) is $0$ or, equivalently, 
that the canonical map 
$H^2(\sg_K,H^0(D_C,\Q_p(j)))\to H^2(D,\Q_p(j))$ is injective. But this map is induced by the projection $D\to K$ and any rational point in $D$ yields a section (such a point always exists).
\end{proof}
\begin{remark} We note that the groups $H^i(D_C,\Q_p(j))$ and $H^i(D,\Q_p(j))$ are  Fr\'echet and nuclear Fr\'echet spaces, by Lemma \ref{Stein} and Theorem \ref{final2},  respectively. 
\end{remark}
 \subsubsection{Open annulus}  Let $A$ be an open annulus over $K$. 
\begin{lemma}{\rm (Geometric cohomology)} \label{annulus1.0} Let $j\in\Z$. 
\begin{enumerate}
\item We have $\sg_K$-equivariant isomorphisms 
\begin{equation} \label{annulus1}
     H^i(A_C,\Q_p(j))\simeq \begin{cases} \Q_p(j)& \mbox{ if } i=0,\\
  0 & \mbox{ if }   i\geq 2.
  \end{cases}
  \end{equation}
  \item We have  a $\sg_K$-equivariant  exact sequence 
   \begin{equation}
   \label{annulus1.1}
    0\to (\so(A_C)/C)(j-1)\to H^1(A_C,\Q_p(j))\to \Q_p(j-1)\to 0
    \end{equation}
admitting  a $\sg_K$-equivariant  $\Q_p$-linear  splitting.
\end{enumerate}
\end{lemma}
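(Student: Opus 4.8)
The plan is to reduce the geometric cohomology of the open annulus $A$ to the de Rham / Hyodo-Kato input via the syntomic comparison, exactly as was done for the disc in Lemma \ref{ball0}, and then to account for the extra class coming from $H^1_{\rm dR}(A)\simeq K$. First I would recall that $A$ is Stein, so diagram \eqref{Stein-cond} applies: we have the exact sequence
$$
0\to \Omega^{i-1}(A_C)/\ker d\to H^i(A_C,\Q_p(i))\to (H^i_{\rm HK}(A_C)\wotimes_{\breve F}\wh{\B}^+_{\rm st})^{N=0,\phi=p^i}\to 0
$$
together with the de Rham sequence below it. For $i\geq 2$ both $H^i_{\rm dR}(A_C)$ and $\Omega^i(A_C)$ vanish (an annulus is a smooth rigid curve, so it has no coherent cohomology above degree $0$ and no de Rham cohomology above degree $1$), and then the Hyodo-Kato isomorphism $H^i_{\rm HK}(A_C)\wotimes_{\breve F}C\simeq H^i_{\rm dR}(A_C)$ forces $H^i_{\rm HK}(A_C)=0$; feeding this back into \eqref{Stein-cond} gives $H^i(A_C,\Q_p(j))=0$ for $i\geq 2$, and $H^0(A_C,\Q_p(j))\simeq \Q_p(j)$ since $A$ is connected. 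That proves part (1).

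For part (2) I would first identify the de Rham and Hyodo-Kato data in degree $1$: $H^1_{\rm dR}(A)\simeq K$, generated by $dT/T$, and correspondingly $H^1_{\rm HK}(A)\simeq \breve F$ with trivial monodromy $N=0$ and Frobenius acting by $\phi=p$ on a suitable basis vector (this is the standard computation for the annulus; the relevant HK class pairs with $dT/T$ under the HK-to-dR comparison). Twisting, $(H^1_{\rm HK}(A_C)\wotimes_{\breve F}\wh{\B}^+_{\rm st})^{N=0,\phi=p^j}\simeq (\wh{\B}^+_{\rm st})^{\phi=p^{j-1}}$ after accounting for the Tate twist, and the fundamental exact sequence of period rings identifies $(\wh{\B}^+_{\rm cris})^{\phi=p^{j-1}}$ with an extension of $\Q_p(j-1)$ — wait, more precisely the relevant short exact sequence is $0\to\Q_p(j-1)\to (\wh{\B}^+_{\rm st})^{N=0,\phi=p^{j-1}}\to (\text{something})\to 0$; but here the point is simpler: after the twist, the HK term in degree one is $(H^1_{\rm HK}(A_C)(j-1)\wotimes\cdots)$, and the class $dT/T$ contributes, via $\Omega^0(A_C)/\ker d$ being already absorbed, the quotient $\Q_p(j-1)$. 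So the top row of \eqref{Stein-cond} for $i=1$, after twisting to weight $j$, reads
$$
0\to \so(A_C)/C \to H^1(A_C,\Q_p(1))\to (H^1_{\rm HK}(A_C)\wotimes_{\breve F}\wh{\B}^+_{\rm st})^{N=0,\phi=p}\to 0,
$$
and the last term is $\simeq\Q_p$ because $H^1_{\rm HK}(A_C)\simeq \breve F\cdot e$ with $\phi(e)=p^{-1}e$ (dual normalization), so $(\breve F e\wotimes\wh{\B}^+_{\rm st})^{N=0,\phi=p}\simeq (\wh{\B}^+_{\rm st})^{\phi=1,N=0}=\Q_p$. Twisting by $j-1$ gives the stated sequence \eqref{annulus1.1}.

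The splitting is the part requiring a little more care. I would produce a $\sg_K$-equivariant $\Q_p$-linear section either (a) by exhibiting an explicit pro-étale cohomology class lifting the generator of $\Q_p(j-1)$ — concretely, the Kummer-type class attached to the coordinate function $T$ on the annulus, i.e. the image of $T\in\so(A)^\times$ under the Kummer map $\so(A_C)^\times\to H^1(A_C,\Q_p(1))$, which maps to a generator of the $\Q_p(0)$ quotient — or (b) by invoking that the extension class lives in $\Ext^1_{\sg_K\text{-}\Q_p}(\Q_p(j-1),(\so(A_C)/C)(j-1))$ and, after the Tate twist is trivialized on the relevant piece, this $\Ext$ group is computed by Galois cohomology of $\so(A_C)/C$, which by the generalized Tate formula \eqref{newton15} vanishes in the relevant degree. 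Approach (a) is cleaner and more canonical, and it has the advantage of being visibly compatible with products later. \textbf{The main obstacle} I anticipate is pinning down the precise Frobenius normalization on $H^1_{\rm HK}(A)$ and verifying that the Kummer class of $T$ really does generate the $\Q_p(j-1)$-quotient rather than landing in $\so(A_C)/C$ — this is a concrete but delicate compatibility between the syntomic description and the Kummer map, and getting the twist bookkeeping right (degree $1$ cohomology of weight $j$ produces a $\Q_p(j-1)$, not $\Q_p(j)$) is where sign/normalization errors tend to creep in.
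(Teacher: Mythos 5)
Your route is the paper's route: part (1) via the Stein comparison diagram \eqref{Stein-cond} plus the Hyodo--Kato isomorphism forcing $H^i_{\rm HK}(A_C)=0$ for $i\geq 2$, and part (2) by identifying $H^1_{\rm HK}(A_C)\simeq\breve F\cdot c_1^{\rm HK}(z)$ with $N=0$ and $\phi$ acting by $p$, so that the HK term is $(\wh{\B}^+_{\st})^{N=0,\phi=1}=\Q_p$, and then splitting the sequence by sending the Hyodo--Kato symbol to the pro-\'etale symbol $c_1^{\proeet}(z)$ of the coordinate. Your preferred option (a) for the splitting is exactly what the paper does, and the worry you raise (that the Kummer class might land in $\so(A_C)/C$) is resolved precisely by the compatibility of $c_1^{\proeet}(z)$ with $c_1^{\rm HK}(z)$ under the projection to the HK term: its image there is the generator, so it cannot die.

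Two corrections. First, your ``dual normalization'' $\phi(e)=p^{-1}e$ is inconsistent with the computation you then perform: with that normalization $(\breve F e\otimes\wh{\B}^+_{\st})^{N=0,\phi=p}$ would be $(\wh{\B}^+_{\st})^{N=0,\phi=p^2}$, a large Banach--Colmez space, not $\Q_p$. The correct (and the paper's) normalization is $\phi(c_1^{\rm HK}(z))=p\,c_1^{\rm HK}(z)$, which gives $(\wh{\B}^+_{\st})^{N=0,\phi=1}=\Q_p$. Second, your fallback (b) is genuinely wrong: the obstruction group is $\Ext^1(\Q_p(j-1),(\so(A_C)/C)(j-1))$, which after untwisting contains $H^1(\sg_K,\so(A_C)/C)$, and by the generalized Tate formula \eqref{newton15} this is $\so(A)/K\neq 0$ (the case $j=0$, $i=1$ is the \emph{nonvanishing} case). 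So one cannot deduce the splitting from a vanishing of $\Ext$; the explicit symbol argument of (a) is needed.
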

\begin{proof}   Since $A$ is  Stein we can use 
 the Galois equivariant map of strictly exact sequences (\ref{Stein}) for $X=A$. 
From the Hyodo-Kato isomorphism $\iota_{\rm HK}:H^i_{\rm HK}(A_C){\otimes}_{\breve{F}}^{{\Box}}C\simeq H^i_{\dr}(A_C)$ and the fact that $H^i_{\dr}(A_C)\simeq C$, for $i=0,1$, and $H^i_{\dr}(A_C)=0$, for $i\geq 2$, we see that 
$$H^0_{\rm HK}(A_C)\simeq \breve{F},\quad H^1_{\rm HK}(A_C)\simeq \breve{F}, \quad H^i_{\rm HK}(A_C)\simeq 0, \, i\geq 2. 
$$ 
The group $H^1_{\rm HK}(A_C)$ is generated by the  Hyodo-Kato symbol  $c_1^{\rm HK}(z)$, for  an arithmetic coordinate $z$ of the annulus.   Frobenius acts on $c_1^{\rm HK}(z)$ by multiplication by $p$ and monodromy is trivial. This implies that
$$
(H^i_{\rm HK}(A_C){\otimes}^{\Box}_{\breve{F}}\wh{\B}^+_{\st})^{N=0,\phi=p^i}\simeq 
\begin{cases}\Q_p & \mbox{if $i=0,1,$}\\
0 & \mbox{if $i\geq 2$.}
\end{cases}
$$
This yields the isomorphisms in our lemma.

   Assume now that $i=1$. Then  the diagram (\ref{Stein}) becomes
 $$
 \xymatrix@R=5mm@C=6mm{
 0\ar[r] & \so(A_C)/C \ar[r] \ar@{=}[d] & H^1(A_C,\Q_p(1))\ar[r]\ar[d]^{\alpha} & \Q_p\ar[d]^{\can} \ar[r] & 0\\
  0\ar[r] & \so(A_C)/C \ar[r]&   \Omega^1(A_C)\ar[r] & C\ar[r] & 0.
 }
 $$
 The top row yields the exact sequence (\ref{annulus1.1}). The term $\Q_p$ comes from a Hyodo-Kato term and is generated by the Hyodo-Kato symbol $c_1^{\rm HK}(z)$. The term $C$ comes from de Rham cohomology and is generated by the de Rham symbol $c_1^{\rm dR}(z)$. These symbols are compatible with each other (via the Hyodo-Kato isomorphism $\iota_{\rm HK}$) and are also compatible with the pro-\'etale symbol $c_1^{\proeet}(z)$. Sending $c_1^{\rm HK}(z)$ to $c_1^{\proeet}(z)$ yields the wanted splitting of the exact sequence (\ref{annulus1.1}).
\end{proof}
 Take now  the Hochschild-Serre spectral sequence: 
\begin{equation}
\label{HSann} 
E^{a,b}_2 =H^a(\sg_K,H^b(A_C,\Q_p(j)))\Rightarrow H^{a+b}(A,\Q_p(j)).
\end{equation} 
By \eqref{annulus1}, the only nonzero rows are those of degrees $b=0,1$. We get:
  
\begin{lemma}{\rm (Arithmetic cohomology)} \label{mist3}
 We have   exact sequences 
   \begin{align*}
 &   0\to H^{i}(\sg_K, H^{0}(A_C,\Q_p(j)))\to H^{i}(A,\Q_p(j))\to H^{i-1}(\sg_K, H^{1}(A_C,\Q_p(j)))\to 0,\\
  &   0\to H^{i-1}(\sg_K, (\so(A_C)/C)(j-1))\to H^{i-1}(\sg_K, H^{1}(A_C,\Q_p(j)))\to H^{i-1}(\sg_K, \Q_p(j-1))\to 0.
\end{align*}
\end{lemma}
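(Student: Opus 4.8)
The first exact sequence will be extracted from the Hochschild--Serre spectral sequence \eqref{HSann}. By the geometric vanishing \eqref{annulus1}, this spectral sequence is concentrated in the two rows $b=0,1$, so it degenerates at $E_3=E_\infty$ and $H^i(A,\Q_p(j))$ carries a two-step filtration with graded pieces $E_\infty^{i,0}$ and $E_\infty^{i-1,1}$. To identify these with $E_2^{i,0}=H^i(\sg_K,\Q_p(j))$ and $E_2^{i-1,1}=H^{i-1}(\sg_K,H^1(A_C,\Q_p(j)))$ — which is exactly the first claimed short exact sequence — it suffices to show that every differential $d_2\colon E_2^{a,1}\to E_2^{a+2,0}=H^{a+2}(\sg_K,\Q_p(j))$ vanishes. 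For $a\geq 1$ this is automatic since $\operatorname{cd}(\sg_K)=2$; for $a=-1$ the source vanishes; so the only issue is $d_2\colon E_2^{0,1}=H^0(\sg_K,H^1(A_C,\Q_p(j)))\to E_2^{2,0}=H^2(\sg_K,\Q_p(j))$, precisely as in the proof of Lemma~\ref{mist1}.

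To kill this $d_2$ we argue exactly as in the disc case. If $j\neq 1$ then $H^2(\sg_K,\Q_p(j))=0$ by the Poitou--Tate computation recalled above, and there is nothing to prove. Assume $j=1$. Choose a classical closed point $x$ of $A$, with residue field a finite extension $K'/K$; since the composite of $x\colon \operatorname{Spa}(K',\so_{K'})\to A$ with the structure map $A\to \operatorname{Spa}(K,\so_K)$ is the structure map of $\operatorname{Spa}(K',\so_{K'})$, functoriality of pro-\'etale cohomology gives a factorization of the restriction map
$$\operatorname{res}\colon H^2(\sg_K,\Q_p(1))\longrightarrow H^2(A,\Q_p(1))\longrightarrow H^2(\sg_{K'},\Q_p(1)),$$
in which the first arrow is the edge map $E_2^{2,0}\to H^2(A,\Q_p(1))$. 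As $\operatorname{cor}\circ\operatorname{res}=[K':K]\cdot\operatorname{id}$ is an isomorphism on the $\Q_p$-vector space $H^2(\sg_K,\Q_p(1))$, the map $\operatorname{res}$ is injective, hence so is the edge map $E_2^{2,0}\to H^2(A,\Q_p(1))$. But this edge map factors through $E_3^{2,0}=E_2^{2,0}/\operatorname{im} d_2$, which forces $\operatorname{im} d_2=0$. (If one prefers a $K$-rational section when it is available, a rational point of $A$ does the same job directly, as for the disc in Lemma~\ref{mist1}.)

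For the second exact sequence, apply $\R\Gamma(\sg_K,-)$ to the short exact sequence of solid $\sg_K$-modules \eqref{annulus1.1}. By Lemma~\ref{annulus1.0}(2) this sequence admits a $\sg_K$-equivariant $\Q_p$-linear splitting, so the resulting triangle in $\sd(\Q_{p,\Box})$ splits; taking cohomology, the long exact Galois sequence breaks into the short exact sequences $0\to H^{i-1}(\sg_K,(\so(A_C)/C)(j-1))\to H^{i-1}(\sg_K,H^1(A_C,\Q_p(j)))\to H^{i-1}(\sg_K,\Q_p(j-1))\to 0$, as asserted. (Alternatively one twists and re-indexes the first sequence, exactly as the proof of Lemma~\ref{mist1} deduces its second sequence from its first.)

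The only genuine step is the vanishing of $d_2$ for $j=1$; everything else is formal, the one geometric input being the $\sg_K$-equivariant $\Q_p$-linear splitting of \eqref{annulus1.1} produced by comparing the Hyodo--Kato, de Rham and pro-\'etale symbols of the annulus coordinate in Lemma~\ref{annulus1.0}. I expect no further obstacle, since the whole argument runs in the solid derived category without any change from the disc case.
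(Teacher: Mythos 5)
Your proof is correct and follows essentially the same route as the paper: both sequences come from the two-row Hochschild--Serre spectral sequence plus the split sequence \eqref{annulus1.1}, with the only real issue being the vanishing of $d_2$ for $j=1$. The paper handles that step by base-changing to a finite extension $L$ over which $A$ acquires a rational point and noting that $H^2(\sg_L,H^0(A_C,\Q_p(1)))\to H^2(A_L,\Q_p(1))$ is then injective; your version, using a closed point with residue field $K'$ and making the restriction--corestriction argument explicit, is the same idea spelled out without base-changing the annulus.
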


 \begin{proof} The second sequence is obtained from the (split) exact sequence  (\ref{annulus1.1}). 
 
  For the rest, we argue exactly as in the case of an open disc (see the proof of 
 Lemma~\ref{mist1}) with the exception of the triviality of the map $d_2$ in the exact sequence \ref{mistake1} when $j=1$: in this case, a rational point in $A$ does not always exist but it does after taking 
a base change to a finite extension $L$ of $K$. Then the map $H^2(\sg_L,H^0(A_C,\Q_p(j)))\to H^2(A_L,\Q_p(j))$ is injective and the triviality of $d_2$ follows. 
\end{proof}
\begin{remark} We note that the groups $H^i(A_C,\Q_p(j))$ and $H^i(A,\Q_p(j))$ are Fr\'echet and nuclear Fr\'echet spaces, by Lemma \ref{Stein} and Theorem \ref{final2},  respectively. 
\end{remark}

 \subsubsection{Ghost circle}
 Take now the ghost circle    $Y:=\partial D$.  
 \begin{lemma}{\rm (Geometric cohomology)}
 \label{ghost0} Let $i\in\N$, $j\in\Z$. 
 \begin{enumerate}
 \item 
 We have  $\sg_K$-equivariant   isomorphisms 
$$
   H^i(Y_C,\Q_p(j))\simeq \begin{cases} \Q_p(j) & \mbox{ if } i=0,\\
  0 & \mbox{ if }  i\geq 2.
  \end{cases}
  $$
  \item We have a  $\sg_K$-equivariant   (split)  exact sequence 
  \begin{equation} 
  \label{ball2-ghost1}
  0\to (\so(Y_C)/C)(j-1)\to H^1(Y_C,\Q_p(j))\to \Q_p(j-1)\to 0. 
  \end{equation}
\end{enumerate}
 \end{lemma}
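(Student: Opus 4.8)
The plan is to realize $Y_C$ as a cofiltered limit of open annuli and to pass to the colimit in Lemma~\ref{annulus1.0}. Write $A_n:=D\moins D_n$; each $A_n$ is an open annulus over $K$, the inclusions $A_{n+1}\hookrightarrow A_n$ are open immersions, and $Y=\varprojlim_n A_n$, with $\so(Y):=\colim_n\so(A_n)$ the germs of analytic functions near the boundary circle. The structural input is the identification
$$\R\Gamma(Y_C,\Q_p(j))\;\simeq\;\colim_n\R\Gamma(A_{n,C},\Q_p(j))\qquad\text{in }\sd(\Q_{p,\Box}),$$
which one establishes in the style of the proof of Lemma~\ref{Serre}: pass to the quasi-pro-\'etale site of the diamond $Y^{\Diamond}=\varprojlim_n A_{n,C}^{\Diamond}$, compute $\Z/p^k$-cohomology on strictly totally disconnected covers (where it is given by sections and behaves well under the cofiltered limit), and then rationalize, the systems $\{\Z/p^k(\cdots)\}_k$ being Mittag-Leffler.

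Granting this, apply Lemma~\ref{annulus1.0} to each $A_n$ and pass to the filtered colimit over $n$, which is exact and commutes with cohomology. The $\sg_K$-equivariant restriction maps induce the identity on $H^0(A_{n,C},\Q_p(j))=\Q_p(j)$ (each $A_n$ is connected) and the zero map in degrees $\geq 2$; hence $H^0(Y_C,\Q_p(j))=\Q_p(j)$ and $H^i(Y_C,\Q_p(j))=0$ for $i\geq 2$, proving part~(1). In degree $1$, the colimit of the split short exact sequences \eqref{annulus1.1} is a $\sg_K$-equivariant short exact sequence
$$0\to\colim_n\big((\so(A_{n,C})/C)(j-1)\big)\to H^1(Y_C,\Q_p(j))\to\colim_n\Q_p(j-1)\to 0.$$

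It then remains to identify the two outer colimits. Since $C\hookrightarrow\so(A_{n,C})$ is the inclusion of constants, compatibly with restriction, exactness of the colimit gives $\colim_n\big(\so(A_{n,C})/C\big)\simeq\so(Y_C)/C$, and twisting by $\Q_p(j-1)$ produces the left term of \eqref{ball2-ghost1}. For the quotient, recall from the proof of Lemma~\ref{annulus1.0} that this copy of $\Q_p(j-1)$ is (the twist of) the line spanned by the Hyodo-Kato symbol $c_1^{\rm HK}(z)$ of a coordinate $z$; as $z$ is a coordinate on $D$, restricting along $A_{n+1}\subset A_n$ fixes $c_1^{\rm HK}(z)$, so the transition maps on this line are the identity and $\colim_n\Q_p(j-1)=\Q_p(j-1)$. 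This yields the exact sequence of part~(2). Finally, the splittings of Lemma~\ref{annulus1.0} send $c_1^{\rm HK}(z)$ to the pro-\'etale symbol $c_1^{\proeet}(z)$ and are thus compatible with the restriction maps, so they assemble into a $\sg_K$-equivariant $\Q_p$-linear splitting of \eqref{ball2-ghost1}.

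The step I expect to cost the most is the structural identification $\R\Gamma(Y_C,\Q_p(j))\simeq\colim_n\R\Gamma(A_{n,C},\Q_p(j))$: the annuli $A_n$ are not quasi-compact, so one cannot simply invoke commutation of \'etale cohomology with cofiltered limits, and some care in the diamond formalism is required; once this input is available, the remainder is routine manipulation with exact filtered colimits.
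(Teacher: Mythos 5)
Your proof is correct and follows essentially the same route as the paper: realize the cohomology of $Y_C$ as a filtered colimit over the open annuli $A_C(\varepsilon)=D_C\moins D_C(\varepsilon)$, apply Lemma~\ref{annulus1.0} to each, and pass to the exact colimit, identifying $\colim_n(\so(A_{n,C})/C)$ with $\so(Y_C)/C$ and noting that the transition maps act as the identity on the $\Q_p(j-1)$-line spanned by the symbol $c_1^{\rm HK}(z)$, so that the splittings are compatible. The one step you flag as costly --- establishing $\R\Gamma(Y_C,\Q_p(j))\simeq\colim_n\R\Gamma(A_{n,C},\Q_p(j))$ via the diamond formalism --- actually requires no work in this paper, because $\R\Gamma(\partial X,\Q_p(r))$ is \emph{defined} as $\colim_Z\R\Gamma(X\moins Z,\Q_p(r))$ over admissible quasi-compact opens $Z$ (Section~\ref{def-compact}), so the paper's proof simply opens with ``by definition''.
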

 \begin{proof}
   By definition, we have 
   \begin{align*}
   \R\Gamma(Y_C,\Q_p(j)) &=\colim_{0 < \varepsilon <1}\R\Gamma(D_C \moins D_C(\varepsilon), \Q_p(j))\\
    & =\colim_{0 < \varepsilon <1}\R\Gamma( A_C(\varepsilon), \Q_p(j)),
   \end{align*}
   where $D_C(\varepsilon)$ is the closed disc of radius $\varepsilon$ (over $C$) and $A_C(\varepsilon):=D_C \moins D_C(\varepsilon)$.
   
   Applying Lemma \ref{annulus1.0},   we get immediately that   ${H}^i(Y_C,\Q_p(j))=0$, for $i\geq 2$, and 
$$
   {H}^1(Y_C,\Q_p(j))  \stackrel{\sim}{\leftarrow}  \colim_{0 < \varepsilon <1}{H}^1( A_C(\varepsilon), \Q_p(j)).
   $$
   From the  exact sequence \eqref{annulus1.1}, we get the exact sequence
   \begin{align*}
0 & \to  \colim_{0 < \varepsilon <1} (\so(A_C(\varepsilon)/C)(j-1))\to \colim_{0 < \varepsilon <1}{H}^1( A_C(\varepsilon), \Q_p(j)) \\
 & \to \colim_{0 < \varepsilon <1}\Q_p(j-1)\to 0.
\end{align*}
Since   $\colim_{0 < \varepsilon <1} (\so(A_C(\varepsilon)/C)(j-1))\stackrel{\sim}{\to} \so(Y_C)/C$,  we get that $  {H}^1(Y_C,\Q_p(j)) $  fits into the  exact sequence \eqref{ball2-ghost1}.
   
      From  Lemma \ref{annulus1.0}, 
   we also get the isomorphism
   $$
   \colim_{0 < \varepsilon <1}{H}^0( A_C(\varepsilon), \Q_p(j)) \stackrel{\sim}{\to}  {H}^0(Y_C,\Q_p(j)).
$$
From it and  the  exact sequence \eqref{annulus1.1},   we get the isomorphism
   $$
      \Q_p(j)\stackrel{\sim}{\to}  {H}^0(Y_C,\Q_p(j)).
   $$
 This    finishes the proof of the lemma.
    \end{proof}

\begin{lemma}{\rm (Arithmetic cohomology)}\label{mist1-ghost} Let $i\in\N, j\in \Z$.  
 We have    exact sequences 
   \begin{align*}
&    0\to H^i(\sg_K, H^{0}(Y_C,\Q_p(j)))\to H^i(Y, \Q_p(j)) \to H^{i-1}(\sg_K, H^{1}(Y_C,\Q_p(j)))\to 0,\\
   & 0\to H^{i-1}(\sg_K, (\so(Y_C)/C)(j-1))\to  H^{i-1}(\sg_K, H^{1}(Y_C,\Q_p(j)))\to H^{i-1}(\sg_K, \Q_p(j-1))\to 0.
   \end{align*}
\end{lemma}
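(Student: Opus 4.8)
The strategy is to run the Hochschild--Serre spectral sequence
$$
E^{a,b}_2 = H^a(\sg_K, H^b(Y_C,\Q_p(j))) \Rightarrow H^{a+b}(Y,\Q_p(j))
$$
for $Y = \partial D$, exactly as was done for the open disc in Lemma \ref{mist1} and for the open annulus in Lemma \ref{mist3}. By Lemma \ref{ghost0}, the geometric cohomology $H^b(Y_C,\Q_p(j))$ is concentrated in degrees $b = 0,1$, with $H^0(Y_C,\Q_p(j)) \simeq \Q_p(j)$ and $H^1(Y_C,\Q_p(j))$ sitting in the split exact sequence \eqref{ball2-ghost1}. Hence the spectral sequence has only two nonzero rows, and the edge maps produce a long exact sequence
$$
\cdots \to H^i(\sg_K, H^0(Y_C,\Q_p(j))) \to H^i(Y,\Q_p(j)) \to H^{i-1}(\sg_K, H^1(Y_C,\Q_p(j))) \xrightarrow{d_2} H^{i+1}(\sg_K, H^0(Y_C,\Q_p(j))) \to \cdots
$$
The second displayed exact sequence of the lemma is immediate: it is obtained by applying Galois cohomology to the split exact sequence \eqref{ball2-ghost1} and using that the $\Q_p$-linear splitting is $\sg_K$-equivariant, so the resulting sequence of Galois cohomology groups is short exact.

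The content of the first displayed exact sequence is the vanishing of the differential $d_2: H^{i-1}(\sg_K, H^1(Y_C,\Q_p(j))) \to H^{i+1}(\sg_K, H^0(Y_C,\Q_p(j)))$. Since $\sg_K$ has cohomological dimension $2$, the only potentially nonzero instance is $d_2: H^0(\sg_K, H^1(Y_C,\Q_p(j))) \to H^2(\sg_K, H^0(Y_C,\Q_p(j))) = H^2(\sg_K, \Q_p(j))$, which vanishes unless $j = 1$. So, as in the disc and annulus cases, the only thing to check is that $d_2 = 0$ when $j = 1$. I would argue this by reducing to the annulus: writing $Y_C = \colim_\varepsilon A_C(\varepsilon)$ and noting that pro-\'etale cohomology of $Y$ (arithmetic and geometric) is the colimit over $\varepsilon$ of that of the annuli $A(\varepsilon)$, the differential $d_2$ for $Y$ is the colimit of the differentials $d_2$ for the $A(\varepsilon)$. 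By Lemma \ref{mist3} (more precisely, by the argument in its proof via a finite base change $L/K$ over which $A$ acquires a rational point), each of these is zero, hence so is their colimit. Alternatively, one argues directly: $d_2$ for $Y$ becomes trivial after base change to a finite $L/K$ over which the relevant annulus has an $L$-point, giving an injection $H^2(\sg_L, \Q_p(1)) \hookrightarrow H^2(Y_L,\Q_p(1))$, and then one transfers back via a corestriction/restriction argument.

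The main obstacle is the $j = 1$ case of the $d_2$-vanishing, since for a ghost circle there is no direct analogue of "a rational point of $D$" to produce a section of the projection. The cleanest route is the colimit-over-annuli reduction described above, which lets one inherit the result from Lemma \ref{mist3}; one must check that the colimit is compatible with the Hochschild--Serre spectral sequences (which it is, since filtered colimits are exact in ${\rm CondAb}$ and the spectral sequences are functorial), so that $d_2^Y = \colim_\varepsilon d_2^{A(\varepsilon)} = 0$. Everything else is formal bookkeeping with the two-row spectral sequence and the split exact sequence \eqref{ball2-ghost1}.
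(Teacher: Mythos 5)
Your proposal is correct and is essentially the paper's own argument: both reduce to the open annuli via the colimit $\R\Gamma(Y,\Q_p(j))=\colim_{\varepsilon_K}\R\Gamma(A(\varepsilon_K),\Q_p(j))$ and import the $d_2=0$ statement from Lemma \ref{mist3}, the paper simply taking the colimit of the already-established short exact sequences for the annuli rather than first writing a spectral sequence for $Y$ (which in any case only exists as the colimit of the annular ones, since $Y$ is not itself a rigid variety covered by Lemma \ref{Serre}). The one small point to tighten is the justification for commuting the colimit past $H^i(\sg_K,-)$: what is needed is that $\Z[\sg_K^n]$ is a compact object of ${\rm CondAb}$ (as in \eqref{chicagowarm}), not merely that filtered colimits are exact.
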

 \begin{proof}
The second exact sequence is obtained from the (split) exact sequence  from Lemma \ref{ghost0}. 

  For the first exact sequence, we write
   \begin{align*}
   \R\Gamma(Y,\Q_p(j)) &=\colim_{0 < \varepsilon _K<1}\R\Gamma(D \moins D(\varepsilon_K), \Q_p(j))\\
    & =\colim_{0 < \varepsilon_K <1}\R\Gamma( A(\varepsilon_K), \Q_p(j)),
   \end{align*}
   where $\varepsilon_K$ are chosen so that the annuli $A(\varepsilon_K)$ are defined over $K$. By Lemma \ref{mist3}, this yields the exact sequence
  \begin{align*}
      0 & \to \colim_{0 < \varepsilon_K <1}H^i(\sg_K, H^{0}(A(\varepsilon_K)_C,\Q_p(j)))\to H^i(Y, \Q_p(j))  \\ & \to 
   \colim_{0 < \varepsilon_K <1}H^{i-1}(\sg_K, H^{1}(A(\varepsilon_K)_C,\Q_p(j)))\to 0.
   \end{align*}
  Hence it suffices to show that
   \begin{align}\label{chicagowarm}
    \colim_{0 < \varepsilon_K <1}H^i(\sg_K, H^{0}(A(\varepsilon_K)_C,\Q_p(j))) &\stackrel{\sim}{\to} H^i(\sg_K, H^{0}(Y_C,\Q_p(j))),\\
      \colim_{0 < \varepsilon_K <1}H^{i-1}(\sg_K, H^{1}(A(\varepsilon_K)_C,\Q_p(j)))  & \stackrel{\sim}{\to} H^{i-1}(\sg_K, H^{1}(Y_C,\Q_p(j))).\notag
   \end{align}
   But this is clear since $\Z[\sg_K]$ is a compact object in ${\rm CondAb}$. 
\end{proof}

 \subsubsection{Boundary of an open annulus}  Let $A$ be an open annulus over $K$. 
  
\begin{corollary}{\rm (Geometric cohomology)}
\label{Vvk} Let $i\in\N, j\in\Z$. 
There is a  $\sg_K$-equivariant canonical  isomorphism  $$H^i(\partial A_C,\Q_p(j)) \stackrel{\sim}{\to}
   H^i(Y_C,\Q_p(j))^{\oplus 2}.
   $$
   Hence we have   $\sg_K$-equivariant  isomorphisms
$$
   H^i(\partial A_C,\Q_p(j)) \simeq \begin{cases}
    \Q_p(j)^{\oplus 2} & \mbox{if } i=0,\\
    0 & \mbox{if } i\geq 2,
  \end{cases}
 $$
and  a  $\sg_K$-equivariant  (split)  exact sequence: 
   $$ 0\to (\so( Y_C)/C)(j-1)^{\oplus 2}\to H^1(\partial A_C,\Q_p(j))\to \Q_p(j-1)^{\oplus 2}\to 0
   $$
 \end{corollary}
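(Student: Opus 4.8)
The statement to prove is Corollary \ref{Vvk}, which computes the geometric pro-\'etale cohomology of the boundary $\partial A$ of an open annulus $A$ over $K$. The plan is to reduce this to the computation of the ghost circle $Y = \partial D$ already carried out in Lemma \ref{ghost0}, using the geometric fact that the boundary of an annulus consists of two "ends", each of which looks like the boundary of a disc.

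\medskip

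\emph{Step 1: Geometric decomposition of $\partial A$.} An open annulus $A$ over $K$ can be written, after a choice of coordinate, as $A = \{r_1 < v(z) < r_2\}$ (with the usual conventions on radii). For $0 < \varepsilon$ small enough, removing a closed sub-annulus $A(\varepsilon) \Subset A$ from $A$ leaves two connected components, each an open annulus, and $\partial A := \varprojlim_\varepsilon (A \moins A(\varepsilon))$ as a pro-object is the disjoint union of the two "inner" boundary pieces. First I would make this precise: there is a decomposition $\partial A_C = \partial_1 A_C \sqcup \partial_2 A_C$ where each $\partial_i A_C$ is, via the coordinate $z$ resp.\ $1/z$ (up to rescaling), identified with the boundary $\partial D'_C$ of an open disc $D'$. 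Concretely, $\R\Gamma(\partial A_C, \Q_p(j)) = \colim_{\varepsilon} \R\Gamma(A_C \moins A_C(\varepsilon), \Q_p(j))$ and $A_C \moins A_C(\varepsilon)$ is a disjoint union of two open annuli $A_C^{(1)}(\varepsilon) \sqcup A_C^{(2)}(\varepsilon)$, each of which, as $\varepsilon$ varies, has colimit computing $H^i(Y_C, \Q_p(j))$ (after identifying the relevant boundary with a ghost circle). This is the heart of the argument and is purely geometric.

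\medskip

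\emph{Step 2: Passage to cohomology.} Since pro-\'etale cohomology sends finite disjoint unions to finite products (at the level of $\R\Gamma$, hence of each $H^i$), and since filtered colimits are exact and commute with the two-term product, we obtain
$$
H^i(\partial A_C, \Q_p(j)) \simeq \colim_\varepsilon \Big( H^i(A^{(1)}_C(\varepsilon), \Q_p(j)) \oplus H^i(A^{(2)}_C(\varepsilon), \Q_p(j)) \Big) \simeq H^i(Y_C, \Q_p(j))^{\oplus 2},
$$
where the last isomorphism uses the identification of each end of $A$ with the boundary of a disc and the computation $\colim_\varepsilon H^i(A_C(\varepsilon), \Q_p(j)) \stackrel{\sim}{\to} H^i(Y_C, \Q_p(j))$ implicit in the proof of Lemma \ref{ghost0}. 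All identifications are $\sg_K$-equivariant because the coordinate changes and the rescalings involved are defined over $K$.

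\medskip

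\emph{Step 3: Read off the explicit description.} Plugging in Lemma \ref{ghost0}: $H^0(Y_C, \Q_p(j)) \simeq \Q_p(j)$ gives $H^0(\partial A_C, \Q_p(j)) \simeq \Q_p(j)^{\oplus 2}$; $H^i(Y_C, \Q_p(j)) = 0$ for $i \geq 2$ gives the vanishing; and the short exact sequence $0 \to (\so(Y_C)/C)(j-1) \to H^1(Y_C, \Q_p(j)) \to \Q_p(j-1) \to 0$, being $\sg_K$-equivariantly split (Lemma \ref{ghost0}) and additive under $\oplus 2$, yields the split exact sequence $0 \to (\so(Y_C)/C)(j-1)^{\oplus 2} \to H^1(\partial A_C, \Q_p(j)) \to \Q_p(j-1)^{\oplus 2} \to 0$, using that $\so(\partial A_C)/C \simeq (\so(Y_C)/C)^{\oplus 2}$ (the functions on the boundary of $A$ decompose according to the two ends — a Laurent-series statement).

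\medskip

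The main obstacle I expect is Step 1: being careful about what "$\partial A$" means as a pro-rigid-analytic (or diamond) object and verifying cleanly that removing a closed sub-annulus genuinely disconnects $A$ into two annular pieces whose colimit towards the respective ends reproduces the ghost circle, all compatibly with the $\sg_K$-action and with the identification $\so(\partial A_C)/C \simeq (\so(Y_C)/C)^{\oplus 2}$. Once this geometric input is in place, Steps 2 and 3 are formal manipulations with colimits, finite products, and the already-established Lemma \ref{ghost0}. (In fact, since the statement is labelled a Corollary, the intended proof is surely exactly this short reduction.)
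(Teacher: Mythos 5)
Your proposal is correct and follows essentially the same route as the paper: the paper's proof simply writes $\partial A\simeq \varinjlim_{\delta,\varepsilon} A\moins A(\delta,\varepsilon)= Y_a\sqcup Y_b$ as a disjoint union of two ghost circles and invokes Lemma \ref{ghost0}. Your Steps 1--3 spell out exactly this reduction (with the identification of each end with the boundary of a disc made a bit more explicit than in the paper), so there is nothing to add.
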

 \begin{proof}
We write $A= \{ z \in K \; : \; |a| < |z| < |b| \}$
with $a,b \in K$. Then
$$ \partial A\simeq  \varinjlim_{|a|< \delta \le \varepsilon < |b|} A \moins A(\delta, \varepsilon) = Y_a\sqcup  Y_b, $$
where $A(\delta, \varepsilon):= \{ z \in K \; : \; \delta \le |z| \le \varepsilon \}$ and $Y_a, Y_b$ are the two ghost circles at the boundary of $A$. Our corollary now follows from Lemma \ref{ghost0}.
 \end{proof}
 
\begin{corollary}\label{mist4}{\rm (Arithmetic cohomology)}
 We have a canonical  isomorphism 
  $$
    H^{i}(\partial A,\Q_p(j))\simeq H^{i}(Y,\Q_p(j))^{\oplus 2}
    $$
    and   exact sequences
    \begin{align*}
 &   0\to H^{i}(\sg_K, H^{0}(\partial A_C,\Q_p(j)))\to H^{i}(\partial A,\Q_p(j))\to H^{i-1}(\sg_K, H^{1}(\partial A_C,\Q_p(j)))\to 0,\\
 &   0\to H^{i-1}(\sg_K, \tfrac{\so(\partial A_C)}{C^{\oplus2}}(j-1))\to H^{i-1}(\sg_K,H^{1}(\partial A,\Q_p(j)))
\to H^{i-1}(\sg_K, \Q_p^{\oplus2}(j-1))\to 0.
 \end{align*}
\end{corollary}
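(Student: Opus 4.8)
The plan is to reduce everything to the case of the ghost circle, exactly as in the proof of Corollary~\ref{Vvk}, combined with Lemma~\ref{mist1-ghost}. Write $A=\{z:|a|<|z|<|b|\}$ with $a,b\in K$. Then, as in the proof of Corollary~\ref{Vvk},
$$
\partial A\simeq \varinjlim_{|a|<\delta\leq\varepsilon<|b|}\big(A\moins A(\delta,\varepsilon)\big)=Y_a\sqcup Y_b,
$$
where $Y_a$ (resp.\ $Y_b$) is the ghost circle at the inner (resp.\ outer) boundary of $A$, and each of $Y_a,Y_b$ is isomorphic to the boundary of an open disc over $K$. Since pro-\'etale cohomology takes disjoint unions to products and the colimit over $(\delta,\varepsilon)$ commutes with the resulting product of two factors, we get $\R\Gamma(\partial A,\Q_p(j))\simeq \R\Gamma(Y_a,\Q_p(j))\oplus\R\Gamma(Y_b,\Q_p(j))$, hence
$$
H^i(\partial A,\Q_p(j))\simeq H^i(Y_a,\Q_p(j))\oplus H^i(Y_b,\Q_p(j))\simeq H^i(Y,\Q_p(j))^{\oplus 2},
$$
the last identification because $Y_a$ and $Y_b$ are both ghost circles.

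Next I would obtain the two exact sequences by applying Lemma~\ref{mist1-ghost} to $Y_a$ and to $Y_b$ and adding. The direct sum of the first exact sequences gives
$$
0\to H^i(\sg_K,H^{0}(Y_{a,C},\Q_p(j))\oplus H^{0}(Y_{b,C},\Q_p(j)))\to H^i(\partial A,\Q_p(j))\to H^{i-1}(\sg_K,H^{1}(Y_{a,C},\Q_p(j))\oplus H^{1}(Y_{b,C},\Q_p(j)))\to 0,
$$
and by Corollary~\ref{Vvk} the inner direct sums are $H^{0}(\partial A_C,\Q_p(j))$ and $H^{1}(\partial A_C,\Q_p(j))$ respectively, which yields the first displayed sequence of the corollary. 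The direct sum of the split exact sequences of Lemma~\ref{mist1-ghost} reads
$$
0\to H^{i-1}(\sg_K,(\tfrac{\so(Y_{a,C})}{C}\oplus\tfrac{\so(Y_{b,C})}{C})(j-1))\to H^{i-1}(\sg_K,H^{1}(\partial A_C,\Q_p(j)))\to H^{i-1}(\sg_K,\Q_p(j-1)^{\oplus 2})\to 0,
$$
and one concludes by the identification $\so(\partial A_C)\simeq\so(Y_{a,C})\oplus\so(Y_{b,C})$, which gives $\so(\partial A_C)/C^{\oplus 2}\simeq(\so(Y_{a,C})/C)\oplus(\so(Y_{b,C})/C)$; splitness is inherited from Lemma~\ref{mist1-ghost}.

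The only point that needs care -- and it is the same one already present in the proofs of Lemmas~\ref{mist3} and~\ref{mist1-ghost} -- is that the formation of Galois cohomology $H^a(\sg_K,-)$ commutes with the colimit over $(\delta,\varepsilon)$ and with the disjoint-union decomposition; this holds because $\Z[\sg_K]$ is a compact object of ${\rm CondAb}$, so that $\R\Gamma(\sg_K,-)$ commutes with filtered colimits of condensed abelian groups. There is no new issue with the Hochschild--Serre differential $d_2$: the split exact sequences for the finite annuli $A(\delta,\varepsilon)$ are already provided by Lemma~\ref{mist3} (after, if necessary, a harmless finite base change of $K$ trivializing $d_2$), and exactness together with splitness is preserved upon passing to the filtered colimit. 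Hence the only genuine input beyond formal manipulations is Corollary~\ref{Vvk} and Lemma~\ref{mist1-ghost}.
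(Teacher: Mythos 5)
Your proposal is correct and follows essentially the same route as the paper, which simply observes that $\partial A$ is a disjoint union of two ghost circles and invokes Lemma~\ref{mist1-ghost}; your extra care about commuting $H^a(\sg_K,-)$ with the filtered colimit (via compactness of $\Z[\sg_K]$ in ${\rm CondAb}$) matches the justification already given in the proof of Lemma~\ref{mist1-ghost}.
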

 \begin{proof} Since $\partial A$ is a disjoint union of two ghost circles, this follows immediately from Lemma \ref{mist1-ghost}.
\end{proof}
\begin{remark}
The arithmetic pro-\'etale cohomology $   H^{i}(Y,\Q_p(j))$ and $   H^{i}(\partial A,\Q_p(j))$ are  direct sums of nuclear Fr\'echet spaces and spaces of compact type (both over $\Q_p$) (see Proposition \ref{fanto8} for an explicit splitting). 
\end{remark}

\section{Pro-\'etale cohomology with compact support}\label{def-compact}
In this chapter we will study properties of compactly supported pro-\'etale cohomology of smooth dagger curves. 
\subsection{Compactly supported  cohomology}We start with  briefly  reviewing  the definition of pro-\'etale cohomology with compact support from \cite{AGN}. 
\subsubsection{Partially proper varieties} \label{morning2}
 Let $X$ be a smooth partially proper rigid analytic variety  over $K, C$. We define its $p$-adic pro-\'etale cohomology with compact support by: 
\begin{equation}
\label{defc}
 \R\Gamma_{c}(X, \Q_p(r)):=[ \R\Gamma(X, \Q_p(r)) \to \R\Gamma(\partial X, \Q_p(r))]\in \sd(\Q_{p}),  \quad r\ge 0, 
 \end{equation}
with
\[ \R\Gamma(\partial X, \Q_p(r)):= \colim_Z \R\Gamma(X \moins Z, \Q_p(r))\in \sd(\Q_{p}), \]
where the colimit is taken over  admissible quasi-compact opens  $Z\subset X$. 
From the definition, we get  a distinguished triangle
\[ \R\Gamma_{c}(X, \Q_p(r)) \to \R\Gamma(X, \Q_p(r)) \to \R\Gamma(\partial X, \Q_p(r)). \]
By \cite[Sec. 2.1]{AGN}, $\R\Gamma_{c}(X, \Q_p(r))$  is a cosheaf for the analytic topology on $X$.

If $X$ is a proper variety then the  definition  \eqref{defc} yields that we have the canonical  isomorphism 
\[ \R\Gamma_{c}(X, \Q_p(r)) \stackrel{\sim}{\to} \R\Gamma(X, \Q_p(r)) \]
and the cohomology groups  of $\R\Gamma(X,\Q_p(j))$ are classical: they are finite dimensional vector spaces over $\Q_p$ equipped with their canonical Hausdorff  topology. By Lemma \ref{morning1} so is the cohomology of the complex $\R\Gamma_{ \Box}(X, \Q_p(j))$ (which can be identified with the cohomology of $\R\Gamma(X,\Q_p(j))$ via the functor ${\rm CD}$).
\begin{remark}
The de Rham and Hyodo-Kato cohomologies  with compact support $\R\Gamma_{\dr,c}(X)$ and $\R\Gamma_{{\rm HK},c}(X)$ can be defined in an analogous way (see \cite{Ch90}).
\end{remark}

\subsubsection{Dagger affinoids}   Let $X$ be a smooth dagger affinoid over $K,C$ with a presentation $\{X_h\}$. We set
 $$
 \R\Gamma_{c}(X, \Q_p(r)):=\R\lim_h\R\Gamma_{c}(X^{0}_h,\Q_p(r))\in \sd(\Q_{p,\Box}),
 $$
 where $X^{0}_h$ denotes a  naive interior\footnote{See Remark \ref{naive1} for a discussion.} of $X_h$ adapted to the presentation $\{X_h\}$. This definition  is independent of the interiors chosen. Alternatively, we can set
 $$
  \R\Gamma_{c}(X, \Q_p(r)):=\R\Gamma_{\wh{X}}(X_h,\Q_p(r))\in \sd(\Q_{p,\Box}).
 $$
 This is independent of $h$.

\subsection{Hochschild-Serre spectral sequence}  
We record the Hochschild-Serre spectral sequence for pro-\'etale cohomology with compact support.
\begin{lemma}\label{Serre-c}
Let $X$ be a smooth partially proper variety over $K$. There is a  spectral sequence
\begin{equation}
\label{sseqc}
E^{a,b}_2 =H^a(\sg_K,H^b_{ c}(X_C,\Q_p(j)))\Rightarrow H^{a+b}_{c}(X,\Q_p(j)).
\end{equation}
\end{lemma}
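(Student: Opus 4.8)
The plan is to mimic the proof of the Hochschild--Serre spectral sequence for usual pro-\'etale cohomology (Lemma~\ref{Serre}), adapting the argument to compactly supported cohomology by exploiting the fact that the latter is, in both the partially proper and the dagger affinoid cases, built out of usual pro-\'etale cohomology of a diagram of rigid analytic varieties over $K$. Concretely, for $X$ smooth partially proper over $K$ we have the distinguished triangle
\begin{equation*}
\R\Gamma_{c}(X,\Q_p(j))\to \R\Gamma(X,\Q_p(j))\to \R\Gamma(\partial X,\Q_p(j)),
\end{equation*}
and the analogous triangle over $C$, compatibly with the $\sg_K$-action. The first step is to observe that $X_C\to X$ and $(\partial X)_C\to\partial X$ are $\sg_K$-torsors in the relevant sense and that the formation of $\partial X$ commutes with the base change $-\times_K C$ (since the admissible quasi-compact opens $Z\subset X$ pull back to a cofinal family over $C$); thus, over the site of diamonds, taking $\sg_K$-invariants of the $C$-triangle recovers the $K$-triangle.

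The second step is the key descent input, which I would isolate as the statement that for any adic space $Y$ over $\Spa(K,\so_K)$ and any profinite set $S$ one has a natural quasi-isomorphism in $\sd({\rm CondAb})$
\begin{equation*}
\R\Gamma(Y\times S,\Q_p)\simeq \R\underline{\Hom}(\Z_{\Box}[S],\R\Gamma(Y,\Q_p)),
\end{equation*}
which is precisely \eqref{chicago10} from the proof of Lemma~\ref{Serre}. Since both $\R\Gamma(X_C,\Q_p(j))$ and $\R\Gamma((\partial X)_C,\Q_p(j))$ are assembled (via a colimit over quasi-compact opens $Z$, which commutes with everything in sight because $\Z_{\Box}[S]$ is internally projective and $\Z[\sg_K]$ is compact in ${\rm CondAb}$) from pieces of the form $\R\Gamma(Y_C\times\sg_K^{n-1},\Q_p(j))$, applying this quasi-isomorphism term by term in the bar resolution $(\,(\partial X)_C/\partial X\,)^{n}\simeq (\partial X)_C\times\sg_K^{n-1}$ and $(X_C/X)^n\simeq X_C\times\sg_K^{n-1}$ yields, for $*\in\{\,[\;],\,\partial,\,c\,\}$,
\begin{equation*}
\R\Gamma_{*}(X,\Q_p(j))\simeq \R\underline{\Hom}_{\Q_p[\sg_K]}(\Q_p,\R\Gamma_{*}(X_C,\Q_p(j)))=\R\Gamma(\sg_K,\R\Gamma_{*}(X_C,\Q_p(j))),
\end{equation*}
the middle isomorphism using the solid projective resolution of the trivial representation recalled in Section~\ref{prelim}(3). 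Taking the hypercohomology spectral sequence of the composite functor $\R\Gamma(\sg_K,-)$ applied to $\R\Gamma_c(X_C,\Q_p(j))$ then gives \eqref{sseqc}.

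The third step handles the dagger affinoid case: here $\R\Gamma_c(X,\Q_p(r))=\R\lim_h\R\Gamma_c(X^0_h,\Q_p(r))$ with $X^0_h$ smooth partially proper (in fact Stein), so one applies the previous paragraph to each $X^0_h$ and then passes to the limit; since $\R\Gamma(\sg_K,-)$ commutes with $\R\lim_h$ one again lands on $\R\Gamma(\sg_K,\R\Gamma_c(X_C,\Q_p(j)))$, whence the spectral sequence. The main obstacle I anticipate is bookkeeping the compatibility of the two colimits/limits (over quasi-compact opens $Z$, resp.\ over the presentation $\{X_h\}$) with the solid $\R\underline{\Hom}(\Z_{\Box}[S],-)$: one must check that $\colim_Z$ (a filtered colimit) commutes with $\R\underline{\Hom}(\Z_{\Box}[S],-)$, which holds because $\Z_{\Box}[S]$ is a compact internally projective object, and that no $\R^1\lim$ obstruction appears when passing to the limit over $h$ — but this follows because everything in sight consists of complexes in $\sd(\Q_{p,\Box})$ where derived limits are well-behaved, exactly as in the proof of Lemma~\ref{Serre}. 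All other ingredients are formal, being verbatim repetitions of the arguments already given for Lemma~\ref{Serre} and Lemma~\ref{Serre-c}'s usual-cohomology analogue.
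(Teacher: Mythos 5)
Your proposal is correct and follows essentially the same route as the paper: both reduce to the Hochschild--Serre quasi-isomorphism of Lemma~\ref{Serre} applied to $X$ and to each $X\moins Z$, and then commute the filtered colimit over quasi-compact opens $Z$ past $\R\Gamma(\sg_K,-)$ using compactness of $\Z[\sg_K^{n}]$ (equivalently, of $\Z_{\Box}[S]$) in the condensed/solid category, so that $\R\Gamma(\sg_K,-)$ passes through the mapping fiber defining $\R\Gamma_c$. The only difference is that your third step on dagger affinoids is superfluous here, since the lemma as stated concerns only partially proper varieties.
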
 

\begin{proof} 
  By definition (see \eqref{defc}), we have  
   \begin{align*}
   \label{passage1}
\rg_{c}(X,\Q_p) & = [\rg(X,\Q_p)\to \rg(\partial X,\Q_p)], \\
  \rg(\partial X,\Q_p) & :=\colim_{Z}\rg(X\moins Z,\Q_p),
\end{align*}
where the colimit is taken over admissible quasi-compact opens $Z$  in $X$. This yields
 natural quasi-isomorphisms
\begin{align*}
\rg_{c}(X,\Q_p) & =  [\R\Gamma(X,\Q_p)\to \R\Gamma(\partial X,\Q_p)]\\
& \simeq [\R\Gamma(\sg_K,\R\Gamma(X_C,\Q_p))\to \R\Gamma(\sg_K,\R\Gamma(\partial X_C,\Q_p))]\\
 & \simeq\rg(\sg_K, [\R\Gamma(X_C,\Q_p)\to \R\Gamma(\partial X_C,\Q_p)])\\
 & = \rg(\sg_K,\rg_{c}(X_C,\Q_p) ).
\end{align*}
The second quasi-isomorphism  needs a justification.  The quasi-isomorphism involving $X$ follows from Lemma  \ref{Serre}. For $\partial X$ we have quasi-isomorphisms
\begin{align*}
 \R\Gamma(\partial X,\Q_p) & =\colim_Z \R\Gamma(X\moins Z,\Q_p)\simeq   \colim_Z\rg(\sg_K,\R\Gamma((X\moins Z)_{C},\Q_p))\\
 &  \simeq \rg(\sg_K,\colim_Z \R\Gamma((X\moins Z)_{C},\Q_p))\simeq \rg(\sg_K, \R\Gamma(\partial X_C,\Q_p)).
\end{align*}
The second quasi-isomorphism follows from Lemma \ref{Serre}; the third one from the fact that $\Z[\sg_K^i]$ is a compact object in ${\rm CondAb}$.
\end{proof}

\subsection{Compactly supported cohomology of Stein  curves} We turn now to the study of compactly supported pro-\'etale cohomology of smooth Stein curves. 
\subsubsection{Geometric compactly supported cohomology}  We will briefly review here computations of geometric compactly supported cohomology from \cite[Sec. 8.2]{AGN}.

  Let $X$ be a geometrically connected smooth Stein  curve over $K$. Its geometric compactly supported pro-\'etale cohomology 
$\R\Gamma_{c}(X_C,\Q_p(j))\in\sd(\Q_{p,\Box})$ is studied in loc. cit. by using 
 a Galois equivariant  exact sequence ($i\geq 0$)
$$\xymatrix@R=3mm{
H^{i-1}{\rm HK}_{c}(X_C,i)\ar[r] &  H^{i-1}{\rm DR}_{c}(X_C,i)\ar[r]& H^i_{c}(X_C,\Q_p(i))\ar[r] & 
  H^{i}{\rm HK}_{c}(X_C,i)\ar[r]&
  H^{i}{\rm DR}_{c}(X_C,i),
}
$$
where we set 
\begin{align*}
 {\rm HK}_{c}(X_C,i) 
 {\rm DR}_{c}(X_C,i) &: = (\rg_{\dr,c}(X){\otimes}^{{\rm L}\Box}_{K}\B^+_{\dr})/F^i\\ 
& \simeq \big(H^1_{c}(X,\so_X){\otimes}^{\Box}_{K}(\B^+_{\dr}/F^i)\to H^1_{c}(X,\Omega^1_X){\otimes}^{\Box}_{K}(\B^+_{\dr}/F^{i-1})\big)[-1].
\end{align*}
This sequence is obtained from  a comparison theorem between pro-\'etale cohomology and syntomic cohomology. It yields the following facts: 
\begin{lemma} We have 
\begin{enumerate}
\item  vanishings: $H^i_{c}(X_C,\Q_p)=0$ for $i\neq 1,2$.
\item an isomorphism: 
\begin{equation}
\label{kosc1}
H^1_{c}(X_C,\Q_p(1)) \xrightarrow{\sim} (H^1_{{\rm HK},c}(X_C){\otimes}^{\Box}_{\breve{F}} \wh{\B}^+_{\st})^{N=0, \varphi=1}.
\end{equation}
\item  an  exact sequence: 
\begin{equation}
\label{vend0}
  \xymatrix@R=3mm{
H^1{\rm HK}_{c}(X_C,2) \ar[r] &  H^{1}{\rm DR}_{c}(X_C,2)\ar[r] & H^2_{c}(X_C,\Q_p(2))\ar[r] & \Q_p(1)\ar[r] & 0.
}
\end{equation}
\end{enumerate}
\end{lemma}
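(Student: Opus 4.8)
The plan is to read off all three assertions from the displayed exact sequence, regarding it as the relevant window of the long exact sequence attached to the distinguished triangle
\[
\rg_{c}(X_C,\Q_p(i))\to {\rm HK}_{c}(X_C,i)\xrightarrow{\iota_{\rm HK}\otimes\theta}{\rm DR}_{c}(X_C,i)
\]
furnished by the compactly supported syntomic description of \cite[Sec.~8.2]{AGN}. Beyond this triangle I will only use: (a) $\rg_{\dr,c}(X_C)$ and $\rg_{{\rm HK},c}(X_C)$ are concentrated in cohomological degrees $1$ and $2$ --- for the de Rham side this is already encoded in the two-term shape of ${\rm DR}_{c}(X_C,i)$ quoted above (only $H^1_{c}(X,\so_X)$ and $H^1_{c}(X,\Omega^1_X)$ occur, by coherent duality on the Stein curve), and the Hyodo-Kato complex has the same shape; so $H^0{\rm HK}_{c}(X_C,i)=H^0{\rm DR}_{c}(X_C,i)=0$ and $H^j{\rm DR}_{c}(X_C,i)=0$ for $j\ge 3$; (b) $X_C$ being connected, $H^0_{{\rm HK}}(X_C)\simeq\breve F$ with trivial $(\varphi,N)$, so by duality $H^2_{{\rm HK},c}(X_C)\simeq\breve F$ with $N=0$ and $\varphi=p$; (c) the fundamental exact sequences $0\to\Q_p(n)\to(\B^+_{\crr})^{\varphi=p^{n}}\xrightarrow{\theta}\B^+_{\dr}/F^{n}\to 0$ for $n\ge 1$, the case $n=1$ being $0\to\Q_p(1)\to(\B^+_{\crr})^{\varphi=p}\to{\mathbb G}_a\to 0$.

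For (1), since over $C$ the sheaf $\Q_p(1)$ is (non-canonically) trivial, it suffices to prove $H^i_{c}(X_C,\Q_p(i))=0$ for $i\notin\{1,2\}$. For $i\le 0$ and for $i\ge 4$ the two outer terms $H^{i-1}{\rm DR}_{c}(X_C,i)$ and $H^i{\rm HK}_{c}(X_C,i)$ of the displayed sequence vanish by (a), forcing $H^i_{c}(X_C,\Q_p(i))=0$. For $i=3$ one has $H^3{\rm HK}_{c}(X_C,3)=0$, so $H^3_{c}(X_C,\Q_p(3))$ is the cokernel of $\iota_{\rm HK}\otimes\theta\colon H^2{\rm HK}_{c}(X_C,3)\to H^2{\rm DR}_{c}(X_C,3)$; by (b), after the identifications given by the Hyodo-Kato isomorphism, this map is the surjection $(\B^+_{\crr})^{\varphi=p^{2}}\to\B^+_{\dr}/F^{2}$ of (c), hence the cokernel vanishes.

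For (2) I specialize the sequence to $i=1$: here $\B^+_{\dr}/F^0=0$, so ${\rm DR}_{c}(X_C,1)=[\,H^1_{c}(X,\so_X)\otimes^{\Box}_K C\to 0\,][-1]$, and together with (a) the displayed sequence reduces to $0\to H^1_{c}(X_C,\Q_p(1))\to H^1{\rm HK}_{c}(X_C,1)\xrightarrow{\iota_{\rm HK}\otimes\theta}H^1{\rm DR}_{c}(X_C,1)$, with $H^1{\rm HK}_{c}(X_C,1)=(H^1_{{\rm HK},c}(X_C)\otimes^{\Box}_{\breve F}\wh{\B}^+_{\st})^{N=0,\varphi=p}$. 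As the Hyodo-Kato map is injective after the relevant scalar extension, $\ker(\iota_{\rm HK}\otimes\theta)$ is cut out by $\theta=0$; on the $\varphi=p$ eigenspace the condition $\theta=0$ means divisibility by $t$, and since $\varphi(t)=pt$ multiplication by $t$ yields an isomorphism $(H^1_{{\rm HK},c}(X_C)\otimes^{\Box}_{\breve F}\wh{\B}^+_{\st})^{N=0,\varphi=1}\xrightarrow{\sim}\ker(\iota_{\rm HK}\otimes\theta)$; this is \eqref{kosc1}. For (3) I take $i=2$ and input $H^3_{c}(X_C,\Q_p(2))=0$ from part (1); the tail of the long exact sequence becomes $H^1{\rm HK}_{c}(X_C,2)\to H^1{\rm DR}_{c}(X_C,2)\to H^2_{c}(X_C,\Q_p(2))\to\ker(\iota_{\rm HK}\otimes\theta\colon H^2{\rm HK}_{c}(X_C,2)\to H^2{\rm DR}_{c}(X_C,2))\to 0$, and by (b)+(c) that kernel is $\Q_p(1)$ --- the $H^2_{{\rm HK},c}(X_C)$-part of $H^2{\rm HK}_{c}(X_C,2)$ being $(\B^+_{\crr})^{\varphi=p}$, on which $\theta$ is the surjection of (c) with kernel $\Q_p(1)$, and the remaining $\coker(\varphi-p^2)$-contribution built from $H^1_{{\rm HK},c}(X_C)$ being annihilated by $\iota_{\rm HK}\otimes\theta$. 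This gives exactly \eqref{vend0}.

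The part I expect to be most delicate is the bookkeeping used implicitly above: in passing from the cohomology of the complexes ${\rm HK}_{c}(X_C,i)$, ${\rm DR}_{c}(X_C,i)$ to the ``naive'' pieces $(H^\bullet_{{\rm HK},c}(X_C)\otimes^{\Box}_{\breve F}\wh{\B}^+_{\st})^{N=0,\varphi=p^i}$ and $H^\bullet_{c}(X,-)\otimes^{\Box}_K(\B^+_{\dr}/F^\bullet)$, one must check that the $(N,\varphi)$- and filtration spectral sequences relating them degenerate in the relevant range of degrees, keep careful track of Frobenius twists, and verify that the surviving boundary term ($\coker(\varphi-p^i)$ in $H^2{\rm HK}_{c}$) dies under $\iota_{\rm HK}\otimes\theta$. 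This is done using the structure of the period rings $\B^+_{\crr}$, $\B^+_{\dr}$, $\wh{\B}^+_{\st}$ together with the finiteness of de Rham and Hyodo-Kato cohomology on the affinoid pieces of an exhaustion $X=\bigcup_n X_n$, exactly as in the treatment of the non-compactly supported sequence \eqref{Stein-cond}.
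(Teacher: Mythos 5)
Your derivation is, in outline, exactly the paper's (largely implicit) argument: the paper simply quotes the five-term exact sequence from \cite{AGN} and asserts that the lemma follows, and your unwinding of that sequence for $i\le 0$, $i=3$, $i\ge 4$, $i=1$, $i=2$ --- using the concentration of $\rg_{\dr,c}$ and $\rg_{{\rm HK},c}$ in degrees $1,2$, the identification $H^2_{{\rm HK},c}(X_C)\simeq\breve F$ of slope $1$, and the fundamental exact sequences --- is the intended one. Parts (1) and (2) are fine as written.

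There is, however, one slip at the crux of part (3), which also recurs in your closing paragraph: you say that the possible $\coker(\varphi-p^{2})$-contribution to $H^{2}{\rm HK}_{c}(X_C,2)$ coming from $H^1_{{\rm HK},c}(X_C)\otimes^{\Box}_{\breve F}\wh{\B}^+_{\st}$ ``is annihilated by $\iota_{\rm HK}\otimes\theta$'', and later that one must ``verify that the surviving boundary term dies under $\iota_{\rm HK}\otimes\theta$''. That is the wrong thing to verify: if such a contribution were nonzero and were killed by the map to $H^{2}{\rm DR}_{c}(X_C,2)$, it would sit \emph{inside} $\ker\bigl(H^{2}{\rm HK}_{c}(X_C,2)\to H^{2}{\rm DR}_{c}(X_C,2)\bigr)$, the kernel would be strictly larger than $\Q_p(1)$, and the exact sequence of part (3) would fail. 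What one needs --- and what is true --- is that this contribution \emph{vanishes}: since the Frobenius slopes of $H^1_{{\rm HK},c}(X_C)$ lie in $[0,1]$, the operator $\varphi-p^{2}$ is surjective on $H^1_{{\rm HK},c}(X_C)\otimes^{\Box}_{\breve F}(\wh{\B}^+_{\st})^{N=0}$ (and $N$ is surjective on $\wh{\B}^+_{\st}$), so that $H^{2}{\rm HK}_{c}(X_C,2)$ is precisely the naive eigenspace $(H^2_{{\rm HK},c}(X_C)\otimes^{\Box}_{\breve F}\wh{\B}^+_{\st})^{N=0,\varphi=p^{2}}\simeq(\B^+_{\crr})^{\varphi=p}$, whose kernel under $\theta$ is $\Q_p(1)$ exactly as you compute. (The analogous surjectivity of $\varphi-p^{3}$ is what you implicitly use in the $i=3$ step of part (1); there, however, any spurious extra term in $H^{2}{\rm HK}_{c}(X_C,3)$ could only help the surjectivity onto $H^{2}{\rm DR}_{c}(X_C,3)$, so no harm is done.)
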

 The above map ${\rm Tr}_{X_C}:H^2_{c}(X_C,\Q_p(2))\to  \Q_p(1)$ is {\it the geometric trace map}.
 \subsubsection{Arithmetic compactly supported  cohomology}\label{van11} 
  Let $X$ be a geometrically connected smooth Stein  curve over $K$. We will now look at  its arithmetic compactly supported pro-\'etale cohomology complex
$\R\Gamma_{c}(X,\Q_p(j))\in\sd(\Q_{p,\Box}).$
\begin{theorem}\label{final1}
The cohomology of $\R\Gamma_{c}(X,\Q_p(j))$ is  of compact type. 
\end{theorem}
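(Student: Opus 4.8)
The plan is to run, in the dual direction, the strategy of Theorem~\ref{final2}: feed the geometric computations recalled above into the Hochschild--Serre spectral sequence \eqref{sseqc}, $E^{a,b}_2 =H^a(\sg_K,H^b_{c}(X_C,\Q_p(j)))\Rightarrow H^{a+b}_{c}(X,\Q_p(j))$, and control its terms with the generalized Tate formulas \eqref{newton15}. Since $H^b_c(X_C,\Q_p)=0$ for $b\neq 1,2$, the spectral sequence has only the two rows $b=1,2$; because $\sg_K$ has cohomological dimension $2$, the sole possibly nonzero differential is $d_2$, so the sequence degenerates at $E_3$ and, for each $i$ (necessarily $1\le i\le 4$), $H^i_c(X,\Q_p(j))$ is an extension of a subquotient of $H^{i-1}(\sg_K,H^1_c(X_C,\Q_p(j)))$ by a subquotient of $H^{i-2}(\sg_K,H^2_c(X_C,\Q_p(j)))$. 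The category $C_{c,K}$ of spaces of compact type is stable under passing to closed subspaces, to the corresponding quotients, and under extensions (dually to the fact that nuclearity is a three-space property; cf.\ Lemma~\ref{ext10}), so it suffices to show that each $H^a(\sg_K,H^b_c(X_C,\Q_p(j)))$ with $b\in\{1,2\}$ is of compact type.

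For $b=1$ I would invoke the comparison isomorphism \eqref{kosc1}, identifying $H^1_c(X_C,\Q_p(j))$ with the twist $\big(H^1_{{\rm HK},c}(X_C)\otimes^{\Box}_{\breve{F}}\wh{\B}^+_{\st}\big)^{N=0,\varphi=1}(j-1)$. Here $H^1_{{\rm HK},c}(X_C)$ is a filtered colimit of finite rank $\breve{F}$-vector spaces (it is of compact type over $\breve{F}$, Serre-dual to $H^1_{\rm HK}(X_C)$, whose scalar extension to $C$ is the nuclear Fr\'echet space $H^1_{\dr}(X_C)$); since $\otimes^{\Box}_{\breve{F}}\wh{\B}^+_{\st}$ commutes with colimits and filtered colimits are exact (hence commute with the kernel cutting out $N=0,\varphi=1$), the space $H^1_c(X_C,\Q_p(j))$ is a filtered colimit of almost $C$-representations. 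As $\Z[\sg_K]$ is a compact object of ${\rm CondAb}$, the functor $H^a(\sg_K,-)$ commutes with this colimit, and each term is of finite rank over $\Q_p$ by Tate's formulas (because $H^a(\sg_K,C(m))$ is $K$ or $0$ and $[K:\Q_p]<\infty$); a filtered colimit of finite rank $\Q_p$-vector spaces lies in $C_{c,K}$. For $b=2$ I would feed the exact sequence \eqref{vend0} and the explicit shape of ${\rm DR}_c(X_C,i)$ into Galois cohomology: as a $\sg_K$-module, $H^1_c(X,\so_X)\otimes^{\Box}_K(\B^+_{\dr}/F^{\bullet})$ is a successive extension of copies of $H^1_c(X,\so_X)\otimes^{\Box}_KC(m)$, and likewise for $H^1_c(X,\Omega^1_X)$. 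By the generalized Tate formulas \eqref{newton15} --- applicable precisely because $H^1_c(X,\so_X)$ and $H^1_c(X,\Omega^1_X)$ are of compact type, being the strong duals of the nuclear Fr\'echet spaces $\Omega^1(X)$ and $\so(X)$ (Serre duality) --- every twisted summand dies, and the only surviving contributions are copies of $H^1_c(X,\so_X)$ and $H^1_c(X,\Omega^1_X)$ in degrees $a=0,1$, which lie in $C_{c,K}$. The remaining inputs --- the quotient $\Q_p(1)$ in \eqref{vend0} and the Hyodo--Kato term ${\rm HK}_c(X_C,2)$, again a filtered colimit of almost $C$-representations --- give Galois cohomology of finite rank over $\Q_p$. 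Assembling the extensions, $H^a(\sg_K,H^2_c(X_C,\Q_p(j)))$ is a successive extension of spaces of compact type and finite rank $\Q_p$-vector spaces, hence of compact type.

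The main difficulty is not a single hard step but the careful bookkeeping of all the extensions produced by the syntomic and de Rham/Hyodo--Kato exact sequences, together with the verification --- exactly parallel to the one in the proof of Theorem~\ref{final2} --- that every genuinely $C$-linear contribution is annihilated by the generalized Tate vanishing \eqref{newton15}, so that the only ``large'' pieces surviving are the compact-type coherent cohomologies $H^1_c(X,\so_X)$, $H^1_c(X,\Omega^1_X)$, everything else being finite over $\Q_p$. A secondary point requiring care is the interchange of $H^a(\sg_K,-)$, the solid tensor products $\otimes^{\Box}_{\breve{F}}\wh{\B}^+_{\st}$, the kernels defining $(-)^{N=0,\varphi=p^i}$, and the filtered colimits in $H^1_{{\rm HK},c}(X_C)$; these are justified by compactness of $\Z[\sg_K]$ in ${\rm CondAb}$, exactness of filtered colimits, and the solid tensor-product formalism of Section~\ref{solid-product}. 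Alternatively one may first prove the statement for Stein curves with finite-dimensional de Rham cohomology --- such as the adapted naive interiors $X_n$ of affinoids, for which all Hyodo--Kato terms are genuinely almost $C$-representations --- and then pass to a general Stein $X$ via $H^i_c(X,\Q_p(j))\simeq\colim_n H^i_c(X_n,\Q_p(j))$ (cosheaf property of $\R\Gamma_c$), using that a countable filtered colimit of spaces of compact type is again of compact type.
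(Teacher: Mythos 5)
Your overall skeleton (Hochschild--Serre with two rows, degeneration at $E_3$, comparison isomorphisms, generalized Tate formulas killing the genuinely $C$-linear pieces) matches the paper's, and your identification of the surviving large pieces as $H^1_c(X,\so_X)$ and colimits of finite rank $\Q_p$-spaces is correct. But the way you close the argument has two gaps, one of which is exactly the point the paper's proof is organized around.

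First, your main route concludes by assembling $H^i_c(X,\Q_p(j))$ from extensions whose sub and quotient are both (in general) infinite-dimensional spaces of compact type --- e.g.\ $0\to H^1(\sg_K,H^1_c(X_C,\Q_p(j)))\to H^2_c(X,\Q_p(j))\to \ker d_2\to 0$, where for a general Stein curve the sub is a countable colimit of finite rank spaces and the quotient contains a copy of $H^1_c(X,\so_X)$. You justify closure of $C_{c,K}$ under such extensions by pointing to Lemma~\ref{ext10}, but that lemma only treats extensions with a \emph{finite rank} subobject (and Lemma~\ref{ext10.1} likewise assumes finite rank ends). The general three-space property for compact type spaces is true but is a nontrivial theorem that the paper neither states nor proves; the paper deliberately avoids needing it by always arranging for one end of each extension to be finite rank over $\Q_p$ and by working with transition maps instead (see below). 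As written, your argument rests on an unproved closure property.

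Second, your fallback ("pass to a general Stein $X$ via $H^i_c(X,\Q_p(j))\simeq\colim_n H^i_c(X_n,\Q_p(j))$, using that a countable filtered colimit of spaces of compact type is again of compact type") is false as stated: a countable colimit of Banach (hence compact type is not even needed) spaces along non-compact injections is a general LB-space, not a space of compact type. Compactness of the transition maps is not automatic --- it is precisely the content of the paper's key Lemma~\ref{glowa1}, whose proof requires (a) reducing, via Lemma~\ref{ext10} and Lemma~\ref{ext10.1} and the finiteness of the Hyodo--Kato contributions on each $X_n$, to the coherent transition maps $H^1_c(X_n,\so_{X_n})\to H^1_c(X_{n+1},\so_{X_{n+1}})$, and (b) proving those are compact by choosing interpolating affinoids $X_n\subset Y_1\Subset Y_2\subset X_{n+1}$, using Serre duality to factor the map through the dual of the compact map $\Omega(Y_2)\to\Omega(Y_1)$ of Banach spaces. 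This factorization step is the real analytic input of the theorem and is absent from your proposal. If you want to keep your direct route, you must either import the general extension-closure of $C_{c,K}$ with a proof (together with strictness of all the exact sequences of solid vector spaces involved), or replace it by the paper's mechanism of compact transition maps over a covering by adapted naive interiors.
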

\begin{proof}

    The only  nontrivial geometric cohomology groups are in degrees 1,2 hence, from the spectral sequence \eqref{sseqc}, we get that $H^0_{c}(X,\Q_p(j))=0$ and we have the long exact sequence 
    \begin{align*}
0   \to&  H^0(\sg_K,H^1_{c}(X_C,\Q_p(j)))\to H^1_{c}(X,\Q_p(j))\to H^{-1}(\sg_K,H^2_{c}(X_C,\Q_p(j)))\\
   \to & H^1(\sg_K, H^1_{c}(X_C,\Q_p(j)))\to H^2_{c}(X,\Q_p(j))\to  H^0(\sg_K,H^2_{c}(X_C,\Q_p(j)))\\
      \stackrel{d_2}{\to} & H^2(\sg_K, H^1_{c}(X_C,\Q_p(j)))\to H^3_{c}(X,\Q_p(j))\to  H^1(\sg_K,H^2_{c}(X_C,\Q_p(j)))\\
            \to & H^3(\sg_K, H^1_{c}(X_C,\Q_p(j)))\to H^4_{c}(X,\Q_p(j))\to  H^2(\sg_K,H^2_{c}(X_C,\Q_p(j)))\to 0
 \end{align*}
 It follows that
 \begin{align*}
 & H^0_{c}(X,\Q_p(j)))=0,\\
  & H^1_{c}(X,\Q_p(j)) \stackrel{\sim}{\to }H^0(\sg_K,H^1_{c}(X_C,\Q_p(j))),\\
  &  H^4_{c}(X,\Q_p(j)) \stackrel{\sim}{\to}H^2(\sg_K, H^2_{c}(X_C,\Q_p(j))).
 \end{align*}
 Hence, by \eqref{kosc1}, $$ H^1_{c}(X,\Q_p(j))\simeq H^0(\sg_K, {\rm HK}^1_{c}(X_C,1) (j-1)),
 $$ 
 where we set ${\rm HK}^1_{c}(X_C,1):=H^1{\rm HK}_{c}(X_C,1)$.  It follows that $ H^1_{c}(X,\Q_p(j))$  is a colimit of finite rank $\Q_p$-vector spaces hence of compact type. 
By \cite[Sec. 8.3]{AGN}, we have  that $$
H^4_{c}(X,\Q_p(j)) \simeq\begin{cases}
\Q_p & \mbox { if } j=2,\\
0 & \mbox{otherwise.}
\end{cases}
$$ 
  
 It remains to treat   $H^i_{c}(X,\Q_p(j))$, for $i=2,3$. We have a long exact sequence
 \begin{align}\label{zlosc1}
0 &  \to H^1(\sg_K, H^1_{c}(X_C,\Q_p(j)))\to H^2_{c}(X,\Q_p(j))\to H^0(\sg_K,H^2_{c}(X_C,\Q_p(j)))\\
  & \stackrel{d_2}{ \to } H^2(\sg_K, H^1_{c}(X_C,\Q_p(j)))\to H^3_{c}(X,\Q_p(j)) \to H^1(\sg_K,H^2_{c}(X_C,\Q_p(j)))\to 0\notag
  \end{align}
 ($\bullet$) {\em Case $j\neq 2$.} We claim that in this case  $H^2(\sg_K, H^1_{c}(X_C,\Q_p(j)))=0$. Indeed, we have 
  $$H^2(\sg_K, H^1_{c}(X_C,\Q_p(j)))\simeq H^2(\sg_K,{\rm HK}^1_{c}(X_C,1)(j-1)).$$
 Now the slopes of  Frobenius on $ H^1_{{\rm HK},c} (X_C)$ are between $0$ and $1$,
which implies that, in the case this group  is of finite rank,  ${\rm HK}^1_{c}(X_C,1)$ is an extension of 
an unramified finite dimensional $\Q_p$-representation $V$ of $\G_K$
by the $C$-points $W$ of a connected BC. Since we have
$H^2(\G_K,W(j-1))=0$, for any $j$, and $H^2(\G_K,V(j-1))=0$, for $j\neq 2$, this implies $H^2(\sg_K,{\rm HK}^1_{c}(X_C,1)(j-1))=0$, as wanted.  The general case is obtained by writing $H^1_{{\rm HK},c} (X_C)$ as a colimit of groups of  finite rank.

   (i) {\em First sequence.} Let us now look at the first exact sequence from \eqref{zlosc1}.
\begin{equation}
\label{firstseq}
  0   \to H^1(\sg_K, H^1_{c}(X_C,\Q_p(j)))\to H^2_{c}(X,\Q_p(j))\to H^0(\sg_K,H^2_{c}(X_C,\Q_p(j)))\to 0 
\end{equation}
Let $\{X_n\}_{n\in\N}$ be a strictly increasing open covering of $X$ by adapted naive interiors of dagger affinoids. We have an isomorphism
$$
\colim_nH^i_{c}(X_n,\Q_p(j))\stackrel{\sim}{\to} H^i_{c}(X,\Q_p(j)),\quad i,j\in\Z.
$$
We have  analogs of  sequence \eqref{firstseq} for $X_n$'s.  

It suffices to prove  the following result.
\begin{lemma}\label{glowa1}
 The transition maps $f_{2,n}: H^2_{c}(X_n,\Q_p(j))\to H^2_{c}(X_{n+1},\Q_p(j))$ are compact maps between spaces of compact type.
 \end{lemma}
 \begin{proof} Consider the canonical transition map
 \begin{align*}
& f_{1,n}: H^1(\sg_K, H^1_{c}(X_{n,C},\Q_p(j)))  \to H^1(\sg_K, H^1_{c}(X_{n+1,C},\Q_p(j))).
\end{align*}
We have $$H^1(\sg_K, H^1_{c}(X_{n,C},\Q_p(j)))\simeq H^1(\sg_K, {\rm HK}^1_{c}(X_{n,C},1)(j-1)).
$$  Hence it is of finite rank over $\Q_p$. This implies, by 
  Lemma \ref{ext10}, Lemma \ref{ext10.1}, and the exact sequence \eqref{firstseq}, that  it suffices to show that
the   canonical transition map
\begin{align*}
& f_{3,n}:  H^0(\sg_K,H^2_{c}(X_{n,C},\Q_p(j)))  \to H^0(\sg_K,H^2_{c}(X_{n+1,C},\Q_p(j)))
\end{align*}
is a compact map of spaces of compact type.

 By \eqref{vend0}, we have the exact sequence, for $s=n,n+1$,
    $$
0\to H^0(\sg_K,  (\coker g(X_s))(j-2)){\to} H^0(\sg_K,H^2_{c}(X_{s,C},\Q_p(2))(j-2))\to H^0(\sg_K,\Q_p(j-1)) \to, 
   $$
   where $g(X_s): H^1{\rm HK}_{c}(X_{s,C},2)\to H^1{\rm DR}_{c}(X_{s,C},2)$ is the canonical map. But
   $$
   \coker g(X_s)\simeq \coker((H^1_{{\rm HK},c}(X_{s,C}){\otimes}_{\breve{F}}^{ \Box}t\wh{\B}^+_{\st})^{N=0,\phi=p^2}\to H^1_{\dr,c}(X_{s,C}){\otimes}_{K}^{ \Box}C(1)\hookrightarrow H^1_{c}(X_{s},\so_{X_s}){\otimes}_{K}^{ \Box}C(1)).
   $$
Now,  using  generalized Tate's formulas  (see \eqref{newton15}), we easily see that  $ H^0(\sg_K, (\coker g(X_s))(j-2))$  is of finite  rank over $\Q_p$ unless $j=1$. Hence, for $j\neq 1$, the map $f_{3,n}$ is   a map between finite rank  vector spaces over $\Q_p$; hence it is compact.

  Assume now that $j=1$.   It suffices to show that the transition maps between the spaces $H^0(\sg_K,(\coker g(X_n))(-1))$ are compact and the spaces themselves are of compact type. From the definition of the map $g(X_n)$, we get an exact sequence
    \begin{equation}
  \label{orange1}
0\to A_n\to  H^1_{c}(X_n,\so_{X_n}){\otimes}_{K}^{ \Box}C\to  (\coker g(X_n))(-1)\to 0,
\end{equation}
where $A_n$ is an almost $C$-representation.  Applying Galois cohomology to this sequence, we get the exact sequence
$$0\to H^0(\sg_K,A_n) \to H^1_{c}(X_n,\so_{X_n})\to H^0(\sg_K,(\coker g(X_n))(-1)) \to H^1(\sg_K,A_n).
$$
Since, by generalized Tate's theorem (see \eqref{newton15}), $H^0(\sg_K,A_n) $ and $H^1(\sg_K,A_n)$ are of finite rank over $\Q_p$, it suffices to show that the canonical map $f: H^1_{c}(X_n,\so_{X_n})\to H^1_{c}(X_{n+1},\so_{X_{n+1}})$ is compact and the cokernel of the map $g: H^0(\sg_K,A_n) \to H^1_{c}(X_n,\so_{X_n})$ is of compact type. 
For that, choose rigid analytic affinoids $Y_1$, $Y_1$ such that
$$
X_{n}\subset Y_1\Subset Y_2\subset X_{n+1}.
$$ Then we have maps
\begin{equation}
\label{factorization}
\Omega(X_{n+1})\to \Omega(Y_2)\stackrel{\tilde{f}}{\longrightarrow}\Omega(Y_1)\to \Omega(X_{n}).
\end{equation}
The spaces $ \Omega(Y_i)$, $i=1,2$, are Banach and the map $\tilde{f}$ is compact. Taking the strong duals of the terms of  \eqref{factorization} and using Serre's duality for the end terms we get maps
$$
H^1_{c}(X_n,\so_{X_n}){\to} \Omega(Y_1)^*\stackrel{\tilde{f}^*}{\longrightarrow} \Omega(Y_2)^*\to H^1_{c}(X_{n+1},\so_{X_{n+1}})
$$
factorizing the map $f$. By \cite[Lemma 16.4]{Schn}, the map $\tilde{f}^*$ is compact; by \cite[Remark 16.7]{Schn}, so is the map $f$, as wanted. 

The statement about the cokernel of the map $g$ follows from the fact that the space $H^1_{c}(X_n,\so_{X_n})$ is of compact type and the 
image of $g$ is finite dimensional.
  \end{proof}

   (ii) {\em Second sequence.}  We pass now to the second exact  sequence from \eqref{zlosc1} which became the isomorphism
 \begin{align*}
 H^3_{c}(X,\Q_p(j)) \stackrel{\sim}{\to} H^1(\sg_K,H^2_{c}(X_C,\Q_p(j))).
  \end{align*}
  We cover $X$ with a strictly increasing system $\{X_n\}_{n\in\N}$ of adapted naive interiors of dagger affinoids. 
It suffices to show that the transition maps 
\begin{equation*}
f_n:\quad H^1(\sg_K,H^2_{c}(X_{n,C},\Q_p(j)))\to
 H^1(\sg_K,H^2_{c}(X_{{n+1},C},\Q_p(j)))
 \end{equation*} are compact maps between spaces of compact type.

     By \eqref{vend0}, we have the exact sequence 
    $$
H^0(\sg_K,\Q_p(j-1))\to  H^1(\sg_K,  (\coker g(X_s))(j-2)){\to} H^1(\sg_K,H^2_{c}(X_{s,C},\Q_p(2))(j-2))\to H^1(\sg_K,\Q_p(j-1)).
 $$
  Using  the generalized Tate's formulas  (see \eqref{newton15}), we easily see that  $ H^1(\sg_K, (\coker g(X_s))(j-2))$  is of finite  rank over $\Q_p$ unless $j=1$. Hence, for $j\neq 1$, the map $f_n$ is compact as  a map between finite rank vector spaces over $\Q_p$. 
 
    Assume now that $j=1$.   It suffices to show  that the transition maps between the spaces $H^1(\sg_K,(\coker g(X_n))(-1))$ are maps of compact type between spaces of compact type. By  generalized  Tate's theorem (see \eqref{newton15}), from the exact sequence \eqref{orange1}, we get the exact sequence
$$\to H^1(\sg_K,A_n) \to H^1_{c}(X_n,\so_{X_n})\to H^1(\sg_K,(\coker g(X_n))(-1)) \to H^2(\sg_K,A_n)\to .
$$
Since $H^1(\sg_K,A_n) $ and $H^2(\sg_K,A_n)$ are of finite rank over $\Q_p$, it suffices to show  that the canonical map $H^1_{c}(X_n,\so_{X_n})\to H^1_{c}(X_{n+1},\so_{X_{n+1}})$ is a compact  map of spaces of compact type but  this we have done above. 
  
  ($\bullet$) {\em Case $j=2$.}    We cover $X$ with a strictly increasing system $\{X_n\}_{n\in\N}$ of adapted naive interiors of dagger affinoids.   It suffices to show  that all the terms in the exact sequence \eqref{zlosc1} on level $X_n$ are of finite rank  over $\Q_p$ (this is true more generally for $j\neq 1$).
   But, by \eqref{vend0}, since $H^0(\sg_K,\Q_p(1))=0$, we have 
    $$
 H^0(\sg_K,  \coker g(X_n))\stackrel{\sim}{\to} H^0(\sg_K,H^2_{c}(X_{n,C},\Q_p(2))).
   $$
And we have shown above that this is of finite rank over $\Q_p$. 
\end{proof}

\subsection{Arithmetic cohomology of dagger affinoids} 
Let $X$ be a smooth geometrically connected dagger affinoid over $K$. We will now study its arithmetic pro-\'etale cohomology. They key tool is the (studied above)   arithmetic pro-\'etale cohomology of smooth Stein curves.
\begin{proposition}\label{final3.1}
The cohomology of $\R\Gamma_{c}(X,\Q_p(j)), j\in\Z,$ is nuclear Fr\'echet. Moreover, 
 if $\{X_h\}$ is  the dagger presentation of the dagger structure on $X$, then we have an isomorphism
$$
H^i_{c}(X,\Q_p(j))\stackrel{\sim}{\to} \lim_hH^i_{c}(X^0_h,\Q_p(j)),\quad i,j\in\Z,
$$
where $X_h^0$ is a naive interior of $X_h$ adapted to $\{X_h\}$.
\end{proposition}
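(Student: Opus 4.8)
The plan is to reduce everything to the already-proven Stein case, Theorem~\ref{final1}, via the defining formula $\R\Gamma_{c}(X,\Q_p(j))=\R\lim_h\R\Gamma_{c}(X^0_h,\Q_p(j))$. First I would pin down the geometry: the presentation $\{X_h\}$ is a descending family of rigid affinoids with $X_{h+1}\Subset X_h$, and $X^0_h$ is a naive interior of $X_h$ adapted to $\{X_h\}$, so Remark~\ref{naive1} gives $X^0_{h+1}\subset X_{h+1}\subset X^0_h\subset X_h$ for all $h$. Each $X^0_h$ is a smooth Stein curve, so by Theorem~\ref{final1} the groups $H^i_{c}(X^0_h,\Q_p(j))$ are of compact type, and the transition maps of the pro-system $\{H^i_{c}(X^0_h,\Q_p(j))\}_h$ are the corestriction maps $H^i_{c}(X^0_{h+1},\Q_p(j))\to H^i_{c}(X^0_h,\Q_p(j))$ attached to the open immersions $X^0_{h+1}\hookrightarrow X^0_h$. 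The proposition then splits into: (a) these transition maps are \emph{compact}; (b) a countable pro-system of spaces of compact type with compact transition maps has $\R\lim$ (in $\sd(\Q_{p,\Box})$) concentrated in degree~$0$, equal there to a nuclear Fréchet space, and with vanishing $\R^1\lim$.

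For~(a) I would rerun the proof of Theorem~\ref{final1}, whose key compactness input is local in the pair of interiors. Using the Hochschild--Serre spectral sequence \eqref{sseqc}, the isomorphism \eqref{kosc1}, the fundamental exact sequence \eqref{vend0}, the generalized Tate formulas \eqref{newton15}, and Lemmas~\ref{ext10}--\ref{ext10.1}: for $j\ne 1$ all the groups occurring are of finite rank over $\Q_p$ (in particular $H^4_{c}(X^0_h,\Q_p(j))$ is $\Q_p$ for $j=2$ and $0$ otherwise, by \cite[Sec.\,8.3]{AGN}), so compactness of the transition maps is automatic; for $j=1$, $i=1$ one has $H^1_{c}(X^0_h,\Q_p(1))\simeq H^0(\sg_K,{\rm HK}^1_{c}(X^0_{h,C},1))$, a filtered colimit of finite-rank spaces whose inverse limit is the finite-rank space $H^0(\sg_K,{\rm HK}^1_{c}(X_C,1))$ (finiteness of Hyodo--Kato cohomology of dagger affinoids), and the transition maps and $\R^1\lim$ are handled as the corresponding degree in Theorem~\ref{final1}; the remaining cases $j=1$, $i=2,3$ reduce, exactly as in the proof of Lemma~\ref{glowa1}, to the compactness of $H^1_{c}(X^0_{h+1},\so_{X^0_{h+1}})\to H^1_{c}(X^0_h,\so_{X^0_h})$. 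For the latter I would use $X^0_{h+1}\subset X_{h+1}\subset X^0_h$ with $X_{h+1}$ affinoid: picking rigid affinoids $Y_1\Subset Y_2$ with $X^0_{h+1}\subset Y_1\Subset Y_2\subset X^0_h$ (take $Y_1=X_{h+1}$ and $Y_2$ a term of a Stein exhaustion of $X^0_h$ containing $X_{h+1}$), the restriction maps $\Omega(X^0_h)\to\Omega(Y_2)\to\Omega(Y_1)\to\Omega(X^0_{h+1})$ have compact middle arrow (restriction between affinoids with $\Subset$, a map of Banach spaces), so after dualizing and invoking Serre duality $H^1_{c}(U,\so_U)\simeq\Omega(U)^*$ the transition map factors through the compact map $\Omega(Y_1)^*\to\Omega(Y_2)^*$, hence is itself compact by \cite[Lemma 16.4, Rem.\,16.7]{Schn}.

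For~(b) I would dualize. The $\Q_p$-duals $H^i_{c}(X^0_h,\Q_p(j))^*$ form an ind-system of nuclear Fréchet spaces with compact transition maps; replacing the original pro-system by the system of images of its iterated transition maps (which changes neither $\R\lim$ nor $\R^1\lim$) I may assume these dual maps injective, so their colimit $W^i$ is a space of compact type. Since $H^i_{c}(X^0_h,\Q_p(j))$ is reflexive and of compact type, $\R\underline{\Hom}_{\Q_p}(H^i_{c}(X^0_h,\Q_p(j))^*,\Q_p)\simeq H^i_{c}(X^0_h,\Q_p(j))$ is concentrated in degree~$0$, and therefore
\begin{align*}
\R\lim_h H^i_{c}(X^0_h,\Q_p(j)) & \simeq \R\lim_h\R\underline{\Hom}_{\Q_p}\big(H^i_{c}(X^0_h,\Q_p(j))^*,\Q_p\big)\\
& \simeq \R\underline{\Hom}_{\Q_p}(W^i,\Q_p)\simeq (W^i)^*,
\end{align*}
concentrated in degree~$0$ (higher $\Ext$ from a space of compact type to $\Q_p$ vanish, as in Section~\ref{compact1}) and nuclear Fréchet, being the strong dual of a space of compact type. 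In particular $\R^1\lim_h H^i_{c}(X^0_h,\Q_p(j))=0$ for all $i$, so the $\lim$--$\R^1\lim$ exact sequence applied to $\R\Gamma_{c}(X,\Q_p(j))=\R\lim_h\R\Gamma_{c}(X^0_h,\Q_p(j))$ yields $H^i_{c}(X,\Q_p(j))\xrightarrow{\sim}\lim_h H^i_{c}(X^0_h,\Q_p(j))=(W^i)^*$, which is nuclear Fréchet.

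I expect the main obstacle to be step~(a): although it is in essence the same argument as in Theorem~\ref{final1}, one must verify that every reduction there --- and in particular the factorization through Banach spaces in Lemma~\ref{glowa1} --- applies verbatim to a pair $X^0_{h+1}\subset X^0_h$ of adapted naive interiors coming from a \emph{descending} dagger-affinoid presentation rather than an ascending Stein exhaustion; this should be harmless, since the statement concerns a single corestriction map, so the direction of the inclusion and the nature of the ambient family are irrelevant. The bookkeeping in~(b) --- the passage to the image system and the derived-dual identities --- is routine given the functional-analytic facts recorded in Sections~\ref{compact1} and~\ref{solid-frechet}.
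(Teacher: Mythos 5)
Your proposal is correct and follows the same overall strategy as the paper: reduce to the Stein interiors $X^0_h$ via $\R\Gamma_c(X,\Q_p(j))=\R\lim_h\R\Gamma_c(X^0_h,\Q_p(j))$, invoke Theorem~\ref{final1} and the compactness of the corestriction maps (Lemma~\ref{glowa1} and its degree-$3$ analogue), and then kill $\R^1\lim$. Your extra care in step~(a) --- checking that the factorization through a pair of affinoids $Y_1\Subset Y_2$ still exists for the \emph{descending} family $X^0_{h+1}\subset X_{h+1}\subset X^0_h$ --- is warranted and correct; the paper simply cites Lemma~\ref{glowa1} without comment. The one place where you genuinely diverge is step~(b): the paper observes that a compact countable pro-system is equivalent, by \cite[discussion after Prop.~16.5]{Schn}, to a pro-system of Banach spaces with dense transition maps, hence is Mittag--Leffler with $\R^1\lim=0$, and its limit is nuclear Fr\'echet as a compact projective limit. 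You instead dualize to an ind-system, pass to images to force injectivity, and identify $\R\lim_h$ with $\R\underline{\Hom}_{\Q_p}(W^i,\Q_p)$ for $W^i$ of compact type. This works (it is essentially the computation~\eqref{arg1} run in reverse), but it is longer and carries minor technical debt --- the images of the dual maps need not be closed, so one must check they are still Hausdorff with compact induced maps before concluding $W^i$ is of compact type; the paper's route through Schneider's equivalence of compact and dense-Banach pro-systems avoids this entirely.
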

\begin{proof} Let $\{X_h\}$ be the dagger presentation of the dagger structure on $X$. Denote by $X_h^0$  a naive interior of $X_h$ adapted to $\{X_h\}$. 
Note that $X^0_h$ is a smooth and Stein rigid analytic variety over $K$.  
From Section \ref{van11}, we have 
 \begin{align*}
 & H^0_{c}(X^0_h,\Q_p(j)))=0,\\
  & H^1_{c}(X^0_h,\Q_p(j)) \stackrel{\sim}{\to }H^0(\sg_K,H^1_{c}(X^0_{h,C},\Q_p(j)))\simeq H^0(\sg_K, {\rm HK}^1_{c}(X^0_{h,C},1) (j-1)),\\
  &  H^4_{c}(X^0_h,\Q_p(j)) \stackrel{\sim}{\to}H^2(\sg_K, H^2_{c}(X^0_{h,C},\Q_p(j)))\simeq\begin{cases}
\Q_p & \mbox { if } j=2,\\
0 & \mbox{otherwise.}
\end{cases}
 \end{align*}
These are  finite rank $\Q_p$-vector spaces. Moreover, by Lemma \ref{glowa1}, the transition maps
$$
H^i_{c}(X^0_h,\Q_p(j))\to H^i_{c}(X^0_{h+1},\Q_p(j)),\quad i=2,3,
$$
  are compact maps between spaces of compact type. 
  
  By \cite[discussion after Prop. 16.5]{Schn}, since  the pro-system 
  $\{H^i_{c}(X^0_h,\Q_p(j))\}_{h\in\N}$, $i\geq 0$,  is compact,  it is equivalent to a pro-system of Banach spaces with dense transition maps. Hence it is Mittag-Leffler by Section \ref{solid-frechet} and we have  $\R^1\lim_hH^i_{c}(X^0_h,\Q_p(j))=0$, $i\geq 0$.     It follows that
   $$
   H^i_{c}(X,\Q_p(j))\stackrel{\sim}{\to} \lim_hH^i_{c}(X^0_h,\Q_p(j))
   $$
 and  the cohomology of $\R\Gamma_{c}(X,\Q_p(j)), j\in\Z,$ is nuclear Fr\'echet,
  as wanted.
\end{proof}
 
 \subsection{Examples}\label{compact-11}
 
We will now determine the compactly supported $p$-adic pro-\'etale cohomology groups of an open disc and an open annulus.

 \subsubsection{Open disc} We start with the cohomology  $H^{i}_{c}(D, \Q_p(j))$ of an  open disc $D$ over $K$.   
It immediately follows from Lemma \ref{ball0} and Lemma \ref{ghost0}, and the definition of compactly supported cohomology that we have: 
\begin{lemma}{\rm (Geometric cohomology)}
\label{PotB0} Let $i\in\N, j\in\Z$. Then:
\begin{enumerate}
\item The canonical maps $H^{i}(D_C, \Q_p(j))\to H^{i}(Y_C, \Q_p(j))$ are   injective; hence we have a   $\sg_K$-equivariant  isomorphism
$$
H^{i}_{c}(D_C, \Q_p(j))\stackrel{\sim}{\leftarrow}  H^{i-1}(Y_C, \Q_p(j))/ H^{i-1}(D_C, \Q_p(j)).
$$
\item 
The groups $H^{i}_{c}(D_C, \Q_p(j))$ are $0$ unless $i=2$, in which case there is a   $\sg_K$-equivariant  (split)  exact sequence: 
\[  0\to (\so(Y_C)/\so(D_C))(j-1)\to H_{c}^2(D_C,\Q_p(j))\to \Q_p(j-1)\to 0. \]
\end{enumerate}
\end{lemma}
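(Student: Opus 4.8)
The plan is to derive both claims directly from the geometric cohomology of the open disc $D_C$ (Lemma \ref{ball0}) and of the ghost circle $Y_C=\partial D_C$ (Lemma \ref{ghost0}), together with the defining triangle
$$
\R\Gamma_{c}(D_C,\Q_p(j)) \to \R\Gamma(D_C,\Q_p(j)) \to \R\Gamma(\partial D_C,\Q_p(j)).
$$
First I would establish claim (1): the long exact sequence attached to this triangle reads
$$
\cdots \to H^{i}_{c}(D_C,\Q_p(j)) \to H^{i}(D_C,\Q_p(j)) \xrightarrow{\ \iota_i\ } H^{i}(Y_C,\Q_p(j)) \to H^{i+1}_{c}(D_C,\Q_p(j)) \to \cdots,
$$
so it suffices to show that each restriction map $\iota_i\colon H^i(D_C,\Q_p(j))\to H^i(Y_C,\Q_p(j))$ is injective; then the connecting maps split off the short exact sequences $0\to \coker(\iota_{i-1})\to H^i_c(D_C,\Q_p(j))\to \ker(\iota_i)=0\to 0$, giving the stated isomorphism. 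For $i=0$ both sides are $\Q_p(j)$ and $\iota_0$ is the identity (the restriction is compatible with the identification of $H^0$ of a connected space with $\Q_p(j)$). For $i=1$, by Lemma \ref{ball0} we have $H^1(D_C,\Q_p(j))\simeq(\so(D_C)/C)(j-1)$ and by Lemma \ref{ghost0} the sequence $0\to(\so(Y_C)/C)(j-1)\to H^1(Y_C,\Q_p(j))\to\Q_p(j-1)\to 0$; the map $\iota_1$ factors through $(\so(D_C)/C)(j-1)\to(\so(Y_C)/C)(j-1)$, which is injective because $\so(D_C)\hookrightarrow\so(Y_C)$ is injective (an analytic function on the disc vanishing near the boundary vanishes identically) and hence $\so(D_C)\cap C=C$ inside $\so(Y_C)$. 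For $i\geq 2$ both groups vanish. This proves (1).

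Next, for claim (2), I would feed the injectivity just established back into the long exact sequence. Since $H^i(D_C,\Q_p(j))=0=H^i(Y_C,\Q_p(j))$ for $i\geq 2$ and $\iota_0,\iota_1$ are injective, the only possibly nonzero compactly supported group is $H^2_c$, sitting in
$$
0\to H^1(D_C,\Q_p(j))\xrightarrow{\iota_1} H^1(Y_C,\Q_p(j))\to H^2_c(D_C,\Q_p(j))\to 0,
$$
i.e.\ $H^2_c(D_C,\Q_p(j))\simeq \coker(\iota_1)$. Using the two-step filtrations on $H^1(D_C,\Q_p(j))$ and $H^1(Y_C,\Q_p(j))$ coming from Lemma \ref{ball0} and Lemma \ref{ghost0}, $\iota_1$ is an isomorphism on the $\Q_p$-free quotient missing on $D_C$ side — wait, more precisely: $H^1(D_C,\Q_p(j))=(\so(D_C)/C)(j-1)$ maps into the subobject $(\so(Y_C)/C)(j-1)\subset H^1(Y_C,\Q_p(j))$, so the snake lemma applied to
$$
\xymatrix@R=4mm@C=6mm{
0\ar[r] & (\so(D_C)/C)(j-1)\ar[r]\ar[d] & (\so(Y_C)/C)(j-1)\ar[r]\ar[d] & (\so(Y_C)/\so(D_C))(j-1)\ar[r]\ar[d] & 0\\
0\ar[r] & H^1(D_C,\Q_p(j))\ar[r]^-{\iota_1} & H^1(Y_C,\Q_p(j))\ar[r] & \Q_p(j-1)\ar[r] & 0
}
$$
identifies $\coker(\iota_1)$ with an extension $0\to(\so(Y_C)/\so(D_C))(j-1)\to H^2_c(D_C,\Q_p(j))\to\Q_p(j-1)\to 0$. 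The splitting is inherited from the $\sg_K$-equivariant $\Q_p$-linear splitting of \eqref{ball2-ghost1} in Lemma \ref{ghost0}: the $\Q_p(j-1)$-lift there lands in $H^1(Y_C,\Q_p(j))$ outside the image of $\iota_1$, hence descends to a splitting of the cokernel sequence; alternatively the splitting comes from the annulus splitting \eqref{annulus1.1} via the colimit defining $Y_C$, and is visibly $\sg_K$-equivariant.

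\textbf{Main obstacle.} The routine part is the diagram chase; the one point requiring genuine care is the injectivity of $\iota_1$ together with the compatibility of the two filtrations — i.e.\ checking that the pro-\'etale restriction map $H^1(D_C,\Q_p(j))\to H^1(Y_C,\Q_p(j))$ really does land in the coherent subobject $(\so(Y_C)/C)(j-1)$ and is the obvious inclusion there, rather than mixing in the Hyodo--Kato/de Rham part. This is where one must invoke the functoriality of the syntomic/Hyodo--Kato description (diagram \eqref{Stein-cond} and its ghost-circle analog obtained by passing to the limit over annuli as in Lemma \ref{ghost0}) for the open immersion $A_C(\varepsilon)\hookrightarrow D_C$, and then take the colimit over $\varepsilon$. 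Once that compatibility is in hand, everything else is formal, and the $\sg_K$-equivariance of all maps is automatic since the whole construction is $\sg_K$-equivariant.
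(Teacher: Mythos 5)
Your argument is correct and is exactly what the paper means when it says the lemma ``immediately follows'' from Lemma \ref{ball0}, Lemma \ref{ghost0}, and the definition of compactly supported cohomology: the long exact sequence of the triangle defining $\R\Gamma_{c}$, injectivity of the restriction maps $\iota_i$ in every degree, and identification of $\coker(\iota_1)$ using the coherent/Hyodo--Kato filtrations (including the functoriality point you correctly flag as the only nontrivial check). One small slip: in your snake-lemma diagram the bottom row is not exact (the image of $\iota_1$ is strictly smaller than the kernel of $H^1(Y_C,\Q_p(j))\to\Q_p(j-1)$) and the right-hand vertical arrow is undefined; the correct formulation puts $\coker(\iota_1)$ as the third term of the bottom row, whereupon the middle vertical inclusion has cokernel $\Q_p(j-1)$ and yields precisely the extension $0\to(\so(Y_C)/\so(D_C))(j-1)\to\coker(\iota_1)\to\Q_p(j-1)\to0$ that you conclude.
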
 

  Now we pass to the arithmetic cohomology. The canonical maps
\begin{equation}
\label{inj21}
H^i(D,\Q_p(j))\to  H^i(Y, \Q_p(j))
\end{equation}
 are   injective; this follows  from the  descriptions of 
these groups in Lemma \ref{mist1} and Lemma \ref{mist1-ghost}. 
Hence we have an isomorphism
\begin{equation}
\label{inj22}H^i_{c}(D,\Q_p(j))\stackrel{\sim}{\leftarrow}H^{i-1}(Y,\Q_p(j))/H^{i-1}(D,\Q_p(j)).
\end{equation}

\begin{lemma}{\rm (Arithmetic Cohomology)}
\label{PotB1}
Let $i\in\N, j\in\Z$. There is an   exact sequence: 
\[ 0\to  H^{i-2}(\sg_K, \tfrac{\so(Y_C)}{\so(D_C)}(j-1))\to H^{i}_{c}(D,\Q_p(j)) \to H^{i-2}(\sg_K,  \Q_p(j-1))\to0 \]
\end{lemma}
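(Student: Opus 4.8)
The plan is to deduce this purely formally from the geometric computation in Lemma~\ref{PotB0} together with the Hochschild--Serre spectral sequence \eqref{sseqc} for compactly supported cohomology (Lemma~\ref{Serre-c}, which applies since an open disc is smooth and partially proper). By Lemma~\ref{PotB0}(2) the geometric cohomology groups $H^b_{c}(D_C,\Q_p(j))$ vanish for $b\neq 2$, so the spectral sequence
\[
E^{a,b}_2=H^a(\sg_K,H^b_{c}(D_C,\Q_p(j)))\Rightarrow H^{a+b}_{c}(D,\Q_p(j))
\]
has a single nonzero row, namely $b=2$. Hence all higher differentials vanish, there is no extension problem on the diagonals, and one obtains natural isomorphisms of solid $\Q_p$-vector spaces
\[
H^i_{c}(D,\Q_p(j))\xrightarrow{\sim}H^{i-2}(\sg_K,H^2_{c}(D_C,\Q_p(j))),\qquad i\in\N .
\]

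Next I would apply the functor $H^{\bullet}(\sg_K,-)$ to the $\sg_K$-equivariant split short exact sequence of Lemma~\ref{PotB0}(2),
\[
0\to (\so(Y_C)/\so(D_C))(j-1)\to H^2_{c}(D_C,\Q_p(j))\to \Q_p(j-1)\to 0 .
\]
Since this sequence is split compatibly with the $\sg_K$-action (the splitting being induced by the Hyodo--Kato/pro-\'etale symbol, exactly as in the annulus case underlying Lemma~\ref{annulus1.0} and Lemma~\ref{ghost0}), its long exact cohomology sequence degenerates into short exact sequences
\[
0\to H^{n}(\sg_K,\tfrac{\so(Y_C)}{\so(D_C)}(j-1))\to H^{n}(\sg_K,H^2_{c}(D_C,\Q_p(j)))\to H^{n}(\sg_K,\Q_p(j-1))\to 0
\]
for every $n$. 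Taking $n=i-2$ and substituting the isomorphism from the previous paragraph yields precisely the claimed exact sequence.

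There is essentially no genuine obstacle here: all the substance has already been extracted in the geometric Lemma~\ref{PotB0}, and what remains is bookkeeping with a degenerate Hochschild--Serre spectral sequence. The only point requiring a moment's attention is that the splitting in Lemma~\ref{PotB0}(2) is $\sg_K$-equivariant, so that passing to Galois cohomology preserves it; should one wish to avoid this, the same exact sequence can instead be read off from the isomorphism \eqref{inj22}, $H^i_{c}(D,\Q_p(j))\xleftarrow{\sim}H^{i-1}(Y,\Q_p(j))/H^{i-1}(D,\Q_p(j))$, combined with the explicit descriptions of $H^{i-1}(Y,\Q_p(j))$ and $H^{i-1}(D,\Q_p(j))$ furnished by Lemmas~\ref{mist1-ghost} and \ref{mist1}.
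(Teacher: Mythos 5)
Your proof is correct, but it reaches the key isomorphism $H^i_{c}(D,\Q_p(j))\simeq H^{i-2}(\sg_K,H^2_{c}(D_C,\Q_p(j)))$ (the paper's \eqref{PotB00}) by a different route. You invoke the compactly supported Hochschild--Serre spectral sequence of Lemma~\ref{Serre-c} directly for the open disc (which is indeed smooth and partially proper, so the lemma applies) and let the one-row degeneration forced by Lemma~\ref{PotB0}(2) do the work; the paper instead avoids \eqref{sseqc} here and derives \eqref{PotB00} from the quotient description \eqref{inj22}, $H^i_{c}(D,\Q_p(j))\simeq H^{i-1}(Y,\Q_p(j))/H^{i-1}(D,\Q_p(j))$, by a diagram chase comparing the Hochschild--Serre filtrations of $D$ and of the ghost circle $Y$ for ordinary cohomology (this requires checking that the map $H^{i-2}(\sg_K,\tfrac{\so(D_C)}{C}(j-1))\to H^{i-2}(\sg_K,\tfrac{\so(Y_C)}{C}(j-1))$ is injective, via the generalized Tate formulas \eqref{newton15}). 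Your argument is shorter and more formal; the paper's version has the side benefit of recording the injectivity of $f_1$ and surjectivity of $f_2$ and of making explicit the compatibility with the boundary map $\partial$ from $Y$ to $D$, which is exactly what is exploited later in the duality argument of Proposition~\ref{main-arithmeticD}. Both proofs then finish identically: the $\sg_K$-equivariance of the splitting in Lemma~\ref{PotB0}(2) (inherited from Lemma~\ref{ghost0}) ensures that the long exact Galois cohomology sequence breaks into the stated short exact sequences, and you correctly flag this as the only point needing care.
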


\begin{proof} 
Consider  the following commutative diagram:
 $$
 \xymatrix@R=6mm{ & 0\ar[d]  & 0\ar[d] & \\
  0\ar[r] & H^{i-1}(\sg_K,\Q_p(j))\ar[d]^{\wr} \ar[r] & H^{i-1}( D,\Q_p(j))\ar[d] \ar[r] & H^{i-2}(\sg_K, H^1(D_C,\Q_p(j)))\ar[d]^{f_1} \ar[r] & 0\\
0\ar[r] & H^{i-1}(\sg_K,\Q_p(j))\ar[r]  & H^{i-1}(Y,\Q_p(j))\ar[r] \ar[d]& H^{i-2}(\sg_K, H^1(Y_C,\Q_p(j)))\ar[r] \ar[d]^{f_2} & 0\\
 & & H^{i}_c(D,\Q_p(j))\ar[r] \ar[d] & H^{i-2}(\sg_K, H^2_{c}(D_C,\Q_p(j)))\\
  & & 0 & 
  }
 $$
The first two rows are  exact by Lemma \ref{mist1}  and Lemma \ref{mist1-ghost}, respectively.  The middle column is  exact by formula \eqref{inj22}. We claim that  the third column is  exact, the map $f_1$ is injective, and the map $f_2$ is surjective. Indeed, by Lemma \ref{PotB0},  Lemma \ref{ball0},  and  Lemma  \ref{ghost0}, it suffices to show that  the canonical map
 $$
  H^{i-2}(\sg_K,  \tfrac{\so(D_C)}{C}(j-1))\to H^{i-2}(\sg_K,  \tfrac{\so(Y_C)}{C}(j-1))
 $$
 is  injective. But,
  by the generalized Tate's formula \eqref{newton15}, this map is either a map between trivial objects or  is isomorphic  (for $j=1$ and $i=2,3$) to the canonical map $
 \so(D)/K\to \so(Y)/K.
 $
 And that map is clearly   injective. 
 
  The above discussion shows that we have 
  an isomorphism: 
\begin{equation}
\label{PotB00}
H_{c}^{i}(D,\Q_p(j))  \xrightarrow{\sim} H^{i-2}(\sg_K, H_{c}^2(D_C,\Q_p(j))), \quad \text{for all $i\in\N, j \in \Z$.} 
\end{equation}
Now we evoke 
 Lemma \ref{PotB0}.
\end{proof}


   \subsubsection{Open annulus} Let $A$ be an open annulus over $K$. 
We start with the geometric cohomology. 
\begin{lemma}{\rm (Geometric cohomology)} \label{deszcz5}
\begin{enumerate}
\item Let $i\in\N, j\in\Z$. The canonical maps
$H^i(A_C,\Q_p(j))\to H^i(\partial A_C, \Q_p(j))$ are    injective.  Hence we have $\sg_K$-equivariant  isomorphisms
\begin{align*}
H^i_{c}(A_C,\Q_p(j))& \stackrel{\sim}{\leftarrow}H^{i-1}(\partial A_C,\Q_p(j))/H^{i-1}(A_C,\Q_p(j)),\\
H^i_{c}(A_C,\Q_p(j))& =0,\quad \mbox{if } i >2.
\end{align*}
\item We have 
a $\sg_K$-equivariant  isomorphism and a (split)  exact sequence:
\begin{align}
\label{Vvk1}
&H^1_{c}(A_C,\Q_p(j))\simeq \Q_p(j),\\
0\to\tfrac{\O(\partial A_C)}{\O(A_C)\oplus C}(j-1)\to &H^2_{c}(A_C,\Q_p(j))\to\Q_p(j-1)\to 0. \notag
\end{align}
\end{enumerate}
\end{lemma}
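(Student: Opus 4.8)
The plan is to play $A_C$ off against its boundary $\partial A_C$, which by the proof of Corollary~\ref{Vvk} is a disjoint union $Y_{a,C}\sqcup Y_{b,C}$ of the two ghost circles at the inner and outer radii. The definition \eqref{defc} gives a $\sg_K$-equivariant localization triangle
$$
\R\Gamma_c(A_C,\Q_p(j))\to \R\Gamma(A_C,\Q_p(j))\to \R\Gamma(\partial A_C,\Q_p(j)),
$$
so the whole statement drops out once we know that the restriction maps $H^i(A_C,\Q_p(j))\to H^i(\partial A_C,\Q_p(j))$ are injective for every $i$: then the long exact sequence of the triangle breaks into short exact sequences $0\to H^{i-1}(A_C,\Q_p(j))\to H^{i-1}(\partial A_C,\Q_p(j))\to H^i_c(A_C,\Q_p(j))\to 0$, which is exactly the displayed isomorphism in (1), and it yields $H^i_c(A_C,\Q_p(j))=0$ for $i>2$ because $H^{i-1}(\partial A_C,\Q_p(j))=0$ for $i-1\geq 2$ by Corollary~\ref{Vvk}.

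So first I would prove the injectivity. For $i=0$ it is the diagonal $\Q_p(j)\hookrightarrow\Q_p(j)^{\oplus2}$, and for $i\geq 2$ both sides vanish (Lemma~\ref{annulus1.0}, Corollary~\ref{Vvk}). For $i=1$, Lemma~\ref{annulus1.0}(2) and Corollary~\ref{Vvk} present both groups as extensions, and restriction induces a morphism of these two short exact sequences, whose nontrivial part is
$$
\xymatrix@R=5mm{
(\so(A_C)/C)(j-1)\ar[r]\ar[d] & H^1(A_C,\Q_p(j))\ar[r]\ar[d] & \Q_p(j-1)\ar[d]\\
(\so(\partial A_C)/C^{\oplus2})(j-1)\ar[r] & H^1(\partial A_C,\Q_p(j))\ar[r] & \Q_p(j-1)^{\oplus2}.
}
$$
The right vertical arrow is again the diagonal, hence injective; the left one is restriction of functions $\so(A_C)/C\to(\so(Y_{a,C})/C)\oplus(\so(Y_{b,C})/C)$, which is injective because $A_C$ is connected, so a function on $A_C$ that becomes constant on a neighbourhood of one boundary component is constant (identity principle; concretely, a Laurent series constant on a sub-annulus is constant). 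The snake lemma then gives injectivity in the middle.

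It remains to identify the two quotients needed for (2). In degree $1$, $H^1_c(A_C,\Q_p(j))=H^0(\partial A_C,\Q_p(j))/H^0(A_C,\Q_p(j))=\Q_p(j)^{\oplus2}/\Delta(\Q_p(j))\simeq\Q_p(j)$. In degree $2$, $H^2_c(A_C,\Q_p(j))$ is the cokernel of the morphism of extensions above; since its two outer vertical maps are injective, the snake lemma produces the short exact sequence
$$
0\to\frac{\so(\partial A_C)}{\so(A_C)+C^{\oplus2}}(j-1)\to H^2_c(A_C,\Q_p(j))\to\Q_p(j-1)\to 0,
$$
and inside $\so(\partial A_C)=\so(Y_{a,C})\oplus\so(Y_{b,C})$ one has $\so(A_C)+C^{\oplus2}=\so(A_C)\oplus C$, with $\so(A_C)$ embedded diagonally: the diagonal constant functions allow one to replace $C^{\oplus2}$ by a single copy of $C$, and that $C$ meets the diagonal copy of $\so(A_C)$ only in $0$. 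For the splitting, the extensions computing $H^1(A_C,\Q_p(j))$ and $H^1(\partial A_C,\Q_p(j))$ carry compatible $\sg_K$-equivariant $\Q_p$-linear splittings coming from the Hyodo-Kato symbol $c_1^{\rm HK}(z)$ of a coordinate $z$ on $A$ (which restricts to a generator of the $\Q_p$-line attached to each of $Y_a,Y_b$), so the quotient sequence is split as well.

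The snake-lemma manipulations are routine; the point that genuinely needs care is the identification of $\so(\partial A_C)/(\so(A_C)\oplus C)$, i.e.\ tracking the $C^{\oplus2}$-versus-$C$ discrepancy in constant terms between $A$ and its two boundary circles, and — if one insists on these objects in their solid/topological incarnation — checking that $H^1(A_C,\Q_p(j))$ has closed image in the compact-type space $H^1(\partial A_C,\Q_p(j))$ so that the cokernels formed in solid $\Q_p$-modules are the expected ones; the explicit splitting helps here by presenting $H^2_c(A_C,\Q_p(j))$ as a direct sum of a space of compact type and a finite-dimensional space.
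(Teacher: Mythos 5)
Your argument is correct and is exactly what the paper's one-line proof ("this follows from the description of the groups involved in \eqref{annulus1}, \eqref{annulus1.1} and Corollary \ref{Vvk}") leaves implicit: injectivity of restriction to the boundary checked degree by degree via the morphism of extensions, followed by the snake lemma and the identification $\so(A_C)+C^{\oplus2}=\so(A_C)\oplus C$ inside $\so(\partial A_C)$. Your added remarks on the diagonal constants and on the splitting making the solid cokernels unproblematic are accurate refinements of the same route, not a different one.
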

\begin{proof}
This follows from the  description of 
the groups involved  in  \eqref{annulus1}, \eqref{annulus1.1} and Corollary \ref{Vvk}.
\end{proof}

 Now we pass to  arithmetic cohomology. The canonical  maps
$$H^i(A,\Q_p(j))\to H^i(\partial A, \Q_p(j))$$ are    injective. This follows  from the  description of 
the groups involved in Lemma \ref{mist3} and Corollary \ref{mist4}.
Hence we have an isomorphism
\begin{equation}
\label{form}
H^i_{c}(A,\Q_p(j))\stackrel{\sim}{\leftarrow} H^{i-1}(\partial A,\Q_p(j))/H^{i-1}(A,\Q_p(j)).
\end{equation}
\begin{lemma}\label{mist5}{\rm (Arithmetic Cohomology)}
Let $i \in\N$, $j\in\Z$. There is an    exact sequence
   \begin{align*}
   0\to H^{i-1}(\sg_K, H^{1}_{c}(A_C,\Q_p(j)))\to H^{i}_{c}(A,\Q_p(j))\to H^{i-2}(\sg_K, H^{2}_{c}(A_C,\Q_p(j)))\to 0.
 \end{align*}
\end{lemma}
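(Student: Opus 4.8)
The plan is to derive the stated short exact sequence for arithmetic compactly supported cohomology of the open annulus $A$ directly from the Hochschild--Serre spectral sequence \eqref{sseqc} for $\R\Gamma_c(A,\Q_p(j))$, exactly as was done for the open disc in Lemma \ref{PotB1} and for Stein curves in the proof of Theorem \ref{final1}. First I would recall from Lemma \ref{deszcz5} (or rather its geometric source \eqref{Vvk1}) that the only nonzero geometric compactly supported cohomology groups $H^b_c(A_C,\Q_p(j))$ sit in degrees $b=1,2$. Feeding this vanishing into the spectral sequence \eqref{sseqc}, whose $E_2$-page has only two nonzero rows, one obtains a four-term exact sequence analogous to \eqref{zlosc1}, namely
\begin{align*}
0 & \to H^{i-1}(\sg_K, H^1_c(A_C,\Q_p(j))) \to H^i_c(A,\Q_p(j)) \to H^{i-2}(\sg_K, H^2_c(A_C,\Q_p(j)))\\
& \xrightarrow{d_2} H^{i}(\sg_K, H^1_c(A_C,\Q_p(j)))\to \cdots,
\end{align*}
so that the claimed sequence is precisely the statement that the relevant differential $d_2$ vanishes.

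The heart of the proof is thus the vanishing of this $d_2$. Here I would split into the usual cases. When $j\neq 1$ the target $H^{i}(\sg_K, H^1_c(A_C,\Q_p(j)))\simeq H^{i}(\sg_K,\Q_p(j))$ (using the isomorphism $H^1_c(A_C,\Q_p(j))\simeq\Q_p(j)$ from \eqref{Vvk1}) vanishes in the relevant range of $i$ by the Tate computations recalled in the Poitou--Tate section (for $i=2$, $H^2(\sg_K,\Q_p(j))=0$ unless $j=1$; for $i\le 1$ the source $H^{i-2}$ is zero), so $d_2=0$ automatically. When $j=1$ I would argue as in the disc case (Lemma \ref{mist1}) and in the proof of Lemma \ref{mist3}: the differential on $H^0(\sg_K,H^2_c(A_C,\Q_p(1)))$ into $H^2(\sg_K,H^1_c(A_C,\Q_p(1)))=H^2(\sg_K,\Q_p(1))$ is killed by the existence of a section of the structure map, which one gets either from a rational point of $A$ or, failing that, after a finite base change $L/K$ — and the injectivity of $H^2(\sg_K,\Q_p(1))\hookrightarrow H^2(\sg_L,\Q_p(1))$ then forces $d_2=0$ over $K$ as well. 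Alternatively, and perhaps more cleanly, I would use \eqref{form}: the canonical maps $H^i(A,\Q_p(j))\to H^i(\partial A,\Q_p(j))$ are injective, giving $H^i_c(A,\Q_p(j))\simeq H^{i-1}(\partial A,\Q_p(j))/H^{i-1}(A,\Q_p(j))$, and then one runs a diagram chase on the map of Hochschild--Serre sequences for $A$ and $\partial A$ (as in Lemma \ref{PotB1}), the third column of which computes the graded pieces in terms of $H^*(\sg_K,H^2_c(A_C,\Q_p(j)))$; the needed injectivity $H^{i-2}(\sg_K, \tfrac{\so(A_C)}{C}(j-1))\hookrightarrow H^{i-2}(\sg_K,\tfrac{\so(\partial A_C)}{C^{\oplus 2}}(j-1))$ reduces via \eqref{newton15} to the obvious injectivity of $\so(A)/K\hookrightarrow \so(\partial A)/K^{\oplus 2}$.

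Once $d_2=0$ is established, the four-term sequence breaks into the asserted short exact sequence
$$0\to H^{i-1}(\sg_K, H^{1}_{c}(A_C,\Q_p(j)))\to H^{i}_{c}(A,\Q_p(j))\to H^{i-2}(\sg_K, H^{2}_{c}(A_C,\Q_p(j)))\to 0,$$
and there is nothing left to prove. The main obstacle is really the $j=1$ case of the $d_2$-vanishing: unlike for the disc, $A$ need not have a $K$-rational point, so one must either invoke a finite base change together with corestriction–restriction on Galois cohomology, or carry out the $\partial A$ diagram chase carefully to see that the extension does not degenerate; both routes are routine given the machinery already assembled, but they do require a little care about which Galois cohomology groups are being compared. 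Everything else — the shape of the spectral sequence, the vanishing of the other $\R^1\lim$'s (not needed here since $A$ is fixed), and the identification of the $E_2$-terms — is immediate from Lemma \ref{Serre-c} and Lemma \ref{deszcz5}.
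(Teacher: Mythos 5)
Your ``alternative'' route is exactly the paper's proof: one uses the injectivity of $H^{i-1}(A,\Q_p(j))\to H^{i-1}(\partial A,\Q_p(j))$ and the resulting isomorphism \eqref{form}, then chases the map of Hochschild--Serre exact sequences for $A$ and $\partial A$ (Lemma \ref{mist3} and Corollary \ref{mist4}); the only nontrivial inputs are the exactness of the outer columns, which reduce, on the one hand, to the $\sg_K$-equivariant splitting of $\Q_p(j)\to\Q_p(j)^{\oplus 2}$ in degree $0$ and, on the other, to the injectivity of $H^{i-2}(\sg_K,(\tfrac{\so(A_C)}{C}\oplus\Q_p)(j-1))\to H^{i-2}(\sg_K,(\tfrac{\so(Y_C)}{C}\oplus\Q_p)^{\oplus 2}(j-1))$, which via \eqref{newton15} becomes the obvious injectivity of $\tfrac{\so(A)}{K}\oplus\Q_p\to(\tfrac{\so(Y)}{K}\oplus\Q_p)^{\oplus 2}$. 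You identified the $\so$-part of this correctly; just remember that $H^1(A_C,\Q_p(j))$ also carries the split $\Q_p(j-1)$ summand from \eqref{annulus1.1}, on which the map is the diagonal and hence also injective.

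Your primary route, however, has a gap at its crucial step. For $j=1$ the target of $d_2^{0,2}$ is $H^2(\sg_K,H^1_c(A_C,\Q_p(1)))=H^2(\sg_K,\Q_p(1))\neq 0$, and the ``section from a rational point'' argument of Lemma \ref{mist1} does not transfer: that argument uses contravariance of $\R\Gamma$ along the structure map $A\to\Spec K$ together with a splitting by a point, whereas $\R\Gamma_c$ is not contravariant for the structure map (it is covariant for open immersions), so a rational point of $A_L$ does not produce a section of the edge map $H^2_c(A_L,\Q_p(1))\to H^0(\sg_L,H^2_c(A_C,\Q_p(1)))$. The class $\Q_p(1)\simeq H^1_c(A_C,\Q_p(1))$ is not pulled back from the point, so there is nothing to split. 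One could presumably repair this by relating the compactly supported $d_2$ to the (vanishing) $d_2$'s for $A$ and $\partial A$ through the defining fiber sequence — but unwinding that is precisely the $\partial A$ diagram chase, so the second route is not merely cleaner, it is the one that actually closes the argument.
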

\begin{proof}  
Consider  the following commutative diagram:
 $$
 \xymatrix@R=6mm{ & 0\ar[d]  & 0\ar[d] & \\
  0\ar[r] & H^{i-1}(\sg_K,\Q_p(j))\ar[d] \ar[r] & H^{i-1}( A,\Q_p(j))\ar[d] \ar[r] & H^{i-2}(\sg_K, H^1(A_C,\Q_p(j)))\ar[d]^{f_1} \ar[r] & 0\\
0\ar[r] & H^{i-1}(\sg_K,\Q_p(j))\ar[r] \ar[d] & H^{i-1}(\partial A,\Q_p(j))\ar[r] \ar[d]& H^{i-2}(\sg_K, H^1(\partial A_C,\Q_p(j)))\ar[r] \ar[d]^{f_2} & 0\\
& H^{i-1}(\sg_K,H^1(A_C,\Q_p(j)))\ar[r] \ar[d] & H^{i}_{c}(A,\Q_p(j))\ar[r] \ar[d] & H^{i-2}(\sg_K, H^2_{c}(A_C,\Q_p(j)))\\
  &0  & 0 & 
  }
 $$
The first two rows are  exact by Lemma \ref{mist3}  and Corollary \ref{mist4}, respectively.  The middle column is  exact by formula \eqref{form}. The first column is  exact by the same formula and the fact that the canonical map $H^0(A_C,\Q_p(j))\to H^0(\partial A_C,\Q_p(j))$ is $\sg_K$-equivariantly split. 

  It suffices now to show that  the third column is  exact, the map $f_1$ is injective, and the map $f_2$ is surjective. To see that, use Lemma \ref{deszcz5} and note that,
 by Lemma \ref{annulus1.0} and  Corollary \ref{Vvk},  the map $f_1$  can be rewritten as the canonical map
 $$
  H^{i-2}(\sg_K,  (\tfrac{\so(A_C)}{C}\oplus \Q_p)(j-1))\to H^{i-2}(\sg_K,  (\tfrac{\so(Y_C)}{C}\oplus \Q_p)^{\oplus 2}(j-1)),
 $$
 which, by the generalized Tate's formula \eqref{newton15}, is either a map between trivial objects or  is isomorphic  (for $j=1$ and $i=2,3$) to the canonical map
 $$
 \tfrac{\so(A)}{K}\oplus \Q_p\to \big(\tfrac{\so(Y)}{K}\oplus \Q_p\big)^{\oplus 2}.
 $$
 And that map is clearly   injective. We note that this also proves that the map $f_2$ is  surjective, as wanted. 
\end{proof}

\section{Trace maps and pairings} In this chapter we will discuss pro-\'etale and coherent trace maps and pairings.

\subsection{Pro-\'etale trace maps.} \label{traces} We start with pro-\'etale trace maps. 
 \begin{proposition}\label{traces11}
 Let $X$ be a smooth Stein variety, a smooth dagger affinoid, or a proper variety  over $K$ of dimension $1$. Assume that it is geometrically irreducible. Then
 \begin{enumerate}
 \item There exists a natural geometric trace map
 \begin{equation}\label{trg}
 {\rm Tr}_{X_C}: H^{2}_{c}(X_C,\Q_p(1))\to\Q_p.
 \end{equation}
 It is an isomorphism if $X$ is proper.
 \item There exists a natural arithmetic trace map
 \begin{equation}
\label{tra}
 {\rm Tr}_{X}: H^{4}_{c}(X,\Q_p(2))\to\Q_p.
 \end{equation}
 It is an isomorphism. 
 \item The above trace maps are functorial for open immersions and compatible with the Hyodo-Kato and de Rham trace maps.
 \end{enumerate} 
 \end{proposition}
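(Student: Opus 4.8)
The plan is to construct the two trace maps hand in hand with the computations of Section~5, then verify functoriality and compatibility by reducing everything to the proper case, where all the trace maps are classical and the compatibility is known.

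First I would treat the geometric trace map \eqref{trg}. For $X$ proper, $H^2_c(X_C,\Q_p(1)) = H^2(X_C,\Q_p(1))$ and one takes the usual trace isomorphism (Poincar\'e duality for proper smooth rigid analytic curves, which is classical, or the cycle-class normalization). For $X$ Stein or dagger affinoid, the map is already built into the machinery: the exact sequence \eqref{vend0} exhibits $\Q_p(1)$ as the cokernel of $H^1{\rm HK}_c(X_C,2) \to H^1{\rm DR}_c(X_C,2) \to H^2_c(X_C,\Q_p(2))$ after an appropriate Tate twist, equivalently, there is a surjection $H^2_c(X_C,\Q_p(1)) \twoheadrightarrow \Q_p$ coming from the degeneration of the syntomic description; for the dagger affinoid one passes to the limit over naive interiors using Proposition~\ref{final3.1}. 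I would simply take ${\rm Tr}_{X_C}$ to be this canonical surjection, noting it is an isomorphism when $X$ is proper since then the de Rham and Hyodo-Kato contributions match up by the classical theory. Naturality in $X$ (for open immersions $U \hookrightarrow X$) follows because \eqref{vend0} is functorial, being built from the comparison theorem of \cite{AGN} which is itself functorial for open immersions; and the restriction map on compactly supported cohomology goes the right way (from $U$ to $X$) because compactly supported cohomology is covariant for open immersions.

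Next, the arithmetic trace map \eqref{tra}: this is forced by the computation in Section~\ref{van11}. Indeed, the Hochschild-Serre spectral sequence \eqref{sseqc} plus the vanishing of $H^i_c(X_C,\Q_p)$ outside $i=1,2$ gives $H^4_c(X,\Q_p(2)) \stackrel{\sim}{\to} H^2(\sg_K, H^2_c(X_C,\Q_p(2)))$, and then ${\rm Tr}_{X_C}(1) : H^2_c(X_C,\Q_p(2)) \to \Q_p(1)$ induces $H^2(\sg_K, H^2_c(X_C,\Q_p(2))) \to H^2(\sg_K,\Q_p(1))$, followed by ${\rm Tr}_K$ to $\Q_p$. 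So I would define
$$
{\rm Tr}_X : H^4_c(X,\Q_p(2)) \stackrel{\sim}{\to} H^2(\sg_K, H^2_c(X_C,\Q_p(2))) \xrightarrow{{\rm Tr}_{X_C}(1)} H^2(\sg_K,\Q_p(1)) \xrightarrow[\sim]{{\rm Tr}_K} \Q_p.
$$
That this is an isomorphism needs: the first arrow is an isomorphism (done in Section~\ref{van11}, cf. the computation giving $H^4_c(X,\Q_p(j)) \simeq \Q_p$ for $j=2$ in \cite[Sec. 8.3]{AGN}); ${\rm Tr}_K$ is an isomorphism by Poitou-Tate; and the middle map is an isomorphism because ${\rm Tr}_{X_C}$ is surjective with kernel a $C$-vector space (coming from the de Rham/Hyodo-Kato part, on which $H^2(\sg_K,-)$ vanishes by the generalized Tate formula \eqref{newton15}) — hence ${\rm Tr}_{X_C}(1)$ induces an isomorphism on $H^2(\sg_K,-)$. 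Functoriality for open immersions again follows from functoriality of \eqref{sseqc} and of ${\rm Tr}_{X_C}$.

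For the compatibility with de Rham and Hyodo-Kato trace maps in part (3), I would argue as follows. The geometric trace map ${\rm Tr}_{X_C}$ was defined precisely via the exact sequence \eqref{vend0}, whose terms $H^1{\rm HK}_c(X_C,2)$ and $H^1{\rm DR}_c(X_C,2)$ carry the Hyodo-Kato resp. de Rham trace maps; unwinding the definition, the pro-\'etale trace restricts on these subquotients to the coherent ones by construction, and the compatibility with ${\rm Tr}_{\dr,X_C}$ and ${\rm Tr}_{{\rm HK},X_C}$ is then the assertion that the syntomic trace map is compatible with its Hyodo-Kato and de Rham components, which is \cite[Sec. 8.2--8.3]{AGN}. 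The arithmetic compatibility is deduced by applying $H^*(\sg_K,-)$ and ${\rm Tr}_K$, using that the filtration on $H^4_c(X,\Q_p(2))$ from the spectral sequence matches the one on the coherent sides — or, more cleanly, by reduction to the proper case: one compactifies (after a finite field extension, harmless for the trace normalization) and uses that for proper curves all the trace maps are the classical ones and the compatibility is the classical compatibility of \'etale, de Rham, and crystalline trace maps under the period isomorphisms. The main obstacle I expect is bookkeeping: making sure the twist conventions (the passage from $\Q_p(1)$ to $\Q_p(2)$, the role of ${\rm Tr}_K$) and the direction of all maps under open immersions are consistent throughout, and checking that the ``kernel is a $C$-vector space'' claim is robust in the dagger affinoid case where one must commute $H^2(\sg_K,-)$ past the limit $\lim_h$ of Proposition~\ref{final3.1}; this last point uses the Mittag-Leffler property established there together with \eqref{newton15}.
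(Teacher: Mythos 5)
Your construction coincides with the paper's: the geometric trace is the canonical surjection coming from the syntomic exact sequence \eqref{vend0}, the arithmetic trace is the composition through the Hochschild--Serre identification $H^4_c(X,\Q_p(2))\simeq H^2(\sg_K,H^2_c(X_C,\Q_p(2)))$ followed by ${\rm Tr}_{X_C}(1)$ and ${\rm Tr}_K$, and the dagger affinoid case is handled by passing to the limit over adapted naive interiors using the $\R^1\lim$-vanishing of Proposition \ref{final3.1}. The only difference is one of detail: the paper defers the isomorphism claim to \cite[Sec.\,8.3]{AGN}, whereas you supply the generalized-Tate vanishing argument for the kernel of ${\rm Tr}_{X_C}(1)$ yourself, which is consistent with the computations in the proof of Theorem \ref{final1}.
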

 \begin{proof}
In the case $X$ is partially proper, the arithmetic trace map is defined using the geometric trace map and the Galois cohomology trace map
$$
 {\rm Tr}_{X}: H^{4}_{c}(X,\Q_p(2))\simeq H^2(\sg_K,H^{2}_{c}(X,\Q_p(2)))\lomapr{ {\rm Tr}_{X_C}(1)} H^2(\sg_K,\Q_p(1))\lomapr{{\rm Tr}_K}\Q_p.
$$
It was shown to be an isomorphism in  \cite[Sec. 8.3]{AGN}.  

In the case $X$ is a dagger affinoid, the arithmetic trace map is defined as the composition
$$
{\rm Tr}_X:\quad H^4_{c}(X,\Q_p(2))\stackrel{\sim}{\to} \lim_h H^4_{c}(X^0_h,\Q_p(2)) \verylomapr{\lim_h{\rm Tr}_{X^0_h}}\Q_p.
$$
Here the first isomorphism follows from the fact that $\R^1 \lim_h H^3_{c}(X^0_h,\Q_p(2))=0$, which was shown in the proof of Proposition \ref{final3.1}.
\end{proof}

  We will also need a derived version of the  arithmetic  trace map:
$${\rm Tr}_X:\quad \rg_{c}(X,\Q_p(2))[4]\to \Q_p.
$$
We define it as  the composition 
$$
\rg_{c}(X,\Q_p(2))[4]\stackrel{\sim}{\leftarrow}(\tau_{\leq 4}\rg_{c}(X,\Q_p(2)))[4]\stackrel{\can}{\longrightarrow}H^4_{c}(X,\Q_p(2))\lomapr{ {\rm Tr}_{X}}\Q_p.
$$
   \subsection{Pro-\'etale pairings for  partially proper varieties} Let $X$ be a partially proper rigid anaytic variety over $K,C$. Cup product on pro-\'etale cohomology induces  pairings
   \begin{align*}
   \R\Gamma(X, \Q_p) \otimes^{ {\rm L}\Box}_{\Q_{p}} \R\Gamma(\partial X, \Q_p)\to  \R\Gamma(\partial X, \Q_p)
   \end{align*}
   as the composition
    \begin{align*}
   \R\Gamma(X, \Q_p) & \otimes^{{\rm L}\Box}_{\Q_{p}} \R\Gamma(\partial X, \Q_p)=
     \R\Gamma(X, \Q_p) \otimes^{ {\rm L}\Box}_{\Q_{p}} (\colim_Z\R\Gamma( X\moins Z, \Q_p))\\
     &\stackrel{\sim}{\leftarrow}
   \colim_Z (   \R\Gamma(X, \Q_p) \otimes^{{\rm L}\Box}_{\Q_{p}} \R\Gamma( X\moins Z, \Q_p))\verylomapr{\colim_Z\cup}\colim_Z\R\Gamma( X\moins Z, \Q_p)\\
 & =\R\Gamma(\partial X, \Q_p).
   \end{align*}
  These pairings are  compatible with the pairings
     \begin{align*}
   \R\Gamma(X, \Q_p) \otimes^{{\rm L}\Box}_{\Q_{p}} \R\Gamma(X, \Q_p)\to  \R\Gamma(X, \Q_p).
   \end{align*}
They yield (Galois equivariant over  $C$)  pairings
\begin{align}
\label{samolot1}
 \R\Gamma(X, \Q_p) \otimes^{{\rm L}\Box}_{\Q_{p}} \R\Gamma_{ c}(X, \Q_p)\to \R\Gamma_{ c}(X, \Q_p),
 \end{align}
which are compatible with the passage from $K$ to $C$.

    Let now $X$ be partially proper and as in  Proposition \ref{traces11}. The (twisted) pairing \eqref{samolot1} composed with the trace map \eqref{trg} yields a geometric pairing: 
 \begin{align*}
  \R\Gamma(X_C,\Q_p(j))\otimes^{{\rm L} \Box}_{\Q_{p}}  \R\Gamma_{c}(X_C,\Q_p(1-j))[2]{\to} \Q_p.
 \end{align*}
Similarly, using the trace map \eqref{tra}, we obtain an  arithmetic pairing
   \begin{align*}
   \R\Gamma(X,\Q_p(j))\otimes^{{\rm L}\Box}_{\Q_{p}}  \R\Gamma_{c}(X,\Q_p(2-j))[4]{\to}  \Q_p.
\end{align*}
 These pairings are compatible and are also compatible  with pro-\'etale pairings on $ \R\Gamma(X_C,\Q_p(j))$ and $ \R\Gamma(X,\Q_p(j))$, respectively. 
\subsection{Pro-\'etale pairings for dagger affinoids} Let $X$ be a smooth dagger affinoid  over $K,C$. Cup product on pro-\'etale cohomology 
   \begin{align}
   \label{samolot2}
   \R\Gamma(X, \Q_p) \otimes^{{\rm L}\Box}_{\Q_{p}} \R\Gamma_{c}(X, \Q_p)\to  \R\Gamma_{c}(X, \Q_p)  
 \end{align}
 is defined by the composition
 \begin{align*}
    \R\Gamma(X, \Q_p) \otimes^{{\rm L}\Box}_{\Q_{p}} \R\Gamma_{c}(X, \Q_p) &= (\colim_h \R\Gamma(X_h, \Q_p)) 
  \otimes^{{\rm L}\Box}_{\Q_{p}} (\colim_n\R\Gamma_{\wh{X}}(X_n, \Q_p))\\
& = \colim_{h,n} (\R\Gamma(X_h, \Q_p) \otimes^{{\rm L}\Box}_{\Q_{p}} \R\Gamma_{\wh{X}}(X_n, \Q_p))\\
& \stackrel{\sim}{\to} \colim_{h} (\R\Gamma(X_h, \Q_p) \otimes^{{\rm L}\Box}_{\Q_{p}} \R\Gamma_{\wh{X}}(X_h, \Q_p))\\
&  \verylomapr{\colim_h\cup} \colim_h  \R\Gamma_{\wh{X}}(X_h, \Q_p)= \R\Gamma_{c}(X, \Q_p).
\end{align*}
Here $\{X_h\}_{h\in\N}$ is the dagger presentation of $X$.

  These pairings  are  Galois equivariant over  $C$  
and  compatible with the passage from $K$ to $C$.

     The (twisted) pairing \eqref{samolot2} composed with the trace map \eqref{trg} yields a geometric pairing: 
 \begin{align*}
  \R\Gamma(X_C,\Q_p(j))\otimes^{{\rm L} \Box}_{\Q_{p}}  \R\Gamma_{c}(X_C,\Q_p(1-j))[2]{\to} \Q_p.
 \end{align*}
Similarly, using the trace map \eqref{tra}, we obtain an  arithmetic pairing
   \begin{align*}
   \R\Gamma(X,\Q_p(j))\otimes^{{\rm L}\Box}_{\Q_{p}}  \R\Gamma_{c}(X,\Q_p(2-j))[4]{\to}  \Q_p.
\end{align*}
These pairings are compatible and are also compatible 
with pro-\'etale pairings on $ \R\Gamma(X_C,\Q_p(j))$ and $ \R\Gamma(X,\Q_p(j))$, respectively.

\subsection{Coherent pairings} 
We will list now the coherent pairings that we will use. 

\subsubsection{Ghost circle} 
Let $D$ be an open disc over $K$; let $Y:=\partial D$ be the boundary of $D$, a ghost circle.  Let $Y_C:=Y\times C$.
The ring  $\O(Y)$ (resp.~$\O(Y_C)$) is the Robba ring  with coefficients in $K$ (resp.~$C$). Topologically, $\so(Y)$ is a direct sum of a nuclear Fr\'echet space and a space of compact type (both over $\Q_p$). 

 The map \begin{equation}\label{nie10}
(f,g)\mapsto {\rm Tr}_{K}{\rm res}(f d g )
\end{equation}
induces a perfect pairing\footnote{We used here the fact that, for nuclear Fr\'echet spaces and spaces of compact type, the passage between locally convex topological vector spaces and solid vector spaces works well on the level of tensor products and $\Hom$s, see Section \ref{solid-product}.} 
\begin{equation} \label{fanto11} \cup: \O(Y)/K\otimes^{\Box}_{\Q_p}\O(Y)/K\to\Q_p \end{equation}
between the solid $\Q_p$-vector spaces $\O(Y)/K$ and $\O(Y)/K$.

 Similarly, for $L=K,C$, the map 
 \begin{equation}
 \label{nie11}
 (f,g)\mapsto {\rm res}(f d g )
 \end{equation}
induces a perfect pairing 
\begin{equation} \label{fanto11C} \cup: \O(Y_L)/L\otimes^{\Box}_{L}\O(Y_L)/L\to L
 \end{equation}
between the solid $L$-vector spaces $\O(Y_L)/L$ and $\O(Y_L)/L$.

\begin{remark}\label{fanto2}
We can see one of the copies of $\O(Y_L)/L$
 as the space $\Omega^1(Y_L)_0$ of differential forms with residue equal to $0$
(via $h\mapsto d h $). The map $(f,\omega)\mapsto {\rm res}(f\omega)$
induces a perfect duality between the $L$-spaces $\O(Y_L)$ and $\Omega^1(Y_L)$
and $\Omega^1(Y_L)_0$ is exactly the orthogonal of $L\subset\O(Y_L)$. 
\end{remark}

\subsubsection{Open disc}  
Let $D$ and $Y$ be as above.  We see easily that $\so(Y)/\so(D)$ is the
  dual of $\so(D)/K$ for the pairing \eqref{nie10}, i.e., that we have a 
  perfect pairing
$$
 \cup:  \so(Y)/\so(D)\otimes^{\Box}_{\Q_p} \so(D)/K\to \Q_p.
$$
  Similarly, we check  that $\so(Y_L)/\so(D_L)$ is the
 dual of $\so(D_L)/L$ for the pairing \eqref{nie11}, i.e., that we have a 
  perfect pairing
$$
 \cup:  \so(Y_L)/\so(D_L)\otimes^{\Box}_L \so(D_L)/L\to L.
$$
     This  pairing   can be thought of as  Serre duality pairing
     \begin{align*}
     &   \cup:  H^1_{c}(D_L,\so)\otimes^{\Box}_L H^0(D_L,\Omega^1) \to L.
     \end{align*}
 This is because we have  isomorphisms (the second is just $d:\O\to\Omega^1$)
 $$\so(Y_L)/\so(D_L)\stackrel{\sim}{\to} H^1_{c}(D_L,\so),\quad  \so(D_L)/L\stackrel{\sim}{\to} H^0(D_L,\Omega^1).
 $$

\subsubsection{Open annulus}   
Let $A$ be an open annulus over $K$; let $\partial A$ be the boundary of $A$,  a disjoint union $Y_a\sqcup Y_b$ of two ghost circles. 
   We see easily that $\so(\partial A)/(\so(A)\oplus K)$ is the
  dual of $\so(A)/K$ for the pairing \eqref{nie10} taken twice and followed by the addition map $\Q_p^{\oplus 2}\stackrel{+}{\to}\Q_p$. That is, that we have a 
  perfect pairing
$$
 \cup:  \so(\partial A)/(\so(A)\oplus K)\otimes^{\Box}_{\Q_p} \so(A)/K\to \Q_p.
$$
  Similarly, we check  that $\so(\partial A_L)/(\so(A_L)\oplus L)$ is the
  dual of $\so(A_L)/L$ for the pairing \eqref{nie11}, i.e., that we have a 
  perfect pairing
$$
 \cup:  \so(\partial A_L)/(\so(A_L)\oplus L)\otimes^{\Box}_{L} \so(A_L)/L\to L.
$$
  
   This  pairing   can be seen  as induced by the  Serre duality pairing
     \begin{align*}
     &   \cup:  H^1_{c}(A_L,\so)\otimes^{\Box}_L H^0(A_L,\Omega^1) \to L.
     \end{align*}
 This is because we have  an  isomorphism
 $$\so(\partial A_L)/\so(A_L)\stackrel{\sim}{\to} H^1_{c}(A_L,\so) 
 $$
 and the  exact sequence
 $$
 0\to \so(A_L)/L\overset{d}{\to} H^0(A_L,\Omega^1)\to L\tfrac{dz}{z}\to 0.
 $$

\section{Poincar\'e duality for a ghost circle}\label{fanto0} 
Let $K$ be a finite extension of $\Q_p$. We will prove in this chapter arithmetic Poincar\'e 
duality for a ghost circle $Y$ over $K$ that will be essential in later computations. The numerology
suggests that $Y$ is a ``proper'' variety of total dimension $\frac{3}{2}$, 
hence $Y_C$ is of dimension~$\frac{1}{2}$.

\subsection{Arithmetic duality theorem}  
We start with a proof that  assumes an Explicit Reciprocity Law (which will be proved in the next section).

\begin{theorem}\label{main-arithmeticY} {\rm (Arithmetic duality)}  Let $Y$ be a ghost circle over $K$.

{\rm (i)} There is a  trace map isomorphism
$${\rm Tr}_Y: \, H^3(Y,\Q_p(2))\stackrel{\sim}{\to} \Q_p. 
$$

{\rm (ii)} Let $i,j\in\Z$.
The  pairing
\begin{equation}
\label{pairing1}
\cup:\quad H^i(Y,\Q_p(j)) \otimes_{\Q_p}^{\Box}  H^{3-i}(Y,\Q_p(2-j))\stackrel{\cup}{\longrightarrow} H^3(Y,\Q_p(2))\xrightarrow[\sim]{{\rm Tr}_Y}\Q_p
\end{equation}
is a perfect duality, i.e., we have  an  induced   isomorphism 
\begin{align*}
 & \gamma_{Y,i}: \quad H^i(Y,\Q_p(j))\stackrel{\sim}{\to} H^{3-i}(Y,\Q_p(2-j))^*.
\end{align*}
\end{theorem}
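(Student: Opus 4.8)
The plan is to build everything from the filtration diagram \eqref{nie22} for $Y$ (with the terms computed in Lemma \ref{ghost0} and Lemma \ref{mist1-ghost}), reduce the trace map and the pairing to their associated-graded pieces, and identify those with Galois and coherent dualities which we already know to be perfect. First I would establish the trace isomorphism: from Lemma \ref{mist1-ghost} with $i=3$, $j=2$ the top row of \eqref{nie22} gives $F^2_{3,2}/F^1_{3,2}\simeq H^2(\sg_K,\Q_p(1))$, while $F^1_{3,2}$ sits in an extension of $H^2(\sg_K,(\so(Y_C)/C)(1))$ by $F^0_{3,2}=H^3(\sg_K,\Q_p(2))$; both of these vanish — the first by the generalized Tate formula \eqref{newton15} (it is $H^2(\sg_K,-\otimes^{\Box}_KC)$ of a nuclear Fréchet space, which is $0$), the second because $\sg_K$ has cohomological dimension $2$. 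Hence $H^3(Y,\Q_p(2))\xrightarrow{\sim}H^2(\sg_K,\Q_p(1))\xrightarrow[\sim]{{\rm Tr}_K}\Q_p$, which I take as the definition of ${\rm Tr}_Y$. This proves (i).

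For (ii) I would first record that all the $H^i(Y,\Q_p(j))$ are computed in Lemma \ref{mist1-ghost}, that by the Remark after Corollary \ref{mist4} they are direct sums of a nuclear Fréchet space and a space of compact type, hence reflexive in the solid sense, so it suffices to produce the duality map $\gamma_{Y,i}$ and check it is an isomorphism for each $i$ (the map $\gamma_{Y,3-i}$ being its transpose). The three graded pieces of the filtration on $H^i(Y,\Q_p(j))$ are $H^{i-1}(\sg_K,\Q_p(j-1))$, $H^{i-1}(\sg_K,(\so(Y_C)/C)(j-1))$, and $H^i(\sg_K,\Q_p(j))$ (using that ${\rm HK}^1(Y_C,1)$ is the $C$-points of the annulus computation, hence an almost $C$-representation whose Galois cohomology is finite-dimensional — actually for the ghost circle the relevant Hyodo--Kato piece contributes the $\Q_p(j-1)$ strand); by the generalized Tate isomorphism \eqref{newton15} the middle piece is $H^{i-1}(\sg_K,-)$ applied to something of the form $W\otimes^{\Box}_KC$ and so is nontrivial only in the "coherent" degrees, where it equals $\so(Y)/K$ (for $j=1$). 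The key input is the Explicit Reciprocity Law (the theorem stated in the introduction, proved in the next chapter of the paper): it asserts that the cup-product pairing \eqref{pairing1} is strictly compatible with the filtration and that on the associated graded it is the pairing induced by the Poitou--Tate Galois pairing on the outer strands and by the coherent residue pairing \eqref{fanto11} ($\so(Y)/K$ self-dual) on the middle strand. Granting this, the pairing on $\mathrm{gr}$ is a direct sum of perfect pairings — Poitou--Tate duality $H^a(\sg_K,\Q_p(b))\otimes^{\Box}_{\Q_p}H^{2-a}(\sg_K,\Q_p(1-b))\to\Q_p$ pairing $\mathrm{gr}^2$ of $H^i(-,j)$ with $\mathrm{gr}^0$ of $H^{3-i}(-,2-j)$ (degrees $(i-1)+(2-i)=1$ shift aside, the twists $j-1$ and $2-j$ sum to $1$, exactly right), and the residue pairing for the middle strands (here the twist bookkeeping again works: $(j-1)+((2-j)-1)=0$). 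A five-lemma / spectral-sequence-of-a-filtered-object argument then upgrades perfection on $\mathrm{gr}$ to perfection of $\gamma_{Y,i}$; one must check the dual filtration matches, i.e. that $(-)^*$ sends the filtration on $H^{3-i}(Y,\Q_p(2-j))$ to the one on $H^i(Y,\Q_p(j))$ in reverse order, which is exactly the statement that the pairing respects the filtration.

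A few technical points I would attend to. One must justify that $(-)^*=\underline{\Hom}_{\Q_p}(-,\Q_p)$ is exact on the short exact sequences of \eqref{nie22} — this holds because every term is nuclear Fréchet or of compact type, hence reflexive with no higher $\Ext$'s (cf. the Remark following \eqref{derived11}), so dualizing a finite filtration with such graded pieces is exact and reverses the order of the filtration. One must also check the compatibility of the two trace maps and of all cup products with the passage to the graded — the cup product $H^i(Y,\Q_p(j))\otimes^{\Box} H^{3-i}(Y,\Q_p(2-j))\to H^3(Y,\Q_p(2))$ lands in $F^1_{3,2}$'s quotient, and one needs that the composite with ${\rm Tr}_Y$ factors through the outer strands in the way the ERL predicts; this is the content of part (2) of the Explicit Reciprocity Law and is exactly where the interpolation between syntomic and $(\varphi,\Gamma)$-techniques (Propositions \ref{fanto23}, \ref{fanto5}, Lemma \ref{compatibility-Galois-product}) enters. \textbf{The main obstacle} is therefore entirely the Explicit Reciprocity Law — verifying that the geometrically-defined cup product on pro-étale cohomology of $Y$, passed through the comparison/syntomic description, matches on associated graded the honest Galois cup product and the honest coherent residue pairing, with the correct constant (no spurious scalar). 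Everything else is formal manipulation of perfect pairings along a three-step filtration.
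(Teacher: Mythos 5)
Your proposal is correct and follows essentially the same route as the paper: define ${\rm Tr}_Y$ via the Hochschild--Serre/syntomic filtration using the vanishing of $H^2(\sg_K,(\so(Y_C)/C)(1))$ and $H^3(\sg_K,-)$, then deduce perfection of the pairing from the Explicit Reciprocity Law (compatibility with the three-step filtration plus identification of the graded pairings with Poitou--Tate and the coherent residue pairing) by a five-lemma argument on the two short exact sequences of the filtration. You also correctly isolate the Explicit Reciprocity Law as the genuine content, which is exactly what the remainder of the paper's chapter is devoted to proving via the $(\partial,\varphi,\gamma)$-Koszul description of $\R\Gamma(Y,\Q_p(j))$.
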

\begin{proof}
Define geometric and arithmetic trace maps as follows:
\begin{align*}
& {\rm Tr}_{Y_C}:  H^1(Y_C,\Q_p(1)){\to}\Q_p,\\
& {\rm Tr}_Y:   H^3(Y,\Q_p(2))\stackrel{\sim}{\to} H^2(\sg_K,H^1(Y_C,\Q_p(2)))\xrightarrow[\sim]{{\rm Tr}_{Y_C}(1)}H^2(\sg_K,\Q_p(1))\xrightarrow[\sim]{{\rm Tr_{K}}} \Q_p.
\end{align*}
Here, the first trace map comes from the exact sequence in  \eqref{ball2-ghost1} 
and ${\rm Tr}_{Y_C}(1)$ is an isomorphism because $H^2(\sg_K,\tfrac{\so(Y_C)}{C})=0$. 
This proves (i);
let us turn to (ii). 

\vskip1mm
$\bullet$ {\em Filtration on cohomology.} 
Let $i,j\in\Z$. There exists an ascending filtration on $H^{i}(Y,\Q_p(j))$:
\begin{equation}\label{kwakus111}
F^2_{i,j}=H^{i}(Y,\Q_p(j))\supset F^1_{i,j}\supset F^0_{i,j}\supset F^{-1}_{i,j}=0,
\end{equation}
such that we have  isomorphisms
\begin{align*}
 & F^2_{i,j}/F^1_{i,j}\simeq H^{i-1}(\sg_K, \Q_p(j-1)),\\
& F^1_{i,j}/F^0_{i,j}\simeq H^{i-1}(\sg_K, \tfrac{\so(Y_C)}{C}(j-1)),\\
  & F^0_{i,j}/F^{-1}_{i,j}\simeq H^i(\sg_K, \Q_p(j)).
\end{align*}
This follows from Section \ref{filtration1}, Lemma \ref{mist1-ghost}, and Lemma \ref{ghost0}.

\vskip1mm
$\bullet$ {\em Identification of pairings on graded pieces.} Assume the following:
\begin{theorem}{\rm (Explicit Reciprocity Law)}
\label{kwakus11}

{\rm (i)} The pairing (\ref{pairing1}) is compatible with the above filtration.
In particular $F^a_{i,j}$ and $F^b_{3-i,2-j}$ are orthogonal if $a+b\leq 1$.

{\rm (ii)}
On the associated grading the pairing  (\ref{pairing1})
yields a pairing induced by the Galois cohomology pairing and pairing (\ref{fanto11C}). 
\end{theorem}
 From claim (i), we obtain  the following commutative diagram with  exact rows (all the vertical maps are induced from pro-\'etale cup products and the  trace ${\rm Tr}_Y$):
\begin{equation}
\xymatrix@C=4mm@R=5mm{
0\ar[r] & H^{i-1}(\sg_K, \tfrac{\so(Y_C)}{C}(j-1))\ar[r] \ar[d]^{\wr} & H^{i-1}(\sg_K,H^1(Y_C,\Q_p(j)))\ar[d]^{\beta_{Y,i}}  \ar[r]  &H^{i-1}(\sg_K, \Q_p(j-1))\ar[r]  \ar[d]^{\wr}& 0\\
0\ar[r] & H^{2-i}(\sg_K,\tfrac{\so(Y_C)}{C}(1-j))^*\ar[r] & (F^1_{3-i,2-j})^* \ar[r]  & H^{3-i}(\sg_K,  \Q_p(2-j))^*\ar[r] & 0
}
\end{equation}
By Theorem~\ref{kwakus11},  claim (ii), 
the left and the right vertical arrows are  isomorphisms. 
It follows that we have an isomorphism (we skipped the indices to lighten the notation)
$$
\beta_{Y,i}: \quad (F^2/F^0)H^{i}(Y,\Q_p(j))\stackrel{\sim}{\to} (F^1 H^{3-i}(Y,\Q_p(2-j)))^*.
$$

  Similarly, we obtain the following commutative diagram with  exact rows (again, all the vertical maps are induced from pro-\'etale cup products and the trace ${\rm Tr}_Y$)
\begin{equation}
\label{DNote-ghost}
 \xymatrix@R=5mm@C=4mm{ 
   0\ar[r] & H^i(\sg_K, \Q_p(j))\ar[r]\ar[d]^{\alpha_{Y,i}}_{\wr}  & H^{i}(Y,\Q_p(j))\ar[d]^{\gamma_{Y,i}}\ar[r] &  
H^{i-1}(\sg_K, H^1(Y_C,\Q_p(j)))\ar[r]\ar[d]^{\beta_{Y,i}}_{\wr} & 0\\
    0\ar[r] & H^{2-i}(\sg_K, \Q_p(1-j))^* \ar[r] &  H^{3-i}(Y,\Q_p(2-j))^* \ar[r] &  
(F^1_{3-i, 2-j})^* \ar[r] &  0
 }
\end{equation}
By Theorem~\ref{kwakus11}, the map $\alpha_{Y,i}$ is induced by the Galois pairing hence is an  isomorphism. It follows that so is the map $\gamma_{Y,i}$, as wanted.
\end{proof}
\subsection{Proof of the Explicit  Reciprocity Law} 
The goal of the rest of the chapter is to prove Theorem~\ref{kwakus11}. The result is immediate
for $i\geq 4$ since then  all the terms are $0$. 

    For $i=0,3$, by symmetry, we may assume that $i=0$. We have 
   $$
    H^0(Y,\Q_p(j))\simeq \begin{cases} \Q_p & \mbox{ if } j=0,\\
    0  & \mbox{ if } j\neq 0;
    \end{cases}\quad     H^3(Y,\Q_p(2-j))\simeq \begin{cases} \Q_p & \mbox{ if } j=0,\\
    0 &  \mbox{ if } j\neq 0.
    \end{cases}
   $$
  Here, to compute,  $H^3$ we have used   Lemma \ref{mist1-ghost} and Lemma \ref{ghost0}.
Now  we need to study  the pairing 
   $$
   \cup:\quad H^0(Y,\Q_p)\otimes_{\Q_{p}}^{\Box} H^3(Y,\Q_p(2))\to H^3(Y,\Q_p(2))\xrightarrow[\sim]{{\rm Tr}_Y}\Q_p.
   $$
   We have
   \begin{align*}
  &  F^0_{0,0}=   F^1_{0,0}=   F^2_{0,0}=H^0(Y,\Q_p),\\
    &  F^0_{3,2}=   F^1_{3,2}=  0,\quad  F^2_{0,0}=H^3(Y,\Q_p(2)).
   \end{align*}
   Claim (i) of Theorem~\ref{kwakus11} follows immediately.  Claim (ii) 
 is easy to check by following the isomorphisms appearing in  Lemma \ref{mist1-ghost}, Lemma \ref{ghost0} and the definition of the trace map ${\rm Tr_Y}$, and by compatibility of the Hochschild-Serre spectral sequence  with products.  (The Hochschild-Serre spectral sequence for $Y$ is obtained by taking $\colim$ of the Hochschild-Serre spectral sequences for annuli \eqref{HSann}. The fact that $\colim$ commutes with Galois cohomology is proved as in the proof of Lemma \ref{ghost0}.)

   So it remains  to look at 
$i=1,2$ and, by symmetry, we may assume that $i=1$. That is, we are studying the pairing
\begin{equation}
\label{main-pairing}
\cup:\quad H^1(Y,\Q_p(j))\otimes_{\Q_{p}}^{\Box} H^2(Y,\Q_p(2-j))\to H^3(Y,\Q_p(2))\xrightarrow[\sim]{{\rm Tr}_Y}\Q_p.
\end{equation}

\subsection{Descent to $\widehat K_\infty$}\label{fanto6}
Since $\Qbar_p/ K_\infty$ is almost \'etale, 
we can compute
the pro-\'etale cohomology of $Y_K:=Y$ using $Y_{\widehat K_\infty}$ and the latter is computed via syntomic methods as in \cite{CN1} or in \cite{Sally}. 

 \subsubsection{$(\partial, \varphi,\gamma)$-Koszul complexes} 
Let $\widetilde Y^{[u,v]}=Y_{\Q_p}\times U^{[u,v]}$ (see section~\ref{tau1}). Then we have
$$\widetilde\O^{[u,v]}:=\O(\widetilde Y^{[u,v]})\simeq \O(Y_{\Q_p})\wotimes \B^{[u,v]}_{K_\infty},
\quad \widetilde\O^{[u,v]}/t\simeq \O(Y_{\widehat K_\infty}).$$
 The algebra $\O( Y_{\Q_p})$ is a direct sum of a nuclear Fr\'echet space 
and a space of compact type and  $\B^{[u,v]}_{K_\infty}$ is a Banach space.
 Let $j\in\Z$. Let us choose a uniformizer $T$ of $\so({Y_{\Q_p}})$ and define  
Frobenius $\varphi$ by sending $T$ to $T^p$ and let $\partial=T\frac{d}{dT}$. We denote by 
\begin{equation} \label{FM-Koszul}
{\rm Kos}_{\partial,\varphi,\gamma}(\widetilde\O^{[u,v]}(j))
\quad{\rm and}\quad
{\rm Kos}_{\partial,\varphi,\gamma}(\widetilde\O^{[u,v]}(j))^{\Delta_K}
\end{equation}
the total complex of the double complex
\begin{equation}
\label{FM-double}
\xymatrix@C=13mm@R=6mm{
\widetilde\O^{[u,v]}(j)\ar[d]^-{\gamma_K-1}\ar[r]^-{(t\partial,\varphi-1)}&\widetilde\O^{[u,v]}(j-1)\oplus
\widetilde\O^{[u,v/p]}(j)\ar@<-12mm>[d]^-{\gamma_K-1}\ar@<12mm>[d]^-{\gamma_K-1}
\ar[r]^-{(\varphi-1)-t\partial}&\widetilde\O^{[u,v/p]}(j-1)\ar[d]^-{\gamma_K-1}\\
\widetilde\O^{[u,v]}(j)\ar[r]^-{(t\partial,\varphi-1)}&\widetilde\O^{[u,v]}(j-1)\oplus
\widetilde\O^{[u,v/p]}(j)\ar[r]^-{(\varphi-1)-t\partial}&\widetilde\O^{[u,v/p]}(j-1)}
\end{equation}
and the complex obtained by taking fixed points by $\Delta_K$ of each of its terms\footnote{In what follows, a superscript $\Delta_K$ means taking fixed points by $\Delta_K$ 
of each terms; we don't need to take derived fixed points since $\Delta_K$ is finite 
and the modules are $L$-vector spaces. All the statements that follow continue to hold
for fixed points by $\Delta_K$ but we will not state them explicitly.}.
Since they are based on Fontaine-Messing syntomic cohomology (see Remark \ref{fanto7}), 
we will call the Koszul complexes~\eqref{FM-Koszul}  {\em FM-Koszul complexes}. 

They sometimes appear in  a different form
  \begin{equation} \label{HK-Koszul}
{\rm Kos}^{\rm HK}_{\partial,\varphi,\gamma}(\widetilde\O^{[u,v]}(j)),
\quad{\rm and}\quad
{\rm Kos}^{\rm HK}_{\partial,\varphi,\gamma}(\widetilde\O^{[u,v]}(j))^{\Delta_K},
  \end{equation}
   which we will call {\em HK-Koszul complexes} as they are based on Hyodo-Kato 
syntomic cohomology. The first one is the total complex of the double complex 
$$\xymatrix@C=13mm@R=6mm{
\widetilde\O^{[u,v]}(j-1)\ar[d]^-{\gamma_K-1}\ar[r]^-{(\partial,\frac{\varphi}{p}-1,\can )}&\widetilde\O^{[u,v]}(j-1)\oplus
\widetilde\O^{[u,v/p]}(j-1)\oplus (\widetilde\O^{[u,v]}/t)(j-1) \ar@<-25mm>[d]^-{\gamma_K-1}\ar@<-6mm>[d]^-{\gamma_K-1}\ar@<20mm>[d]^-{\gamma_K-1}
\ar[r]^-{(\varphi-1)-\partial+0}&\widetilde\O^{[u,v/p]}(j-1)\ar[d]^-{\gamma_K-1}\\
\widetilde\O^{[u,v]}(j-1)\ar[r]^-{(\partial,\frac{\varphi}{p}-1,\can)}&\widetilde\O^{[u,v]}(j-1)\oplus
\widetilde\O^{[u,v/p]}(j-1)\oplus (\widetilde\O^{[u,v]}/t)(j-1)\ar[r]^-{(\varphi-1)-\partial+0}&\widetilde\O^{[u,v/p]}(j-1)}
$$
The two Koszul complexes are related by a quasi-isomorphism in $\sd(\Q_{p,\Box})$
\begin{equation}
\label{beta-map}
\beta_j: {\rm Kos}_{\partial,\varphi,\gamma}(\widetilde\O^{[u,v]}(j))\stackrel{\sim}{\to} {\rm Kos}^{\rm HK}_{\partial,\varphi,\gamma}(\widetilde\O^{[u,v]}(j))
\end{equation}
   given by the maps $(t, {\rm Id} \oplus t\oplus 0, {\rm Id})$ 
in the top and bottom rows (same for $\Delta_K$-fixed points).
   
 \begin{remark}\label{fanto7}{\rm ({\em Relation to syntomic cohomology}.)} 
 
 (i) One can easily show that the horizontal complexes in \eqref{FM-double} compute the syntomic cohomology of
$Y_{\widehat K_\infty}$ (more exactly, $\rg_{\rm syn}(Y_{\widehat K_\infty},1)(j-1)$; the
twist $(j-1)$ does not play a role at the level of $K_\infty$ but intervenes in the computation of the arithmetic cohomology). 
The complex ${\rm Kos}_{\partial,\varphi,\gamma}(\widetilde\O^{[u,v]}(j))$ is then given by the mapping fiber
\begin{equation}
\label{ogrod11}
[\xymatrix@R=6mm{\rg_{\rm syn}(Y_{\widehat K_\infty},1)(j-1)\ar[r]^{\gamma_K-1}
&\rg_{\rm syn}(Y_{\widehat K_\infty},1)(j-1)}].
\end{equation}
By applying ${\rm Res}_{p^{-n}\Z_p}$, for $n$ big enough,
we obtain a quasi-isomorphic complex in which
$\B_{K_\infty}^{[u,v]}$ is replaced by $\B_{K_n}^{[u,v]}$
and which looks like $\rg_{\rm syn}(Y_K,1)(j-1)$ except for the arithmetic variable 
which is treated a little bit differently 
(there is an action of the entire $\Gamma_K$ and not just of its Lie algebra).

(ii) More precisely, the  double complex \eqref{FM-Koszul} can be rewritten in the following way:
\begin{equation}
\label{FM-syntomic}
\xymatrix@C=13mm@R=6mm{
F^1\widetilde\O^{[u,v]}(j-1)\ar[d]^-{\gamma_K-1}\ar[r]^-{(d,\frac{\varphi}{p}-1)}&\widetilde\Omega^{[u,v]}(j-1)\oplus
\widetilde\O^{[u,v/p]}(j-1)\ar@<-12mm>[d]^-{\gamma_K-1}\ar@<12mm>[d]^-{\gamma_K-1}
\ar[r]^-{(\frac{\varphi}{p}-1)-d}&\widetilde\Omega^{[u,v/p]}(j-1)\ar[d]^-{\gamma_K-1}\\
F^1\widetilde\O^{[u,v]}(j-1)\ar[r]^-{(d,\frac{\varphi}{p}-1)}&\widetilde\Omega^{[u,v]}(j-1)\oplus
\widetilde\O^{[u,v/p]}(j-1) \ar[r]^-{(\frac{\varphi}{p}-1)-d}&\widetilde\Omega^{[u,v/p]}(j-1).
}
\end{equation}
We remind the reader that $F^1\widetilde\O^{[u,v]}=t\widetilde\O^{[u,v]}$,
and that multiplication by $t$ is equivalent to a Tate twist by $(1)$. 
An isomorphism from the complex \eqref{FM-Koszul} to \eqref{FM-syntomic}
can be  given by the map 
\[
\left [ {\begin{array}{cccc}
a & x & y & z\\
a' & x' & y' & z'\\
\end{array} }\right  ]\Rightarrow \left [ {\begin{array}{cccc}
ta & x \frac{dT}{T} & ty & z \frac{dT}{T}\\
ta' & x' \frac{dT}{T} & ty' & z' \frac{dT}{T}\\
\end{array} }\right ]
\]

 (iii) We have chosen to twist everything in (i)  by $(1)$  
but one could twist by $(r)$ for any $r\geq 1$. 
That is, take $F^r$,  $F^{r-1}\Omega^1$, $\frac{\varphi}{p^r}$, etc. 
 \end{remark}
 \subsubsection{Products on mapping fibers}  
\label{prod}We will recall here the well-known formulas  for  products on mapping fibers that we will need (for details, see, for example, \cite[Prop. 3.1]{Niz1}). 
 
Let $A_i^{\bullet}$, $C_i^{\bullet}$ complexes of condensed $\Q_p$-vector spaces (for $i=1,2,3$) and $f_i, g_i$ morphisms of complexes $A_i^{\bullet} \to C_i^{\bullet}$. Assume that, for all $\alpha$ in $\Q_p$, there are maps
\[ \cup_{\alpha} : A^{\bullet}_1 \otimes_{\Q_p} A^{\bullet}_2 \to A^{\bullet}_3 \text{ and  } \cup_{\alpha} : C^{\bullet}_1 \otimes_{\Q_p}  C^{\bullet}_2 \to C^{\bullet}_3 \]
such that the $\cup_{\alpha}$'s are morphisms of complexes which commute with the $f_i$'s and $g_i$'s, all the $\cup_{\alpha}$'s are homotopic and we can choose the homotopies such that they commute with the $f_i$'s and $g_i$'s. 

If we take the mapping fiber
\[ D_i^{\bullet}:= [ A_i^{\bullet} \xrightarrow{f_i-g_i} C_i^{\bullet}] \] 
and, for all $\alpha \in \Q_p$,   the products
\[ \cup_{\alpha} : D^{\bullet}_1 \otimes_{\Q_p}  D^{\bullet}_2 \to D^{\bullet}_3 \] 
can be defined  (on the level of sections) by the formula
\begin{equation}
\label{FaSi} \gamma_1 \cup_{\alpha} \gamma_2 = (a_1 \cup_{\alpha} a_2, c_1 \cup_{\alpha}  w_{\alpha}(a_2) + (-1)^{\deg(a_1)} w_{1-\alpha}(a_1) \cup_{\alpha} c_2), 
\end{equation}
where $(a_i,c_i) \in A^{\bullet}_i \oplus C^{\bullet-1}_i$ represents $\gamma_i$, and, for $\beta\in\R$, $w_{\beta}=\beta f_i(a_i)+(1- \beta) g_i(a_i)$. 

Then: 

\begin{enumerate}
\item The $\cup_{\alpha}$'s are morphisms of complexes, which commute with the projections $D^{\bullet}_i \to A^{\bullet}_i$. 
\item The $\cup_{\alpha}$'s are homotopic.
\item If $\tilde{A}^{\bullet}_1$, $\tilde{C}^{\bullet}_i$, $\tilde{f}_i$, $\tilde{\cup}_{\alpha}$ are another set of data as above and $A_i^{\bullet} \to \tilde{A}_i^{\bullet}$, $C_i^{\bullet} \to \tilde{C}_i^{\bullet}$ are morphisms of complexes which commute with $\cup_{\alpha}$ and $\tilde{\cup}_{\alpha}$, then the induced morphism $D^{\bullet}_i \to \tilde{D}^{\bullet}_i$ commutes with $\cup_{\alpha}$ and $\tilde{\cup}_{\alpha}$ defined by \eqref{FaSi}.  
\item If $g_i=0$, then the  products 
$\cup_0, \cup_1 : D_1^{\bullet} \otimes_{\Q_p}  D_2^{\bullet} \to D_3^{\bullet} $ induce products $\tilde{\cup}_0, \tilde{\cup}_1 : A_1^{\bullet} \otimes_{\Q_p}  D_2^{\bullet} \to D_3^{\bullet}$ such that the following diagrams are commutative:
\[ \xymatrix@R=6mm{
A_1^{\bullet} \otimes_{\Q_p}  D_2^{\bullet} \ar[r]^-{\tilde{\cup}_0} & D_3^{\bullet} \\ D_1^{\bullet} \otimes_{\Q_p}  D_2^{\bullet} \ar[u] \ar[ru]_-{\cup_0} &
}
\hspace{2cm}
\xymatrix@R=6mm{
D_2^{\bullet} \otimes_{\Q_p}  A_1^{\bullet} \ar[r]^-{\tilde{\cup}_1} & D_3^{\bullet} \\ D_2^{\bullet} \otimes_{\Q_p}  D_1^{\bullet} \ar[u] \ar[ru]_-{\cup_1} &
}. \]
Explicitly, for $a_i \in A^{\bullet}_i$ and $c_i \in C^{\bullet}_i$, we have
\[ a_1 \tilde{\cup}_0 (a_2, c_2)= (a_1 \cup_0 a_2, (-1)^{\deg(a_1)} f_1(a_1) \cup_0 c_2), \]
\[ (a_2, c_2) \tilde{\cup}_1 a_1 = (a_2 \cup_1 a_1, c_2 \cup_1 f_1(a_1)). \]
\end{enumerate}
\subsubsection{Structures on  $(\partial, \varphi,\gamma)$-Koszul complexes} 
 We equip the complexes  ${\rm Kos}_{\partial,\varphi,\gamma}(\widetilde\O^{[u,v]}(j)) $ with the following structures: 

 ($\bullet$) {\em Products.}  Using (twice)  the formulas from Section \ref{prod}, we define  cup products ($\alpha\in\Q_p$)
$$
\cup_{\alpha}: {\rm Kos}_{\partial,\varphi,\gamma}(\widetilde\O^{[u,v]}(j_1))\otimes^{\Box}_{\Q_p} {\rm Kos}_{\partial,\varphi,\gamma}(\widetilde\O^{[u,v]}(j_2))\to {\rm Kos}_{\partial,\varphi,\gamma}(\widetilde\O^{[u,v]}(j_1+j_2)).
$$

 ($\bullet$) {\em Filtration.} 
 We note that the operators $\partial,\gamma_K$ do not change the exponents in the powers of~$T$ (and that the operator $\varphi$ sends $T$ to $T^p$). In particular, we can separate the constant term from the rest, which allows us to write
\begin{equation}
\label{write1}
{\rm Kos}_{\partial,\varphi,\gamma}(\widetilde\O^{[u,v]}(j))=
{\rm Kos}_{\partial,\varphi,\gamma}(\widetilde\O^{[u,v]}_0(j))
\oplus {\rm Kos}_{\varphi,\gamma}(\B_{K_\infty}^{[u,v]}(j))
\oplus {\rm Kos}_{\varphi,\gamma}(\B_{K_\infty}^{[u,v]}(j-1))[-1].
\end{equation}
Here the subscript  $0$ in $\widetilde\O^{[u,v]}_0$ in the first term denotes the series for which the constant term is $0$. The complex ${\rm Kos}_{\varphi,\gamma}(\B_{K_\infty}^{[u,v]}(s))$
(for $s=j,j-1$)
is the one defined in Section \ref{prelim} and is quasi-isomorphic to $\rg(\sg_K,\Q_p(s))$ (see \eqref{Galois1}).

 We define an ascending filtration on the complex ${\rm Kos}_{\partial,\varphi,\gamma}(\widetilde\O^{[u,v]}(j))$: 
 \begin{align*}
   F^{-1}:=0,\hskip2mm &F^0:={\rm Kos}_{\varphi,\gamma}(\B_{K_\infty}^{[u,v]}(j)),\\
&  F^1:={\rm Kos}_{\partial,\varphi,\gamma}(\widetilde\O^{[u,v]}_0(j))
\oplus {\rm Kos}_{\varphi,\gamma}(\B_{K_\infty}^{[u,v]}(j)),\\
&  F^2:={\rm Kos}_{\partial,\varphi,\gamma}(\widetilde\O^{[u,v]}(j)).
 \end{align*}
This filtration does not depend on the choice of $T$ (but the splitting does):
this is obvious for $F^0$ and,
on the diagram (\ref{FM-syntomic}), $F^1$ corresponds to the kernel of the residue map
on differential forms.
 We have canonical quasi-isomorphisms in $\sd(\Q_{p,\Box})$:
 \begin{align*}   
 &   (F^0/F^{-1}){\rm Kos}_{\partial,\varphi,\gamma}(\widetilde\O^{[u,v]}(j))\simeq {\rm Kos}_{\varphi,\gamma}(\B_{K_\infty}^{[u,v]}(j)),\\
& (F^1/F^0){\rm Kos}_{\partial,\varphi,\gamma}(\widetilde\O^{[u,v]}(j))\simeq {\rm Kos}_{\partial,\varphi,\gamma}(\widetilde\O^{[u,v]}_0(j)),\\
&  (F^2/F^1){\rm Kos}_{\partial,\varphi,\gamma}(\widetilde\O^{[u,v]}(j))\simeq {\rm Kos}_{\varphi,\gamma}(\B_{K_\infty}^{[u,v]}(j-1))[-1].
\end{align*}

 ($\bullet$) {\em Trace map.}  We define the trace map 
 $$
 {\rm Tr_{\rm Kos}}:\quad H^3 {\rm Kos}_{\partial,\varphi,\gamma}(\widetilde\O^{[u,v]}(2))    
 \overset{\sim}{\to}\Q_p
 $$
 as the composition 
 \begin{align*}
 {\rm Tr}_{\rm Kos}:\quad H^3 {\rm Kos}_{\partial,\varphi,\gamma}(\widetilde\O^{[u,v]}(2)) & \stackrel{\sim}{\to} H^3\big((F^2/F^1){\rm Kos}_{\partial,\varphi,\gamma}(\widetilde\O^{[u,v]}(1))\big)\\
  & \simeq H^2 {\rm Kos}_{\varphi,\gamma}(\B_{K_\infty}^{[u,v]}(1))
  \overset{\sim}{\to}\Q_p,
\end{align*}
where the last map is $\frac{1}{|\Delta_K|}{\rm res}_{\pi}\circ {\rm Tr}_{K_{\infty}/F_{\infty}}$
 (see Proposition~\ref{fanto23}).

\subsubsection{Quasi-isomorphism with pro-\'etale cohomology}
  Recall that, if we start with the complex computing the cohomology 
of the $\pi_1$ via perfectoid methods, we get the same complex\footnote{More precisely, one needs to do it for each of the annuli converging to the ghost circle and then go to the limit.} (but with slightly
different period rings) as 
${\rm Kos}_{\partial,\varphi,\gamma}(\widetilde\O^{[u,v]}(j))$ 
and with $t\partial$ replaced by $\tau-1$, where 
$\tau$ is the generator of the geometric part 
of ${\rm Aut}(\O(Y_{\widehat K_\infty}[T^{1/p^\infty}])/\O(Y))$ (this group
is the semi-direct product of $\Z_p(1)$ and $\Gamma_K$, and $\tau$ is the generator
of $\Z_p(1)$ given by our fixed choice of compatible system of roots of unity).
Passing from $\tau-1$ to $t\partial$
corresponds to passing from $\Z_p(1)$ to its Lie algebra as in~\cite{CN1} or~\cite{Sally};
change of period rings is done as in~\cite{CN1} or~\cite{Sally}.  
It follows that we have a quasi-isomorphism in $\sd({\Q_{p,\Box}})$
\begin{equation}\label{berlin1}
\alpha_j: {\rm Kos}_{\partial,\varphi,\gamma}(\widetilde\O^{[u,v]}(j))^{\Delta_K}
 \stackrel{\sim}{\to} \rg(Y_K,\Q_p(j)).
\end{equation}
\begin{remark}\label{functoriality}
{\rm (i)}
It is easy to see that the maps \eqref{berlin1} and \eqref{Galois1} are compatible.

{\rm (ii)} The following commutative diagram\footnote{In which $L$ has the same meaning as in
 Section~\ref{tau1}.}  makes it possible to assume
that $\Delta_K=1$ in the proofs:
$$
\xymatrix@C=15mm@R=6mm{
{\rm Kos}_{\partial,\varphi,\gamma}(\widetilde\O^{[u,v]}(j))
\ar[r]^{\sim}\ar@<3mm>[d]^{[\Delta_K]}
&\rg(Y_L,\Q_p(j))\ar[d]^{{\rm cor}_L^K}\\
{\rm Kos}_{\partial,\varphi,\gamma}(\widetilde\O^{[u,v]}(j))^{\Delta_K}
\ar[r]^{\sim}\ar@<3mm>[u]^{\rm id}
&\rg(Y_K,\Q_p(j))\ar@<5mm>[u]^{{\rm res}_K^L}}
$$
\end{remark}

\begin{lemma}\label{compatibility}
On cohomology level, the quasi-isomorphism $\alpha_j$ from \eqref{berlin1} is compatible with 
 products,  filtrations, and trace maps.
\end{lemma}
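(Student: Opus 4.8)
The statement to prove is Lemma \ref{compatibility}: the quasi-isomorphism $\alpha_j$ of \eqref{berlin1} is compatible with products, filtrations, and trace maps. The plan is to treat the three compatibilities in turn, reducing each to statements already established for the $(\varphi,\Gamma)$-Koszul complexes ${\rm Kos}_{\varphi,\gamma}(\B^{[u,v]}_{K_\infty}(s))$ and for the underlying syntomic-to-pro-\'etale comparison. By Remark \ref{functoriality}(ii) we may assume throughout that $\Delta_K=1$, so all the ``$\Delta_K$''-superscripts disappear. By Remark \ref{functoriality}(i), the map $\alpha_j$ restricted to the direct summands ${\rm Kos}_{\varphi,\gamma}(\B^{[u,v]}_{K_\infty}(s))$ appearing in the splitting \eqref{write1} agrees with the quasi-isomorphism $\alpha_s$ of \eqref{Galois1}; this will be the bridge between the ghost-circle statements and the purely Galois-cohomological statements of Section \ref{prelim}.

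\emph{Filtrations.} First I would check that $\alpha_j$ carries the filtration $F^\bullet$ on ${\rm Kos}_{\partial,\varphi,\gamma}(\widetilde\O^{[u,v]}(j))$ to the filtration $F^\bullet_{i,j}$ on $\rg(Y_K,\Q_p(j))$ (the one of Section \ref{filtration1} / \eqref{kwakus111}). The point is that both filtrations are defined by the same mechanism: $F^0$ is the image of Galois cohomology of $\Q_p(j)$ (i.e.\ of $\rg_{\rm syn}$ restricted to the constant part), and $F^1/F^0$ is the ``$\so(Y_C)/C$''-part, which on the Koszul side is ${\rm Kos}_{\partial,\varphi,\gamma}(\widetilde\O^{[u,v]}_0(j))$ — the series with vanishing constant term, equivalently the kernel of the residue map on the diagram \eqref{FM-syntomic}. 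Concretely I would use the description of the middle row of \eqref{nie22} as coming from Hochschild-Serre and the right column as coming from the syntomic filtration \eqref{Stein-cond}, and observe that the quasi-isomorphism $\alpha_j$ is built compatibly with the syntomic filtration and with the descent from $\widehat K_\infty$ to $K$ (i.e.\ with the outer $\gamma_K-1$ in \eqref{ogrod11}), hence respects both. The graded-piece identifications
$(F^0/F^{-1})\simeq {\rm Kos}_{\varphi,\gamma}(\B^{[u,v]}_{K_\infty}(j))$,
$(F^1/F^0)\simeq {\rm Kos}_{\partial,\varphi,\gamma}(\widetilde\O^{[u,v]}_0(j))$,
$(F^2/F^1)\simeq {\rm Kos}_{\varphi,\gamma}(\B^{[u,v]}_{K_\infty}(j-1))[-1]$,
then match on the two sides via $\alpha_j$, the first and third via Remark \ref{functoriality}(i).

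\emph{Trace maps and products.} For the trace map, I would compare the two composites $H^3{\rm Kos}_{\partial,\varphi,\gamma}(\widetilde\O^{[u,v]}(2))\xrightarrow{\sim}\Q_p$: the one defined via $(F^2/F^1)$ and $\frac1{|\Delta_K|}{\rm res}_\pi\circ{\rm Tr}_{K_\infty/F_\infty}$, and the one obtained by transporting ${\rm Tr}_Y$ through $\alpha_2$. By the filtration compatibility just established, ${\rm Tr}_Y$ also factors through the top graded piece $(F^2/F^1)H^3\simeq H^2{\rm Kos}_{\varphi,\gamma}(\B^{[u,v]}_{K_\infty}(1))$, so it suffices to identify the two maps out of $H^2(\sg_K,\Q_p(1))$; but that identification is exactly Proposition \ref{fanto23}, which says ${\rm Tr}_K\circ h^2_K=\frac1{|\Delta_K|}{\rm res}_\pi\circ{\rm Tr}_{K_\infty/F_\infty}$, together with the definition of ${\rm Tr}_{Y_C}$ from \eqref{ball2-ghost1} and of ${\rm Tr}_Y$ as ${\rm Tr}_K\circ{\rm Tr}_{Y_C}(1)$. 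For products, both cup products — on the Koszul side the one built by applying the formulas of Section \ref{prod} twice (first for the mapping fiber by $\varphi-1$ and $t\partial$, then for the fiber by $\gamma_K-1$), on the pro-\'etale side the geometric cup product followed by the Hochschild-Serre product — are instances of the same mapping-fiber product construction. So I would invoke item (3) of Section \ref{prod}: $\alpha_j$ is a morphism of the relevant systems of mapping-fiber data (syntomic $\to$ pro-\'etale, descended along $\gamma_K-1$), hence commutes with the $\cup_\alpha$ up to the homotopies provided there; on cohomology all $\cup_\alpha$ agree, and this agreement is compatible with $\alpha_j$. The compatibility of the Galois part of the product is then Lemma \ref{compatibility-Galois-product} applied to the summands in \eqref{write1}.

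\emph{Main obstacle.} The routine parts are the two applications of the Section \ref{prod} formalism and the bookkeeping of the splitting \eqref{write1}; the genuinely delicate point is making the chain of identifications in \eqref{berlin1} itself sufficiently functorial — that is, checking that passing from $\tau-1$ to $t\partial$ (Lie-algebra vs.\ group action of $\Z_p(1)$) and the change of period rings, as carried out in \cite{CN1} or \cite{Sally}, can be done simultaneously with the filtration, the product structure, and the outer $\gamma_K-1$, all compatibly. Concretely, one must know that the perfectoid complex computing $\rg(Y_K,\Q_p(j))$ and the complex ${\rm Kos}_{\partial,\varphi,\gamma}(\widetilde\O^{[u,v]}(j))$ are related by a zig-zag of quasi-isomorphisms each of which respects the decomposition into a ``constant-term'' summand and two ${\rm Kos}_{\varphi,\gamma}$-summands, and respects the mapping-fiber products; I expect this to follow from the explicit nature of those quasi-isomorphisms (they are given by decompletion / normalized trace maps ${\rm Res}_{p^{-n}\Z_p}$, which commute with $\varphi$, $\Gamma_K$, and $\partial$, and do not mix powers of $T$), but spelling it out is where the real work lies.
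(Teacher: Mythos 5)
Your proposal is correct and follows essentially the same route as the paper: products are handled by checking that each step of the comparison zig-zag (the syntomic-to-pro-\'etale diagram from \cite{Sally}, Galois descent, and the passage from continuous cochains to the $\gamma_K$-Koszul complex) respects cup products; filtrations are matched by identifying $F^0$ via the constant/point part and $F^1/F^0$ via the kernel of the residue map; and the trace compatibility reduces, exactly as you say, to Proposition \ref{fanto23}. The delicate point you flag at the end (functoriality of the $\tau-1\rightsquigarrow t\partial$ passage and the change of period rings) is also where the paper defers to \cite{CN1} and \cite{Sally} rather than spelling out the details.
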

\begin{proof}

 (i) {\em Products.}    The easiest way to see this is to trace the geometric  part of the quasi-isomorphism $\alpha_j$ via the analog of the big commutative diagram used in the proof of Theorem 7.5 in \cite{Sally}. We choose $r$ large enough (see Remark \ref{fanto7}). The best path is via the top row (right-to-left), then all the way down, and then to the right along  the bottom row. All the morphisms used along the way are clearly compatible with cup products. That treats the case of the cohomology of the geometric $\pi_1$. 
 
 Now we apply continuous group cohomology for the Galois group of the base field. This and the subsequent almost \'etale descent are clearly compatible with products. What remains is the passage from 
 (nonhomogeneous) continuous cochains of $<\gamma_K>$  to the Koszul complexes for $\gamma_K$. But a map from the former to the latter can be given by the identity in degree $0$ and the evaluation at $\gamma_K$ in degree $1$; this is easily  checked to be compatible with products.
 
 (ii) {\em Filtrations.} This is easy to see for $F^0$: both the pro-\'etale  $F^0$ and the $(\phi,\Gamma)$-module $F^0$ come from the corresponding complexes of a point (given by $K$). The required compatibility follows then easily from functoriality of the comparison map $\alpha_j$ (see Remark \ref{functoriality}).  Moreover, $\alpha_j$ restricted to $F^0$ is an isomorphism (on cohomology level).
 
  It remains to check compatibility for $F^1$. Or, in light of the above, for $F^1/F^0$. Note that after moding out $F^0$ from both sides of the map $\alpha_j$, the pro-\'etale $F^1$ arises from the Galois (for the group $\sg_K$)  kernel of the pro-\'etale residue map 
  $${\rm res}_{\proeet}: H^1(Y_C, \Q_p(j))\to \Q_p(j-1)$$ (see  \eqref{kwakus111}). 
Recall that this map comes from the syntomic residue map 
\begin{equation}
\label{syntmap}
{\rm res}_{\synt}(j-1): H^1_{\synt}(Y_C, 1)(j-1)\to \B_{\crr}^{+,\phi=1}(j-1)
\end{equation}
(See diagram \eqref{Stein},  take $i=1$, and note that the slope of Frobenius on the Hyodo-Kato cohomology is $1$.)  
By changing the period rings, the Galois cohomology of the map~\eqref{syntmap}  can be seen as the residue map from diagram \eqref{ogrod11} to diagram
  $$
  \xymatrix@C=13mm@R=6mm{
\B_{K_{\infty}}^{[u,v]}(j-1)
\ar[d]^-{\gamma_K-1}
\ar[r]^-{\varphi-1}&\B_{K_{\infty}}^{[u,v/p]}(j-1)\ar[d]^-{\gamma_K-1}\\
\B_{K_{\infty}}^{[u,v]}(j-1)
 \ar[r]^-{\varphi-1}&\B_{K_{\infty}}^{[u,v/p]}(j-1),
}
  $$
  which is the Koszul complex representing the Galois cohomology of $\Q_p(j-1)$ shifted by $[-1]$. It is clear that the kernel of this map is  $F^1/F^0$, as wanted.
 
(iii) {\em Trace maps}.
The $(\phi,\Gamma)$-module trace ${\rm Tr}_{\rm Kos}$  is defined as the ``Galois'' cohomology 
of the $(\phi,\Gamma)$-module residue map followed by 
$\frac{1}{|\Delta_K|}{\rm res}_{\pi}\circ {\rm Tr}_{K_{\infty}/F_{\infty}}$.
 On the other hand, the pro-\'etale trace is defined as the Galois cohomology of the  pro-\'etale residue map followed by Galois cohomology trace. 
By point (ii) the first maps of the compositions agree.
 Hence it suffices to show the compatibility of 
$\frac{1}{|\Delta_K|}{\rm res}_{\pi}\circ {\rm Tr}_{K_{\infty}/F_{\infty}}$ with the Galois trace. 
But this was done in Proposition \ref{fanto23}. 
\end{proof}

\subsubsection{Identification of graded pieces}
 Via the comparison morphism $\alpha_j$ from \eqref{berlin1}, and using Lemma \ref{compatibility}, we get   isomorphisms:
 \begin{align}\label{grad1}
 &  \alpha^3_{j}:  H^i(F^2/F^1){\rm Kos}_{\partial,\varphi,\gamma}(\widetilde\O^{[u,v]}(j))\stackrel{\sim}{\to} H^{i-1}(\sg_K,\Q_p(j-1)),\\
 &   \alpha^2_j: H^i(F^1/F^0){\rm Kos}_{\partial,\varphi,\gamma}(\widetilde\O^{[u,v]}(j))\stackrel{\sim}{\to}  H^{i-1}(\sg_K,\tfrac{\so(Y_C)}{C}(j-1)),\notag \\
 & \alpha^1_j:  H^i(F^0/F^{-1}){\rm Kos}_{\partial,\varphi,\gamma}(\widetilde\O^{[u,v]}(j))\stackrel{\sim}{\to} H^{i}(\sg_K,\Q_p(j)).\notag
 \end{align}

Recall that $(F^1/F^0){\rm Kos}_{\partial,\varphi,\gamma}(\widetilde\O^{[u,v]}(j))
\simeq  H^i{\rm Kos}_{\partial,\varphi,\gamma}(\widetilde\O^{[u,v]}_0(j))$
 \begin{lemma}
\label{ident1}
We have   isomorphisms
$$H^i{\rm Kos}_{\partial,\varphi,\gamma}(\widetilde\O^{[u,v]}_0(j))\xleftarrow{\sim}
\begin{cases}
\O(Y_{K})_0 &{\text{if $j=1$ and $i=1,2$,}}\\
0 &{\text{if $j\neq 1$ or $i\neq 1,2$.}}
\end{cases}$$
\end{lemma}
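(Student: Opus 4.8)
The plan is to compute the cohomology of the Koszul complex ${\rm Kos}_{\partial,\varphi,\gamma}(\widetilde\O^{[u,v]}_0(j))$ directly, exploiting the fact that on the subspace $\widetilde\O^{[u,v]}_0$ of series with vanishing constant term the operator $\partial=T\frac{d}{dT}$ is invertible. First I would recall from \eqref{write1} that the total complex ${\rm Kos}_{\partial,\varphi,\gamma}(\widetilde\O^{[u,v]}_0(j))$ is the total complex of the double complex \eqref{FM-double} restricted to the part with zero constant term, i.e. a bicomplex built from the rows ${\rm Kos}_\varphi$-type data $(t\partial,\varphi-1)$ horizontally and $\gamma_K-1$ vertically, with $\widetilde\O^{[u,v]}_0\simeq \O(Y_{\Q_p})_0\wotimes\B^{[u,v]}_{K_\infty}$. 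Since it is the total complex of a double complex with two columns (the $\gamma_K-1$ direction), there is a short exact sequence relating its cohomology to the kernel and cokernel of $\gamma_K-1$ acting on the cohomology of the horizontal complex ${\rm Kos}_{\partial,\varphi}(\widetilde\O^{[u,v]}_0(j))$.

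So the key step is to compute $H^i{\rm Kos}_{\partial,\varphi}(\widetilde\O^{[u,v]}_0(j))$, where ${\rm Kos}_{\partial,\varphi}$ denotes the horizontal complex $\widetilde\O^{[u,v]}_0(j)\xrightarrow{(t\partial,\varphi-1)}\widetilde\O^{[u,v]}_0(j-1)\oplus\widetilde\O^{[u,v/p]}_0(j)\xrightarrow{(\varphi-1)-t\partial}\widetilde\O^{[u,v/p]}_0(j-1)$. Here I would argue as follows. On $\O(Y_{\Q_p})_0$ the operator $\partial$ is bijective with a continuous inverse (it multiplies $T^k$ by $k$, $k\neq 0$, and $k$ is a $p$-adic unit or has bounded valuation controlled by the annulus radii in a way that is harmless on the overconvergent Robba ring — this is a standard fact about the Robba ring, e.g. as in \cite{CN1}). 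Consequently $t\partial$ is bijective on $\widetilde\O^{[u,v]}_0$ with $t$ acting invertibly too (recall $t$ is a unit in $\B^{[u,v/p]}_{K_\infty}$ by the choice of $u,v$ in Section~\ref{tau1}). The complex ${\rm Kos}_{\partial,\varphi}$ is then the mapping fiber of $(\varphi-1)$ between the two-term complexes $[\widetilde\O^{[u,v]}_0(j)\xrightarrow{t\partial}\widetilde\O^{[u,v]}_0(j-1)]$ and $[\widetilde\O^{[u,v/p]}_0(j)\xrightarrow{t\partial}\widetilde\O^{[u,v/p]}_0(j-1)]$, and since $t\partial$ is an isomorphism these two-term complexes are both acyclic unless the twist forces something — more precisely, $[\widetilde\O^{[u,v]}_0(j)\xrightarrow{t\partial}\widetilde\O^{[u,v]}_0(j-1)]$ is acyclic, so the whole horizontal complex is acyclic, \emph{except} that after taking $\varphi$ into account we must be careful: the correct statement is that ${\rm Kos}_{\partial,\varphi}(\widetilde\O^{[u,v]}_0(j))$ is quasi-isomorphic to ${\rm Kos}_\varphi$ of $\widetilde\O^{[u,v]}_0(j)/t\partial$-coinvariants, which vanishes. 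Actually the slope consideration enters precisely when $j=1$: the monodromy-type operator degenerates. I would therefore split into cases: for $j\neq 1$, the combination of invertibility of $t\partial$ and the absence of $\varphi=p^j$-eigenvectors (slopes) kills all cohomology; for $j=1$, solving $t\partial f = g$ together with $(\varphi-1)$ leaves exactly a copy of $\O(Y_{\widehat K_\infty})_0$ in the relevant degree, which after the $\gamma_K$-step and descent becomes $\O(Y_K)_0$.

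Then I would feed this computation into the two-column spectral sequence / short exact sequence for $\gamma_K-1$: $H^i{\rm Kos}_{\partial,\varphi,\gamma}(\widetilde\O^{[u,v]}_0(j))$ sits in $0\to H^0(\gamma_K-1, H^{i-1}{\rm Kos}_{\partial,\varphi}(\widetilde\O^{[u,v]}_0(j)))\to H^i{\rm Kos}_{\partial,\varphi,\gamma}\to H^1(\gamma_K-1,\ \cdot\ )\to 0$ — but here I must be careful about which operator survives, because the comparison with Galois/syntomic cohomology as in Lemma~\ref{compatibility} identifies the outcome with $H^{i-1}(\sg_K,\tfrac{\so(Y_C)}{C}(j-1))$ up to the $F^1/F^0$ identification in \eqref{grad1}. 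In fact the cleanest route is to invoke \eqref{grad1} directly: $\alpha^2_j$ already gives $H^i(F^1/F^0){\rm Kos}_{\partial,\varphi,\gamma}(\widetilde\O^{[u,v]}(j))\simeq H^{i-1}(\sg_K,\tfrac{\so(Y_C)}{C}(j-1))$, and by the generalized Tate formulas \eqref{newton15} (applied with $W=\so(Y)/K$, which is a direct sum of a nuclear Fréchet space and a space of compact type, hence covered by the Proposition after splitting) this Galois cohomology is $\so(Y_K)/K=\O(Y_K)_0$ if $j-1=0$ and $i-1\in\{0,1\}$, and $0$ otherwise. Matching indices $j=1$, $i\in\{1,2\}$ gives exactly the claim. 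I expect the main obstacle to be the careful bookkeeping of the invertibility of $t\partial$ on $\widetilde\O^{[u,v]}_0$ over the perfectoid annulus $\B^{[u,v]}_{K_\infty}$ and the verification that it behaves well with respect to the change of radius $v\to v/p$ appearing in the Frobenius column — i.e. that the mapping fiber really is acyclic off the stated range — together with checking the slope vanishing $(\varphi-1)$ is invertible on $\widetilde\O_0$ twisted by $(j)$, $(j-1)$ for $j\neq 1$, which is where the Robba-ring techniques of \cite{CN1} are needed; the identification of what is left when $j=1$ with $\O(Y_K)_0$ is then forced by the already-established comparison \eqref{grad1} and \eqref{newton15}, so I would lean on those rather than redo it by hand.
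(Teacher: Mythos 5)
Your direct computation of the horizontal complex goes wrong at the key step. You assert that $t\partial$ is bijective on $\widetilde\O^{[u,v]}_0$ and that both two-term complexes $[\widetilde\O^{[u,v]}_0(j)\xrightarrow{t\partial}\widetilde\O^{[u,v]}_0(j-1)]$ and $[\widetilde\O^{[u,v/p]}_0(j)\xrightarrow{t\partial}\widetilde\O^{[u,v/p]}_0(j-1)]$ are acyclic. Only the second is: $t$ is a unit in $\B^{[u,v/p]}_{K_\infty}$ but not in $\B^{[u,v]}_{K_\infty}$ (where $\B^{[u,v]}_{K_\infty}/t=\widehat K_\infty$). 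So while $\partial$ is indeed bijective on the zero-constant-term part, $t\partial$ on $\widetilde\O^{[u,v]}_0$ is injective with cokernel $\widetilde\O^{[u,v]}_0/t\simeq\O(Y_{\widehat K_\infty})_0$. The correct conclusion is that the $(\varphi-1)$-target drops out entirely and ${\rm Kos}_{\partial,\varphi}(\widetilde\O^{[u,v]}_0(j))$ is quasi-isomorphic to $[\widetilde\O^{[u,v]}_0(j)\xrightarrow{t\partial}\widetilde\O^{[u,v]}_0(j-1)]$, whose only cohomology is $\O(Y_{\widehat K_\infty})_0(j-1)$ in degree $1$ --- and this for \emph{every} $j$. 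In particular your attribution of the $j$-dependence to slopes of $\varphi$ (``absence of $\varphi=p^j$-eigenvectors kills all cohomology for $j\neq 1$'') is a misdiagnosis: $\varphi$ plays no role in selecting $j=1$. What selects $j=1$ is the final $\gamma_K$-step: $H^i(\Gamma,\O(Y_{\widehat K_\infty})_0(j-1))$ vanishes unless the Tate twist is trivial (exactly as in the generalized Tate formulas, since $\gamma_K-1$ is invertible on the non-constant part after twisting), and equals $\O(Y_K)_0$ for $i=0,1$ when $j=1$. Feeding this into the two-column exact sequence for $\gamma_K-1$, which you set up correctly, then gives the lemma.

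Your fallback --- invoking \eqref{grad1} together with \eqref{newton15} --- does yield the abstract isomorphism and is not circular, since \eqref{grad1} is established via Lemma \ref{compatibility} independently of the present lemma. But it misses what the lemma is for: the explicit reciprocity computation that follows needs the specific isomorphisms $\omega_1,\omega_2$ of Remark \ref{ident2}, built from the explicit quasi-isomorphisms above (via $\kappa$ and $\beta_1$), so that one can write down the representing cocycles $z^1(g)$, $z^2(f)$ and then compare $\omega_i$ with $\alpha^2_1$ in Proposition \ref{night1}. An isomorphism obtained merely by transport along $\alpha^2_j$ would make that comparison vacuous and would leave you with no concrete cocycles on which to evaluate the cup product and the trace.
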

\begin{proof}
We have (in $\sd({\Q_{p,\Box}})$)
$${\rm Kos}_{\partial,\varphi,\gamma}(\widetilde\O^{[u,v]}_0(j))\simeq 
\big[\xymatrix{{\rm Kos}_{\partial,\varphi}(\widetilde\O^{[u,v]}_0(j))\ar[r]^-{\gamma_K-1}
&{\rm Kos}_{\partial,\varphi}(\widetilde\O^{[u,v]}_0(j))}\big]$$
and since $t\partial: \widetilde\O^{[u,v/p]}_0\to \widetilde\O^{[u,v/p]}_0$ is
an isomorphism, we get a quasi-isomorphism in $\sd({\Q_{p,\Box}})$:
$${\rm Kos}_{\partial,\varphi}(\widetilde\O^{[u,v]}_0(j))\simeq
\big(\,\widetilde\O^{[u,v]}_0(j)\overset{t\partial}{\longrightarrow}\widetilde\O^{[u,v]}_0(j-1)\,\big).
$$
(The twist is $(j-1)$ and not $(j)$ because of the $\chi(\gamma_K)^{-1}$ appearing in the vertical arrow,
necessary to have a commutative diagram as $\gamma_K\cdot t\partial=\chi(\gamma_K)t\partial\cdot\gamma_K$.)

Since $\partial:\widetilde\O^{[u,v]}_0\to \widetilde\O^{[u,v]}_0$ is an isomorphism,
and since $\widetilde\O^{[u,v]}_0/t\simeq \O(Y_{\widehat K_\infty})_0$,
we get  an  isomorphism
\begin{equation}
\label{ident3}
H^i{\rm Kos}_{\partial,\varphi}(\widetilde\O^{[u,v]}_0(j))\stackrel{\sim}{\leftarrow}
\begin{cases}
\O(Y_{\widehat K_\infty})_0(j-1) &{\text{if $i=1$,}}\\
0 &{\text{if $i\neq 1$.}}
\end{cases}
\end{equation}
  Finally, we have  isomorphisms (the first one is the natural injection, the second one
is the cup-product with $\frac{\log\chi}{\log\chi(\gamma_K)}$):
$$H^i(\Gamma,\O(Y_{\widehat K_\infty})_0(j-1))\stackrel{\sim}{\leftarrow}
\begin{cases}
\O(Y_{K})_0 &{\text{if $j=1$ and $i=0,1$,}}\\
0 &{\text{if $j\neq 1$ or $i\neq 0,1$.}}
\end{cases}$$
 This is enough to prove  our lemma. 
\end{proof}
\begin{remark}\label{ident2} 
(i) In what follows we will use a convenient choice for the isomorphisms 
in Lemma~\ref{ident1} which we will call $\omega_i$, for $i=1,2$, and call $\omega$ 
a lift to the derived category. 
Set 
 \begin{align*}
& {\rm Kos}_{\gamma}(\O(Y_{K})_0 )  :=(\O(Y_{K})_0 \stackrel{\gamma_K-1}{\longrightarrow}\O(Y_{K})_0 )=(\O(Y_{K})_0 \stackrel{0}{\longrightarrow}\O(Y_{K})_0 ),\\
& {\rm Kos}_{\gamma}(\O(Y_{\wh{K}_{\infty}})_0 )  :=(\O(Y_{\widehat K_\infty})_0\stackrel{\gamma_K-1}{\longrightarrow}\O(Y_{\wh{K}_{\infty}})_0 ).
\end{align*}
We take the following composition (where the second map is induced by the isomorphism
$\widetilde\O^{[u,v]}/t\simeq  \O(Y_{\widehat K_\infty})$)
\begin{align*}
\kappa: {\rm Kos}_{\gamma}(\O(Y_{K})_0 )[-1] 
\lomapr{\can}{\rm Kos}_{\gamma}(\O(Y_{\wh{K}_{\infty}})_0 )[-1] 
\longrightarrow
{\rm Kos}^{\rm HK}_{\partial,\varphi,\gamma}(\widetilde\O^{[u,v]}_0(1)).
\end{align*}
The isomorphism in Lemma \ref{ident1} comes from the map $\omega: =\beta_1^{-1}\kappa$, where
$\beta_1$ is the map~(\ref{beta-map}):
$$\beta_1: {\rm Kos}_{\partial,\varphi,\gamma}(\widetilde\O^{[u,v]}_0(1))\stackrel{\sim}{\to}{\rm Kos}^{\rm HK}_{\partial,\varphi,\gamma}(\widetilde\O^{[u,v]}_0(1)).
  $$
  
  (ii) Similarly, the isomorphism from \eqref{ident3} can be lifted to the derived category. We define it as $\omega_{\infty}:=\beta_{1,\infty}^{-1}\kappa_{\infty}$, where
  \begin{align*}
 &  \kappa_{\infty}: \O(Y_{\wh{K}_{\infty}})_0[-1]  
\longrightarrow
{\rm Kos}^{\rm HK}_{\partial,\varphi}(\widetilde\O^{[u,v]}_0(1)),\\
 &  \beta_{1,\infty}: {\rm Kos}_{\partial,\varphi}(\widetilde\O^{[u,v]}_0(1))\stackrel{\sim}{\to}{\rm Kos}^{\rm HK}_{\partial,\varphi}(\widetilde\O^{[u,v]}_0(1)),
  \end{align*}
  and the second map is the restriction of $\beta_1$. 
\end{remark}

From the above, we can deduce the following result 
(compare with Lemma~\ref{mist1-ghost}; of course the splittings depend on the choice of $T$):
\begin{proposition}\label{fanto8} Let $j\in\Z$. 
We have $H^i(Y,\Q_p(j))=0$ if $i\geq 4$ and, if $i\leq 3$, then we have  isomorphisms
$$H^i(Y,\Q_p(j))\simeq 
\begin{cases}
\O(Y)_0 \oplus H^i(\sg_K,\Q_p(j))\oplus H^{i-1}(\sg_K,\Q_p(j-1))&{\text{if $j=1$ and $i=1,2$,}}\\
H^i(\sg_K,\Q_p(j))\oplus H^{i-1}(\sg_K,\Q_p(j-1)) &{\text{if $j\neq 1$ or $i\neq 1,2$.}}
\end{cases}$$
\end{proposition}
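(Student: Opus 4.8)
The statement to prove, Proposition~\ref{fanto8}, is a computation of the arithmetic pro-\'etale cohomology groups $H^i(Y,\Q_p(j))$ of the ghost circle together with an explicit (non-canonical) direct sum decomposition. The plan is to read it off from the $(\partial,\varphi,\gamma)$-Koszul complex $ {\rm Kos}_{\partial,\varphi,\gamma}(\widetilde\O^{[u,v]}(j))^{\Delta_K}$, which by \eqref{berlin1} computes $\rg(Y,\Q_p(j))$. The key input is the splitting \eqref{write1}:
\begin{equation*}
{\rm Kos}_{\partial,\varphi,\gamma}(\widetilde\O^{[u,v]}(j))=
{\rm Kos}_{\partial,\varphi,\gamma}(\widetilde\O^{[u,v]}_0(j))
\oplus {\rm Kos}_{\varphi,\gamma}(\B_{K_\infty}^{[u,v]}(j))
\oplus {\rm Kos}_{\varphi,\gamma}(\B_{K_\infty}^{[u,v]}(j-1))[-1],
\end{equation*}
which is a splitting of complexes (coming from separating off the constant term in the $T$-expansion and using $\varphi(T)=T^p$). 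Taking $\Delta_K$-fixed points (exact, as the modules are $L$-vector spaces and $\Delta_K$ is finite) and passing to cohomology, I would combine three facts: first, $H^i({\rm Kos}_{\varphi,\gamma}(\B_{K_\infty}^{[u,v]}(s))^{\Delta_K})\simeq H^i(\sg_K,\Q_p(s))$ by \eqref{Galois1}; second, the shift $[-1]$ turns the third summand's $H^i$ into $H^{i-1}(\sg_K,\Q_p(j-1))$; third, $H^i({\rm Kos}_{\partial,\varphi,\gamma}(\widetilde\O^{[u,v]}_0(j))^{\Delta_K})$ is computed by Lemma~\ref{ident1} to be $\O(Y_K)_0$ when $j=1$, $i=1,2$ and zero otherwise.

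First I would record that the three summands in \eqref{write1} are compatible with taking $\Delta_K$-invariants, so that after applying $\alpha_j$ from \eqref{berlin1} we get an isomorphism
\begin{equation*}
H^i(Y,\Q_p(j))\simeq H^i\big({\rm Kos}_{\partial,\varphi,\gamma}(\widetilde\O^{[u,v]}_0(j))^{\Delta_K}\big)\oplus H^i(\sg_K,\Q_p(j))\oplus H^{i-1}(\sg_K,\Q_p(j-1)).
\end{equation*}
Then I would invoke Lemma~\ref{ident1} for the first term. For the vanishing in degrees $i\geq 4$ I would note that each Koszul complex in \eqref{write1} lives in degrees $[0,2]$ (the third one in $[1,3]$ after shift), and that $\sg_K$ has cohomological dimension $2$, so all three summands vanish in degree $\geq 4$; and in degree $3$ only the third summand can contribute, giving $H^2(\sg_K,\Q_p(j-1))$, which is consistent with the stated formula (here $i=3$ falls in the case ``$j\ne1$ or $i\ne 1,2$'', and indeed $H^3(\sg_K,\Q_p(j))=0$). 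Assembling the cases $j=1$ versus $j\ne1$, and $i\in\{1,2\}$ versus the rest, then yields exactly the two-line formula in the statement. I would also remark, as the proposition parenthetically notes, that this recovers Lemma~\ref{mist1-ghost} (the filtration there has graded pieces $H^{i-1}(\sg_K,\Q_p(j-1))$, $H^{i-1}(\sg_K,\tfrac{\so(Y_C)}{C}(j-1))$, $H^i(\sg_K,\Q_p(j))$, and $H^{i-1}(\sg_K,\tfrac{\so(Y_C)}{C}(j-1))\simeq \O(Y)_0$ only when $j=1$ and $i=1,2$ by the generalized Tate formulas \eqref{newton15}), so the new content is just the explicit splitting.

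The only genuine subtlety — and the step I expect to need the most care — is making sure the splitting \eqref{write1} really is $\Delta_K$-equivariant and functional-analytically clean: one must check that separating the constant term of the $T$-expansion commutes with the $\Delta_K$-action (it does, since $\Delta_K$ acts through $\Gamma_K$ which fixes the $T$-degree, as noted in the filtration discussion), and that the isomorphism $\widetilde\O^{[u,v]}/t\simeq \O(Y_{\widehat K_\infty})$ together with $\partial$ being invertible on $\widetilde\O^{[u,v]}_0$ descends correctly after invariants. Since all these points are already handled in Section~\ref{fanto6} (the structures on the $(\partial,\varphi,\gamma)$-Koszul complexes) and in the proof of Lemma~\ref{ident1}, the proof of the proposition itself is essentially a bookkeeping assembly of \eqref{write1}, \eqref{Galois1}, and Lemma~\ref{ident1}; no new estimates are needed.
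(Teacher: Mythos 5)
Your proof is correct and follows essentially the same route as the paper, which deduces Proposition \ref{fanto8} directly from the splitting \eqref{write1}, the comparison quasi-isomorphism \eqref{berlin1}, the identification \eqref{Galois1} of the $(\varphi,\gamma)$-Koszul summands with Galois cohomology, and Lemma \ref{ident1}. (One tiny imprecision: the total complex ${\rm Kos}_{\partial,\varphi,\gamma}(\widetilde\O^{[u,v]}_0(j))$ sits in degrees $[0,3]$ rather than $[0,2]$, but its cohomology outside degrees $1,2$ vanishes by Lemma \ref{ident1}, so your conclusion is unaffected.)
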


\subsection{Identification of the cup-product}\label{fanto9} 

\subsubsection{Formula for the cup-product}
We are going to use a ``basis'' $e_1,e_2,e_3$ to
write the differentials in the Koszul complex 
${\rm Kos}_{\partial,\varphi,\gamma}(\widetilde\O^{[u,v]}(j))$ 
in a way reminiscent
to differentials on differential forms (i.e, $e_i$ behaves as $dx_i$ in computations; 
we don't explicit the powers of $\chi(\gamma_K)$ involved in the different Tate twists)
\begin{align*}
dx&=(\varphi-1)x\cdot e_1+(\gamma_K-1)x\cdot e_2+t\partial x\cdot e_3,\\
d(ae_1+be_2+ce_3)&=
(t\partial b-(\gamma_K-1)c)\cdot e_2\wedge e_3
+((\varphi-1)c-t\partial a)\cdot e_1\wedge e_3\\
&\quad \quad +((\varphi-1)b-(\gamma_K-1)a)\cdot e_1\wedge e_2,\\
d(a\cdot e_2\wedge e_3+b\cdot e_1\wedge e_3+c\cdot e_1\wedge e_2)&=
((\varphi-1)a+(\gamma_K-1)b-t\partial c)\cdot e_1\wedge e_2\wedge e_3.
\end{align*}
The cup-product (we use, twice,  the formulas from Section \ref{prod} with $\alpha=1$)
$$\cup^{\rm Kos}:{\rm Kos}^1_{\partial,\varphi,\gamma}(\widetilde\O^{[u,v]}(j))\otimes^{\Box}_{\Q_p}
{\rm Kos}^2_{\partial,\varphi,\gamma}(\widetilde\O^{[u,v]}(2-j))\to 
{\rm Kos}^3_{\partial,\varphi,\gamma}(\widetilde\O^{[u,v]}(2))$$
between the terms of degree $1$ and $2$
is then given by:
\begin{equation}
\label{ciezko1}
(a\cdot e_1+b\cdot e_2+c\cdot e_3)\cup^{\rm Kos} (a'\cdot e_2\wedge e_3+b'\cdot e_1\wedge e_3+c'\cdot e_1\wedge e_2)=
(-a\cup\varphi a'+b\cup\gamma_K b'+ c\cup c')\cdot e_1\wedge e_2\wedge e_3.
\end{equation}

\subsubsection{Orthogonality and reduction to the Poitou-Tate pairing}
Consider now  the trace map
$${\rm Tr}_Y:H^3({\rm Kos}_{\partial,\varphi,\gamma}(\widetilde\O^{[u,v]}(2)))\to\Q_p.$$
Then  ${\rm Tr}_Y(x\cup^{\rm Kos}y)$ can be computed in the following way:

$\bullet$ write $x\cup^{\rm Kos}y$ as $a\cdot e_1\wedge e_2\wedge e_3$, with
$a\in \widetilde\O^{[u,v/p]}$;

$\bullet$ consider the constant term $a_0\in\B_{K_\infty}^{[u,v/p]}$ of $a$;

$\bullet$ we have ${\rm Tr}_Y(x\cup^{\rm Kos}y)={\rm Tr}_K\circ h_K^2(a_0)$, 
where ${\rm Tr}_K$ is the trace  map
defined in  Section~\ref{fanto22}. 
\vskip 2mm

The restriction of the pairing $(x,y)\mapsto {\rm Tr}_Y(x\cup^{\rm Kos}y)$ 
to $1$- and $2$-cocycles, respectively,
 factors through 
$H^1(Y_K,\Q_p(j))\otimes^{\Box}_{\Q_p} H^2(Y_K,\Q_p(2-j))$ and gives the pairing from Theorem~\ref{main-arithmeticY}.
Since only the constant term plays a role in the computation of the trace map, 
we have the following orthogonalities:
\begin{align*}
{\rm Kos}^1_{\partial,\varphi,\gamma}(\widetilde\O^{[u,v]}_0(j))&\perp
\big({\rm Kos}^2_{\varphi,\gamma}((\B_{K_\infty}^{[u,v]}(2-j))\oplus
{\rm Kos}^1_{\varphi,\gamma}((\B_{K_\infty}^{[u,v]}(1-j))\big), \\
\big({\rm Kos}^1_{\varphi,\gamma}((\B_{K_\infty}^{[u,v]}(j))\oplus
{\rm Kos}^0_{\varphi,\gamma}((\B_{K_\infty}^{[u,v]}(j-1))\big)&\perp {\rm Kos}^2_{\partial,\varphi,\gamma}(\widetilde\O^{[u,v]}_0(2-j))
\end{align*} 
(because the product 
of a series with constant term $0$ by a constant gives a series with a constant term $0$).
We also have an orthogonality:
\[
{\rm Kos}^1_{\varphi,\gamma}((\B_{K_\infty}^{[u,v]}(j))\perp
{\rm Kos}^2_{\varphi,\gamma}((\B_{K_\infty}^{[u,v]}(2-j))
\]
because all the terms 
$a\cup\varphi a'$, $b\cup\chi(\gamma_K)^{-1}\gamma_K b'$, $c\cup c'$ are $0$. 
This proves claims (i) of Theorem~\ref{kwakus11} 
(because the only statement that needs to be checked is that
$F^0H^1(Y_K,\Q_p(j))\otimes^{\Box}_{\Q_p} F^0H^3(Y_K,\Q_p(j))$ maps to $F^1H^3(Y_K,\Q_p(j))$. But, in fact, we have just shown that this pairing is $0$). 

  Finally, the restriction to the cocycles in
\begin{align*}
{\rm Kos}^1_{\varphi,\gamma}((\B_{K_\infty}^{[u,v]}(j))
&\times {\rm Kos}^1_{\varphi,\gamma}((\B_{K_\infty}^{[u,v]}(1-j)),\\
{\rm Kos}^0_{\varphi,\gamma}((\B_{K_\infty}^{[u,v]}(j-1))
&\times {\rm Kos}^1_{\varphi,\gamma}((\B_{K_\infty}^{[u,v]}(1-j)), \\
{\rm Kos}^0_{\varphi,\gamma}((\B_{K_\infty}^{[u,v]}(j-1))
&\times {\rm Kos}^2_{\varphi,\gamma}((\B_{K_\infty}^{[u,v]}(2-j)) 
\end{align*}
is the usual cup-product coming from the theory of  $(\varphi,\Gamma)$-modules and hence  is equal to the one from the Poitou-Tate duality. This proves the Galois part of claim (ii) from Theorem~\ref{kwakus11}.

\subsubsection{The perfection of the coherent  part of the pairing}
 It  remains to understand the restriction of the pairing to the cocycles in 
\begin{equation}
\label{pinokio1}
{\rm Kos}^1_{\partial,\varphi,\gamma}(\widetilde\O^{[u,v]}_0(j))\times
{\rm Kos}^2_{\partial,\varphi,\gamma}(\widetilde\O^{[u,v]}_0(2-j)).
\end{equation}

If $j\neq 1$, the cohomology groups of both terms in (\ref{pinokio1}) are $0$ and so the pairing is identically $0$. We can thus assume that $j=1$ and in that case we have, by Lemma \ref{ident1}, the isomorphisms:
$$\omega_1: \O(Y)_0\stackrel{\sim}{\to}H^1({\rm Kos}_{\partial,\varphi,\gamma}(\widetilde\O^{[u,v]}_0(1))),
\quad
\omega_2: \O(Y)_0\stackrel{\sim}{\to} H^2({\rm Kos}_{\partial,\varphi,\gamma}(\widetilde\O^{[u,v]}_0(1))).$$ 
\begin{lemma}
The following diagram commutes: 
$$
\xymatrix@R=6mm@C=15mm{
\O(Y)_0 \otimes^{\Box}_{\Q_p} \O(Y)_0 \ar[r]^{\cup^{\rm coh}} \ar@<-10mm>[d]^{\omega_1}_{\wr} 
\ar@<10mm>[d]^{\omega_2}_{\wr} & \Q_p\ar@{=}[d]\\
H^1({\rm Kos}_{\partial,\varphi,\gamma}(\widetilde\O^{[u,v]}_0(1)))
\otimes^{\Box}_{\Q_p} H^2({\rm Kos}_{\partial,\varphi,\gamma}(\widetilde\O^{[u,v]}_0(1)))
\ar[r]^-{{\rm Tr}_Y\circ\cup^{\rm Kos}}& \Q_p
}
$$
In particular, since the top pairing is perfect so is the bottom pairing.
\end{lemma}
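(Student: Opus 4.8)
**

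The plan is to compute both sides of the diagram explicitly on the level of representatives and match them. Since everything is already pinned down on cohomology, I would work with the concrete cocycle representatives coming from Remark \ref{ident2}. So fix $j=1$. An element $f\in\O(Y)_0$ is represented, via $\omega_1$, by the degree-$1$ cocycle in ${\rm Kos}_{\partial,\varphi,\gamma}(\widetilde\O^{[u,v]}_0(1))$ obtained by lifting $f$ to $\widetilde\O^{[u,v]}_0$ (using that $\partial$ and $\theta$ give $\widetilde\O^{[u,v]}_0/t\simeq\O(Y_{\widehat K_\infty})_0$, then taking $\Gamma$-invariants / the cup with $\frac{\log\chi}{\log\chi(\gamma_K)}$); concretely, after the dust settles, it is of the form $\tilde f\cdot e_3$ (the $t\partial$-component) plus correction terms along $e_1,e_2$ forced by the cocycle condition. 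Similarly $g\in\O(Y)_0$ is represented via $\omega_2$ by a degree-$2$ cocycle, of the form $\tilde g\cdot e_1\wedge e_2$ plus corrections. I would extract these normal forms carefully from the quasi-isomorphisms $\kappa,\kappa_\infty$ and $\beta_1$ of Remark \ref{ident2}, keeping track of the $\chi(\gamma_K)$-twists.

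Next I would feed these into the cup-product formula \eqref{ciezko1}: the product of $a\cdot e_1+b\cdot e_2+c\cdot e_3$ with $a'\cdot e_2\wedge e_3+b'\cdot e_1\wedge e_3+c'\cdot e_1\wedge e_2$ is $(-a\cup\varphi a'+b\cup\gamma_K b'+c\cup c')e_1\wedge e_2\wedge e_3$. The recipe for ${\rm Tr}_Y$ from Section \ref{fanto9} then says: take the constant term $a_0\in\B^{[u,v/p]}_{K_\infty}$ of this coefficient, and apply ${\rm Tr}_K\circ h^2_K$. The key arithmetic point is that the constant term of $-a\cup\varphi a'+b\cup\gamma_K b'+c\cup c'$ is governed by the ``residue'' part, because $a,a',b,b',c,c'$ all have zero constant term individually — so the constant term of the product picks out exactly the $(-1)$-coefficient pairing, i.e., a residue. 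Concretely I expect this to reduce, after using $(\varphi-1)\frac{\alpha}{t}$-type manipulations as in Proposition \ref{fanto5}, to an expression of the shape ${\rm Tr}_K\circ h^2_K\big((\varphi-1)\tfrac{\alpha}{t}\otimes\tfrac{d\pi}{1+\pi}\big)$ with $\theta(\alpha)$ computing ${\rm res}(f\,dg)$ over $\widehat K_\infty$; then Proposition \ref{fanto5} turns this into $-\tfrac{[K:\Q_p]}{\log\chi(\gamma_K)}{\rm Tr}\circ\theta(\alpha)$, and the two explicit factors of $\frac{1}{\log\chi(\gamma_K)}$ coming from the two copies of $\omega_i$ (each built from cupping with $\frac{\log\chi}{\log\chi(\gamma_K)}$) combine with this to yield exactly $\tfrac{1}{[K:\Q_p]}{\rm Tr}_{K/\Q_p}\circ{\rm res}$, i.e., the coherent pairing \eqref{fanto11}. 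I would also invoke Remark \ref{functoriality}(ii) to reduce to $\Delta_K=1$ throughout, so that ``$\Gamma$-invariants'' really means cupping with a single $\frac{\log\chi}{\log\chi(\gamma_K)}$ and Proposition \ref{fanto23}/\ref{fanto5} apply on the nose.

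Once the diagram commutes, the ``in particular'' is immediate: the top pairing $\cup^{\rm coh}$ on $\O(Y)/K\cong\O(Y)_0$ is perfect by \eqref{fanto11} (using, as flagged in the footnote there, that $\O(Y)_0$ is a sum of a nuclear Fréchet space and a space of compact type, so the solid and classical tensor/dual formalisms agree), and a commuting square of pairings with the vertical maps isomorphisms transports perfection from top to bottom.

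I expect the main obstacle to be precisely the bookkeeping in the first two paragraphs: writing down the normal forms of the cocycles $\omega_1(f)$ and $\omega_2(g)$ with all $\chi(\gamma_K)$-twists correct, and then verifying that when these are plugged into \eqref{ciezko1} the constant term of the resulting degree-$3$ coefficient is *exactly* (no spurious factor) the residue pairing dressed up as in Proposition \ref{fanto5}. The danger is an off-by-$\log\chi(\gamma_K)$ or a sign; the cleanest safeguard is to test the identity on a single explicit pair, e.g. $f=T^m$, $g=T^{-m}$ (so ${\rm res}(f\,dg)=-m$), tracing it through $\kappa_\infty$ where the $\partial$-inverse is literally division by the exponent, and comparing with the left-hand side computed directly — this fixes the normalization constant and then the general case follows by density/continuity of both (solid-continuous, bilinear) pairings in $f$ and $g$.
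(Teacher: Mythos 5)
Your plan is essentially the paper's proof: it writes down explicit cocycles $z^1(g)$, $z^2(f)$ representing $\omega_1(g)$, $\omega_2(f)$ (built from lifts $\tilde g,\tilde f$ via $(\varphi-1)\tfrac{\tilde g}{t}$, $(\gamma_K-1)\tfrac{\tilde g}{t}$, $\partial\tilde f$, etc.), feeds them into the cup-product formula \eqref{ciezko1}, reduces to monomials $\alpha T^i$, $\beta T^j$ to see that only $i+j=0$ contributes to the constant term (the other candidate $j+pi=0$ cancels), and concludes via Proposition \ref{fanto5}. One bookkeeping correction to your anticipation: only $\omega_2$ involves cupping with $\tfrac{\log\chi}{\log\chi(\gamma_K)}$ (in degree $1$ the map is the plain inclusion), so a single factor $\tfrac{1}{\log\chi(\gamma_K)}$ survives in the final identity \eqref{goal1} rather than two such factors cancelling against $[K:\Q_p]$ --- this is harmless for perfection and is exactly the kind of constant your proposed monomial test $f=T^m$, $g=T^{-m}$ would pin down.
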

\begin{proof}
We start with $f,g\in\O(Y)_0$, and we will construct cocycles 
$$z^1(g)\in{\rm Kos}^1_{\partial,\varphi,\gamma}(\widetilde\O^{[u,v]}_0(1)),\quad 
z^2(f)\in {\rm Kos}^2_{\partial,\varphi,\gamma}(\widetilde\O^{[u,v]}_0(1))$$ 
representing $\omega_1(g)$ and $\omega_2(f)$. 
Then we are going to show that:
 \begin{equation}
 \label{goal1}
{\rm Tr}_Y\big( {\rm cl}(z^1(g))\cup^{\rm Kos}  {\rm cl}(z^2(f))\big)=
\tfrac{1}{\log\chi(\gamma_K)}{\rm Tr}_{K/\Q_p}(g\cup^{\rm coh} f).
 \end{equation}

$\bullet$ For $g$, take  its image $\kappa(g)\in {\rm Kos}^{{\rm HK},1}_{\partial,\varphi,\gamma}(\widetilde\O^{[u,v]}_0(1))$ 
(see remark~\ref{ident2} for the definition of the map $\kappa$). 
Now, take $\tilde g\in\widetilde\O^{[u,v]}$ that lifts $g$ 
and consider the  cocycle (in ${\rm Kos}^1_{\partial,\varphi,\gamma}(\widetilde\O^{[u,v]}_0(1))$)
$$z^1(g):=-((\varphi-1)\tfrac{\tilde g}{t}(1)\cdot e_1+
(\gamma_K-1)\tfrac{\tilde g}{t}(1)\cdot e_2
+t\partial\tfrac{\tilde g}{t}\cdot e_3).$$
(The twist (1)  plays a role only  in the action of $\gamma_K$ and compensates for the action on $t$;
it follows that $(\gamma_K-1)\tfrac{\tilde g}{t}(1)\in \widetilde\O^{[u,v]}$ since
$g$ is fixed by $\gamma_K$. The cocycle $z^1(g)$ has then values in the desired group 
and is not a coboundary as $\tfrac{\tilde g}{t}\notin \widetilde\O^{[u,v]}$ when $g\neq 0$.) 
We easily check 
that ${\rm cl}(\beta_1(z^1(g)))={\rm cl}(\kappa(g))$ 
(see formula~(\ref{beta-map}) for the definition of the map $\beta_1$); hence
$\omega_1(g)$ is represented by $z^1(g)$. 

\vskip1mm
$\bullet$ For $f$, take its image $\kappa(f)\in {\rm Kos}^{{\rm HK},2}_{\partial,\varphi,\gamma}(\widetilde\O^{[u,v]}_0(1))$. Now, take 
 $\tilde f\in\widetilde\O^{[u,v]}$ that lifts $f$
and consider  the   cocycle (in ${\rm Kos}^2_{\partial,\varphi,\gamma}(\widetilde\O^{[u,v]}_0(1))$) 
$$z^2(f):=-\partial \tilde f\cdot e_2\wedge e_3-(\varphi-1)\tfrac{\tilde f}{t}(1)\cdot e_1\wedge e_2.$$
We easily check 
that ${\rm cl}(\beta_1(z^2(f)))={\rm cl}(\kappa(f))$; hence
$\omega_2(f)$ is represented by $z^2(f)$.

\vskip1mm
$\bullet$  Then (use formula \eqref{ciezko1}) 
$$z^1(g)\cup^{\rm Kos} z^2(f)=[-(\varphi-1)\tfrac{\tilde g}{t}\cdot \varphi(\partial \tilde f)+
t\partial\tfrac{\tilde g}{t}\cdot (\varphi-1)\tfrac{\tilde f}{t}]e_1\wedge e_2\wedge e_3.$$ 
We can write $\tilde f$ and $\tilde g$ as series in $T$ and  reduce to the 
case $\tilde f=\alpha T^i$ and $\tilde g=\beta T^j$, for $i,j\neq 0$, (and so $f=\theta(\alpha)T^i$, $g=\theta(\beta)T^j$).
Using $\varphi(T)=T^p$, $\varphi(t)=pt$, $\partial T^k=kT^k$, we obtain the formula
$$z^1(g)\cup^{\rm Kos} z^2(f)=
-\big[\big(\tfrac{\varphi(\beta)T^{pj}}{p\,t}-\tfrac{\beta T^j}{t}\big)i\varphi(\alpha)T^{pi}-
j\beta T^j\big(\tfrac{\varphi(\alpha)T^{pi}}{p\,t}-\tfrac{\alpha T^i}{\,t}\big)\big ] e_1\wedge e_2\wedge e_3.$$
In order to get a nonzero  constant term we need $j+pi=0$ or $i+j=0$.

--- If $j+pi=0$, the constant term is
$-(-\beta i\varphi(\alpha)-\frac{j\beta\varphi(\alpha)}{\,p})\frac{1}{t}=0$.

--- If $i+j=0$, the constant term is $-\frac{i\varphi(\alpha)\varphi(\beta)}{p\,t}-
\frac{j\alpha\beta}{\,t}=j(\varphi-1)\frac{\alpha\beta}{t}$.

It follows from Proposition~\ref{fanto5} that
$${\rm Tr}_Y(z^1(g)\cup^{\rm Kos} z^2(f))=\begin{cases}
\frac{-[K:\Q_p]}{\log\chi(\gamma_K)}\,j\,{\rm Tr}(\theta(\alpha)\theta(\beta))
 &{\text{if $i+j=0$,}}\\ 0&{\text{if $i+j\neq 0$,}}\end{cases}$$
Using that ${\rm Tr}_{K/\Q_p}=[K:\Q_p]\,{\rm Tr}$ on $K$,
we deduce that (see \eqref{nie10})
$$
{\rm Tr}_Y(z^1(g)\cup^{\rm Kos} z^2(f))
=\tfrac{-1}{\log\chi(\gamma_K)}{\rm Tr}_{K/\Q_p}( {\rm res} (f d g))
=\tfrac{1}{\log\chi(\gamma_K)} {\rm Tr}_{K/\Q_p}(g\cup^{\rm coh} f).
$$
This proves equality \eqref{goal1}, which we wanted.
\end{proof}

   We have proved that  the pairing 
$$
{\rm Tr}_Y\circ\cup^{\rm Kos}: 
 H^1( (F^1/F^0){\rm Kos}_{\partial,\varphi,\gamma}(\widetilde\O^{[u,v]}(j)))\otimes^{\Box}_{\Q_p} H^2( (F^1/F^0){\rm Kos}_{\partial,\varphi,\gamma}(\widetilde\O^{[u,v]}(2-j)))\to \Q_p
$$
is
perfect since or it is trivial or a multiple (for $j=1$) of the coherent pairing 
$$
{\rm Tr}_{K/\Q_p}\circ \cup^{\rm coh}: \tfrac{\so(Y_K)}{K}\otimes^{\Box}_{\Q_p}\tfrac{\so(Y_K)}{K}\to\Q_p.
$$
It follows, by  \eqref{grad1}, that  the pairing  
\begin{equation}
\label{proetale1}
\cup^{\proeet}: H^{0}(\sg_K,(\tfrac{\so(Y_C)}{C})(j-1))
\otimes^{\Box}_{\Q_p} H^{1}(\sg_K,(\tfrac{\so(Y_C)}{C})(1-j))\to\Q_p
\end{equation}
induced from pro-\'etale pairing is also perfect.

\subsubsection{Identification of the pairing \eqref{proetale1}} 
To finish the proof of Theorem~\ref{kwakus11},
it remains to show that, for $j=1$,  
the   pairing \eqref{proetale1} is equal to the one induced from  Galois pairing and coherent pairing:
\begin{align*}
\cup^{\rm Gal} : H^{0}(\sg_K, \tfrac{\so(Y_C)}{C})\otimes^{\Box}_{\Q_p} H^{1}(\sg_K, \tfrac{\so(Y_C)}{C})\stackrel{\cup}{\to} H^1(\sg_K,C)\xleftarrow[\sim]{\tfrac{\log \chi}{\log \chi(\gamma_K)}} K\lomapr{{\rm Tr}_{K}}\Q_p.
\end{align*}
  But, since  both Galois cohomology  groups $H^{0}(\sg_K, \tfrac{\so(Y_C)}{C})$ and $H^{1}(\sg_K, \tfrac{\so(Y_C)}{C})$ are isomorphic to $\frac{\so(Y_K)}{K}$,
  this can be checked  by pulling back these pairings to $\frac{\so(Y_K)}{K}$. 
By Proposition~\ref{night1} below, 
the pro-\'etale pairing pullbacks to the coherent pairing $\cup^{\rm coh}$; 
by Lemma \ref{bonn3} below, 
  so does the Galois-coherent pairing $\cup^{\rm Gal}$. 
This proves that $\cup^{\proeet}=\cup^{\rm Gal}$, as wanted. 

\begin{lemma}\label{bonn3}
The following diagram is commutative:
$$
\xymatrix@R=6mm{
 H^{0}(\sg_K, \tfrac{\so(Y_C)}{C})\otimes^{\Box}_{\Q_p} H^{1}(\sg_K, \tfrac{\so(Y_C)}{C})\ar[r]^-{\cup}   & H^1(\sg_K,C) &  K\ar[l]^-{\sim}_-{\frac{\log\chi}{\log \chi(\gamma_K)}}\ar@{=}[d]\ar[r]^{\rm Tr_K} & \Q_p\ar@{=}[d]\\
   \tfrac{\so(Y)}{K}\otimes^{\Box}_{\Q_p}  \tfrac{\so(Y)}{K}\ar[rr]^{\cup} \ar@<7mm>[u]^{\wr}_{\can}\ar@<-7mm>[u]^{\wr} _{ \cup \frac{\log\chi}{\log \chi(\gamma_K)}}&  &  K \ar[r]^{\rm Tr_K} & \Q_p.
}
$$
In particular, the  Galois-coherent pairing defined by the top row is perfect. 
\end{lemma}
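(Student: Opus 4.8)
\textbf{Proof plan for Lemma \ref{bonn3}.} The statement to prove is that the two ways of pairing $\tfrac{\so(Y)}{K}$ against itself and landing in $\Q_p$ — along the top edge through $H^1(\sg_K,C)$ versus directly along the bottom edge via the coherent pairing $\cup^{\rm coh}$ followed by ${\rm Tr}_K$ — agree, where the left-hand vertical maps are the canonical identification $H^0(\sg_K,\tfrac{\so(Y_C)}{C})\simeq \tfrac{\so(Y)}{K}$ and the cup-with-$\tfrac{\log\chi}{\log\chi(\gamma_K)}$ identification $\tfrac{\so(Y)}{K}\simeq H^1(\sg_K,\tfrac{\so(Y_C)}{C})$ coming from the generalized Tate isomorphism \eqref{newton15}. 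The plan is to reduce the whole diagram to the rank-one Tate computation encoded in the rescaled map $\alpha_0$ of the Remark after \eqref{old1}, and then to chase the definitions of the two instances of that rescaling through a single cup-product of a $0$-cocycle with a $1$-cocycle.

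First I would set up notation: write $L=\tfrac{\so(Y)}{K}$, which is a direct sum of a nuclear Fréchet space and a space of compact type, so by Section \ref{solid-product} the solid tensor products $\otimes^{\Box}_{\Q_p}$ behave as the classical projective tensor products and all the maps in sight are continuous and the identifications are genuine topological isomorphisms. Next I would recall that $\tfrac{\so(Y_C)}{C}\simeq L\otimes^{\Box}_K C$ as a $\sg_K$-module with $\sg_K$ acting only on the factor $C$, and that under this identification \eqref{newton15} gives $H^0(\sg_K,L\otimes^{\Box}_K C)\xleftarrow{\sim} L$ (the obvious inclusion) and $H^1(\sg_K,L\otimes^{\Box}_K C)\xleftarrow{\sim} L$ (cup with the class of $\tfrac{\log\chi}{\log\chi(\gamma_K)}$ in $H^1(\sg_K,C)$). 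Then the top row of the diagram, applied to $g\in L$ sitting in degree $0$ and $f\in L$ producing the degree-$1$ class $f\otimes \tfrac{\log\chi}{\log\chi(\gamma_K)}$, computes the cup product $(g)\cup(f\otimes\tfrac{\log\chi}{\log\chi(\gamma_K)})$ in $H^1(\sg_K, L\otimes^{\Box}_K L\otimes^{\Box}_K C)$, then applies the coherent multiplication $L\otimes^{\Box}_K L\to L$ to get a class in $H^1(\sg_K, L\otimes^{\Box}_K C)$; but here one must be careful — the pairing in the top row lands in $H^1(\sg_K,C)$, so in fact the coherent multiplication $\cup^{\rm coh}\colon \tfrac{\so(Y)}{K}\otimes^{\Box}_{\Q_p}\tfrac{\so(Y)}{K}\to\Q_p$ (i.e.\ $(f,g)\mapsto {\rm Tr}_{K/\Q_p}{\rm res}(f\,dg)$, from \eqref{nie10}, but here without the ${\rm Tr}_{K/\Q_p}$, landing in $K$) is applied first to the coefficients and then the Tate class is carried along. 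So I would reorganize: the top composite sends $(g,f)$ to ${\rm Tr}_K$ applied to the image in $H^1(\sg_K,C)$ of $\big({\rm res}(g\,df)\big)\cdot\tfrac{\log\chi}{\log\chi(\gamma_K)}$ — which is exactly $K\xrightarrow{\tfrac{\log\chi}{\log\chi(\gamma_K)}}H^1(\sg_K,C)$ precomposed with multiplication by the scalar ${\rm res}(g\,df)\in K$. Then ${\rm Tr}_K$ of that is ${\rm Tr}_K({\rm res}(g\,df))$, which is precisely the bottom composite (after checking the sign/order convention ${\rm res}(g\,df)=-{\rm res}(f\,dg)$ up to exact terms, matching the $\cup^{\rm coh}$ convention fixed in Section \ref{traces}).

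The content, then, is: (1) that the cup product $H^0(\sg_K,-)\otimes H^1(\sg_K,C)\to H^1(\sg_K,-)$ is, under \eqref{newton15}, just multiplication of the coefficient module by the scalar class, which is immediate from the cocycle formula for the cup product of a $0$-cochain with a $1$-cochain and the explicit description of the \eqref{newton15} isomorphisms in the Remark after \eqref{old1}; and (2) that the scalar produced by the coherent multiplication is ${\rm res}(g\,df)$, which is the definition of $\cup^{\rm coh}$ on $\tfrac{\so(Y)}{K}$ via Remark \ref{fanto2}. I expect the main obstacle to be purely bookkeeping: tracking the normalization $\tfrac{\log\chi}{\log\chi(\gamma_K)}$ consistently through the two vertical maps (one uses it, the other does not) and making sure the single factor of $\tfrac{1}{\log\chi(\gamma_K)}$ appears on exactly one side, so that it cancels and does not leave a stray scalar — this is the same normalization bookkeeping that made Proposition \ref{fanto5} and equation \eqref{goal1} work, and I would import those conventions verbatim. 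Granting that, the commutativity of the square and the perfectness of the bottom (hence top) Galois-coherent pairing — which follows from perfectness of $\cup^{\rm coh}$ established for the ghost circle in Section \ref{traces} (the pairing \eqref{fanto11}) together with the finiteness of ${\rm Tr}_{K/\Q_p}$ — is then formal.
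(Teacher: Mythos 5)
Your proposal is correct and follows essentially the same route as the paper: both reduce the commutativity to the explicit cocycle formula for the cup product of a $0$-cocycle with a $1$-cocycle, observe that the Tate class $\tfrac{\log\chi}{\log\chi(\gamma_K)}$ factors out so that each path sends $(f,g)$ to the cocycle $\sigma\mapsto \tfrac{\log\chi(\sigma)}{\log\chi(\gamma_K)}{\rm res}(f\,dg)$, and deduce perfectness from that of the coherent pairing \eqref{fanto11}. Your worry about the sign ${\rm res}(g\,df)$ versus ${\rm res}(f\,dg)$ is moot, since the same ordered coefficient pairing is applied along both paths and any such convention cancels.
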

\begin{proof} We compute. Going up and then right in the above diagram (and stopping at $K$) we get:
\begin{align*}
\{f,g\} & \to \{f,\{\sigma\mapsto \tfrac{\log \chi(\sigma)}{ \log \chi (\gamma_K)}g\}\} \xrightarrow{\cup}
   \{\sigma\mapsto {\rm res}(fd(\tfrac{\log \chi(\sigma)}{ \log \chi (\gamma_K)}g))\}\\
   & = \{\sigma\mapsto \tfrac{\log \chi(\sigma)}{ \log \chi (\gamma_K)}{\rm res}(fdg)\}.
\end{align*}
Going right and then up we get: 
\begin{align*}
\{f,g\} & \stackrel{\cup}{\to}{\rm res}(f dg)\to  \{\sigma\mapsto \tfrac{\log \chi(\sigma)}{\log \chi (\gamma_K)}{\rm res}(fdg)\},
\end{align*}
as wanted.   
\end{proof}

\begin{proposition}
\label{night1}  Let $i=1,2$. The following diagram commutes
$$
\xymatrix@R=6mm{
H^i{\rm Kos}_{\partial,\varphi,\gamma}(\widetilde\O^{[u,v]}_0(1))\ar[r]^-{\alpha^2_1}_-{\sim} & H^{i-1}(\sg_K,\tfrac{\so(Y_C)}{C})\\
\so(Y_K)/K\ar[ru]^{f_i}_{\sim}\ar[u]^{\omega_i}_{\wr},
}$$
where $f_1=\can, f_2=\tfrac{\log \chi}{\log \chi (\gamma_K)}$.
\end{proposition}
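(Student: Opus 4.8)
The statement is a compatibility between the comparison map $\alpha^2_1$ (which by Lemma~\ref{compatibility} is the restriction to $F^1/F^0$ of the pro-\'etale comparison isomorphism $\alpha_j$) and the explicit isomorphism $\omega_i$ chosen in Remark~\ref{ident2}. Since everything in sight is defined by explicit cocycle-level formulas, the plan is to chase the diagram at the level of cocycles, using the factorization $\omega_i=\beta_1^{-1}\kappa$ from Remark~\ref{ident2}~(i). Concretely, I would start with $h\in\so(Y_K)/K$ (lift it to $\tilde h\in\widetilde\O^{[u,v]}_0$), produce the explicit cocycle $z^i(h)$ in ${\rm Kos}_{\partial,\varphi,\gamma}(\widetilde\O^{[u,v]}_0(1))$ representing $\omega_i(h)$ — for $i=1,2$ these are exactly the cocycles $z^1(g)$, $z^2(f)$ written down in the proof of the previous lemma — and then compute the image of ${\rm cl}(z^i(h))$ under $\alpha^2_1$.

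\textbf{Key steps, in order.} First I would reduce to $\Delta_K=1$ using the commutative diagram in Remark~\ref{functoriality}~(ii), so that $\Gamma_K=\gamma_K^{\Z_p}$ and the Koszul complex for $\gamma_K$ literally computes continuous cochains of $\Gamma_K'$. Second, I would recall from the construction of $\alpha_j$ in \eqref{berlin1} that on $F^1/F^0$ the comparison map factors as: pass through $\beta_1$ to the HK-Koszul complex, identify $H^i{\rm Kos}_{\partial,\varphi}(\widetilde\O^{[u,v]}_0(1))\simeq \so(Y_{\widehat K_\infty})_0(j-1)$ via \eqref{ident3} (using that $\partial$ is invertible on $\widetilde\O^{[u,v]}_0$ and $\widetilde\O^{[u,v]}_0/t\simeq \so(Y_{\widehat K_\infty})_0$), and then take $\Gamma_K$-cohomology. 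Third, I would compute this composite on the explicit cocycle $z^i(h)$: the key point is that modding out $t$ kills the $\tfrac{\tilde h}{t}$-terms carrying $e_1$ (resp.\ $e_1\wedge e_2$) and leaves the $t\partial\tfrac{\tilde h}{t}\cdot e_3=\partial\tilde h\cdot e_3$ term (resp.\ $\partial\tilde h\cdot e_2\wedge e_3$), whose image under $\partial^{-1}$ and reduction mod $t$ is exactly $h$. Fourth, I would read off the $\Gamma_K$-cohomology class: in degree $i-1=0$ (case $i=1$) the cocycle $z^1(h)$ descends to the $\gamma_K$-invariant element $h\in\so(Y_K)/K\subset \so(Y_{\widehat K_\infty})_0^{\Gamma_K}$, so $f_1=\can$; in degree $i-1=1$ (case $i=2$) the $e_2$-component of $z^2(h)$ — which is $-(\varphi-1)\tfrac{\tilde h}{t}(1)$ — together with the normalization \eqref{old1} of the Tate isomorphism, produces the $1$-cocycle $\sigma\mapsto\tfrac{\log\chi(\sigma)}{\log\chi(\gamma_K)}h$, giving $f_2=\tfrac{\log\chi}{\log\chi(\gamma_K)}$. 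Here I would invoke the rescaled map $\alpha_0$ from the remark after \eqref{old1}, which is precisely the one matching the cup-product with $\tfrac{\log\chi}{\log\chi(\gamma_K)}$.

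\textbf{Main obstacle.} The routine cocycle bookkeeping is not the hard part; the delicate point is pinning down the \emph{exact normalizing constant} in degree $1$, i.e.\ checking that the composite $\alpha^2_1\circ\omega_2$ is $\tfrac{\log\chi}{\log\chi(\gamma_K)}$ and not some other scalar multiple. This requires keeping careful track of two things: the $\chi(\gamma_K)$-twist introduced when one trades the operator $t\partial$ for the differential in the simplified complex $\big(\widetilde\O^{[u,v]}_0(1)\xrightarrow{t\partial}\widetilde\O^{[u,v]}_0(0)\big)$ (as in the proof of Lemma~\ref{ident1}, where the twist drops from $(1)$ to $(0)$ precisely because $\gamma_K\cdot t\partial=\chi(\gamma_K)t\partial\cdot\gamma_K$), and the normalization convention built into the maps $\kappa$, $\beta_1$, and the generalized Tate isomorphism \eqref{newton15}. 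I expect to resolve this by testing on a single monomial $\tilde h=\alpha T^k$, $k\neq 0$, exactly as in the computation leading to \eqref{goal1}, where the scalar $\tfrac{1}{\log\chi(\gamma_K)}$ already appeared with the correct sign and normalization; consistency with that computation then forces $f_2=\tfrac{\log\chi}{\log\chi(\gamma_K)}$, and the sign works out because the cup-product formula \eqref{ciezko1} and the trace formula of Proposition~\ref{fanto5} are the same ones used there.
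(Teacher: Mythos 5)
Your proposal is correct and follows essentially the same route as the paper: the paper's proof is precisely the assembly of one large commutative diagram built from the factorization $\omega_i=\beta_1^{-1}\kappa$ of Remark~\ref{ident2}, the identification \eqref{ident3} of $H^1{\rm Kos}_{\partial,\varphi}$, the evaluation-at-$\gamma_K$ passage from the $\gamma$-Koszul complex to $\Gamma$-cohomology, and the compatibility of $\alpha_1$ with the filtration from Lemma~\ref{compatibility} (which handles your ``bottom square''), with the rest commuting by construction. The only caveat is a small bookkeeping slip in your degree-$1$ case: the $\Gamma_K'$-cocycle is read off from the $e_2\wedge e_3$ coefficient $-\partial\tilde f$ (the $t\partial$-direction of the bottom row, which survives the reduction of Lemma~\ref{ident1}) rather than the $e_1\wedge e_2$ coefficient, but the monomial test you propose would detect and correct this.
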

\begin{proof} Consider the  commutative diagram (we shortened ${\rm Kos}$ to ${\rm K}$, ${\rm res}_{\proeet}$ to ${\rm res}$;  removed  subscripts  from pro-\'etale cohomology; and set $s:=i-1$):
$$
\xymatrix@C=13pt@R=6mm{  H^{s}{\rm K}_{\gamma}(\so(Y_K)_0)\ar@/_30mm/[ddd]^{\omega_i}\ar[d]^{\can}_{\wr }\ar[r]^{\sim}& \so(Y_K)_0\ar[d]_{\wr}\ar@/^55mm/[ddddd]^{f_i}\\
H^{s}{\rm K}_{\gamma}(\so(Y_{\wh{K}_{\infty}})_0)\ar[d]^{\kappa}_{\wr}\ar@{=}[r]& H^{s}{\rm K}_{\gamma}( \so(Y_{\wh{K}_{\infty}})_0)\ar[d]^{\kappa_{\infty}}_{\wr} & 
H^{s}(\Gamma,  \so(Y_{\wh{K}_{\infty}})_0)\ar@/^39mm/[ldddd]^{\mu_{\infty}}_{\wr}\ar[l]_-{\gamma_K}^-{\sim} \\
H^{s+1}{\rm K}^{\rm HK}_{\partial,\varphi,\gamma}(\widetilde\O^{[u,v]}_0(1))) \ar[r]&  H^{s}{\rm K}_{\gamma_K}(H^1{\rm K}^{\rm HK}_{\partial,\varphi}(\widetilde\O^{[u,v]}_0(1))) \\
H^{s+1}{\rm K}_{\partial,\varphi,\gamma}(\widetilde\O^{[u,v]}_0(1)))\ar[u]_{\beta_1}^{\wr} \ar[r] \ar[d]^{\alpha_1}& 
 H^{s}{\rm K}_{\gamma}(H^1{\rm K}_{\partial,\varphi}(\widetilde\O^{[u,v]}_0(1))) \ar[u]_{\beta_{1,\infty}}^{\wr}\ar[d]^{\alpha_{1,\infty}} \\
H^{s+1}(Y,\Q_p(1))^{{\rm res}=0}\ar@/_10mm/[rd]^{\alpha_1^2}\ar[r] & H^{s}(\sg_K,H^1(Y_C, \Q_p(1))^{{\rm res}=0}) \\
 & H^{s}(\sg_K,  \so(Y_{C})_0)\ar[u]_-{\wr}
}
$$
The bottom square commutes by the proof of Lemma \ref{compatibility}. The rest commutes basically by constructions of the involved maps. Tracing this diagram proves our proposition.
\end{proof}

\section{Arithmetic Poincar\'e duality}
 Let  $K$ be a finite extension of $\Q_p$.  This chapter is devoted to the proof of the arithmetic Poincar\'e duality for smooth dagger curves over $K$. We start with stating the duality, then we prove it for proper curves, where it is an easy consequence of the geometric Poincar\'e duality.  After that we prove it for an open disc and an open annulus over $K$ (via a reduction to the Poincar\'e duality for the ghost circle proved earlier). This then allows us to treat the case of wide open curves (a special type of Stein curves) and we treat the case of general Stein curves by a limit argument. Finally, an analogous limit argument  yields the duality for a dagger affinoid. 
 
 \subsection{The statement of arithmetic Poincar\'e duality}The goal of this paper is to prove the following theorem: 

\begin{theorem}{\rm (Arithmetic Poincar\'e duality)} \label{main-arithmetic} Let $X$ be a smooth geometrically irreducible dagger variety of dimension $1$ over $K$. Assume that $X$ is proper, Stein, or affinoid. Then: 
\begin{enumerate}
\item
There is a natural  trace map isomorphism
$${\rm Tr}_X: \, H^4_{c}(X,\Q_p(2))\stackrel{\sim}{\to} \Q_p. 
$$
\item For $i,j\in\Z$, 
the  pairing
$$
H^i(X,\Q_p(j))\otimes_{\Q_{p}}^{\Box} H^{4-i}_{c}(X,\Q_p(2-j))\stackrel{\sim}{\to } H^4_{c}(X,\Q_p(2))\xrightarrow[\sim]{{\rm Tr}_X}\Q_p
$$
is a perfect duality, i.e., we have  induced  isomorphisms
\begin{align*}
 &\gamma_{X,i}:  H^i(X,\Q_p(j))\stackrel{\sim}{\to} H^{4-i}_{c}(X,\Q_p(2-j))^*,\\
& \gamma_{X,i}^c: H^i_{c}(X,\Q_p(j))\stackrel{\sim}{\to} H^{4-i}(X,\Q_p(2-j))^*.
\end{align*}
\end{enumerate}
\end{theorem}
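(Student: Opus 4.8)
The proof will follow the strategy announced in the introduction: reduce the general case to the ghost circle via a cascade of dévissages. First I would dispose of the \emph{proper} case, where $\R\Gamma_c(X,\Q_p(j))\simeq \R\Gamma(X,\Q_p(j))$ and the Hochschild--Serre spectral sequence \eqref{chicago0} combined with geometric Poincar\'e duality for proper rigid analytic curves (Zavyalov--Gabber--Mann) and Tate--Poitou duality gives the result directly, exactly as for schemes: the spectral sequence degenerates at $E_3$, and the pairing on $E_\infty$ is the tensor product of the geometric Poincar\'e pairing with the Galois cup product, which is perfect because both factors are. For the trace map, $\mathrm{Tr}_X$ is the composite $H^4_c(X,\Q_p(2))\simeq H^2(\sg_K,H^2_c(X_C,\Q_p(2)))\xrightarrow{\mathrm{Tr}_{X_C}(1)} H^2(\sg_K,\Q_p(1))\xrightarrow{\mathrm{Tr}_K}\Q_p$, an isomorphism since $\mathrm{Tr}_{X_C}$ is (properness) and by Tate.

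\textbf{The open disc and open annulus.} Next I would treat $X=D$ and $X=A$. Here the key device is the boundary: $\partial D=Y$ is a ghost circle and $\partial A=Y_a\sqcup Y_b$ is a disjoint union of two. Using the distinguished triangle $\R\Gamma_c(X,\Q_p(r))\to\R\Gamma(X,\Q_p(r))\to\R\Gamma(\partial X,\Q_p(r))$ together with the (already proved) arithmetic Poincar\'e duality for $Y$ (Theorem~\ref{main-arithmeticY}), and the fact (established in \S\ref{compact-11}) that the restriction maps $H^i(X,\Q_p(j))\hookrightarrow H^i(\partial X,\Q_p(j))$ are injective, one gets $H^i_c(X,\Q_p(j))\simeq H^{i-1}(\partial X,\Q_p(j))/H^{i-1}(X,\Q_p(j))$. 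The pairing $\R\Gamma(X,\Q_p(j))\otimes^{\Box}\R\Gamma_c(X,\Q_p(2-j))[4]\to\Q_p$ is compatible with the boundary pairing on $Y$ (this compatibility is built into the construction of the pro-\'etale pairings for partially proper varieties in \S7). So perfection of the duality for $X$ follows from perfection of the duality for $\partial X$ by a diagram chase in the long exact sequences, using that $H^i(X,\Q_p(j))$ and $H^{3-i}(\partial X,\Q_p(2-j))$ sit in exact sequences whose quotients/subobjects are in perfect duality. The trace map $\mathrm{Tr}_X\colon H^4_c(X,\Q_p(2))\xrightarrow{\sim}\Q_p$ is then identified, via $H^4_c(X,\Q_p(2))\simeq H^3(\partial X,\Q_p(2))/H^3(X,\Q_p(2))\simeq H^3(Y,\Q_p(2))$ (the disc case; the annulus differs by a copy that dies), with $\mathrm{Tr}_Y$.

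\textbf{Wide open curves and the Mayer--Vietoris step.} A wide open curve is obtained from a proper curve by deleting finitely many closed discs (after a finite extension of $K$, which is harmless since $[K:\Q_p]<\infty$ and all the dualities are compatible with $\mathrm{Tr}_{K/\Q_p}$). I would cover such a curve by a proper curve with a disc removed, open discs, and open annuli, and run a Mayer--Vietoris argument: both $\R\Gamma(-,\Q_p(j))$ and $\R\Gamma_c(-,\Q_p(j))$ satisfy Mayer--Vietoris (the former is a sheaf, the latter a cosheaf, by \cite[Sec. 2.1]{AGN}), and the pro-\'etale pairings and trace maps are compatible with the Mayer--Vietoris triangles. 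So the derived duality \eqref{derived11} for the pieces and their intersections (which are annuli/discs, handled above, or proper pieces) propagates to the union by the five lemma in $\sd(\Q_{p,\Box})$, using that ${\mathbb D}(-)=\R\underline{\Hom}_{\Q_p}(-,\Q_p)$ is exact and sends the MV triangle for $\R\Gamma_c$ to the MV triangle for ${\mathbb D}(\R\Gamma_c)$. The passage from a general Stein curve to wide open curves is a colimit argument: a Stein curve is an increasing union of (interiors of) wide open curves, $\R\Gamma(X,\Q_p(j))\simeq \R\lim$, $\R\Gamma_c(X,\Q_p(j))\simeq \colim$, and by Theorems~\ref{final2} and~\ref{final1} the relevant $\R^1\lim$ vanish, so the duality commutes with the limit (the dual of a colimit of spaces of compact type is the limit of the nuclear Fr\'echet duals, which is well behaved in the solid world). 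Finally, the dagger affinoid case follows from the Stein case by the same kind of colimit/limit argument, using $H^i(X,\Q_p(j))\simeq\colim_h H^i(X^0_h,\Q_p(j))$, $H^i_c(X,\Q_p(j))\simeq\lim_h H^i_c(X^0_h,\Q_p(j))$ (Proposition~\ref{final3.1}), and the vanishing of $\R^1\lim$ established there.

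\textbf{The main obstacle.} The genuinely hard input is already isolated and proved in the excerpt: the Explicit Reciprocity Law (Theorem~\ref{kwakus11}) underlying Poincar\'e duality for the ghost circle, which interpolates syntomic and $(\varphi,\Gamma)$-module techniques to identify the pro-\'etale cup product on the graded pieces of the filtration with the Galois and coherent pairings. Granting that, the remaining difficulty in the present chapter is \emph{bookkeeping of compatibilities}: one must check that the trace maps and cup products constructed at each stage (geometric vs. arithmetic, via boundaries, via Mayer--Vietoris, via limits) are all compatible with each other and with the comparison isomorphisms, so that perfection propagates. This is where the solid formalism pays off --- it gives a derived internal $\underline{\Hom}$ compatible with the Mayer--Vietoris triangles, which is exactly what fails in classical functional analysis --- but the verifications are numerous and must be done with care. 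I expect no new conceptual obstacle beyond Theorem~\ref{kwakus11}; the work is in assembling the pieces.
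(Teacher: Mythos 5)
Your proposal is correct and follows essentially the same route as the paper: proper curves via geometric Poincar\'e duality plus Hochschild--Serre and Tate duality, disc and annulus by reduction to the ghost circle through the boundary sequence and the filtration compatibilities supplied by the Explicit Reciprocity Law (Theorem~\ref{kwakus11}), wide opens by Mayer--Vietoris with the derived dual ${\mathbb D}$, and Stein/affinoid cases by the limit and colimit arguments with the $\R^1\lim$ vanishings. The only compression worth noting is that the disc/annulus step is not a bare five-lemma chase against the boundary duality but requires the orthogonality of the filtrations and the identification of the graded pairings with the Galois and coherent ones --- which you correctly isolate as the content of Theorem~\ref{kwakus11}.
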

Here we wrote $(-)^*$ for the internal $\Hom$ in the category of solid $\Q_p$-vector spaces.

\subsection{The case of proper curves}\label{case1} A proper smooth curve is the analytification of an algebraic smooth curve for which
Theorem \ref{main-arithmetic} has been known for quite a while\footnote{Of course, modulo certain identifications.}.
But, actually,
Theorem \ref{main-arithmetic} holds for any smooth proper geometrically irreducible  rigid analytic variety  $X$ over $K$ of dimension $d$ (see Corollary \ref{proper-arith}). This follows from the recently proved
geometric Poincar\'e duality (Theorem~\ref{proper-geom})
and  local Galois duality. 

  Recall that, if $X$ is a smooth proper variety over $K$, geometrically irreducible, then its pro-\'etale cohomology complex
$\R\Gamma(X_L,\Q_p(j))$, for $L=K,C$, has  classical cohomology and the cohomology groups $H^i(X_L,\Q_p(j))$ are finite dimensional $\Q_p$-vector spaces with their canonical Hausdorff topology (see Section \ref{morning2}). Over $C$, it satisfies Poincar\'e duality: 
\begin{theorem}\label{proper-geom}
{\rm (Zavyalov, Mann \cite[5.5.7]{Zav21}, \cite[Th. 1.1.1]{Mann})}
Let ${X}$ be a smooth proper geometrically irreducible rigid analytic variety of pure dimension $d$ over $K$. Then:
\begin{enumerate}[label=(\roman*)] 
\item There is a Galois-equivariant $\Q_p$-linear trace  map isomorphism
\[ {\rm Tr}_{X_C}: \, H^{2d}({X}_C,\Q_p(d))\stackrel{\sim}{\to} \Q_p. \]
\item For $i\in\N, j\in\Z$, the trace map $ {\rm Tr}_{X_C}$  induces a perfect Galois-equivariant pairing of finite rank $\Q_p$-vector spaces:
\[ H^i({X}_C,\Q_p(j))\otimes_{\Q_{p}}^{\Box}H^{2d-i}({X}_C,\Q_p(d-j))\stackrel{\sim}{\to} H^{2d}({X}_C,\Q_p(d)) \xrightarrow[\sim]{{\rm Tr}_{X_C}} \Q_p. \]
\end{enumerate}
\end{theorem}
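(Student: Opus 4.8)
The statement to prove is the geometric Poincaré duality of Theorem~\ref{proper-geom}, which asserts a trace isomorphism ${\rm Tr}_{X_C}\colon H^{2d}(X_C,\Q_p(d))\xrightarrow{\sim}\Q_p$ and perfection of the cup-product pairing on $p$-adic pro-\'etale cohomology of a smooth proper geometrically irreducible rigid analytic variety of dimension $d$ over $C$. Since $X_C$ is proper, all the cohomology groups $H^i(X_C,\Q_p(j))$ are finite-dimensional $\Q_p$-vector spaces with their canonical Hausdorff topology (this is recalled in Section~\ref{morning2}), so the solid tensor product $\otimes^{\Box}_{\Q_p}$ here is just the ordinary tensor product of finite-dimensional spaces and no functional-analytic subtleties intervene. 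The plan is therefore \emph{not} to reprove this from scratch — it is a deep theorem — but to \emph{quote} it from the work of Zav{'}yalov \cite{Zav21} (for the case where $X$ has a model that is, after a suitable alteration, semistable, using rigid-analytic nearby cycles and a comparison with $\ell$-adic duality) and of Mann \cite{Mann} (who proves the general case via his six-functor formalism for $p$-adic \'etale sheaves on diamonds and the associated abstract Poincar\'e/Verdier duality for smooth proper morphisms). Both references produce exactly the trace map and the perfect pairing in the form stated, including the Galois-equivariance, which comes for free because the entire construction (nearby cycles, the fundamental class, the six operations) is functorial for the action of $\sg_K$ on $X_C$ over $X$.

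\textbf{Structure of the argument I would record.} First I would recall the construction of the trace map: Mann's formalism gives, for the smooth proper structure morphism $f\colon X_C\to\Spa(C)$ of pure dimension $d$, a dualizing complex $f^{!}\Q_p\simeq\Q_p(d)[2d]$ together with a counit $f_{!}f^{!}\Q_p\to\Q_p$; since $f$ is proper, $f_{!}=f_{*}=\R\Gamma(X_C,-)$, and applying $H^0$ of the counit to $f^{!}\Q_p$ yields ${\rm Tr}_{X_C}\colon H^{2d}(X_C,\Q_p(d))\to\Q_p$. That this is an isomorphism is the statement that $X_C$ is geometrically connected of dimension $d$ — equivalently, $H^0(X_C,\Q_p)\simeq\Q_p$ — dualized; here one uses that $X$, hence $X_C$, is geometrically irreducible. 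Second, the perfect pairing in part (ii) is the adjunction isomorphism
$$
\R\Gamma(X_C,\Q_p(j))\xrightarrow{\sim}\R\underline{\Hom}_{\Q_p}\bigl(\R\Gamma(X_C,\Q_p(d-j))[2d],\Q_p\bigr),
$$
obtained by combining the projection formula, the identification $f^{!}\Q_p\simeq\Q_p(d)[2d]$, and the counit; taking $H^i$ and using that all groups are finite-dimensional over $\Q_p$ (so $\R\underline{\Hom}$ reduces to ordinary linear duality and the higher $\Ext$'s vanish) gives the stated pairing of finite-rank $\Q_p$-vector spaces. Third, Galois-equivariance: all of the above is natural in $X_C\to\Spa(C)$ with its $\sg_K$-action compatible with the trivial action on $H^{2d}(\Spa(C),\Q_p(d))\otimes\Q_p(d)$-twist bookkeeping, so the isomorphisms intertwine the Galois actions; this is exactly what is needed to feed into the Hochschild–Serre descent in the next chapter.

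\textbf{What counts as the "main obstacle."} There is genuinely no obstacle internal to \emph{this} paper: the content of Theorem~\ref{proper-geom} is precisely the externally-cited results of \cite{Zav21} and \cite{Mann}, and the role of this section is only to state them in the normalization (trace map, twists, Galois-equivariance, finiteness of the groups) that the rest of the paper uses. The only point requiring a sentence of care is the \emph{compatibility of normalizations}: one must check that the trace map coming from Mann's abstract duality agrees, up to the expected sign/twist conventions, with the one coming from a semistable model and de Rham/crystalline comparison (so that later compatibilities with the Hyodo–Kato and de Rham trace maps, asserted in Proposition~\ref{traces11}(3), hold on the nose). I would dispose of this by invoking the compatibility of Mann's six-functor trace with the classical one on good-reduction or semistable loci (again \cite{Mann}, together with \cite{Zav21}), and by noting that a trace map $H^{2d}(X_C,\Q_p(d))\to\Q_p$ out of a one-dimensional space is determined up to a scalar, which is pinned down by evaluating on the fundamental class of a point — a computation that is identical in all the normalizations in play. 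After that, part (i) is immediate and part (ii) follows by the adjunction chain above, so the section is short.
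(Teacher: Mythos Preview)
Your proposal is correct and matches the paper's approach exactly: the paper does not prove Theorem~\ref{proper-geom} at all but simply states it with attribution to Zavyalov \cite[5.5.7]{Zav21} and Mann \cite[Th.~1.1.1]{Mann}, and you correctly identify that the content here is entirely external. Your additional sketch of how the trace map and duality arise from Mann's six-functor formalism, and your remark on normalization compatibility (which the paper addresses later, in \cite[Rem.~7.16]{AGN} as cited after Corollary~\ref{proper-arith}), are helpful elaborations but go beyond what the paper itself records at this point.
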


 Combining it with local Galois duality we obtain: 
\begin{corollary}{\rm ({\em Arithmetic Poincar\'e duality})} \label{proper-arith} Let ${X}$ be a smooth proper geometrically irreducible  rigid analytic variety of pure dimension $d$ over $K$. Then:
\begin{enumerate}[label=(\roman*)]
\item There is a natural $\Q_p$-linear trace map isomorphism
\[ {\rm Tr}_X: \, H^{2d+2}({X},\Q_p(d+1)) \stackrel{\sim}{\to} \Q_p. \]
\item For $i\in\N, j\in\Z$, the pairing
\[ H^i({X},\Q_p(j))\otimes_{\Q_{p}}^{\Box} H^{2d+2-i}({X},\Q_p(d+1-j))\stackrel{\sim}{\to} H^{2d+2}({X},\Q_p(d+1)) \xrightarrow[\sim]{{\rm Tr}_X} \Q_p \]
is a perfect duality of finite rank  $\Q_p$-vector spaces.
\end{enumerate}
\end{corollary}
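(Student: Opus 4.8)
The plan is to deduce Corollary \ref{proper-arith} from the geometric Poincar\'e duality of Theorem \ref{proper-geom} by Galois descent, exactly as outlined in the introduction for proper curves. First I would record the Hochschild-Serre spectral sequence from Lemma \ref{Serre}:
$$
E^{a,b}_2 = H^a(\sg_K, H^b(X_C,\Q_p(j))) \Longrightarrow H^{a+b}(X,\Q_p(j)),
$$
where all $E^{a,b}_2$ vanish for $a>2$ (the cohomological dimension of $\sg_K$ is $2$) and all $H^b(X_C,\Q_p(j))$ are finite rank $\Q_p$-vector spaces. Hence the spectral sequence has only three nonzero columns $a=0,1,2$, the only possibly nonzero differential is $d_2\colon E^{0,b}_2\to E^{2,b-1}_2$, and it degenerates at $E_3$. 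This gives a three-step filtration on $H^n(X,\Q_p(j))$ with graded pieces among $\ker d_2$, $E^{1,\bullet}_2$, $\coker d_2$, all finite-dimensional over $\Q_p$; in particular every cohomology group appearing is a finite rank $\Q_p$-vector space with its canonical Hausdorff topology, so the solid and classical formalisms agree and $(-)^*$ is the ordinary $\Q_p$-linear dual.

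\textbf{Trace map.} For part (i), I would define ${\rm Tr}_X$ as the composition
$$
H^{2d+2}(X,\Q_p(d+1)) \xrightarrow{\ \sim\ } H^2(\sg_K, H^{2d}(X_C,\Q_p(d+1))) \xrightarrow[\ \sim\ ]{{\rm Tr}_{X_C}(1)} H^2(\sg_K,\Q_p(1)) \xrightarrow[\ \sim\ ]{{\rm Tr}_K} \Q_p,
$$
where the first isomorphism is the edge map of the spectral sequence: since $H^b(X_C,\Q_p(d+1))=0$ for $b>2d$ and $H^a(\sg_K,-)=0$ for $a>2$, the group $H^{2d+2}(X,\Q_p(d+1))$ equals $E^{2,2d}_\infty=E^{2,2d}_3=\coker(d_2\colon E^{0,2d+1}_2\to E^{2,2d}_2)=E^{2,2d}_2$ (the source is zero). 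The second map is Theorem \ref{proper-geom}(i) twisted by $(1)$, and the third is the local Tate trace isomorphism recalled in Section on Poitou-Tate duality. Each arrow is an isomorphism, proving (i).

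\textbf{The pairing.} For part (ii), the pro-\'etale cup product and ${\rm Tr}_X$ give the pairing; the cup product is compatible with the Hochschild-Serre filtration, so it induces pairings on the associated graded pieces. On gradeds, the $E_2$-pairing is the external product of the Galois cup product with the geometric cup product $H^i(X_C,\Q_p(j))\otimes H^{2d-i}(X_C,\Q_p(d-j))\to H^{2d}(X_C,\Q_p(d))$. By Theorem \ref{proper-geom}(ii) the geometric pairing is perfect, and by Poitou-Tate duality the Galois pairing $H^a(\sg_K,V)\otimes H^{2-a}(\sg_K,V^*(1))\to \Q_p$ is perfect for the finite-dimensional Galois representation $V=H^b(X_C,\Q_p(j))$ (with $V^*(1)=H^{2d-b}(X_C,\Q_p(d+1-j))$ by the geometric duality). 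Combining these, the induced pairing on each graded piece $E^{a,b}_3$ of $H^n(X,\Q_p(j))$ with the corresponding $E^{2-a,2d-b}_3$ of $H^{2d+2-n}(X,\Q_p(d+1-j))$ is perfect. I would phrase this via the five-lemma argument in a commutative ladder, as in diagram \eqref{DNote-ghost}: the two-step filtration $0\subset F^2 H^n \subset H^n$ (with one graded piece from $a=2$ or the appropriate column and the complement an extension of two others) gives a map of short exact sequences whose outer vertical maps are the graded-piece dualities, hence the middle map $\gamma_{X,i}$ is an isomorphism. Dualizing over the finite-dimensional $\Q_p$-vector spaces then yields $\gamma_{X,i}^c$.

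\textbf{Main obstacle.} The one genuine point requiring care is the compatibility of the pro-\'etale cup product with the Hochschild-Serre spectral sequence and the identification of the induced pairing on $E_2$-terms with the external (Galois $\times$ geometric) product, together with checking that ${\rm Tr}_{X_C}$ is Galois-equivariant so that the twist by $(1)$ is legitimate; the latter is part of Theorem \ref{proper-geom}(i). The former is standard for Cartan-Leray/Hochschild-Serre spectral sequences but must be invoked in the solid/condensed setting --- here it follows from the construction of the spectral sequence in Lemma \ref{Serre} via $\R\underline{\Hom}(\Z_\Box[\sg_K^\bullet],-)$, which is manifestly multiplicative. Since the $d_2$ differential only affects which column a graded piece lives in and never breaks the pairing between complementary columns $a$ and $2-a$ (the differential $d_2\colon E^{0,\bullet}\to E^{2,\bullet-1}$ pairs a $\ker$ against a $\coker$ of the transpose differential, and perfectness is preserved), no further input is needed. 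The $d=1$ case of the Corollary reduces to, and reproves, the classical statement for proper algebraic curves.
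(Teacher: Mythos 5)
Your proposal is correct and follows essentially the same route as the paper: the Hochschild--Serre spectral sequence with three columns degenerating at $E_3$, the trace defined through the edge map $H^{2d+2}(X,\Q_p(d+1))\simeq H^2(\sg_K,H^{2d}(X_C,\Q_p(d+1)))$ composed with ${\rm Tr}_{X_C}(1)$ and ${\rm Tr}_K$, perfectness on $E_3$-graded pieces obtained by combining Theorem \ref{proper-geom} with Poitou--Tate duality in the four-term ladder $0\to E_3^{a,b}\to E_2^{a,b}\to E_2^{a+2,b-1}\to E_3^{a+2,b-1}\to 0$, and a final d\'evissage along the filtration. The only cosmetic difference is that you spell out the multiplicativity of the spectral sequence and the identification $E_3^{2,2d}=E_2^{2,2d}$ slightly more explicitly than the paper does.
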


\begin{proof}
This follows from Theorem \ref{proper-geom} and from the Hochschild-Serre spectral sequence: 
\begin{equation}\label{HoSer}
E^{a,b}_2(j)  =H^a(\sg_K,H^b({X}_C,\Q_p(j)))\Rightarrow H^{a+b}({X},\Q_p(j)).
\end{equation}
The trace map ${\rm Tr}_X$ comes from the composition:  
\[ {\rm Tr}_X: \, H^{2d+2}({X},\Q_p(d+1))\simeq H^2(\sg_K, H^{2d}({X}_C,\Q_p(d+1))) \xrightarrow[\sim]{{\rm Tr}_{X_C}(1)} H^2(\sg_K,\Q_p(1)) \xrightarrow{\sim} \Q_p. \]
By Theorem \ref{proper-geom}, it is an isomorphism.

To prove the duality, note that the only nonzero terms of $E^{a,b}_2(j)$ are those with  degrees $0 \le a \le 2$ and $0 \le b \le 2d$; hence   the spectral sequence degenerates at $E_3$.
As the cup product commutes with the differentials $d_2^{a,b} : E_2^{a,b} \to E_2^{a+2, b-1}$ of \eqref{HoSer}, we get a commutative diagram with exact rows: 
 {\footnotesize \[ \xymatrix@R=6mm@C=10pt{ 
0 \ar[r] & E_{3}^{a,b}(j) \ar[r] \ar[d] & H^a(\sg_K,H^b(j)) \ar[r]^-{d} \ar[d]^{\rotatebox{90}{$\sim$}} & H^{a+2}(\sg_K,H^{b-1}(j)) \ar[r] \ar[d]^{\rotatebox{90}{$\sim$}} & E_{3}^{a+2,b-1}(j) \ar[r] \ar[d] & 0 \\
0 \ar[r] &E_{3}^{2-a,2d-b}(j^*)^* \ar[r]  & H^{2-a}(\sg_K,H^{2d-b}(j^*))^* \ar[r]^{\tilde{d}^*}  & H^{-a}(\sg_K,H^{2d{+}1{-}b}(j^*))^* \ar[r]  & E_{3}^{-a,2d{+}1{-}b}(j^*)^* \ar[r]  & 0} \]}
where $d:=d_2^{a,b}$, $\tilde{d}:=d_2^{-a,2d+1-b}$ and  we set $j^*:=d+1-j$ and $H^b(j):=H^b({X}_C,\Q_p(j))$.
The second and third vertical arrows are isomorphisms by Theorem \ref{proper-geom} and Tate's duality. We deduce that the two other vertical maps are isomorphisms as well.   

The Hochschild-Serre spectral sequence   \eqref{HoSer}  induces a descending filtration on $ H^i({X}, \Q_p(j))$: $$H^i({X}, \Q_p(j))=F^0_{i,j}\supset F^1_{i,j}\supset F^2_{i,j}\supset  F^3_{i,j}=0.$$ Since it degenerates at $E_3$, i.e., $E_3=E_{\infty}$, the above diagram  gives a perfect duality 
\[{\rm gr}^{\bullet}_F H^i({X}, \Q_p(j)) \otimes_{\Q_{p}}^{\Box} {\rm gr}^{2-\bullet}_F H^{2d+2-i}({X}, \Q_p(d+1-j)) \to {\rm gr}^{2}_F H^{2d+2}({X}, \Q_p(d+1)) \stackrel{\sim}{\to} \Q_p. \]
By devissage (via the filtration F), this perfect duality lifts to the following perfect pairings  (in the presented order):
\begin{align*}
(F^0/F^2)H^i({X},\Q_p(j)) & \otimes_{\Q_p}^{\Box}  F^1H^{2d+2-i}({X},\Q_p(d+1-j))\to \Q_p,\\
F^0H^i({X},\Q_p(j)) &  \otimes_{\Q_p}^{\Box}   F^0H^{2d+2-i}({X},\Q_p(d+1-j))\to \Q_p.
\end{align*}
 This concludes the proof.   
\end{proof}

    We specialize now to curves. Let $X$ be a  proper smooth geometrically irreducible curve  over $K$.   
  By \cite[Rem. 7.16]{AGN}, our geometric trace map ${\rm Tr}_{X_C}: H^{2}(X_C,\Q_p(1)) \to \Q_p$ is equal to the trace map used by Zavyalov in \cite{Zav21}, i.e., to the rigid analytic version of the Berkovich trace map
 (see  \cite[Sec. 5.3]{Zav21}, \cite[Sec. 7.2]{Ber}). Hence  in this case Theorem \ref{main-arithmetic} follows from Zavyalov's arithmetic duality stated in Corollary \ref{proper-arith}. 
\subsection{The case of an  open disc} We will now prove Theorem \ref{main-arithmetic} for an open disc. 
Let $D$ be an open disc $D$ over $K$. 
\begin{proposition}\label{main-arithmeticD} {\rm (Arithmetic duality for an open disc)} Theorem \ref{main-arithmetic} holds for $D$. 
\end{proposition}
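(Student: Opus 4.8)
The idea is to bootstrap the duality for the open disc $D$ from the already-established duality for its boundary ghost circle $Y=\partial D$ (Theorem~\ref{main-arithmeticY}), using the defining distinguished triangle
$$
\R\Gamma_c(D,\Q_p(r))\to \R\Gamma(D,\Q_p(r))\to \R\Gamma(Y,\Q_p(r))
$$
together with the fact (recorded in Section~\ref{compact-11}, see \eqref{inj21}--\eqref{inj22}) that $H^i(D,\Q_p(j))\hookrightarrow H^i(Y,\Q_p(j))$, so that $H^i_c(D,\Q_p(j))\simeq H^{i-1}(Y,\Q_p(j))/H^{i-1}(D,\Q_p(j))$. First I would fix the trace map: combining the isomorphism $H^4_c(D,\Q_p(2))\xrightarrow{\sim}H^2(\sg_K,H^2_c(D_C,\Q_p(2)))$ from \eqref{PotB00} with the geometric trace ${\rm Tr}_{D_C}\colon H^2_c(D_C,\Q_p(1))\to\Q_p$ (Proposition~\ref{traces11}) and ${\rm Tr}_K$, and checking it is an isomorphism via Lemma~\ref{PotB0} (the extra $\so(Y_C)/\so(D_C)$ summand being Galois-killed in $H^2(\sg_K,-)$ by generalized Tate \eqref{newton15}). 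This gives part (1).

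For part (2), the key point is that the pro-\'etale cup products on $D$, $Y$ and $\R\Gamma_c(D,-)$ are compatible with the triangle above and with the pairings on $Y$ (this is the content of Section~\ref{traces} on pro-\'etale pairings for partially proper varieties, and the compatibility of ${\rm Tr}_D$ with ${\rm Tr}_Y$ follows by tracing definitions). Concretely, I would set up the commutative ladder of long exact sequences obtained by applying $\underline{\Hom}_{\Q_p}(-,\Q_p)$ to the triangle $\R\Gamma_c(D)\to\R\Gamma(D)\to\R\Gamma(Y)$ and comparing it, via the cup-product maps $\gamma_{D,i}$, $\gamma_{D,i}^c$, $\gamma_{Y,i}$, with the sequence $\R\Gamma_c(D,\Q_p(j))\to\R\Gamma(D,\Q_p(j))\to\R\Gamma(Y,\Q_p(j))$ itself (dualized and shifted). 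Since $\gamma_{Y,i}$ is an isomorphism by Theorem~\ref{main-arithmeticY}, the five lemma will force $\gamma_{D,i}$ (resp.\ $\gamma_{D,i}^c$) to be an isomorphism \emph{provided} one of the two families is already known. So the scheme is: (a) prove $\gamma_{D,i}\colon H^i(D,\Q_p(j))\xrightarrow{\sim}H^{4-i}_c(D,\Q_p(2-j))^*$ directly for all $i,j$ using the explicit descriptions of $H^i(D,\Q_p(j))$ (Lemma~\ref{mist1}, split into $H^i(\sg_K,\Q_p(j))$ and $H^{i-1}(\sg_K,\tfrac{\so(D_C)}{C}(j-1))$ pieces) and of $H^{4-i}_c(D,\Q_p(2-j))$ (Lemma~\ref{PotB1}, with pieces $H^{2-i}(\sg_K,\tfrac{\so(Y_C)}{\so(D_C)}(1-j))$ and $H^{2-i}(\sg_K,\Q_p(1-j))$), matching them summand by summand via Tate--Poitou--Tate duality and the coherent Serre duality pairing $\so(Y)/\so(D)\otimes^\Box_{\Q_p}\so(D)/K\to\Q_p$ from Section~6.4; (b) deduce $\gamma_{D,i}^c$ by dualizing $\gamma_{D,i}$, using that $H^i_c(D,\Q_p(j))$ is of compact type (Theorem~\ref{final1} applied to $D$, which is Stein) hence reflexive and that the relevant higher $\Ext$'s vanish, exactly as in the Remark following Theorem~\ref{main-arithmetic0}.

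The compatibility of the identifications in (a) with the actual pro-\'etale cup product is where the argument has real content: one must check that under the filtration splittings, the pro-\'etale pairing $H^i(D,\Q_p(j))\otimes^\Box H^{4-i}_c(D,\Q_p(2-j))\to\Q_p$ restricts on the ``coherent $\times$ coherent'' graded piece to (a Galois-twisted form of) the residue pairing $\so(D)/K\otimes^\Box\so(Y)/\so(D)\to\Q_p$, and on the ``Galois $\times$ Galois'' piece to the local Tate pairing. The clean way to get this is to \emph{transport} it from the ghost circle: the inclusion $\R\Gamma(D,-)\to\R\Gamma(Y,-)$ and the quotient $\R\Gamma(Y,-)\to\R\Gamma_c(D,-)[1]$ are morphisms of modules over $\R\Gamma(D,-)$, so the $D$-pairing is literally the restriction/corestriction of the $Y$-pairing, whose graded pieces were computed in Theorem~\ref{kwakus11} (the Explicit Reciprocity Law). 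Thus no new reciprocity computation is needed — only careful bookkeeping of how the filtration on $H^\bullet(Y)$ (Section~\ref{filtration1}) induces the relevant filtrations on $H^\bullet(D)$ and $H^\bullet_c(D)$, and that ${\rm Tr}_D$ is the restriction of ${\rm Tr}_Y$ along the natural map (both are, after the Galois and de Rham identifications, the residue map followed by ${\rm Tr}_K$). \textbf{The main obstacle} I anticipate is precisely this last bookkeeping: verifying that the ghost-circle filtration restricts correctly and that all the trace-map normalizations (the factors $\tfrac{1}{\log\chi(\gamma_K)}$, $[K:\Q_p]$, etc.\ from Propositions~\ref{fanto23} and \ref{fanto5}) match up so that the induced pairing on $D$ is \emph{perfect} and not merely nondegenerate up to a scalar — but since everything is pulled back from $Y$ where perfection is already proved, this reduces to checking the relevant maps of exact sequences are injective/surjective, which is done using the generalized Tate formulas \eqref{newton15} exactly as in the proof of Lemma~\ref{PotB1}.
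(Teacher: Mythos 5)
Your proposal follows essentially the same route as the paper: define the trace via the ghost circle identification, use the compatibility (projection formula) of the cup product with the boundary triangle $\R\Gamma_c(D)\to\R\Gamma(D)\to\R\Gamma(Y)$, check strict compatibility of the filtrations under $\can$ and $\partial$ via the generalized Tate formulas, and transport the identification of the graded pairings (Galois piece to Poitou--Tate, coherent piece to the residue pairing) from the Explicit Reciprocity Law for $Y$; your deduction of $\gamma^c_{D,i}$ by dualizing $\gamma_{D,i}$ using reflexivity is a harmless variant of the paper's "analogous" filtration argument and is in fact the mechanism the paper itself uses later for Stein curves. No gaps.
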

\begin{proof}
(i)  The trace map $${\rm Tr}_D: \, H^4_{c}(D,\Q_p(2)){\to} \Q_p
$$ was defined in Section \ref{traces}. It will be convenient to have a different description of this map. Let $Y:=\partial D$ be the boundary of $D$, a ghost circle. Consider  the composition
$$
t_D: H^4_{c}(D,\Q_p(2))\xleftarrow[\sim]{\partial}H^3(Y,\Q_p(2))\xrightarrow[\sim]{{\rm Tr}_Y}\Q_p.
$$
 The first  strict  isomorphism holds   because $ H^i(D,\Q_p(2))=0$, for $i\geq 3$, by Lemma \ref{mist1}. 
 An alternative definition of $t_D$ is the following. First, we define the geometric trace zig-zag:
 $$
 t_{D_C}: H^2_{c}(D_C,\Q_p(1))\xleftarrow[]{\partial}H^1(Y_C,\Q_p(1))\xrightarrow[]{{\rm Tr}_{Y_C}}\Q_p.
 $$
 Then the arithmetic trace $t_D$ is obtained as a composition of $H^2(\sg_K, t_{D_C}(1))$ with ${\rm Tr}_K: H^2(\sg_K,\Q_p(1))\stackrel{\sim}{\to}\Q_p$ via the identifications coming from \eqref{PotB00} and Lemma \ref{mist1-ghost}.

  We claim that $ t_{D}={\rm Tr}_{D}$. For that,  it  is enough to prove that  the geometric traces $
 t_{D_C}, {\rm Tr}_{D_C}
 $ are equal after we apply $H^2(\sg_K, (-)(1))$. This will follow if we prove  that the following diagram commutes
   $$
   \xymatrix@R=6mm{
   H^1(Y_C,\Q_p(1))\ar[r]^-{{\rm Tr}_{Y_C}} \ar[d]^{\partial}& \Q_p\\
   H^2_{c}(D_C,\Q_p(1))\ar[ur]_-{{\rm Tr}_{D_C}}
   }
   $$
    Or, as can be seen by unwinding the definitions of trace maps,  that the following diagram commutes
   $$
 \xymatrix@R=6mm{
H^1_{\synt}(Y_C,2)\ar[r] \ar[d]^{\partial}&  (H^1_{{\rm HK}}(Y_C){\otimes}^{\Box}_{\breve{F}}\wh{\B}^+_{\st})^{N=0,\phi=p^2}\ar[d]^{\partial}\\
 H^2_{\synt,c}( D_C,2)\ar[r] &  (H^2_{{\rm HK},c}( D_C){\otimes}^{\Box}_{\breve{F}}\wh{\B}^+_{\st})^{N=0,\phi=p^2}
 }
 $$
But this follows easily from the definitions.

  (ii) We will first show that the  pairing
\begin{equation}
\label{pairing1D}
H^i(D,\Q_p(j))\otimes^{\Box}_{\Q_p} H^{4-i}_{c}(D,\Q_p(2-j))\stackrel{\cup}{\to} H^4_{c}(D,\Q_p(2))\xrightarrow[\sim]{{\rm Tr}_D}\Q_p
\end{equation} 
 induces the  isomorphism 
\begin{align}\label{cieplo1}
  \gamma_{D,i}: \quad H^i(D,\Q_p(j))\stackrel{\sim}{\to} H^{4-i}_{c}(D,\Q_p(2-j))^*.
\end{align}

  ($\bullet$) {\em Compatibility of pairings.} The pairing \eqref{pairing1D} 
is compatible with the pairing \eqref{pairing1}, i.e., the diagram 
 \begin{equation}
 \label{nie3}
 \xymatrix@R=6mm{
 H^i(D,\Q_p(j))\otimes^{\Box}_{\Q_p} H^{i^{\prime}}_{c}(D,\Q_p(j^{\prime}))\ar@<-15mm>@{^(->}[d]^{\can}\ar[r]^-{\cup} &  H^{i+i^{\prime}}_{c}(D,\Q_p(j+j^{\prime})) \\
  H^i(Y,\Q_p(j))\otimes^{\Box}_{\Q_p} H^{i^{\prime}-1}(Y,\Q_p(j^{\prime}))\ar[r]^-{\cup} \ar@<-15mm>@{->>}[u]_{\partial}&  H^{i+i^{\prime}-1}(Y,\Q_p(j+j^{\prime})) \ar@{->>}[u]^{\partial}
 }
 \end{equation}
 commutes (up to a sign): 
 for $a\in  H^i(D,\Q_p(j))(S)$ and $b\in  H^{i^{\prime}-1}(Y,\Q_p(j^{\prime}))(S)$, where $S$ is an extremally disconnected set, we have 
 $$
\partial(\can(a)\cup b)=(-1)^{i}a\cup \partial(b).
 $$
 This follows easily from the formula \eqref{FaSi} (take $\alpha=0$). The   injectivity and  surjectivity of the vertical maps in the diagram  follow from \eqref{inj21}. 

  ($\bullet$) {\em Filtration on cohomology.} By Section \ref{filtration1} and Lemma \ref{mist1},  there exists an ascending filtration on $H^{i}(D,\Q_p(j))$:
$$
F^2_{i,j}=H^{i}(D,\Q_p(j))\supset F^1_{i,j}\supset F^0_{i,j}\supset F^{-1}_{i,j}=0,
$$
such that 
\begin{align*}
 & F^1_{i,j}=F^2_{i,j}= H^{i}(D, \Q_p(j)),\quad F^1_{i,j}/F^0_{i,j}\simeq H^{i-1}(\sg_K, \tfrac{\so(D_C)}{C}(j-1)),\\
  & F^0_{i,j}/F^{-1}_{i,j}\simeq H^i(\sg_K, \Q_p(j)).
\end{align*}
\begin{lemma}
\label{injective1}The injection 
$$\can: H^{i}(D, \Q_p(j))\hookrightarrow   H^{i}(Y, \Q_p(j))$$
is strict for the given filtrations, i.e., the induced map 
\begin{equation}
\label{newton01}
(F^{s+1}/F^s)H^{i}(D, \Q_p(j))\to (F^{s+1}/F^s)H^{i}(Y, \Q_p(j)),\quad s\geq -1,
\end{equation}
is  injective.
\end{lemma}
\begin{proof}Since $(F^{s+1}/F^s)H^{i}(D, \Q_p(j))=0$ for $s\neq -1, 0$, it suffices to check the statement of the lemma for $s=-1,0$. 
For $s=-1$, it is clear. For $s=0$, we can write the map \eqref{newton01} as:
   \begin{equation}
   \label{nie251}
  \xymatrix@R=6mm{
 H^{i-1}(\sg_K,\frac{\so(D_C)}{C}(j-1))\ar[r] & 
  H^{i-1}(\sg_K,\frac{\so(Y_C)}{C}(j-1)). 
  }
  \end{equation}
Our claim now follows from  the compatible  isomorphisms \eqref{newton15}.
\end{proof}
 
  ($\bullet$) {\em Filtration on cohomology with compact support.} There exists an ascending filtration on $H^{i}_{c}(D,\Q_p(j))$:
$$
F^2_{c,i,j}=H^{i}_{c}(D,\Q_p(j))\supset F^1_{c,i,j}\supset F^0_{c,i,j}=0,
$$
such that 
\begin{align*}
  F^2_{c,i,j}/F^1_{c,i,j}\simeq H^{i-2}(\sg_K, \Q_p(j-1)),\quad F^1_{c,i,j}/F^0_{c,i,j}\simeq H^{i-2}(\sg_K, \tfrac{\so(Y_C)}{\so(D_C)}(j-1)).
\end{align*}
We can visualize it in the following way (to simplify the notation we removed the subscripts from cohomology): 
$$
\xymatrix@R=6mm{   & 0\ar[d]\\
 F^1_{c,i,j}\ar[r]^-{\sim}\ar[d] & H^{i-2}(\sg_K, (\so(Y_C)/\so(D_C))(j-1))\ar[d] \\
 F^2_{c,i,j}:=H^i_{c}(D,\Q_p(j))\ar[r]^-{\sim}  &  H^{i-2}(\sg_K, H^{2}_{c}(D_C,\Q_p(j))) \ar[d]\\
 & H^{i-2}(\sg_K,\Q_p(j-1))\ar[d]\\
&  0
}
$$
 Hence the filtration comes basically only from the syntomic  sequence. The right column is  exact by Lemma \ref{PotB0}.  
  \begin{lemma}
 \label{surjective1}The canonical maps
 $$\partial: F^{s}H^{i-1}(Y,\Q_p(j))\to F^{s}H^{i}_{c}(D,\Q_p(j)),\quad s\geq -1,
 $$
 are  surjective.
 \end{lemma}
 \begin{proof} This is clear for $s=-1, 0$ and $s\geq 2$. For $s=1$, we use the computations done earlier, to rewrite the map in the lemma as the canonical map:
\begin{equation}
\label{dlugo1}
\partial: 
  H^{2-i}(\sg_K,\tfrac{\so(Y_C)}{C}(1-j))\to H^{2-i}(\sg_K,\tfrac{\so(Y_C)}{\so(D_C)}(1-j)).
  \end{equation}
Using   the generalized Tate's  isomorphisms   \eqref{newton15} 
we see that the map in  \eqref{dlugo1} is  surjective. 
 \end{proof}

 ($\bullet$)  {\em Pairings on the graded pieces.} From diagram \eqref{nie3}, Lemma \ref{surjective1},  and Theorem~\ref{kwakus11},  it follows that the cup product pairing
 $$
\cup:\quad H^{i}(D,\Q_p(j))\otimes^{\Box}_{\Q_p}  H^{i^{\prime}}_{c}(D,\Q_p(j^{\prime}))\to H^{i+i^{\prime}}_{c}(D,\Q_p(j+j^{\prime}))
 $$
  is compatible with the above filtrations. In particular, 
 the subgroups 
\begin{align*}
&F^0_{i,j}=H^i(\sg_K, \Q_p(j))\subset H^i(D,\Q_p(j)),\\
& F^1_{c,4-i,2-j}=H^{2-i}(\sg_K, \tfrac{\so(Y_C)}{\so(D_C)}(1-j))\subset H^{4-i}_{c}(D,\Q_p(2-j))
\end{align*}
 are orthogonal.
 That orthogonality   induces the following map of  exact sequences (all the vertical maps are induced from cup products and the trace map ${\rm Tr}_D$):
\begin{equation}
\label{DNote}
 \xymatrix@R=5mm@C=4mm{
   0\ar[r] & H^i(\sg_K, \Q_p(j))\ar[r]\ar[d]^{\alpha_{D,i}} & H^{i}(D,\Q_p(j))\ar[d]^{\gamma_{D,i}}\ar[r]&  
H^{i-1}(\sg_K, \tfrac{\so(D_C)}{C}(j-1))\ar[r]\ar[d]^{\beta_{D,i}} & 0\\
    0\ar[r] &  H^{2-i}(\sg_K,  \Q_p(1-j))^*\ar[r] &  H^{4-i}_{c}(D,\Q_p(2-j))^* \ar[r] &  
H^{2-i}(\sg_K, \tfrac{\so(Y_C)}{\so(D_C)}(1-j))^*\ar[r] &  0
 }
\end{equation}

    It suffices now to show  that  maps $\alpha_{D,i}$ and $\beta_{D,i}$ are  isomorphisms. This is easy to prove for the first map because this map
  is induced from Galois cohomology pairing
 $$
 H^i(\sg_K, \Q_p(j))\otimes^{\Box}_{\Q_p} H^{2-i}(\sg_K,  \Q_p(1-j))\stackrel{\cup}{\to} H^{2}(\sg_K,  \Q_p(1))\xrightarrow[\sim]{{\rm Tr}_{K}}\Q_p
 $$
as can be seen by comparing $\alpha_{D,i}$  with the map $\alpha_{Y,i}$ -- an analog for the ghost circle $Y$ -- and evoking Theorem~\ref{kwakus11}.

  To identify  map  $\beta_D$, consider the commutative diagram (we omitted the indices of  filtrations and subscripts from cohomology): 
  $$
  \xymatrix@R=6mm{
  (F^1/F^0) H^{i}(D,\Q_p(j))\otimes^{\Box}_{\Q_p}  (F^1_{c}/F^0_{c})H^{4-i}_{c}(D,\Q_p(2-j))\ar@<-3cm>@{^(->}[d]^{\can}\ar[r]^-{\cup} & (F^2_{c}/F^1_{c})H^{4}_{c}(D,\Q_p(2)) \ar[r]^-{{\rm Tr}_D} _-{\sim}& \Q_p\\
    (F^1/F^0 )H^{i}(Y,\Q_p(j))\otimes^{\Box}_{\Q_p}  (F^1/F^0)H^{3-i}(Y,\Q_p(2-j))\ar[r]^-{\cup}\ar@<-3cm>@{->>}[u]_{\partial} &  (F^2_{c}/F^1_{c})H^{3}(Y,\Q_p(2))\ar[u]^{\wr}_{\partial} \ar[ru]_-{{\rm Tr}_Y}^-{\sim} 
  }
  $$
 Map $\beta_{D,i}$ is induced by the top pairing in this diagram and we want to show that that this pairing is perfect. Map $\can$ is   injective by Lemma \ref{injective1} and  maps $\partial$ are  surjective by Lemma \ref{surjective1}.     Identifying the graded pieces, the above commutative diagram can be rewritten as: 
   \begin{equation}
   \label{nie25}
  \xymatrix@R=6mm{
 H^{i-1}(\sg_K,\frac{\so(D_C)}{C}(j-1))\otimes^{\Box}_{\Q_p}  H^{2-i}(\sg_K,\frac{\so(Y_C)}{\so(D_C)}(1-j))\ar@<-2cm>@{^(->}[d]^{\can}\ar[r]^-{\cup^{\proeet}} & \Q_p\\
  H^{i-1}(\sg_K,\frac{\so(Y_C)}{C}(j-1)) \otimes^{\Box}_{\Q_p}    H^{2-i}(\sg_K,\frac{\so(Y_C)}{C}(1-j))\ar[ru]_-{\cup^{\proeet}}\ar[r]^-{\cup^{G-coh}}\ar@<-2cm>@{->>}[u]^{\partial}& H^{1}(\sg_K, C)\ar[u]_-{{\rm Tr}^{\prime}_{K}}  ,
  }
  \end{equation}
  where ${\rm Tr}^{\prime}_{K}$ is the composition $$H^{1}(\sg_K, C) \xleftarrow[\sim]{\frac{\log \chi}{\log \chi(\gamma_K)}}K\lomapr{{\rm Tr}_{K/\Q_p}} \Q_p.$$ 
  The right triangle commutes by  Theorem~\ref{kwakus11}.
 Going back to the definition of cohomology with compact support,  it is easy to check that the vertical maps ($\can$ and $\partial$) are induced from the canonical  coherent maps.  Moreover, using the generalized Tate's   isomorphisms \eqref{newton15}, we can rewrite the nontrivial cases of the above commutative diagram further as:
 \begin{equation}
   \label{nie255}
  \xymatrix@R=6mm{
 \frac{\so(D)}{K}\otimes^{\Box}_{\Q_p}   \frac{\so(Y)}{\so(D)}\ar@<-0.8cm>@{^(->}[d]^{\can}\ar[r]^-{\cup^{\proeet}}& \Q_p\\
\frac{\so(Y)}{K}\otimes^{\Box}_{\Q_p}    \frac{\so(Y)}{K}\ar[ru]_-{\cup^{\rm coh}}\ar[r]^-{\cup}\ar@<-0.8cm>@{->>}[u]^{\partial}& K \ar[u]_-{{\rm Tr}_{K}} .
  }
  \end{equation}
It is now clear that the top product in this diagram is  the  coherent product. Since the latter 
  is perfect, it follows that so is  the top product in diagram \eqref{nie25},    as wanted.

  The argument for the map 
\begin{align*}
  \gamma^c_{D,i}: \quad H^i_{c}(D,\Q_p(j))\stackrel{\sim}{\to} H^{4-i}(D,\Q_p(2-j))^.
\end{align*}
 is analogous. 
 \end{proof}
\subsection{The case of  an open annulus} \label{open-annulus-kolo}We will now prove Theorem \ref{main-arithmetic} for an open annulus. 
Let $A$ be an   open annulus over $K$.
\begin{proposition}\label{main-arithmeticA}{\rm (Arithmetic duality for an open annulus)}  Theorem \eqref{main-arithmetic} holds for $A$. 
\end{proposition}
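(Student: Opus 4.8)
The plan is to mimic the proof of Proposition~\ref{main-arithmeticD} for the open disc, replacing the single ghost circle $\partial D$ by the boundary $\partial A=Y_a\sqcup Y_b$ of the annulus, which by Corollary~\ref{mist4} and Corollary~\ref{Vvk} is a disjoint union of two ghost circles. First I would record the trace map: using Lemma~\ref{mist3} we have $H^i(A,\Q_p(2))=0$ for $i\geq 3$, so $\partial\colon H^3(\partial A,\Q_p(2))\xrightarrow{\sim}H^4_c(A,\Q_p(2))$, and composing with ${\rm Tr}_{\partial A}={\rm Tr}_{Y_a}+{\rm Tr}_{Y_b}$ (from Theorem~\ref{main-arithmeticY}) gives a candidate trace $t_A$. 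As in the disc case I would check $t_A={\rm Tr}_A$ by reducing to the geometric statement: the diagram relating $H^1(\partial A_C,\Q_p(1))\to H^2_c(A_C,\Q_p(1))$ and the Hyodo--Kato/syntomic residue maps commutes by definition, and then one applies $H^2(\sg_K,(-)(1))$ and ${\rm Tr}_K$. That $t_A$ (hence ${\rm Tr}_A$) is an isomorphism is part (iii) of Proposition~\ref{traces11} together with Theorem~\ref{main-arithmeticY}.

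Next I would establish the compatibility of pairings: the canonical maps $H^i(A,\Q_p(j))\hookrightarrow H^i(\partial A,\Q_p(j))$ are injective (Lemma~\ref{mist3}, Corollary~\ref{mist4}) and $\partial\colon H^{i-1}(\partial A,\Q_p(j))\twoheadrightarrow H^i_c(A,\Q_p(j))$ is surjective by \eqref{form}, and the square analogous to \eqref{nie3} commutes up to sign by formula~\eqref{FaSi} with $\alpha=0$. Then I would set up the filtrations. On $H^i(A,\Q_p(j))$ there is the ascending filtration from Section~\ref{filtration1} (via Lemma~\ref{mist3}, Lemma~\ref{annulus1.0}) with graded pieces $H^{i-1}(\sg_K,\Q_p(j-1))$, $H^{i-1}(\sg_K,\tfrac{\so(A_C)}{C}(j-1))$, $H^i(\sg_K,\Q_p(j))$; on $H^i_c(A,\Q_p(j))$ the filtration from Lemma~\ref{mist5}/\eqref{Vvk1} with graded pieces coming from $H^{i-1}(\sg_K,\Q_p(j))$ (via $H^1_c(A_C,\Q_p(j))\simeq\Q_p(j)$), $H^{i-2}(\sg_K,\tfrac{\so(\partial A_C)}{\so(A_C)\oplus C}(j-1))$, and $H^{i-2}(\sg_K,\Q_p(j-1))$. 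I would then prove the strictness/surjectivity statements (analogues of Lemma~\ref{injective1} and Lemma~\ref{surjective1}) for these filtrations by rewriting the relevant maps, via the generalized Tate isomorphisms \eqref{newton15}, as the canonical coherent maps $\tfrac{\so(A)}{K}\to\tfrac{\so(\partial A)}{K}$ etc., which are manifestly injective/surjective.

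The decisive input is the coherent duality: one needs the pairing on graded pieces to be perfect, and for the middle graded piece this comes down to the perfect coherent pairing $\cup\colon\tfrac{\so(\partial A)}{\so(A)\oplus K}\otimes^{\Box}_{\Q_p}\tfrac{\so(A)}{K}\to\Q_p$ of Section~7 together with the residue pairing on $\tfrac{\so(\partial A)}{K^{\oplus 2}}$, which decomposes as the sum of the two ghost-circle pairings \eqref{fanto11}. Exactly as in \eqref{nie25}--\eqref{nie255}, compatibility with the ghost-circle duality Theorem~\ref{main-arithmeticY}/Theorem~\ref{kwakus11} identifies the pro-\'etale pairing on $H^*(\sg_K,\tfrac{\so(A_C)}{C})\otimes H^*(\sg_K,\tfrac{\so(\partial A_C)}{\so(A_C)\oplus C})$ with the Galois-twisted coherent pairing, which is perfect. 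The Galois graded piece is handled by Poitou--Tate as before. Finally, d\'evissage through the three-step filtration (using the snake/five lemma on the diagram analogous to \eqref{DNote}) upgrades the perfectness on the graded pieces to the isomorphisms $\gamma_{A,i}$ and, by dualizing (using reflexivity of the spaces of compact type/nuclear Fr\'echet involved, Proposition~\ref{final1}, Theorem~\ref{final2}), $\gamma^c_{A,i}$.

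The main obstacle I anticipate is bookkeeping rather than a genuinely new idea: one must check that all the identifications of graded pieces (there are now \emph{two} ghost circles, and the split exact sequences \eqref{annulus1.1}, \eqref{Vvk1}, \eqref{ball2-ghost1} interact) are compatible with cup products and with the trace ${\rm Tr}_{\partial A}={\rm Tr}_{Y_a}+{\rm Tr}_{Y_b}$, and in particular that the ``extra'' $\Q_p(j-1)$ summand in $H^1(A_C,\Q_p(j))$ (coming from the Hyodo--Kato symbol $c_1^{\rm HK}(z)$) pairs correctly --- it should pair with the $\Q_p(j)$ in $H^1_c(A_C,\Q_p(j))$ via the Galois pairing, orthogonally to the coherent part. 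Verifying this orthogonality and the normalization of the resulting Galois pairing (that it is the Poitou--Tate pairing, not a nonzero multiple gone wrong) is where care is needed; I would do it by tracing the splitting induced by the symbol $c_1^{\proeet}(z)$ through the syntomic comparison, reducing to the already-established ghost-circle computation.
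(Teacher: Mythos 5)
Your overall strategy is the paper's: reduce to the boundary $\partial A=Y_a\sqcup Y_b$, filter both cohomologies, identify the graded pairings with the Poitou--Tate and coherent pairings via Theorem~\ref{kwakus11}, and d\'evisser. The filtrations you describe, the strictness/surjectivity lemmas, the identification of the middle graded pairing with $\tfrac{\so(A)}{K}\otimes^{\Box}_{\Q_p}\tfrac{\so(\partial A)}{\so(A)\oplus K}\to\Q_p$, and the treatment of $\gamma^c_{A,i}$ by reflexivity all match the paper. You also correctly flag the extra $\Q_p(j\pm1)$ summands coming from $c_1^{\rm HK}(z)$ as the delicate point.

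However, your construction of the trace map contains a false step. You claim that Lemma~\ref{mist3} gives $H^i(A,\Q_p(2))=0$ for $i\geq 3$, hence that $\partial\colon H^3(\partial A,\Q_p(2))\to H^4_{c}(A,\Q_p(2))$ is an isomorphism. This is exactly the point where the annulus differs from the disc: by Lemma~\ref{mist3} and Lemma~\ref{annulus1.0}, $H^3(A,\Q_p(2))\simeq H^2(\sg_K,H^1(A_C,\Q_p(2)))\simeq H^2(\sg_K,\Q_p(1))\simeq\Q_p$ (the class of $c_1(z)$), while $H^3(\partial A,\Q_p(2))\simeq\Q_p^{\oplus 2}$ and $H^4_c(A,\Q_p(2))\simeq\Q_p$. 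So $\partial$ is surjective with one-dimensional kernel, not injective, and "$\partial^{-1}$" does not exist. The missing ingredient is the descent of the trace: one must define ${\rm Tr}_A$ by checking that ${\rm Tr}_{Y_a}+{\rm Tr}_{Y_b}$ annihilates the image of $H^3(A,\Q_p(2))$ in $H^3(\partial A,\Q_p(2))$ --- the pro-\'etale avatar of the residue theorem (the two boundary residues of a class extending across the annulus sum to zero) --- so that the sum of the two ghost-circle traces factors through $H^4_c(A,\Q_p(2))$. This is how the paper proceeds; without this vanishing your candidate $t_A$ is not even well defined, and since the whole pairing is built on the trace, the gap propagates. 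Once this is repaired, the rest of your argument goes through as in the paper.
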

\begin{proof}  Let $Y:=\partial A$ be the boundary of $A$, a disjoint union of two ghost circles $Y_a$, $Y_b$ over $K$. 

 (i)  The geometric and arithmetic trace maps are  defined as follows:
\begin{align}
&{\rm Tr}_{A_C}: H^2_{c}(A_C,\Q_p(1))\to \Q_p,\\
&{\rm Tr}_{A}: H^4_{c}(A,\Q_p(2))\stackrel{\sim}{\to} H^2(\sg_K,H^2_{c}(A_C,\Q_p(2)))\verylomapr{H^2(\sg_K,{\rm Tr}_{A_C})} \Q_p,\notag
\end{align}
where ${\rm Tr}_{A_C}$ is the map coming from \eqref{Vvk1}.  The map ${\rm Tr}_{A}$ is an isomorphism by Lemma \ref{mist5} and the vanishing of  $H^2(\sg_K,\so(\partial A_C)/(\so(A_C)\oplus C)(1))$ (see \eqref{Vvk1}).

    Alternatively, we have an   exact sequence
  \begin{equation}
  \label{ogrod1}
   0\to H^1(A_C,\Q_p(1))\to H^1(Y_{a,C},\Q_p(1))\oplus H^1(Y_{b,C},\Q_p(1))\stackrel{\partial}{\to}  H^2_{c}(A_C,\Q_p(1))\to 0.
  \end{equation}
  The trace map $ {\rm Tr}_{A_C}$  is induced from the trace map $$
  {\rm Tr}_{Y_{a,C}}+{\rm Tr}_{Y_{b,C}}: H^1(Y_{a,C},\Q_p(1))\oplus H^1(Y_{b,C},\Q_p(1))\to \Q_p.
  $$ This works because $H^1(A_C,\Q_p(1))\stackrel{\sim}{\to} \Q_p$ compatibly with the maps ${\rm Tr}_{Y_{a,C}}$ and ${\rm Tr}_{Y_{b,C}}$.
Applying $H^2(\sg_K,-)$  and ${\rm Tr}_K$ to the exact sequence \eqref{ogrod1}, we obtain that the arithmetic trace ${\rm Tr}_A$ can be defined via the maps
 $$
{\rm Tr}_A: \, H^4_{c}(A,\Q_p(2))\stackrel{\,\partial}{\twoheadleftarrow } H^3(Y_a,\Q_p(2))\oplus H^3(Y_b,\Q_p(2))\verylomapr{{\rm Tr}_{Y_a}+{\rm Tr}_{Y_b}} \Q_p. 
$$
We used here the fact that  the composition
$$ H^3(A,\Q_p(2))\to H^3(\partial A,\Q_p(2))\verylomapr{{\rm Tr}_{Y_a}+{\rm Tr}_{Y_b}}\Q_p 
$$
is $0$. 
 
  (ii) We will first show that 
the  pairings
\begin{equation}
\label{pairing1A}
H^i(A,\Q_p(j))\otimes^{\Box}_{\Q_p}  H^{4-i}_{c}(A,\Q_p(2-j))\stackrel{\cup}{\to} H^4_{c}(A,\Q_p(2))\xrightarrow[\sim]{{\rm Tr}_A}\Q_p
\end{equation}
  induce   isomorphisms 
\begin{align*}
 \gamma_{A,i}: \quad H^i(A,\Q_p(j))\stackrel{\sim}{\to} H^{4-i}_{c}(A,\Q_p(2-j))^*.
\end{align*}

 ($\bullet$) {\em Compatibility of pairings.} The pairing \eqref{pairing1A} is compatible with the pairing \eqref{pairing1}, i.e., the diagram 
 \begin{equation}
 \label{nie3A}
 \xymatrix@R=6mm{
 H^i(A,\Q_p(j))\otimes^{\Box}_{\Q_p}  H^{i^{\prime}}_{c}(A,\Q_p(j^{\prime}))\ar@<-15mm>@{^(->}[d]^{\can}\ar[r]^-{\cup} &  H^{i+i^{\prime}}_{c}(A,\Q_p(j+j^{\prime})) \\
  H^i(Y_a\sqcup Y_b,\Q_p(j))\otimes^{\Box}_{\Q_p}  H^{i^{\prime}-1}(Y_a\sqcup Y_b,\Q_p(j^{\prime}))\ar[r]^-{\cup} \ar@<-15mm>@{->>}[u]_{\partial}&  H^{i+i^{\prime}-1}(Y_a\sqcup Y_b,\Q_p(j+j^{\prime})) \ar@{->>}[u]^{\partial}
 }
 \end{equation}
 commutes (up to a sign). That is, for $a\in  H^i(A,\Q_p(j))(S)$ and $b\in  H^{i^{\prime}-1}(Y_a\sqcup Y_b,\Q_p(j^{\prime}))(S)$, where $S$ is an extremally disconnected set, we have 
 $$
\partial(\can(a)\cup b)=(-1)^ia\cup \partial(b).
 $$
 This follows easily from the formulas in Section \ref{prod}. The injectivity and surjectivity of the vertical maps in diagram \eqref{nie3A} follows from the computations in Section \ref{compact-11}. 

  ($\bullet$) {\em  Filtration on cohomology.} By Section \ref{filtration1} and Lemma \ref{mist3}, there exists an ascending filtration on $H^{i}(A,\Q_p(j))$:
$$
F^2_{i,j}=H^{i}(A,\Q_p(j))\supset F^1_{i,j}\supset F^1_{i,j}\supset F^{-1}_{i,j}=0,
$$
such that 
\begin{align*}
 & F^2_{i,j}/F^1_{i,j}\simeq  H^{i-1}(\sg_K, \Q_p(j-1)),\quad F^1_{i,j}/F^0_{i,j}\simeq H^{i-1}(\sg_K, \tfrac{\so(A_C)}{C}(j-1)),\\
  & F^0_{i,j}/F^{-1}_{i,j}\simeq H^i(\sg_K, \Q_p(j)).
\end{align*}
 \begin{lemma} \label{injectiveA} The canonical injection
   $$\can:  H^i(A,\Q_p(j))\hookrightarrow   H^i(Y_a\sqcup Y_b,\Q_p(j))$$ is strict for the filtrations, i.e., the induced map
   \begin{equation}
   \label{bitter1}
   (F^{s+1}/F^s)H^i(A,\Q_p(j))\to     (F^{s+1}/F^s) H^i(Y_a\sqcup Y_b,\Q_p(j)),\quad s\geq -1,
   \end{equation}
   is  injective.
   \end{lemma}
   \begin{proof}This is clear for $s=-1$ and $s\geq 1$.  For $s=0$, the argument is analogous to the one used in the proof of Lemma \ref{injective1}.
   \end{proof}

    ($\bullet$) {\em  Filtration on cohomology with compact support.} Similarly, there exists an ascending filtration on $H^{i}_{c}(A,\Q_p(j))$:
$$
F^2_{c,i,j}=H^{i}_{c}(A,\Q_p(j))\supset F^1_{c,i,j}\supset F^0_{c,i,j}\supset F^{-1}_{c,i,j}=0,
$$
such that 
\begin{align*}
  & F^2_{c,i,j}/F^1_{c,i,j}\simeq H^{i-2}(\sg_K, \Q_p(j-1)),\quad F^1_{c,i,j}/F^0_{c,i,j}\simeq H^{i-2}(\sg_K, \tfrac{\so(\partial A_C)}{\so(A_C)\oplus C}(j-1)),\\
  &  F^0_{c,i,j}/F^{-1}_{c,i,j}\simeq H^{i-1}(\sg_K, H^{1}_{c}(A_C,\Q_p(j)))\simeq H^{i-1}(\sg_K, \Q_p(j)).
\end{align*}
We will visualize this filtration in the following way (to simplify the notation we removed the subscripts from cohomology):
\begin{equation}
\label{nie2A}
\xymatrix@R=6mm@C=10pt{ & &0\ar[d]  & 0\ar[d]\\
0\ar[r] &  F^0_{c,i,j}:=H^{i-1}(\sg_K, H^{1}_{c}(A_C,\Q_p(j)))\ar[r] \ar@{=}[d] & F^1_{c,i,j} \ar[r] \ar[d] & H^{i-2}(\sg_K, \frac{\so(\partial A_C)}{\so(A_C)\oplus C}(j-1))\ar[d] \ar[r] & 0\\
0\ar[r] &  H^{i-1}(\sg_K, H^{1}_{c}(A_C,\Q_p(j)))\ar[r] & F^2_{c,i,j}:=H^i_{c}(A,\Q_p(j))\ar[r] \ar[d] &  H^{i-2}(\sg_K, H^{2}_{c}(A_C,\Q_p(j)))\ar[r] \ar[d]& 0 \\
& &   H^{i-2}(\sg_K,\Q_p(j-1) )\ar@{=}[r] \ar[d] &  H^{i-2}(\sg_K,\Q_p(j-1))\ar[d]\\
& & 0 & 0
}
\end{equation}
The diagram is a map of  exact sequences with   exact columns. The middle exact row comes from the filtration induced by the Hochschild-Serre spectral sequence (see Lemma \ref{mist5}). 
The right exact column is induced by the syntomic filtration  from \eqref{Vvk1}. The term $F^1_{i,j}$ is defined as the  pullback of the top right square.  
\begin{lemma} \label{surjectiveA} The map
$$\partial: F^sH^{i-1}(\partial A,\Q_p(j))\to F^sH^{i}_{c}(A,\Q_p(j)),\quad s\geq -1,
$$
is surjective.
   \end{lemma}
   \begin{proof} This is clear for $s=-1$ and $s\geq 2$. For $s=0$, we need to check strict surjectivity of the canonical map
   $$
   H^{i-1}(\sg_K, H^0(\partial A_C,\Q_p(j)))\to    H^{i-1}(\sg_K, H^1_{c}(A_C,\Q_p(j))).
   $$
   But, as follows from \eqref{Vvk1}, the canonical map
   $$
   H^0(\partial A_C,\Q_p(j))\to    H^1_{c}(A_C,\Q_p(j))
   $$
   is  surjective with a Galois equivariant section. 
   
     For $s=1$, having done the case of $s=0$, it suffices to show that the map
     $$
     \partial: \quad (F^2/F^1)H^{i-1}(\partial A,\Q_p(j))\to (F^2/F^1)H^{i}_{c}(A,\Q_p(j))
     $$
     is  surjective. This amounts to showing that the canonical map
     $$
       H^{i-2}(\sg_K,\tfrac{\so(\partial A_C)}{C}(j-1))\to  H^{i-2}(\sg_K,\tfrac{\so(\partial A_C)}{\so(A_C)\oplus C}(j-1))
     $$
     is  surjective. But, by formula \eqref{newton15} and its suitable analog, or both the domain and the target of this map are trivial or this map is isomorphic to the canonical map
     $$
     \tfrac{\so(\partial A)}{K}\to \tfrac{\so(\partial A)}{\so(A)\oplus K}
     $$
     whose strict surjectivity is clear.
   \end{proof}

  ($\bullet$) {\em  Pairings on the graded pieces.}  From diagram \eqref{nie3A}, Lemma \ref{surjectiveA}, and Theorem~\ref{kwakus11},  it follows that the cup product pairing
 $$
\cup:\quad H^{i}(A,\Q_p(j))\otimes^{\Box}_{\Q_p}   H^{i^{\prime}}_{c}(A,\Q_p(j^{\prime}))\to H^{i+i^{\prime}}_{c}(A,\Q_p(j+j^{\prime}))
 $$
  is compatible with the above filtrations.    In particular, 
 the subgroups 
\begin{align*}
&F^0_{i,j}=H^i(\sg_K, \Q_p(j))\subset H^i(A,\Q_p(j)),\\
& F^1_{c,4-i,2-j}\subset H^{4-i}_{c}(A,\Q_p(2-j))
\end{align*}
 are orthogonal and so are 
$F^0_c$ and $F^1$. Hence
we obtain  the following commutative diagram with  exact rows (all the vertical maps are induced from cup products and the trace map ${\rm Tr}_A$)
\begin{equation}
\xymatrix@R=6mm@C=4mm{
0\ar[r] & H^{i-1}(\sg_K, \tfrac{\so(A_C)}{C}(j-1))\ar[r] \ar[d]^{\gamma_{A,i}} & H^{i-1}(\sg_K,H^1(A_C,\Q_p(j)))\ar[d]^{\beta_{A,i}}  \ar[r]  &H^{i-1}(\sg_K, \Q_p(j-1))\ar[r]  \ar[d]^{\alpha_{A,i}}& 0\\
0\ar[r] & H^{2-i}(\sg_K,\tfrac{\so(\partial A_C)}{\so(A_C)\oplus C}(1-j))^*\ar[r] & (F^2_{c,4-i,2-j})^* \ar[r]  & H^{3-i}(\sg_K,  \Q_p(2-j))^*
}
\end{equation}

We claim  that the maps $\alpha_{A,i}$ and $\gamma_{A,i}$ are  isomorphisms. 
Using Theorem~\ref{kwakus11} and arguing as in the case of diagram \eqref{DNote} for the open disc, we easily check that the map  $\alpha_{A,i}$ is induced by the Galois pairing; hence it is an isomorphism. 
    
     To identify  map  $\gamma_{A,i}$, consider the commutative diagram (we omitted the indices of  filtrations and the subscripts of cohomology): 
  $$
  \xymatrix@R=6mm{
  (F^1/F^0) H^{i}(A,\Q_p(j))\otimes^{\Box}_{\Q_p}   (F^1_{c}/F^0_{c})H^{4-i}_{c}(A,\Q_p(2-j))\ar@<-3cm>@{^(->}[d]^{\can}\ar[r]^-{\cup} & (F^2_{c}/F^1_{c})H^{4}_{c}(A,\Q_p(2)) \ar[r]^-{{\rm Tr}_A} _-{\sim}& \Q_p\\
    (F^1/F^0 )H^{i}(\partial A,\Q_p(j))\otimes^{\Box}_{\Q_p}   (F^1/F^0)H^{3-i}(\partial A,\Q_p(2-j))\ar[r]^-{\cup}\ar@<-3cm>@{->>}[u]_{\partial} &  (F^2_{c}/F^1_{c})H^{3}(\partial A,\Q_p(2))\ar[u]_{\partial} 
    \ar[r]_-{{\rm Tr}_{\partial A}}^-{\sim} & \Q_p^{\oplus 2}\ar[u]_{+}
  }
  $$
 Map $\gamma_{A,i}$ is induced by the top pairing in this diagram and we want to show that that this pairing is perfect.  Map $\can$ is   injective by Lemma \ref{injectiveA} and  maps $\partial$ are  surjective by Lemma \ref{surjectiveA}.     Identifying the graded pieces, the above commutative diagram can be rewritten as: 
   \begin{equation} 
   \label{nie25A}
  \xymatrix@R=6mm{
 H^{i-1}(\sg_K,\frac{\so(A_C)}{C}(j-1))\otimes^{\Box}_{\Q_p}   H^{2-i}(\sg_K,\frac{\so(\partial A_C)}{\so(A_C)\oplus C}(1-j))\ar@<-2cm>@{^(->}[d]^{\can}\ar[r]^-{\cup} & H^{1}(\sg_K, C) \ar[r]^-{{\rm Tr}_{K}} & \Q_p\\
  H^{i-1}(\sg_K,\frac{\so(\partial A_C)}{C}(j-1)) \otimes^{\Box}_{\Q_p}    H^{2-i}(\sg_K,\frac{\so(\partial A_C)}{C}(1-j))\ar[r]^-{\cup}\ar@<-2cm>@{->>}[u]_{\partial} & H^{1}(\sg_K, C)^{\oplus 2} \ar[u]_{+}\ar[r]^-{\oplus {\rm Tr}_{K}} & \Q_p^{\oplus 2}\ar[u]_{+},
  }
  \end{equation}
  where, for now,  the cup products are the ones induces from the pro-\'etale products. 
 Going back to the definition of cohomology with compact support,  it is easy to check that the vertical maps ($\can$ and $\partial$) are induced from the canonical  coherent maps.  Moreover, using the generalized Tate's isomorphisms \eqref{newton15}, we can rewrite the nontrivial cases of the above commutative diagram further as: 
 \begin{equation}
   \label{nie255A}
  \xymatrix@R=6mm{
 \frac{\so(A)}{K}\otimes^{\Box}_{\Q_p}   \frac{\so(\partial A)}{\so(A)\oplus K}\ar@<-0.8cm>@{^(->}[d]^{\can}\ar[r]^-{\cup} & K\ar[r]^-{{\rm Tr}_{K}} & \Q_p\\
\frac{\so(\partial A)}{K}\otimes^{\Box}_{\Q_p}    \frac{\so(\partial A)}{K}\ar[r]^-{\cup}\ar@<-0.8cm>@{->>}[u]_{\partial}& K^{\oplus 2}\ar[r]^-{\oplus {\rm Tr}_{K}} \ar[u]_{+}& \Q_p^{\oplus 2}\ar[u]_{+},
  }
  \end{equation}
  where, again, the products are still the ones induced from the pro-\'etale products.  Now, 
  the identification of the bottom product follows from Theorem~\ref{kwakus11}: it is just the coherent product.  
It is now clear that the top product in this diagram is also the  coherent product. Since the latter 
  is perfect, it follows that so is  the top product in diagram \eqref{nie25A},    as wanted.

     We have shown  that we have an isomorphism
$$
\beta_{A,i}: \quad (F^2/F^0)H^{i}(A,\Q_p(j))\stackrel{\sim}{\to} (F^1 H^{4-i}_{c}(A,\Q_p(2-j)))^*.
$$
Similarly as above, we obtain the following commutative diagram with  exact rows (again, all the vertical maps are induced from cup products and the trace map ${\rm Tr}_A$)
\begin{equation}
\label{DNote-ghostA}
 \xymatrix@R=5mm@C=4mm{ 
   0\ar[r] & H^i(\sg_K, \Q_p(j))\ar[r]\ar[d]^{\tilde{\alpha}_{A,i}}  & H^{i}(A,\Q_p(j))\ar[d]^{\tilde{\gamma}_{A,i}}\ar[r] &  
H^{i-1}(\sg_K, H^1(A_C,\Q_p(j)))\ar[r]\ar[d]^{\beta_{A,i}}_{\wr}& 0\\
    0\ar[r] & H^{2-i}(\sg_K, \Q_p(1-j))^* \ar[r] &  H^{4-i}_{c}(A,\Q_p(2-j))^* \ar[r] &  
(F^2_{c,4-i, 2-j})^* \ar[r] &  0
 }
\end{equation}
 Using Theorem~\ref{kwakus11}, we easily check that the map $\tilde{\alpha}_{A,i}$ is induced by the Galois pairing; hence it is an  isomorphism. It follows that so is  the map $\tilde{\gamma}_{A,i}$, as wanted. 
 
 The arguments for the map   $$
  \gamma^c_{A,i}: \quad H^i_{c}(A,\Q_p(j)){\to} H^{4-i}(A,\Q_p(2-j))^*
  $$
 are analogous. 
\end{proof}

\subsection{The case of  wide open curves} Now we pass to a special kind of Stein curves.
\subsubsection{Definition of wide opens} A {\em wide open}  (see \cite[Sec. III]{Col} for a brief study) is a rigid analytic space isomorphic to the complement in a  proper, geometrically connected, and smooth curve
 of finitely many closed discs. Examples of wide opens include
open discs and annuli. A {\em $K$-wide open}  is a rigid analytic space  over $K$ isomorphic to the complement in a  proper, geometrically connected, and smooth curve over $K$
 of finitely many closed discs over $K$. 

We will need the following fact:
\begin{lemma}
\label{embedding1} 
Let $X$ be a wide open over $K$.  Then one can embed $X$
into a proper, geometrically connected, and smooth curve $\overline{X}$ over $K$ such that we have an admissible covering
$$
\overline{X}=X\cup_{i=1}^{m} \{D_i\},
$$
where $D_i$'s are disjoint discs 
 over $K$ with centers $\{x_i\}_{i=1}^m$, $x_i\in\overline{X}(K)$, such that the intersections $A_i:=X\cap D_i$ are open annuli. 
\end{lemma}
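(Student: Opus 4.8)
The statement is essentially a matter of unwinding the definition of a wide open and arranging the geometry carefully, so the plan is to proceed in a few elementary steps. First I would fix, by the very definition of a wide open over $K$, a proper, geometrically connected, smooth curve $\overline{X}$ over $K$ together with finitely many pairwise disjoint closed discs $\overline{D}_1,\dots,\overline{D}_m\subset\overline{X}$ such that $X=\overline{X}\smallsetminus\bigcup_{i=1}^m\overline{D}_i$. Here one should be slightly careful: the $\overline{D}_i$ come a priori as closed discs over some finite extension, but by shrinking them slightly one can choose each $\overline{D}_i$ to be a closed disc \emph{defined over $K$} (the complement of finitely many $\overline{K}$-closed discs can always be rewritten as the complement of finitely many $K$-closed discs after enlarging them suitably; alternatively, one enlarges $X$ — this is where the ``isomorphic to'' in the definition gives room). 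So I would begin by arranging that the $\overline{D}_i$ are $K$-rational closed discs with $K$-rational centers $x_i$, which is possible since $\overline{X}(K)$ is dense (or at least non-empty on each residue disc; in the worst case pass to a finite extension and re-descend, but one can avoid this by the enlargement trick).

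Next, for each $i$ I would choose a slightly larger open disc $D_i\supset\overline{D}_i$ still contained in $\overline{X}$ and still pairwise disjoint: concretely, if $\overline{D}_i=\{|z_i|\le r_i\}$ in a local coordinate $z_i$ at $x_i$, take $D_i=\{|z_i|<r_i'\}$ for some $r_i'>r_i$ small enough that the $D_i$ remain disjoint and each $D_i$ lies in the locus where $z_i$ is a valid coordinate (this is possible since $\overline{X}$ is proper, hence the residue disc at $x_i$ is an honest open disc and $\overline{D}_i$ sits strictly inside it). Then $D_i$ is an open disc over $K$ with center $x_i\in\overline{X}(K)$, and $A_i:=X\cap D_i=D_i\smallsetminus\overline{D}_i=\{r_i<|z_i|<r_i'\}$ is an open annulus over $K$. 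Finally I would check that $\overline{X}=X\cup\bigcup_{i=1}^m D_i$ is an admissible covering: this is clear since $\overline{X}\smallsetminus X=\bigcup\overline{D}_i\subset\bigcup D_i$, so the two opens cover $\overline{X}$ set-theoretically, and admissibility follows from the standard fact that a finite cover of a proper (hence quasi-compact) rigid space by Zariski-type opens obtained by removing/adding closed subdiscs is admissible — e.g.\ one refines it to the tautological admissible cover coming from the affinoid structure of a formal model, or simply notes that $X$ and the $D_i$ are admissible opens and their union is everything.

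I do not expect any serious obstacle here; the only genuinely non-formal point is the descent of the discs to $K$ and the choice of $K$-rational centers, and as indicated this is handled either by enlarging the removed discs (which only enlarges $X$ within its isomorphism class, which is all the statement requires) or by the density of rational points in residue discs of a curve over a $p$-adic field. Everything else is a direct verification from the definitions of open disc and open annulus over $K$. I would present the argument in precisely the order above: (1) produce $\overline{X}$ and $K$-rational closed discs $\overline{D}_i$ from the definition of wide open; (2) fatten each $\overline{D}_i$ to an open disc $D_i$ over $K$ with $K$-rational center, keeping disjointness and $A_i=X\cap D_i$ an annulus; (3) verify the covering $\overline{X}=X\cup\bigcup D_i$ is admissible.
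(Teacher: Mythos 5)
Your proposal is correct and follows essentially the same route as the paper's proof: start from the defining presentation of $X$ as the complement of finitely many disjoint closed discs in a proper smooth curve, fatten each closed disc to an open disc, shrink so the open discs stay disjoint, and observe that each intersection with $X$ is an open annulus. Your extra attention to the $K$-rationality of the discs and centers and to the admissibility of the covering is reasonable but not part of the paper's (very brief) argument, which takes these points for granted from the definition of a $K$-wide open.
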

\begin{proof}
By definition, we can embed $X$ into $\overline{X}$ as in the lemma with complementary disjoint closed discs $\overline{D}_j$. We embed these discs $\overline{D}_j$ into  open discs $D_j$. By shrinking if necessary, 
we may insure that the open discs are disjoint as well. It is then clear that the intersections $A_j:=X\cap D_j$ are open annuli, as wanted. 
\end{proof}

\subsubsection{Theorem \ref{main-arithmetic} for wide opens}  
\begin{proposition}{\rm ({Arithmetic duality for wide opens})}\label{tea1}
Let $X$ be  a wide open over $K$. Theorem \ref{main-arithmetic} holds for $X$.
\end{proposition}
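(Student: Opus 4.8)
The plan is to prove Theorem \ref{main-arithmetic} for a wide open $X$ by a Mayer--Vietoris argument, reducing to the three cases already established: proper curves (Section \ref{case1}), open discs (Proposition \ref{main-arithmeticD}), and open annuli (Proposition \ref{main-arithmeticA}). First I would invoke Lemma \ref{embedding1} to embed $X$ into a proper, geometrically connected, smooth curve $\overline{X}$ over $K$ together with an admissible covering $\overline{X}=X\cup\bigcup_{i=1}^m D_i$ by disjoint open discs $D_i$, with $A_i:=X\cap D_i$ open annuli. This gives a Mayer--Vietoris distinguished triangle in $\sd(\Q_{p,\Box})$
$$
\rg(\overline{X},\Q_p(j))\to \rg(X,\Q_p(j))\oplus\bigoplus_{i=1}^m\rg(D_i,\Q_p(j))\to \bigoplus_{i=1}^m\rg(A_i,\Q_p(j)),
$$
and, dually, a Mayer--Vietoris triangle for compactly supported cohomology (using that $\rg_c$ is a cosheaf for the analytic topology, Section \ref{morning2})
$$
\bigoplus_{i=1}^m\rg_c(A_i,\Q_p(2-j))\to \rg_c(X,\Q_p(2-j))\oplus\bigoplus_{i=1}^m\rg_c(D_i,\Q_p(2-j))\to \rg_c(\overline{X},\Q_p(2-j)).
$$

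Next I would define the trace map ${\rm Tr}_X\colon H^4_c(X,\Q_p(2))\xrightarrow{\sim}\Q_p$. The compactly supported Mayer--Vietoris triangle, together with the vanishing $H^i_c(A_i,\Q_p(2))=0$ for $i\geq 3$ and the isomorphism $H^4_c(D_i,\Q_p(2))\xrightarrow{\sim}H^4_c(\overline{X},\Q_p(2))$ on the relevant top pieces (coming from the excision map $D_i\hookrightarrow\overline{X}$ and the fact that removing a disc from a proper curve does not affect the top compactly supported cohomology in a way that changes the trace), yields $H^4_c(X,\Q_p(2))\xrightarrow{\sim}H^4_c(\overline{X},\Q_p(2))\xrightarrow{\sim}\Q_p$, where the last isomorphism is the proper trace ${\rm Tr}_{\overline{X}}$ from Corollary \ref{proper-arith}. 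I would check compatibility: the trace maps ${\rm Tr}_{D_i}$, ${\rm Tr}_{A_i}$, ${\rm Tr}_{\overline{X}}$ are all functorial for open immersions and compatible with the Hyodo--Kato and de Rham traces (Proposition \ref{traces11}(3)), so all the squares in sight commute and ${\rm Tr}_X$ is well-defined and an isomorphism.

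For the duality itself, I would assemble the two Mayer--Vietoris triangles into a ladder: applying $\R\underline{\Hom}_{\Q_p}(-,\Q_p)[-4]$ (composed with the twist) to the second triangle produces a triangle that should be identified, term by term, with the first. The comparison maps are the duality morphisms $\gamma_{\overline{X}}$, $\gamma_{D_i}$, $\gamma_{A_i}$ already known to be isomorphisms in the three base cases (Corollary \ref{proper-arith}, Propositions \ref{main-arithmeticD}, \ref{main-arithmeticA}). Compatibility of the cup products with the restriction maps in Mayer--Vietoris — i.e., that the pro-\'etale pairing $\rg(X,\Q_p(j))\otimes^{{\rm L}\Box}\rg_c(X,\Q_p(2-j))[4]\to\Q_p$ restricts correctly along $X\hookleftarrow A_i\hookrightarrow D_i$ (using the functoriality of the pairings \eqref{samolot1}, \eqref{samolot2} for open immersions and their compatibility with traces) — makes the ladder commute. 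Then the five lemma (in the triangulated category $\sd(\Q_{p,\Box})$, or rather the long exact sequence of cohomology and the usual five-lemma) gives that $\gamma_X\colon\rg(X,\Q_p(j))\xrightarrow{\sim}{\mathbb D}(\rg_c(X,\Q_p(2-j))[4])$ is a quasi-isomorphism, hence the stated isomorphisms $\gamma_{X,i}$ on cohomology groups (all higher $\Ext$ vanish since, by Theorem \ref{final2} and Proposition \ref{final3.1} and their compactly supported analogs, the cohomology groups are nuclear Fr\'echet or of compact type). The dual statement $\gamma^c_{X,i}$ follows by reflexivity.

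The main obstacle I expect is the bookkeeping of \emph{signs and compatibilities} in the Mayer--Vietoris ladder: one must verify that the connecting maps of the two triangles are genuinely dual to each other under the pairing, which requires tracking how $\can$ and $\partial$ (the maps $H^i(Z)\to H^i(\partial Z)$ and $H^{i-1}(\partial Z)\to H^i_c(Z)$ appearing throughout Section \ref{compact-11}) interact with cup products — exactly the kind of sign computation carried out in \eqref{nie3} and \eqref{nie3A}, now to be done globally on the covering. A secondary subtlety is ensuring the excision isomorphism $H^*_c(D_i)\to H^*_c(\overline{X})$ on top degrees is compatible with the chosen traces; this is where Proposition \ref{traces11}(3) and the explicit descriptions of ${\rm Tr}_{D}$ via ${\rm Tr}_Y$ (as in the proof of Proposition \ref{main-arithmeticD}) do the real work. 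Once these compatibilities are in place, the five-lemma argument is formal.
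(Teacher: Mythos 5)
Your proposal is correct and follows essentially the same route as the paper: embed $X$ into a proper curve via Lemma \ref{embedding1}, form the two Mayer--Vietoris triangles, apply the duality functor ${\mathbb D}(-)$ to the compactly supported one, and conclude by the five lemma from the known cases of proper curves, discs, and annuli, with the passage from the derived statement to the isomorphisms $\gamma_{X,i}$ handled by the vanishing of higher $\Ext$'s (the paper isolates this as Lemma \ref{crutch1}) and $\gamma^c_{X,i}$ obtained by reflexivity. The only inessential difference is that you re-derive the trace isomorphism by excision, whereas the paper simply uses the trace already constructed for Stein varieties in Proposition \ref{traces11}.
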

\begin{proof}
By Lemma~\ref{embedding1}, $X$ can be embedded into a proper smooth geometrically irreducible curve $\overline{X}$ over $K$. Using the same notation as in that lemma, we write $D$ for the union of the open discs $D_i$'s and $A$ for the union of the open annuli $A_i$'s (coming from the intersections of the $D_i$'s and $X$). 

   For $j\in\Z$, we have the Mayer-Vietoris distinguished triangles:
\begin{align}\label{triangles1}
 \R\Gamma(\overline{X},\Q_p(j))   \to \R\Gamma(X,\Q_p(j))  \oplus \R\Gamma(D,\Q_p(j))  \to \R\Gamma(A,\Q_p(j)), \\
\R\Gamma_{c}(A,\Q_p(j))   \to \R\Gamma_{c}(X,\Q_p(j)) \oplus \R\Gamma_{c}(D,\Q_p(j))  \to \R\Gamma_{c}(\overline{X},\Q_p(j)).\notag
\end{align}
The first one comes from analytic descent of pro-\'etale cohomology and the second one from analytic co-descent of compactly supported pro-\'etale cohomology. 

 The following lemma will be needed later to pass from derived duality to classical duality. We will write   ${\mathbb D}(-):=\R\underline{\Hom}_{\Q_p}(-,\Q_p)$ for the duality functor. 
     \begin{lemma}\label{crutch1} Let $X$ be a geometrically connected smooth Stein curve over $K$. Let $j,s\in\Z$. We have a natural isomorphism
 \[H^j{\mathbb D}(\R\Gamma_{c}({X},\Q_p(s)))\simeq H^{-j}_{c}({X},\Q_p(s))^*,\] 
\end{lemma}
\begin{proof} We have the spectral sequence
$$
E_2^{i,j}=\underline{\Ext}^i_{\Q_p}(H^{-j}_{c}({X},\Q_p(s)),\Q_p)\Rightarrow  H^{i+j}{\mathbb D}(\R\Gamma_{c}({X},\Q_p(s))).
$$
Hence, it suffices to show that
\begin{align}\label{arg1}
\underline{\Ext}^i_{\Q_p}(H^j_{c}({X},\Q_p(s)),\Q_p)\simeq\begin{cases}  H^{j}_{c}({X},\Q_p(s))^* & \mbox{ if } i=0,\\
0 & \mbox{ if } i>0.
\end{cases}
\end{align}
 Since, by Theorem \ref{final1}, the solid $\Q_p$-vector space $V^j:=H^{j}_{c}({X},\Q_p(s))$ is of compact type, it is an LS of compact type (by \cite[Cor. 3.38]{Cam}). That is, it can be written as a countable colimit of Smith spaces with injective trace class  transition maps (see \cite[Def. 3.34]{Cam}): 
$V^j\simeq \colim_n V^{j,S}_n$, where $V^{j,S}_n$'s are Smith spaces. We compute 
\begin{align*}
\R\underline{\Hom}_{\Q_p}(V^j,\Q_p) & \simeq\R\underline{\Hom}_{\Q_p}(\colim_n V^{j,S}_n,\Q_p)\simeq \R\lim_n\R\underline{\Hom}_{\Q_p}(V^{j,S}_n,\Q_p)\\
 & \simeq \R\lim_n\underline{\Hom}_{\Q_p}(V^{j,S}_n,\Q_p)  \simeq \lim_n\underline{\Hom}_{\Q_p}(V^{j,S}_n,\Q_p)\simeq\underline{\Hom}_{\Q_p}(\colim_n V^{j,S}_n,\Q_p)\\
  & \simeq 
\underline{\Hom}_{\Q_p}(V^j,\Q_p).
\end{align*}
The third quasi-isomorphism follows from the fact that Smith spaces are projective objects,  the fourth  one from Section \ref{solid-frechet}  since the pro-system $\{\underline{\Hom}_{\Q_p}(V^{j,S}_n,\Q_p)\}_{n\in\N}$ is built from  Banach spaces with compact transition maps hence is equivalent to a pro-system of Banach spaces with dense transition maps (by \cite[discussion after Prop. 16.5]{Schn}).  This proves \eqref{arg1}, as wanted. 
\end{proof}

  (i) {\em Duality map $\gamma_{X,i}$.}  Apply now  the duality functor   ${\mathbb D}(-)$ to the second triangle in \eqref{triangles1}.  We obtain a distinguished triangle 
\begin{equation}
{\mathbb D}(\R\Gamma_{c}(\overline{X},\Q_p(j)))\to  {\mathbb D}(\R\Gamma_c(X,\Q_p(j)))\oplus{\mathbb D}(\R\Gamma_c(D,\Q_p(j)))\to  {\mathbb D}(\R\Gamma_{c}(A,\Q_p(j))).
\end{equation}
Here and below, we write  $\R\Gamma(-):= \R\Gamma$,  $\R\Gamma_c(-):=\R\Gamma_{c}(-)$.
 We have  a map of distinguished triangles:  
{\small \begin{equation}
\label{vl}
\xymatrix@R=6mm{
\R\Gamma(\overline{X},\Q_p(j))\ar[r]^-{\gamma_{\overline{X}}} \ar[d] &{\mathbb D}(\R\Gamma_{c}(\overline{X},\Q_p(2-j))[4])\ar[d] \\
\R\Gamma({X},\Q_p(j))\oplus \R\Gamma({D},\Q_p(j))\ar[r]^-{{\gamma_{{X}}}\oplus {\gamma_{{D}}}}\ar[d] & {\mathbb D}(\R\Gamma_c(X,\Q_p(2-j))[4])\oplus\ar[d] {\mathbb D}(\R\Gamma_c(D,\Q_p(2-j))[4])\\
\R\Gamma(A,\Q_p(j)) \ar[r]^-{\gamma_{{A}}} & {\mathbb D}(\R\Gamma_{c}(A,\Q_p(2-j))[4])
}
\end{equation}}
The top horizontal arrow is a quasi-isomorphism by Section~\ref{case1}.  The bottom horizontal arrow is a quasi-isomorphism by  Proposition \ref{main-arithmeticA} and Lemma \ref{crutch1}.
Since, moreover, by Proposition \ref{main-arithmeticD} and Lemma \ref{crutch1}, the arrow $\gamma_D$ in  diagram \eqref{vl} is a quasi-isomorphism, it follows that the map 
\begin{equation}
\label{derived1}
 {\gamma_{{X}}}: \R\Gamma({X},\Q_p(j)) \xrightarrow{} {\mathbb D}(\R\Gamma_c({X},\Q_p(2-j))[4]), 
 \end{equation}
is a quasi-isomorphism as well. That is, for $i\in\N$, we have an induced  isomorphism
\begin{equation}
\label{pada11}
  {\gamma_{{X},i}}: H^i({X},\Q_p(j)) \xrightarrow{\sim} H^i{\mathbb D}(\R\Gamma_{c}({X},\Q_p(2-j))[4]). 
  \end{equation}
This, in combination with Lemma \ref{crutch1}, yields the isomorphism
$$
  {\gamma_{{X},i}}: H^i({X},\Q_p(j)) \xrightarrow{\sim} H^{4-i}_{c}({X},\Q_p(2-j))^*,
$$
as wanted. 
   
    (ii) {\em Duality map $\gamma^c_{X,i}$.}   
    Write  the map $\gamma_{X,i}^c$  as  the composition
\begin{equation}
\label{deszcz1}
\gamma^c_{X,i}: H^i_{c}({X},\Q_p(j))\xrightarrow[\sim]{{\rm eval}} (H^i_{c}({X},\Q_p(j))^*)^*\xrightarrow[\sim]{\gamma^*_{X,4-i}}H^{4-i}({X},\Q_p(2-j))^*.
\end{equation}
The evaluation map is an isomorphism since $H^i_{c}({X},\Q_p(j))$ is reflexive, hence LS, hence solid reflexive by \cite[Thm. 3.40]{Cam}. This proves that $\gamma^c_{X,i}$ is an isomorphism, as wanted.
\end{proof}
       
 \subsection{The case of  general Stein curves}  Finally, we are ready to treat general Stein curves. 
 \begin{proposition}{\rm ({Arithmetic duality for Stein curves})}\label{tea2}
Let $X$ be  a smooth geometrically irreducible Stein curve over $K$. Theorem \ref{main-arithmetic} holds for $X$.
\end{proposition}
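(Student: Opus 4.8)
The strategy is a standard exhaustion argument: a general smooth geometrically irreducible Stein curve $X$ over $K$ is an increasing union of wide opens, for which Theorem \ref{main-arithmetic} is already known by Proposition \ref{tea1}, so I will deduce the derived duality for $X$ by passing to the (co)limit and then extract the statements about the groups $H^i$ and $H^i_c$. More precisely, I would first recall that any such $X$ admits an admissible covering $X=\bigcup_{n\in\N}X_n$ by an increasing sequence of $K$-wide opens (one can take $X_n$ to be the complement, in a suitable proper model, of finitely many closed discs shrinking to the ends of $X$; a finite extension of $K$ is permitted since, after proving the statement over $L$, Galois descent along $\sg_L\subset\sg_K$ together with the trace/corestriction recovers it over $K$ — exactly as in Remark \ref{functoriality}(ii)). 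Then $\R\Gamma(X,\Q_p(j))\simeq\R\lim_n\R\Gamma(X_n,\Q_p(j))$ and, dually, $\R\Gamma_c(X,\Q_p(j))\simeq\hocolim_n\R\Gamma_c(X_n,\Q_p(j))$, the latter because compactly supported pro-\'etale cohomology is a cosheaf for the analytic topology (see Section \ref{def-compact}).

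\textbf{Key steps, in order.} First I would establish the two limit presentations above and the compatibility of the trace maps ${\rm Tr}_{X_n}$ under the restriction/shrinking maps, so that in the colimit they define ${\rm Tr}_X\colon H^4_c(X,\Q_p(2))\xrightarrow{\sim}\Q_p$; the isomorphism follows since each ${\rm Tr}_{X_n}$ is an isomorphism (Proposition \ref{traces11}) and $H^4_c(X,\Q_p(2))\simeq\colim_nH^4_c(X_n,\Q_p(2))$. Second, I would assemble the derived duality quasi-isomorphisms $\gamma_{X_n}\colon\R\Gamma(X_n,\Q_p(j))\xrightarrow{\sim}{\mathbb D}(\R\Gamma_c(X_n,\Q_p(2-j))[4])$ from Proposition \ref{tea1} into a compatible system; applying $\R\lim_n$ on the left and using ${\mathbb D}(\hocolim_n)=\R\lim_n{\mathbb D}(-)$ on the right yields
\begin{equation}
\gamma_X\colon\quad \R\Gamma(X,\Q_p(j))\xrightarrow{\ \sim\ }{\mathbb D}(\R\Gamma_c(X,\Q_p(2-j))[4])
\end{equation}
in $\sd(\Q_{p,\Box})$. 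Third, I would descend from the derived statement to the statements about individual cohomology groups exactly as in Proposition \ref{tea1}: by Theorem \ref{final1} the groups $H^i_c(X,\Q_p(s))$ are of compact type, hence by Lemma \ref{crutch1} the higher $\underline{\Ext}$'s vanish and $H^i{\mathbb D}(\R\Gamma_c(X,\Q_p(2-j))[4])\simeq H^{4-i}_c(X,\Q_p(2-j))^*$, which gives $\gamma_{X,i}$; then $\gamma^c_{X,i}$ is obtained from $\gamma_{X,4-i}$ by dualizing and using that spaces of compact type are solid reflexive (\cite[Thm. 3.40]{Cam}), just as in \eqref{deszcz1}. Finally I would note that the pairing-theoretic formulation follows because $\gamma_X$ is by construction induced by the cup-product pairing composed with ${\rm Tr}_X$, this being true at each finite level $X_n$ and compatible with the transition maps.

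\textbf{Main obstacle.} The delicate point is the interchange of duality with the limit/colimit, i.e.\ checking that $\R\lim_n$ of the $\gamma_{X_n}$ is still an isomorphism and really computes ${\mathbb D}$ of a $\hocolim$. For this I need that the relevant pro-systems are nice enough to have no $\R^1\lim$ obstruction: the vanishing $\R^1\lim_nH^i(X_n,\Q_p(j))=0$ is exactly Theorem \ref{final2}(2), and on the compactly supported side the ind-systems $\{H^i_c(X_n,\Q_p(j))\}$ have injective transition maps with colimit of compact type (Theorem \ref{final1}), so their duals form Mittag-Leffler pro-systems of Fr\'echet spaces by Section \ref{solid-frechet}; combining these I get that ${\mathbb D}(\hocolim_n\R\Gamma_c(X_n))\simeq\R\lim_n{\mathbb D}(\R\Gamma_c(X_n))$ with the expected cohomology. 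A secondary bookkeeping issue is the finite base change: one must check that all trace maps, pairings, and the covering by wide opens behave well under $\sg_L/\sg_K$-descent, but this is routine given the functoriality already recorded for the constituent pieces. Once these compatibilities are in place, the proof is a formal assembly of the wide-open case.
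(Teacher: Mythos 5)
Your proposal is correct and follows essentially the same route as the paper: exhaust $X$ by wide opens (defined possibly over finite extensions of $K$, with the descent handled by functoriality/\'etale descent of the duality map), pass the derived duality of Proposition \ref{tea1} through $\R\lim_n$ using ${\mathbb D}(\colim_n(-))\simeq\R\lim_n{\mathbb D}(-)$ together with the $\R^1\lim$-vanishing of Theorem \ref{final2}, and then extract $\gamma_{X,i}$ via the compact-type/$\underline{\Ext}$-vanishing argument of Lemma \ref{crutch1} and $\gamma^c_{X,i}$ via solid reflexivity. The only cosmetic difference is that the paper realizes the wide opens concretely as adapted naive interiors of a dagger-affinoid Stein covering (via Coleman), but the argument is otherwise identical.
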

\begin{proof}

 (i)  {\em Reduction step.} Take a  Stein covering $\{X_n\}_{n\in\N}$ of $X$   by dagger affinoids with adapted naive interiors $X^0_n$  of $X_n$, $n\geq 1$. Using \cite[proof of Prop. 3.3]{Col} we may choose $X^0_n$ to be wide opens over finite extensions $L_n$ of $K$.  Since the duality map 
$$
\gamma_Y: \R\Gamma(Y,\Q_p(j))\to {\mathbb D}(\R\Gamma_{c}(Y,\Q_p(2-j))[4])
$$
satisfies \'etale descent,  we know from Proposition \ref{tea1}  that it is a quasi-isomorphism for each $X^0_n$. 

  (ii) {\em Trace map.}   We can write the trace map as 
   $$
{\rm Tr}_X:\quad H^4_{c}(X,\Q_p(2))\stackrel{\sim}{\leftarrow}\colim_n H^4_{c}(X^0_n,\Q_p(2))\xrightarrow[\sim]{\colim_n{\rm Tr}_{X^0_n}}\Q_p.
$$

 (iii) {\em Duality map $\gamma_{X,i}$.}
 The duality map
$$\gamma_X: \R\Gamma(X,\Q_p(j))\to {\mathbb D}(\R\Gamma_{c}(X,\Q_p(2-j))[4])$$
can be written as the composition (we set $s:=2-j$)
\begin{align*}
 \R\Gamma(X,\Q_p(j)) &  \stackrel{\sim}{\to} \R\lim_n  \R\Gamma(X^0_n,\Q_p(j))\xrightarrow[\sim]{\gamma_{X_n}}
\R\lim_n  {\mathbb D}(\R\Gamma_{c}(X^0_n,\Q_p(s))[4])\\
  & \simeq  {\mathbb D}(\colim_n\R\Gamma_{c}(X^0_n,\Q_p(s))[4])\stackrel{\sim}{\leftarrow}  {\mathbb D}(\R\Gamma_{c}(X,\Q_p(s))[4]).
 \end{align*}
The second quasi-isomorphism follows from (i).

On cohomology level this gives us  isomorphisms
$$
\gamma_{X,i}: H^i(X,\Q_p(j))\stackrel{\sim}{\to} H^i{\mathbb D}(\R\Gamma_{c}(X,\Q_p(2-j))[4]).
$$
By Theorem \ref{final1},  $H^i_{c}(X,\Q_p(j))$ is of compact type hence
 $H^i{\mathbb D}(\R\Gamma_{c}(X,\Q_p(s)))\simeq (H^i_{c}(X,\Q_p(s)))^*$ by  the computation above (proving \eqref{arg1}). Combination of  these two observations  shows that the duality map
 $$
\gamma_{X,i}: H^i(X,\Q_p(j))\stackrel{\sim}{\to} H^{4-i}_{c}(X,\Q_p(2-j))^*
$$
 is an isomorphism.

(iv) {\em Duality map $\gamma^c_X$.} Analogous to the argument used in the proof of Proposition \ref{tea1}.
\end{proof}

 \subsection{The case of  dagger affinoid curves}  Finally,  we will  treat dagger affinoids of dimension $1$. 
  \begin{proposition}{\rm ({Arithmetic duality for dagger affinoid curves})}\label{tea3}
Let $X$ be  a smooth geometrically irreducible dagger affinoid  curve over $K$. Theorem \ref{main-arithmetic} holds for $X$.
\end{proposition}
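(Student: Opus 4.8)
The plan is to run the argument of Proposition~\ref{tea2} (the Stein case), with the roles of the inverse limit and the filtered colimit --- and correspondingly of nuclear Fr\'echet spaces and spaces of compact type --- interchanged. Fix the dagger presentation $\{X_h\}$ of $X$ and adapted naive interiors $X^0_h\subset X_h$. Each $X^0_h$ is a smooth geometrically irreducible Stein curve over $K$, so Proposition~\ref{tea2} applies: the duality map $\gamma_{X^0_h}\colon\R\Gamma(X^0_h,\Q_p(j))\xrightarrow{\sim}{\mathbb D}(\R\Gamma_{c}(X^0_h,\Q_p(2-j))[4])$ is a quasi-isomorphism, where ${\mathbb D}(-)=\R\underline{\Hom}_{\Q_p}(-,\Q_p)$, and ${\rm Tr}_{X^0_h}$ is an isomorphism. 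I would then record the identifications
\[
\R\Gamma(X,\Q_p(j))\simeq\colim_h\R\Gamma(X^0_h,\Q_p(j)),\qquad
\R\Gamma_{c}(X,\Q_p(j))\simeq\R\lim_h\R\Gamma_{c}(X^0_h,\Q_p(j)),
\]
the latter because $\R^1\lim_h H^i_{c}(X^0_h,\Q_p(j))=0$ for the compact pro-systems $\{H^i_{c}(X^0_h,\Q_p(j))\}_h$ (Proposition~\ref{final3.1}, Lemma~\ref{glowa1}). The trace map ${\rm Tr}_X$, by its definition in Proposition~\ref{traces11}, is the composite $H^4_{c}(X,\Q_p(2))\xrightarrow{\sim}\lim_h H^4_{c}(X^0_h,\Q_p(2))\xrightarrow{\lim_h{\rm Tr}_{X^0_h}}\Q_p$, and is an isomorphism.

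Next I would build $\gamma_X$ as the composite
\[
\R\Gamma(X,\Q_p(j))\simeq\colim_h\R\Gamma(X^0_h,\Q_p(j))
\xrightarrow[\sim]{\colim_h\gamma_{X^0_h}}\colim_h{\mathbb D}\big(\R\Gamma_{c}(X^0_h,\Q_p(2-j))[4]\big)
\simeq{\mathbb D}\big(\R\Gamma_{c}(X,\Q_p(2-j))[4]\big),
\]
checking by functoriality of cup products and trace maps for the open immersions between the $X^0_h$ (Proposition~\ref{traces11}(3) and the compatibilities recorded for the pro-\'etale pairings) that this composite is indeed the duality map $\gamma_X$; hence $\gamma_X$ is a quasi-isomorphism. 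Passing to cohomology and using Lemma~\ref{crutch1} for each $X^0_h$, together with the fact that the strong dual of the reduced projective limit $H^{4-i}_{c}(X,\Q_p(2-j))=\lim_h H^{4-i}_{c}(X^0_h,\Q_p(2-j))$ is $\colim_h H^{4-i}_{c}(X^0_h,\Q_p(2-j))^*$, I would get the isomorphism $\gamma_{X,i}\colon H^i(X,\Q_p(j))\xrightarrow{\sim}H^{4-i}_{c}(X,\Q_p(2-j))^*$. Finally $\gamma^c_{X,i}$ is obtained, exactly as in Propositions~\ref{tea1}--\ref{tea2}, by writing it as the composite of the evaluation map with $(\gamma_{X,4-i})^*$, the evaluation being an isomorphism since $H^i_{c}(X,\Q_p(j))$ is nuclear Fr\'echet (Proposition~\ref{final3.1}), hence solid reflexive (Section~\ref{solid-frechet}).

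The one non-formal point --- and the main obstacle --- is the middle quasi-isomorphism $\colim_h{\mathbb D}(D_h)\simeq{\mathbb D}(\R\lim_h D_h)$, where $D_h:=\R\Gamma_{c}(X^0_h,\Q_p(2-j))[4]$ (in the Stein case one only needed that $\R\underline{\Hom}$ out of a colimit is a limit of $\R\underline{\Hom}$'s, which is formal). I would deduce it from derived solid reflexivity: set $E:=\colim_h{\mathbb D}(D_h)$. By Lemma~\ref{crutch1} its cohomology in degree $i$ is $\colim_h H^{4-i}_{c}(X^0_h,\Q_p(2-j))^*$, a filtered colimit of nuclear Fr\'echet spaces along the compact maps dual to the compact transition maps of the pro-system (\cite[Lemma 16.4]{Schn}, Lemma~\ref{glowa1}), hence of compact type. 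Then ${\mathbb D}(E)\simeq\R\lim_h{\mathbb D}({\mathbb D}(D_h))\simeq\R\lim_h D_h$, since each $D_h$ has compact-type cohomology and is therefore derived solid reflexive (this is the content of \cite[Thm. 3.40]{Cam} and of the proof of Lemma~\ref{crutch1}); and likewise $E$, having compact-type cohomology, satisfies ${\mathbb D}({\mathbb D}(E))\simeq E$. Combining the last two displays yields ${\mathbb D}(\R\lim_h D_h)\simeq{\mathbb D}({\mathbb D}(E))\simeq E=\colim_h{\mathbb D}(D_h)$, as required. Everything else in the argument is bookkeeping parallel to the Stein case.
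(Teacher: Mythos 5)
Your proposal is correct, but it runs the reduction in the opposite order from the paper, and this costs you an extra non-formal input. The paper's proof of Proposition~\ref{tea3} is the exact mirror of its Stein argument: since $\R\Gamma_{c}(X,\Q_p(j))\simeq\R\lim_h\R\Gamma_{c}(X^0_h,\Q_p(j))$ while $\R\Gamma(X,\Q_p(j))\simeq\colim_h\R\Gamma(X^0_h,\Q_p(j))$, it constructs $\gamma^c_{X,i}$ \emph{first}, as
$H^i_{c}(X,\Q_p(j))\xrightarrow{\sim}\lim_h H^i_{c}(X^0_h,\Q_p(j))\xrightarrow{\sim}\lim_h H^{4-i}(X^0_h,\Q_p(2-j))^*\simeq(\colim_h H^{4-i}(X^0_h,\Q_p(2-j)))^*$,
where only the formal identity $\underline{\Hom}(\colim_h(-),\Q_p)\simeq\lim_h\underline{\Hom}(-,\Q_p)$ is needed; $\gamma_{X,i}$ is then obtained by composing the evaluation map (an isomorphism because $H^i(X,\Q_p(j))$ is of compact type, hence solid reflexive, by Proposition~\ref{final3}) with $(\gamma^c_{X,4-i})^*$. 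You instead build $\gamma_X$ first and must therefore justify $\colim_h{\mathbb D}(D_h)\simeq{\mathbb D}(\R\lim_h D_h)$, which you correctly identify as the crux and resolve by a derived biduality argument. That argument does go through: ${\mathbb D}(\colim_h{\mathbb D}(D_h))\simeq\R\lim_h{\mathbb D}({\mathbb D}(D_h))$ is formal, each $D_h$ is derived reflexive because its cohomology is of compact type (the content of Lemma~\ref{crutch1} together with $V\simeq\R\underline{\Hom}(V^*,\Q_p)$ for the nuclear Fr\'echet duals, Section~\ref{solid-frechet}), and $E=\colim_h{\mathbb D}(D_h)$ has compact-type cohomology --- most cleanly seen by noting $E\simeq\colim_h\R\Gamma(X^0_h,\Q_p(j))\simeq\R\Gamma(X,\Q_p(j))$ and invoking Proposition~\ref{final3}, rather than re-deriving compact-typeness of the colimit of dual spaces from scratch (where injectivity of the dualized transition maps would need a word). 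So both approaches work; the paper's buys economy by always dualizing out of the colimit side, while yours makes the derived reflexivity of both sides explicit, at the price of carrying the biduality step that the paper is structured to avoid.
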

\begin{proof}
Take  a presentation $\{X_h\}_{h\in\N}$ of the dagger structure on $X$. Denote by $X^0_h$ a naive interior of $X_h$ adapted to $\{X_h\}$. 
 
 (i) {\em Duality map $\gamma^c_{X,i}$.} 
 The duality map
\begin{equation}
\label{deszcz3}
\gamma^c_{X,i}: H^i_{c}(X,\Q_p(j))\to H^{4-i}(X,\Q_p(2-j))^*
\end{equation}
can be written as the composition 
\begin{align*}
H^i_{c}(X,\Q_p(j)) &  \stackrel{\sim}{\to} \lim_h H^i_{c}(X^0_h,\Q_p(j))\xrightarrow[\sim]{\gamma^c_{X^0_h,i}}
\lim_h  H^{4-i}(X^0_h,\Q_p(2-j))^*\\
  & \simeq  (\colim_hH^{4-i}(X^0_h,\Q_p(2-j)))^*\simeq  (H^{4-i}(X,\Q_p(2-j)))^*.
 \end{align*}
The first isomorphism follows from the fact that $\R^1\lim_h H^{i-1}_{c}(X^0_h,\Q_p(j))=0$ by Section \ref{solid-frechet} since the pro-system $\{H^{i-1}_{c}(X^0_h,\Q_p(j))\}_{h\in\N}$ is built from compact type spaces with compact transition maps (hence it is equivalent to a pro-system of Banach spaces with dense transition maps (by \cite[by discussion after Prop. 16.5]{Schn}). The second isomorphism is induced by the analog of the  isomorphism \eqref{deszcz1}. It follows that the duality map \eqref{deszcz3} is an isomorphism, as wanted.

  (ii) {\em Duality map $\gamma_{X,i}$.}  Write  the duality map
  $$\gamma_{X,i}: H^i({X},\Q_p(j))\to H^{4-i}_{c}({X},\Q_p(2-j))^*
  $$
    as  the composition
$$
\gamma_{X,i}: H^i({X},\Q_p(j))\xrightarrow[\sim]{{\rm eval}} (H^i({X},\Q_p(j))^*)^*\xrightarrow[\sim]{\gamma^{c,*}_{X,4-i}}H^{4-i}_{c}({X},\Q_p(2-j))^*.
$$
The evaluation map is an isomorphism since $H^i({X},\Q_p(j))$ is reflexive by Proposition \ref{final3}, hence LS, hence solid reflexive by \cite[Thm. 3.30]{Cam}. This proves that $\gamma_{X,i}$ is an isomorphism, as wanted.
 \end{proof}

\end{document}